\documentclass{article}
\usepackage[utf8]{inputenc}
\usepackage{amsmath}
\usepackage{amssymb}
\usepackage{amsthm}
\usepackage{tikz-cd}
\usepackage{enumerate}
\usepackage[margin=1in]{geometry}
\usepackage{comment}
\usepackage{hyperref}

\newcommand{\N}{\mathbb{N}}

\newcommand{\overbar}[1]{\mkern 1.5mu\overline{\mkern-1.5mu#1\mkern-1.5mu}\mkern 1.5mu}

\newcommand{\Rm}{\mbox{Rm}}

\newtheorem{thm}{Theorem}[section]
\newtheorem{cor}[thm]{Corollary}

\newtheorem{prop}[thm]{Proposition}
\newtheorem{lem}[thm]{Lemma}

\newtheorem{defin}[thm]{Definition}

\numberwithin{equation}{section}

\title{Canonical surgeries in rotationally invariant Ricci flow}
\author{Timothy Buttsworth, Maximilien Hallgren and Yongjia Zhang}
\date{}
\newcommand{\Addresses}{{
  \bigskip
  \footnotesize

  T.~Buttsworth, \textsc{School of Mathematics and Physics, The University of Queensland, St Lucia 4067}\par\nopagebreak
  \textit{E-mail address}: \texttt{t.buttsworth@uq.edu.au}

  \medskip

  M.~Hallgren, \textsc{Department of Mathematics, Cornell University, Ithaca, NY 14850}\par\nopagebreak
  \textit{E-mail address}: \texttt{meh249@cornell.edu}

  \medskip

 Y. Zhang, \textsc{School of Mathematics, University of Minnesota, Minneapolis, MN 55455}\par\nopagebreak
  \textit{E-mail address}: \texttt{zhan7298@umn.edu}

}}

\begin{document}

\maketitle
\abstract{We construct a rotationally invariant Ricci flow through surgery starting at any closed rotationally invariant Riemannian manifold. We demonstrate that a sequence of such Ricci flows with surgery converges to a Ricci flow spacetime in the sense of \cite{KleinerLott17}. Results of Bamler-Kleiner \cite{BamlerKleiner17} and Haslhofer \cite{Haslhofer} then guarantee the uniqueness and stability of these spacetimes given initial data. We simplify aspects of this proof in our setting, and show that for rotationally invariant Ricci flows, the closeness of spacetimes can be measured by equivariant comparison maps. Finally we show that the blowup rate of the curvature near a singular time for these Ricci flows is bounded by the inverse of remaining time squared.}
\tableofcontents

\section{Introduction}

The Ricci flow is one of the most widely-studied geometric evolution equations. The substantial interest in Ricci flow is derived from its tendency to smooth out irregularities in a given geometry, deforming generic shapes into more canonical ones.

The utility of the Ricci flow is obstructed by the fact that singularities tend to develop in the geometry before the flow has completed the regularising process. One successful approach to resolving this issue involves removing singularities as they occur so that the flow can continue. 
Perhaps the most notable use of this is in Perelman's celebrated resolution of the Poincar\'e and Geometrisation conjectures. This process is known as \textit{Ricci flow with surgery}, and becomes useful if the topology and geometry of the removed singularity is well-understood. The surgery process typically involves obtaining a comprehensive understanding of regions of the Ricci flow with large curvature.

One particularly unsatisfying aspect of Ricci flow with surgery is that it is not especially canonical, in the sense that the geometric alterations to the flow depend on a number of parameters, including a choice of curvature `tolerance' which determines how much of the geometry is removed before restarting the Ricci flow. One would hope that by taking a limit of these parameters, one could obtain a Ricci flow with surgery that does not depend on any parameters. The resulting \textit{Ricci flow space-time} that arises from this limiting procedure can be thought of as a weak solution of the Ricci flow. 
The rigorous theory of these weak solutions of Ricci flow was introduced in \cite{KleinerLott17} where the authors focused their study on Ricci flow for three-dimensional manifolds. 

Later, the authors of \cite{BamlerKleiner17} demonstrated that these weak solutions were uniquely determined by their initial data, thus demonstrating that these weak solutions were \textit{canonical} solutions of Ricci flow with surgery. This proof involved demonstrating that two weak solutions that are initially close to each other must remain that way for all future times. This observation is achieved primarily through analysing the linearised Ricci flow equations, but care is needed since the flows can often encounter singularities. Indeed, the analysis is carried out on certain \textit{comparison domains}, which provide the tools needed to rigorously compare the geometry of two weak solutions \textit{even at singular times}. 

In this paper, we consider Ricci flows in dimensions greater than three, but specialize to the case where the initial metric is rotationally invariant. Such Ricci flows have been extensively studied in the literature, generally through the use of barrier/maximum principle arguments. In \cite{AngenentKnopf04} and \cite{AngenentKnopf05}, the authors construct a rotationally-symmetric Ricci flow developing a family of neckpinch singularities, and describe the precise convergence rates and asymptotic profiles of these solutions after a Type-I rescaling.  In \cite{AngenentCaputoKnopf}, it is shown that the Ricci flow can be continued canonically through these singularities, and asymptotic profiles for the flow recovering from surgery are also derived. In \cite{AngenentIsenbergKnopf}, the authors construct a family of Ricci flows developing degenerate neckpinch singularities, and in \cite{Carson16} it is again shown that there is a canonical flow through singularities. These results all give detailed information about the singularity formation and recovery, but only apply to carefully constructed classes of initial metrics. 

Throughout this paper, we forego the strict assumptions of the aforementioned results, and demonstrate in general the well-posedness of weak solutions of Ricci flow (in the sense of \cite{KleinerLott17}) for rotationally invariant metrics in any dimension. Given our weaker assumptions, we are not able to obtain precise asymptotics for how the singularities form, so our results may be viewed as complementary to the precise special cases considered by previous works. 

Our first main result describes the construction of rotationally invariant Ricci flow spacetimes by establishing long-time existence of a rotationally invariant Ricci flow with surgery, taking surgery parameters to zero, and taking a limit of spacetimes.

\begin{thm}(Existence) Given any closed, rotationally invariant Riemannian manifold $(M^{n+1},g_0)$, there exists a Ricci flow with surgery $(M,(g_t)_{t\in [0,\infty)})$ which is $\kappa_0(n)$-noncollapsed below scale 1 for all $t\in [0,\infty)$. The surgery procedure is described in Section 4.1 ; taking surgery parameters to zero gives a rotationally invariant Ricci flow spacetime (in the sense of \cite{KleinerLott17}, see Section 4.2) which becomes extinct in finite time. Moreover, for any $\epsilon>0$, there exists $r>0$ such that any such spacetime satisfies the equivariant $\epsilon$-canonical neighborhood assumptions below the scale $r>0$.
\end{thm}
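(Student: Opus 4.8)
The plan is to build the flow with surgery by iterating short-time existence and a surgery procedure, then extract a spacetime limit. First I would establish short-time existence and uniqueness of rotationally invariant Ricci flow starting from $(M^{n+1},g_0)$; rotational invariance is preserved under the flow, so the evolution reduces to a scalar parabolic system for the warping function, and the flow continues as long as the curvature stays bounded. Next, at a first singular time $T_1 < \infty$ (finiteness follows because $M$ is closed and the scalar curvature obeys a lower-bound-improving ODE, or more simply because rotationally invariant metrics have a positive-eigenvalue obstruction), I would analyze the high-curvature regions. Here the key input is a \emph{canonical neighborhood theorem} in the rotationally invariant setting: any point of sufficiently large curvature lies in a region close, after rescaling, to a rotationally invariant model — a shrinking round cylinder $\R\times S^n$, a round sphere, or a Bryant-type steady soliton cap. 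This is where equivariance simplifies the classification considerably compared to \cite{BamlerKleiner17}: the $\kappa$-noncollapsing (from Perelman's monotonicity, which holds in all dimensions) together with rotational symmetry forces the blowup limits into this short list, since an ancient $\kappa$-noncollapsed rotationally invariant Ricci flow with bounded curvature must be one of these models.

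With the canonical neighborhood structure in hand, I would define the surgery procedure of Section 4.1: fix a canonical cutoff scale, excise the high-curvature necks, cap off the resulting boundaries with standard rotationally invariant caps glued in an equivariant way, and discard components that are entirely covered by canonical neighborhoods (spheres, etc.). One then checks that the post-surgery metric is again closed and rotationally invariant, that surgery strictly decreases volume by a definite amount relative to the cutoff scale, and that $\kappa_0(n)$-noncollapsing below scale $1$ is preserved (this uses that surgeries are performed on necks, where noncollapsing is automatic, and that Perelman-type reduced volume/entropy arguments survive surgery with controlled error, exactly as in Perelman's three-dimensional argument but with the simplifications afforded by symmetry). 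Iterating gives a flow with surgery on $[0,\infty)$; since each surgery removes a definite amount of volume and the flow becomes extinct whenever the manifold is exhausted by canonical neighborhoods, the flow becomes extinct in finite time.

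To pass to the spacetime, I would take a sequence of such flows with surgery whose cutoff scales $r_j \to 0$, and apply a compactness argument in the Kleiner–Lott topology: the uniform $\kappa_0(n)$-noncollapsing plus the canonical neighborhood assumption below a uniform scale give the local curvature and injectivity radius control needed to extract a subsequential limit that is a Ricci flow spacetime, which inherits rotational invariance from the equivariance of all the approximating flows and of the compactness construction. The final clause — that for each $\epsilon>0$ there is $r>0$ with the equivariant $\epsilon$-canonical neighborhood assumption below scale $r$ — I would prove by a contradiction/point-picking argument: if it failed, one could find a sequence of points in (limits of) such spacetimes with curvature going to infinity but \emph{not} $\epsilon$-close to a model, rescale, and extract an ancient $\kappa_0(n)$-noncollapsed rotationally invariant limit flow; the classification of such ancient solutions then forces it to be a canonical model, a contradiction. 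The main obstacle is precisely this ancient-solutions classification and the associated canonical neighborhood theorem in dimension $n+1>3$: without Hamilton–Ivey pinching one cannot rule out noncompact or exotic ancient limits in general, so the argument must genuinely exploit rotational symmetry (e.g. that the orbit structure is $S^n$ fibered over an interval or circle, reducing the curvature operator to a controlled form) to carry out the classification — this is the technical heart of Sections 2–3 that Theorem 1.1 relies on.
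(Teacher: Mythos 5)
Your skeleton (short-time existence, canonical neighborhoods at high curvature, equivariant neck surgery, iterate, Kleiner--Lott limit) matches the paper's architecture, but several load-bearing steps are either left open or would fail as written. The most concrete failure is the finite-time extinction argument: a definite volume drop per surgery only bounds the number of surgeries on compact time intervals (the volume can \emph{grow} under the flow wherever $R<0$), so it gives no extinction time at all. The paper instead applies the maximum principle to the rotationally invariant warping function $\psi$: on each surgery-free slab $\max_M\psi^2(\cdot,t)$ decreases at a definite linear rate in $t$, and the cutoff procedure does not increase $\max\psi$, so the flow with surgery is extinct by a time controlled by $\max\psi^2(\cdot,0)$ (Theorem \ref{extinction}). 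A second structural difference: for $\kappa$-noncollapsing through surgery you invoke ``Perelman-type reduced volume arguments survive surgery,'' which is precisely the difficult adaptation the paper is designed to avoid; Proposition \ref{uniform_noncollapsing} shows that every complete rotationally invariant manifold \emph{not} diffeomorphic to $\mathbb{S}^1\times\mathbb{S}^n$ is automatically $\kappa(n)$-noncollapsed on all scales, and after the first surgery no component is $\mathbb{S}^1\times\mathbb{S}^n$, so noncollapsing below scale $1$ comes for free and no through-surgery entropy argument is needed.

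Two further gaps. First, your classification of blowup limits presupposes that they are $\kappa$-solutions, i.e. have nonnegative curvature operator; you flag Hamilton--Ivey pinching as ``the obstacle'' but do not supply it, whereas in the paper it is an essential proved input: rotational invariance reduces the curvature operator to two eigenvalues $\lambda,\mu$, and an elementary ODE computation (Lemma \ref{rotinvarode}) feeds Hamilton's tensor maximum principle to give the pinching estimate (Theorem \ref{rotpinch}); without it the rescaled limits need not lie in the cylinder/Bryant/sphere list. Second, you assert the limit spacetime ``inherits rotational invariance from the equivariance of the compactness construction,'' but standard Cheeger--Gromov--Hamilton compactness gives no equivariance of the comparison diffeomorphisms; the paper must prove equivariant compactness theorems (Propositions \ref{equivariantcompactness} and \ref{flowequivcompact}, Corollary \ref{closenessimproved}) and then, in the proof of Theorem \ref{spacetimeexists}, glue local equivariant limit charts (Claims 1--3 there) to endow the Kleiner--Lott limit with an $O(n+1)$-action and to upgrade $\epsilon$-closeness to \emph{equivariant} $\epsilon$-closeness to $\kappa$-solutions. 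Relatedly, for the final clause your naive point-picking contradiction argument is delicate on the incomplete limit spacetime; the paper sidesteps this by passing the a priori canonical neighborhood assumptions of the surgical flows to the limit, as in Kleiner--Lott.
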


The proof of this theorem follows primarily by adapting several classical theories of three-dimensional Ricci flow to the higher-dimensional rotationally invariant Ricci flow case. Firstly, we prove a simplified Hamilton-Ivey curvature estimate as well as a no local collapsing theorem for rotationally invariant Ricci flows. Secondly, we establish a version of the Cheeger-Gromov-Hamilton convergence theorem for rotationally invariant flows. Although it is reasonable to expect that the limit of a sequence of rotationally invariant Ricci flows must bear some sort of symmetry,  it not clear from the original theory whether the sequence of diffeomorphisms in the definition of the Cheeger-Gromov-Hamilton convergence can be chosen to be equivariant. The novelty of our compactness theorem is to show that one can always choose the diffeomorphisms to be equivariant. In view of the complete classification of rotationally symmetric $\kappa$-solutions \cite{LiZhang18}, we thus establish an equivariant version of the canonical neighborhood theorem. The combination of all of these elements essentially allows us to reconstruct Perelman's surgery process \cite{Perelman03} for rotationally invariant flows, and the existence of singular Ricci flow follows step-by-step from \cite{KleinerLott17}. For the relevant definitions, see Sections 2-4. 

In \cite{Haslhofer}, Haslhofer showed the uniqueness and stability (in terms of initial time slices) of Ricci flow spacetimes satisfying the $\epsilon$-canonical neighborhood assumptions, generalizing Bamler-Kleiner's result. Together with Theorem 1.1, this proves the well-posedness of singular Ricci flow starting at any closed rotationally invariant Riemannian manifold. In our setting, we can refine the well-posedness statement by showing that the comparison maps between Ricci flow spacetimes can be taken to be $O(n+1)$-equivariant. 

\begin{thm} (Equivariant stability) If $(M,g_j)_{j\in \mathbb{N}}$ is a sequence of Riemannian manifolds converging in the smooth Cheeger-Gromov sense to $(M,g_{\infty})$, then the corresponding spacetimes $\mathcal{M}_{j}^{\infty}$ associated to $(M_j,g_j)$ converge in the sense of Ricci flow spacetimes to the Ricci flow spacetime $\mathcal{M}_{\infty}^{\infty}$ associated to $(M_{\infty},g_{\infty})$ via equivariant diffeomorphisms. 
\end{thm}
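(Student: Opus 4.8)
The plan is to combine the non-equivariant uniqueness/stability theorem of Haslhofer (and Bamler–Kleiner) with the equivariant compactness and canonical neighborhood structure established in the proof of Theorem 1.1. First I would set up the spacetimes $\mathcal{M}_j^\infty$ and $\mathcal{M}_\infty^\infty$ as limits (as surgery parameters tend to zero) of rotationally invariant Ricci flows with surgery, as in Theorem 1.1; by construction each carries a smooth isometric $O(n+1)$-action, and the initial time slices $(M_j, g_j)$ converge smoothly in the Cheeger–Gromov sense to $(M_\infty, g_\infty)$. By Haslhofer's theorem, since each $\mathcal{M}_j^\infty$ satisfies the $\epsilon$-canonical neighborhood assumptions below a uniform scale $r$ (Theorem 1.1), we obtain convergence $\mathcal{M}_j^\infty \to \mathcal{M}_\infty^\infty$ in the sense of Ricci flow spacetimes via \emph{some} sequence of comparison diffeomorphisms $\phi_j$ defined on larger and larger compact subsets. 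The entire content of the theorem is upgrading these $\phi_j$ to $O(n+1)$-equivariant maps.

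The key steps, in order, are as follows. (1) Recall that a Ricci flow spacetime arising from our construction admits, away from its (codimension-$\geq 3$) singular set, a global warped-product description over an interval or half-line, so the $O(n+1)$-action is transparent and a point is determined by its orbit together with a radial coordinate and a time coordinate; the comparison of two such spacetimes therefore reduces to comparing these low-dimensional profiles. (2) Given a comparison map $\phi_j$ produced by Haslhofer's theorem on a compact domain $K_j$, I would average it over the compact group $O(n+1)$: set $\tilde\phi_j(x) = $ the $O(n+1)$-center-of-mass (Riemannian barycenter, with respect to $g_\infty$-distance in the fibers) of the family $\{ a^{-1} \cdot \phi_j(a\cdot x) : a\in O(n+1)\}$. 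Since $\phi_j$ is $C^{[\epsilon^{-1}]}$-close to an isometry that intertwines the actions up to small error (the near-isometry property is part of the spacetime-convergence definition), the barycenter is well-defined, smooth, $O(n+1)$-equivariant, and still $C^{k}$-close to $\phi_j$, hence still a valid comparison map with only slightly worse constants. (3) Verify that $\tilde\phi_j$ still respects the time function and is still close to an isometry of the spacetime metrics in the required sense; this is routine since barycenter constructions depend smoothly on the data and the group is compact. (4) Conclude that $\mathcal{M}_j^\infty \to \mathcal{M}_\infty^\infty$ via the equivariant maps $\tilde\phi_j$.

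The main obstacle I anticipate is Step (2): making the averaging/barycenter argument work \emph{uniformly near the singular set and across surgery regions}, where the warped-product structure degenerates (the radial sphere factor pinches). There the fibers of the $O(n+1)$-action collapse, so the naive fiberwise barycenter is ill-behaved, and one must instead argue that in a neighborhood of a singularity the flow is $\epsilon$-close to a rotationally symmetric $\kappa$-solution (a Bryant soliton or round cylinder/sphere quotient, by \cite{LiZhang18}), whose symmetry is \emph{already} essentially rigid — so the comparison map is automatically almost-equivariant there and only a small correction, supported where the fibers are nondegenerate, is needed. Patching the averaged map on canonical-neighborhood regions with the averaged map on the smooth warped-product region using a partition of unity subordinate to the $O(n+1)$-orbit structure, while keeping equivariance, is the technical heart; I would handle it by choosing the cutoff functions to be $O(n+1)$-invariant (functions of the radial coordinate alone), so that convex combinations of equivariant maps taken in normal coordinates remain equivariant. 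A secondary, more bookkeeping-level obstacle is checking that the equivariance of $\tilde\phi_j$ is compatible with the inductive comparison-domain construction in Haslhofer's proof, i.e.\ that at each step of extending the comparison domain forward in time one can choose the extension equivariantly; this follows because all the objects involved (cutoff times, the curvature scale $r$, the neck regions) are $O(n+1)$-invariant by Theorem 1.1.
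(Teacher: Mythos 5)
Your route is genuinely different from the paper's, and it has a gap at its central step. The paper does not take a non-equivariant comparison from Haslhofer/Bamler--Kleiner and symmetrize it afterwards; instead it re-runs the entire comparison construction inside the equivariant category (Sections 5--10): the comparison domain is cut along $O(n+1)$-orbits, the Bryant extension of Section 7 is built equivariantly using the classification of equivariant maps $(r,\zeta)\mapsto(\phi(r),\pm\zeta)$, the harmonic map heat flow is solved with equivariant initial/boundary data and stays equivariant, and the Ricci--DeTurck perturbation is rotationally invariant so that the Anderson--Chow estimate of Section 6 applies. This yields equivariant analogues of the weak and strong stability theorems (Section 10), and Theorem 1.2 follows by combining those with the equivariant spacetime convergence of Theorem \ref{spacetimeexists}.

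The gap in your Step (2) is the assertion that the comparison maps $\phi_j$ furnished by the non-equivariant stability theorem ``intertwine the actions up to small error.'' Spacetime convergence only gives that $\phi_j$ is an almost isometry ($|\phi_j^\ast g' - g|$ small); it says nothing about the equivariance defect $d\left(a^{-1}\cdot\phi_j(a\cdot x),\,\phi_j(x)\right)$, and this defect can be large precisely in the canonical-neighborhood regions where you claim rigidity makes it automatic. Indeed, on a region $\epsilon$-close to a round cylinder, post-composing a comparison map with a constant non-central rotation $A_0\in O(n+1)$ of the $\mathbb{S}^n$ factor (or a slowly varying twist $(r,\zeta)\mapsto(r,A(r)\zeta)$) produces another almost isometry whose equivariance defect is comparable to the orbit diameter, since $a^{-1}A_0 a$ is far from $A_0$ for generic $a$ unless $A_0=\pm I$. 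For such a map the fiberwise barycenter of $\{a^{-1}\cdot\phi_j(a\cdot x)\}$ is either ill-defined or degenerates toward the rotation axis, is not $C^k$-close to $\phi_j$, and need not be a diffeomorphism; the extra symmetry of the local models (cylinders, round spheres) is exactly what permits these non-equivariant almost isometries, so the rigidity of $\kappa$-solutions does not rescue the argument. To repair this you would have to show that the comparison maps can be corrected region-by-region by isometries of the local models in a globally consistent way, which amounts to redoing the inductive comparison construction equivariantly -- i.e., the strategy the paper actually follows. (Your Step (1) is fine and corresponds to Proposition \ref{spacetimewarp}, and your observation that the cutoffs and comparison-domain data can be chosen $O(n+1)$-invariant is also in the spirit of the paper's Sections 5 and 8--9, but it does not substitute for the missing almost-equivariance of $\phi_j$.)
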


For an effective version of this theorem, see Section 10. The idea of the proof (as in \cite{BamlerKleiner17,Haslhofer}), roughly speaking, is to establish the stability of the Ricci-DeTurck perturbation as in Kotschwar's energy approach to the uniqueness of the Ricci flow \cite{Kotschwar14}. Specifically, for two Ricci flows on the same underlying manifold, their difference can be measured by a time-dependent diffeomorphism evolving by the harmonic map heat flow, and this difference, a time-dependent symmetric $(0,2)$-tensor field, is the Ricci-DeTurck perturbation. Therefore, if one can prove that for a Ricci-DeTurck perturbation, the initial smallness implies smallness for a long time, then the uniqueness of the Ricci flow is established.

Treating the singularities of our Ricci flow space-times is the greatest challenge in establishing this smallness propagation result. Indeed, the existence of singularities implies that one can never expect to connect two singular Ricci flows with a long-lasting time-dependent diffeomorphism evolving by the harmonic map heat flow, because the underlying manifolds of the two singular Ricci flows may possibly undergo wildly different alterations. Therefore, as shown in \cite{BamlerKleiner17,Haslhofer}, it is natural to construct a comparison domain excluding all the `almost singular points', and compare the Ricci flows on this domain through a time-dependent diffeomorphism called a comparison (see Sections 8-9 for more details). Once the stability of the Ricci-DeTurck perturbation is established in this case, one immediately obtains that two singular Ricci flows with identical initial data must coincide completely at all regular points. 

The comparsion domain and the comparison themselves are also objects that are difficult to construct. Since the singular points can possibly occur anywhere on the manifold, the comparison domain, being designed to exclude all the singular points, is not expected to be a space-time product, and this in turn introduces difficulties of imposing boundary conditions for the solving of the harmonic map heat flow. The idea to settle this issue is to create the comparison domain in the piecewise space-time-product fashion, namely, to create the comparison domain as consecutive time-slabs, each one of which is a space-time product. The canonical neighborhood theorem is useful in this construction, because it implies that a high-curvature region is either an almost round neck or close to a piece of scaled Bryant soliton; therefore, one may always arrange the boundary of a space-time-product slab of the comparison domain to be the central sphere of a shrinking neck. As we move to making alterations between two adjacent time-slabs of the comparison domain, one either discards a region of the first or adds to it, so as to obtain the second slab; in the former case, the harmonic map heat flow and the Ricci-DeTurck perturbation on the first slab is obviously inherited by the second; in the latter case, the canonical neighborhood assumption indicates that that which is added is nothing but a piece of a rescaled Bryant soliton, for which the Bryant extension theorem (Section 7) enables the continuation of the harmonic map heat flow and the Ricci-DeTurck perturbation.

Our techniques are mostly derived from those presented in \cite{KleinerLott17,BamlerKleiner17}, but there are a number of key ingredients that need altering. We conclude the introduction by summarising the new arguments we produce in the course of demonstrating existence and uniqueness of weak solutions in the context of rotationally-invariant Ricci flow:
\begin{itemize}
    \item we give an easy proof of the Hamilton-Ivey pinching estimate on higher-dimensional manifolds with rotational invariance, avoiding the difficult computation for general Weyl-flat Ricci flows done in \cite{Zhang15};
    \item we show that any simply-connected rotationally-invariant metric is automatically $\kappa$-noncollapsed for some universal $\kappa>0$, thus bypassing the difficult adaptation of Perelman's no local collapsing theorem to the Ricci flow with surgery;
    \item we develop equivariant compactness theorems for manifolds and spacetimes with respect to Cheeger-Gromov convergence;
    \item we give a new proof of the Anderson-Chow pinching estimates for higher dimensional rotationally invariant flows (the result is already known from Lemma 5.2 of \cite{Carson16}, and the more general \cite{ChenWu16}); 
    \item we obtain a simpler proof of the induction step in the construction of the Bamler-Kleiner comparison domain; 
    \item we show that the number of `bumps' does not increase, even at singular times; and
    \item we show that for any isolated singular time $t^{\ast}$, the curvature can only blow up at most at the rate $(t^{\ast}-t)^{-2}$ as $t\nearrow t^{\ast}$.
\end{itemize}
We observe that, in full generality, many of these arguments fail without imposing rotational invariance. In fact, even for slightly more complicated cohomogeneity one Ricci flows, it is possible to have nonuniqueness through singularities \cite{AngenentKnopf19}. 

Finally, we also note that our proof related to the Bamler-Kleiner comparison domain can be combined with the classification of three-dimensional noncompact $\kappa$-solutions presented in \cite{Brendle20} to produce a simpler proof in the three-dimensional case \textit{without} imposing rotational symmetry.

\section{Preliminaries, definitions and notation}
In this section, we recall some standard definitions from the theory of Ricci flow, and also describe the specialized theory of rotationally-invariant Ricci flow. 

\subsection{Notation and Conventions}

Given a solution $(M,(g_t)_{t\in [a,b]})$ of Ricci flow, we let $B_{g_t}(x,r)$ denote the geodesic ball measured at time $t$, and $d_{g_t}(x,y)$ the distance measured at time $t$. We let $P(x,t,r,\tau)$ denote the parabolic cylinder $B_{g_t}(x,r)\times [t,t+\tau]$ if $\tau\geq 0$, and the cylinder $B_{g_t}(x,r)\times [t+\tau,t]$ otherwise. We also set $P(x,t,r):=P(x,t,r,-r^2)$. Let $\mathcal{O}(y)$ denote the $O(n+1)$-orbit through $y$, and define
$$B_{\mathcal{O}}(x,r):=\cup_{y\in B(x,r)} \mathcal{O}(y), \hspace{6 mm} P_{\mathcal{O}}(x,t,r):=\cup_{y\in P(x,t,r)} \mathcal{O}(y).$$
We let $\overline{g}$ denote the standard metric on $\mathbb{R}^{n+1}$, and let $g_{\mathbb{S}^n}$ denote the standard metric on $\mathbb{S}^n$ with scalar curvature 1. We use the following convention for the curvature tensor
$$R(X,Y,Z,W)=\langle \nabla_X (\nabla_Y Z)-\nabla_Y (\nabla_X Z)-\nabla_{[X,Y]}Z,W\rangle$$
and we define the curvature operator acting on $\wedge^2 TM$ by 
$$\langle \mathcal{R}(X\wedge Y),Z\wedge W\rangle =2R(X,Y,W,Z).$$

\subsection{Rotationally-invariant metrics}
Consider the standard action of $O(n+1)$ on the two manifolds $\mathbb{S}^{n+1}\subset \mathbb{R}\times \mathbb{R}^{n+1} $ and $\mathbb{S}^1\times \mathbb{S}^{n}\subset \mathbb{S}^1\times \mathbb{R}^{n+1}$. 
\begin{defin}\label{defrotational}
We say that a Riemannian metric $g$ on $\mathbb{S}^{n+1}$ or $\mathbb{S}^1\times \mathbb{S}^n$ is \textit{rotationally-invariant} if the $O(n+1)$ group acts via isometries. We will also consider a Riemannian manifold $(M,g)$ to be rotationally-invariant if $M$ is equivariantly diffeomorphic to an $O(n+1)$-invariant regular subdomain of either $\mathbb{S}^1\times \mathbb{S}^n$ or $\mathbb{S}^{n+1}$, and $g$ is $O(n+1)$-invariant. For example, rotationally-invariant metrics on $\mathbb{R}^{n+1}$ and the cylinder $\mathbb{R}\times \mathbb{S}^{n}$ arise from $\mathbb{S}^{n+1}$ in this way. We let $M_{\operatorname{sing}}$ denote the union of all singular orbits of $M$, and  $M_{\operatorname{reg}}:=M\setminus M_{\operatorname{sing}}$.
\end{defin}

We next observe an equivalent formulation of rotationally-invariant Riemannian manifolds. 

\begin{prop}
A connected Riemannian manifold $(M^{n+1},g)$ is rotationally invariant if and only if it is orientable, and
admits a proper and faithful cohomogeneity-one isometric action by $O(n+1)$ which has $O(n)$ as the principal isotropy subgroup.
The singular orbits of the $O(n+1)$ action are fixed points, and if $M$ is connected, the orbit space
$M/O(n+1)$ is diffeomorphic to $\mathbb{R}$, $[0,\infty)$,
$[0,1]$, or $\mathbb{S}^{1},$ which corresponds to the cases where
$M$ is diffeomorphic to $\mathbb{R}\times\mathbb{S}^{n}$,  $\mathbb{R}^{n+1}$,
$\mathbb{S}^{n+1}$, or $\mathbb{S}^{1}\times\mathbb{S}^{n}$, respectively. In particular, $M_{\operatorname{sing}}$ consists of at most two points.
\end{prop}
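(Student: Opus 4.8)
The plan is to prove the two implications separately. For the forward direction, suppose $(M,g)$ is rotationally invariant in the sense of Definition~\ref{defrotational}; after an equivariant diffeomorphism we may take $M$ to be an $O(n+1)$-invariant regular subdomain of $N$, where $N$ is $\mathbb{S}^{n+1}\subset\mathbb{R}\times\mathbb{R}^{n+1}$ or $\mathbb{S}^1\times\mathbb{S}^n\subset\mathbb{S}^1\times\mathbb{R}^{n+1}$ with the standard $O(n+1)$-action, and $g$ is $O(n+1)$-invariant. Then the action is isometric by hypothesis, proper because $O(n+1)$ is compact, and faithful because the standard linear action on $\mathbb{R}^{n+1}$ is faithful and restricts to $M$; moreover $M$ is orientable, being a codimension-zero submanifold of the orientable manifold $N$. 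The orbits are the round $n$-spheres $\{t\}\times\mathbb{S}^n$ together with, in the $\mathbb{S}^{n+1}$ case, the two poles, so the cohomogeneity is one, the principal isotropy is a conjugate of the stabiliser $O(n)$ of a point of $\mathbb{S}^n$, and the only singular orbits are the poles, which are fixed points. Writing the orbit space of $N$ as $[-1,1]$ (via the first coordinate, when $N=\mathbb{S}^{n+1}$) or $\mathbb{S}^1$, the orbit space of $M$ is a connected $1$-manifold with boundary whose boundary points correspond to poles since $M$ has no boundary; hence it is one of $\mathbb{R}$, $[0,\infty)$, $[0,1]$, $\mathbb{S}^1$, one has $|M_{\operatorname{sing}}|\le 2$, and the four cases give $M\cong\mathbb{R}\times\mathbb{S}^n$, $\mathbb{R}^{n+1}$, $\mathbb{S}^{n+1}$, $\mathbb{S}^1\times\mathbb{S}^n$ respectively.

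For the backward direction, assume $(M^{n+1},g)$ is connected, orientable, and admits a proper faithful isometric cohomogeneity-one action of $O(n+1)$ with principal isotropy $H=O(n)$. By the structure theory of cohomogeneity-one manifolds (Mostert--Neumann, and its modern expositions), the orbit space $X:=M/O(n+1)$ is a connected $1$-manifold with boundary, hence diffeomorphic to $\mathbb{R}$, $[0,\infty)$, $[0,1]$, or $\mathbb{S}^1$; interior points of $X$ parametrise principal orbits $O(n+1)/O(n)\cong\mathbb{S}^n$, while a boundary point of $X$ parametrises a singular orbit $O(n+1)/K$ with $O(n)\subsetneq K\subseteq O(n+1)$ and, by the slice theorem, $K/O(n)$ diffeomorphic to the slice sphere $\mathbb{S}^\ell$ ($\ell\ge 0$), with $K$ acting linearly and transitively on $\mathbb{S}^\ell$.

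The heart of the matter is to pin down $K$. Classifying the subgroups between $O(n)$ and $O(n+1)$ --- by decomposing $\mathbb{R}^{n+1}=\mathbb{R}\,e_{n+1}\oplus e_{n+1}^\perp$ into its $O(n)$-isotypic summands, or by invoking the classification of compact groups acting transitively on spheres --- shows that up to conjugacy these are exactly $O(n)$, $O(n)\times O(1)=N_{O(n+1)}(O(n))$, and $O(n+1)$. A boundary point with $K=O(n+1)$ has $K/O(n)=\mathbb{S}^n$ and singular orbit a single fixed point. A boundary point with $K=O(n)\times O(1)$ has $K/O(n)=\mathbb{S}^0$, slice representation the sign representation of the $O(1)$ factor on $\mathbb{R}$, and singular orbit $O(n+1)/(O(n)\times O(1))\cong\mathbb{RP}^n$; a tubular neighbourhood of this orbit is $O(n+1)\times_{O(n)\times O(1)}\mathbb{R}$, the total space of the tautological line bundle $\gamma\to\mathbb{RP}^n$, whose orientability is governed by $w_1=w_1(T\mathbb{RP}^n)+w_1(\gamma)=(n+2)a$ (with $a$ the generator of $H^1(\mathbb{RP}^n;\mathbb{Z}_2)$). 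Excluding the $\mathbb{RP}^n$-orbit by appealing to orientability of $M$ is the step I expect to be the main obstacle; granting it, every singular orbit is a fixed point, so $M_{\operatorname{sing}}$ consists of the boundary points of $X$ and hence has at most two elements.

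It then remains to reconstruct $M$ equivariantly from $X$ together with its isotropy data, via the equivariant tubular neighbourhood (slice) theorem: $X\cong\mathbb{R}$ gives $M\cong\mathbb{R}\times\mathbb{S}^n$ (a bundle over a contractible base); $X\cong[0,\infty)$ gives $M$ equal to a ball $\mathbb{R}^{n+1}$ capped onto a half-cylinder, i.e. $M\cong\mathbb{R}^{n+1}$; $X\cong[0,1]$ gives $M$ equal to two balls glued along $\mathbb{S}^n$, i.e. $M\cong\mathbb{S}^{n+1}$; and $X\cong\mathbb{S}^1$ gives $M$ an $\mathbb{S}^n$-bundle over $\mathbb{S}^1$ with $O(n+1)$-structure group $N_{O(n+1)}(O(n))/O(n)\cong\mathbb{Z}_2$, which orientability (the only equivariant gluing it allows being, up to isotopy, the trivial one) pins down as $M\cong\mathbb{S}^1\times\mathbb{S}^n$. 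In each case the resulting $O(n+1)$-manifold is equivariantly diffeomorphic to a regular subdomain of $\mathbb{S}^{n+1}$ or $\mathbb{S}^1\times\mathbb{S}^n$, so $(M,g)$ is rotationally invariant; apart from the subgroup classification and the orientability exclusion above, this reconstruction and the forward verification are routine applications of standard equivariant topology.
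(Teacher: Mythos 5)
Your strategy is the same as the paper's---the forward direction is the routine verification, and the backward direction is a case analysis over the orbit space $M/O(n+1)\in\{\mathbb{R},[0,\infty),[0,1],\mathbb{S}^1\}$---but the proposal is incomplete at exactly the step you flag yourself: ruling out a boundary orbit with isotropy $K=O(n)\times O(1)$, i.e.\ an exceptional orbit $O(n+1)/(O(n)\times O(1))\cong\mathbb{RP}^n$. The orientability route you sketch cannot close this case when $n$ is even: by your own computation the invariant tubular neighbourhood $O(n+1)\times_{O(n)\times O(1)}\mathbb{R}$ has $w_1=(n+2)a$, which vanishes for $n$ even, so that neighbourhood is orientable. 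Concretely, $\mathbb{RP}^{n+1}$ with $n$ even (e.g.\ $\mathbb{RP}^3$ for $n=2$), equipped with the round metric and the $O(n+1)$-action descended from $\mathbb{S}^{n+1}\subset\mathbb{R}\times\mathbb{R}^{n+1}$, is connected, orientable, and carries a proper, faithful, isometric cohomogeneity-one action with principal isotropy $O(n)$, yet it has an $\mathbb{RP}^n$ orbit (orbit space $[0,1]$, one endpoint a fixed point, the other the $\mathbb{RP}^n$ orbit); deleting the fixed point produces the same phenomenon over $[0,\infty)$. So orientability alone does not exclude $K=O(n)\times O(1)$, and some further input is needed at this point. (For comparison, the paper's Case Three simply asserts that ``since $H=O(n)$, it is clear that $K=O(n+1)$''; your more careful subgroup classification has therefore exposed a point the paper passes over silently, rather than a defect peculiar to your argument, but as written your proof does not establish the stated proposition.)

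A smaller instance of the same over-reliance on orientability occurs in your $\mathbb{S}^1$ case (mirroring the paper's Case Two): the nontrivial equivariant gluing is by $-I$, and the corresponding mapping torus is orientable whenever $n$ is odd, since the antipodal map of $\mathbb{S}^n$ then preserves orientation; so orientability does not by itself force the trivial gluing there either, although for $n$ odd the glued manifold is still (non-equivariantly) diffeomorphic to $\mathbb{S}^1\times\mathbb{S}^n$. Apart from these two points, your forward direction, the classification of the intermediate subgroups $O(n)\subseteq K\subseteq O(n+1)$, and the slice-theorem reconstruction of $M$ from the orbit data are correct and agree with the paper's argument.
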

\begin{proof} 
It is clear that the manifolds described in Definition \ref{defrotational} are all orientable, and that the actions of $O(n+1)$ on these manifolds are all faithful and proper, with principal isotropy subgroup given by $O(n)$.  

On the other hand, suppose that the connected Riemannian manifold $(M^{n+1},g)$ admits a proper and faithful cohomogeneity one action of the group $G=O(n+1)$, which acts via isometries, and has $H=O(n)$ as the principal isometry subgroup. Let $M_{reg}$ denote the set of all points whose isotropy subgroups are (conjugacy equivalent to) $H$. This is an open subset of $M$, and $M_{reg}^*=M_{reg}/O(n+1)$ is a smooth one-dimensional manifold, and comes equipped with a Riemannian metric induced by $g$. The space $M_{reg}^*$ is open and dense in $M^*=M/O(n+1)$, which is therefore a smooth one-dimensional manifold with boundary. We split into cases according to the topological type of $M^*$. 

\textbf{Case One:} $M^*=\mathbb{R}$. In this case, $M_{reg}^*=M^*$ and since $\mathbb{R}$ is contractible, $M$ must appear as the trivial bundle $\mathbb{R}\times G/H=\mathbb{R}\times \mathbb{S}^n$. 

\textbf{Case Two:} $M^*=\mathbb{S}^1$. In this case, we again have $M_{reg}^*=M^*$. Choose any point $p^*\in M/O(n+1)$, and consider $M\setminus \pi^{-1}(p^*)$. Then like in Case One, this manifold appears as $\mathbb{R}\times \mathbb{S}^n$. The original manifold is then found by identifying  the copies of $\mathbb{S}^n$ at their two ends. The are only two $O(n+1)$-invariant diffeomorphisms on $\mathbb{S}^n$, namely, $\pm I$, so there are only two possibilities: $\mathbb{S}^1\times \mathbb{S}^n$, or a non-trivial $\mathbb{S}^1$ bundle, but this second example is not oriented. 

\textbf{Case Three:} $M^*=[0,\infty)$. Let $K$ be the isotropy of the single non-principal orbit. It is well known that $K/H$ is a sphere, say, $\mathbb{S}^d$, and $K$ acts linearly on $\mathbb{R}^{d+1}$, and transitively on $\mathbb{S}^d$. Since $H=O(n)$, it is clear that $K=O(n+1)=G$, so that the singular orbit is just a point. The resulting manifold is $\mathbb{R}^{n+1}$.

\textbf{Case Four:} $M^*=[0,1]$. This is just a gluing of two of the manifolds from Case Three along a principal orbit; the result is the compact $\mathbb{S}^{n+1}$. 
\end{proof}

It is convenient to note that $g|M_{\operatorname{reg}}$ can always be expressed as a warped Riemannian product. 

\begin{prop} \label{rotdifgen} Suppose $(M^{n+1},g)$ is a connected, simply connected rotationally invariant Riemannian manifold and $M_{\operatorname{reg}}$ denotes the dense collection of principal  ($\mathbb{S}^n$) orbits. Then there is an $O(n+1)$-equivariant diffeomorphism $\Phi: (\alpha, \beta) \times \mathbb{S}^n \to M_{\operatorname{reg}}$
satisfying 
$$\Phi^{\ast}g=dr \otimes dr +\psi^2 (r)\overline{g},$$
where $\psi \in C^{\infty}((\alpha,\beta))$ is strictly positive. If instead $M \cong \mathbb{S}^1 \times \mathbb{S}^n$, then there is an $O(n+1)$-equivariant diffeomorphism $\Phi:(\mathbb{R}/\ell \mathbb{Z}) \times \mathbb{S}^n \to M$ satisfying 
$$\Phi^{\ast}g = g_{\frac{\ell}{2\pi} \mathbb{S}^1} + \psi^2(r)\overline{g},$$
where $\ell$ is the length of the $\mathbb{S}^1$ factor, and $(\mathbb{R}/\ell \mathbb{Z}, g_{\frac{\ell}{2\pi}\mathbb{S}^1})$ is the circle of length $\ell$.
\end{prop}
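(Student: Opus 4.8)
The plan is to realize $\Phi$ as the family of orbits swept out by a single geodesic crossing every principal orbit orthogonally. By the classification in the preceding proposition, the orbit space $M^{\ast}=M/O(n+1)$ is a connected $1$-manifold with boundary, which I endow with the quotient (orbital-distance) metric, so that over its interior $M^{\ast}_{\operatorname{reg}}:=M^{\ast}\setminus\partial M^{\ast}$ the projection $\pi\colon M_{\operatorname{reg}}\to M^{\ast}_{\operatorname{reg}}$ is a Riemannian submersion with fibres the principal $\mathbb{S}^{n}$-orbits. When $M$ is simply connected, the preceding proposition gives $M^{\ast}\in\{\mathbb{R},[0,\infty),[0,1]\}$ with all non-principal orbits fixed points, so $M^{\ast}_{\operatorname{reg}}$ is an open interval; when $M\cong\mathbb{S}^{1}\times\mathbb{S}^{n}$ there are no singular orbits, $M_{\operatorname{reg}}=M$, and $M^{\ast}$ is a circle of some length $\ell$.

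First I would fix a regular point $p_{0}$ and, after conjugating the identification of the group, assume $\operatorname{Stab}(p_{0})=H:=O(n)=\operatorname{Stab}_{O(n+1)}(e_{1})$ for a fixed $e_{1}\in\mathbb{S}^{n}\subset\mathbb{R}^{n+1}$. Because $p_{0}$ lies on a principal orbit, the slice representation of $H$ on the $1$-dimensional normal space $\nu_{p_{0}}$ is trivial (a nontrivial homomorphism $O(n)\to O(1)$ would force nearby orbits to have isotropy $SO(n)$, contradicting principality of $H$), so $H$ fixes a unit normal $v_{0}$. Let $\gamma(r)=\exp_{p_{0}}(rv_{0})$ be the maximal unit-speed geodesic it determines. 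Then $H$ fixes $\gamma(r)$ for every $r$, $\gamma$ meets every orbit it touches orthogonally, and its projection $\overline{\gamma}=\pi\circ\gamma$ is a unit-speed geodesic of $M^{\ast}$ --- the standard properties of normal geodesics in cohomogeneity one. Taking $(\alpha,\beta)$ to be the maximal interval about $r_{0}$ on which $\gamma(r)$ is regular, $\overline{\gamma}$ restricts to a diffeomorphism $(\alpha,\beta)\xrightarrow{\ \sim\ }M^{\ast}_{\operatorname{reg}}$ (a geodesic in the $1$-manifold $M^{\ast}$ avoiding $\partial M^{\ast}$ is strictly monotone, and exhausts $M^{\ast}_{\operatorname{reg}}$ by maximality). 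I then set
$$\Phi\colon(\alpha,\beta)\times\mathbb{S}^{n}\longrightarrow M_{\operatorname{reg}},\qquad\Phi(r,a\cdot e_{1}):=a\cdot\gamma(r)\quad(a\in O(n+1)),$$
which is well defined since $a\cdot e_{1}=b\cdot e_{1}$ gives $b^{-1}a\in H=\operatorname{Stab}(\gamma(r))$, is manifestly $O(n+1)$-equivariant, and is smooth.

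Next I would check $\Phi$ is a diffeomorphism and compute $\Phi^{\ast}g$. Surjectivity follows from $\overline{\gamma}$ being onto $M^{\ast}_{\operatorname{reg}}$; injectivity because $a\cdot\gamma(r)$ and $b\cdot\gamma(r')$ on a common orbit force $r=r'$ and then $b^{-1}a\in H$. For the differential: each $\gamma(r)$, $r\in(\alpha,\beta)$, is regular with isotropy containing $H$, and a conjugate of $O(n)$ in $O(n+1)$ containing $O(n)$ must equal it (by a dimension and component count), so $\operatorname{Stab}(\gamma(r))=H$; hence the orbit map $O(n+1)/H\to O(n+1)\cdot\gamma(r)$ is a diffeomorphism onto an $n$-dimensional orbit transverse to $\gamma'(r)$, and $d\Phi$ is an isomorphism everywhere. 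For the metric, $\Phi^{\ast}g(\partial_{r},\partial_{r})=|\gamma'|^{2}\equiv1$; the form $X\mapsto\Phi^{\ast}g(\partial_{r},X)$ restricted to each slice $\{r\}\times\mathbb{S}^{n}$ is an $O(n+1)$-invariant $1$-form on $\mathbb{S}^{n}$, hence zero since $T_{e_{1}}^{\ast}\mathbb{S}^{n}=\mathbb{R}^{n}$ has no nonzero $O(n)$-invariant vector (equivalently, $\gamma$ stays normal to all orbits); and for each fixed $r$ the induced metric on $\{r\}\times\mathbb{S}^{n}$ is $O(n+1)$-invariant, so by Schur's lemma --- the isotropy representation of $O(n)$ on $\mathbb{R}^{n}$ is the standard real irreducible one, whose invariant symmetric forms constitute a ray --- it equals $\psi^{2}(r)\overline{g}$ for a unique $\psi(r)>0$, with $\psi$ smooth (from smoothness of $\Phi^{\ast}g$) and positive (principal orbits are $n$-dimensional). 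This yields $\Phi^{\ast}g=dr\otimes dr+\psi^{2}(r)\overline{g}$.

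Finally, in the case $M\cong\mathbb{S}^{1}\times\mathbb{S}^{n}$ the same construction applies with $M_{\operatorname{reg}}=M$: the normal geodesic $\gamma$ projects to the closed geodesic of length $\ell$ in $M^{\ast}$, its monodromy after one period lies in $N(H)/H\cong\mathbb{Z}/2$ and is trivial exactly in the orientable case (as in Case Two of the preceding proposition's proof), so $\gamma$ is $\ell$-periodic, $\Phi(r,a\cdot e_{1})=a\cdot\gamma(r)$ descends to a global $O(n+1)$-equivariant diffeomorphism $(\mathbb{R}/\ell\mathbb{Z})\times\mathbb{S}^{n}\to M$, and the identical computation gives $\Phi^{\ast}g=dr\otimes dr+\psi^{2}(r)\overline{g}$ with $(\mathbb{R}/\ell\mathbb{Z},dr\otimes dr)=g_{\frac{\ell}{2\pi}\mathbb{S}^{1}}$. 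The step I expect to be the main obstacle is the diffeomorphism claim --- chiefly, confirming that the single geodesic $\gamma$ crosses each principal orbit exactly once and carries isotropy exactly $H$ (not a larger conjugate) at every regular parameter; granting that, the orthogonality of $\partial_{r}$ to the slices and the roundness of the slice metrics are forced by $O(n+1)$-invariance together with Schur's lemma.
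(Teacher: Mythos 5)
Your construction is exactly the paper's: the paper takes $\Phi$ to be the normal exponential map of a principal orbit (and passes to the quotient for $\mathbb{S}^1\times\mathbb{S}^n$), and your map $\Phi(r,a\cdot e_1)=a\cdot\gamma(r)$ is precisely that map, unpacked via the $H$-fixed unit normal and equivariance, with the metric form then forced by invariance and Schur's lemma. The argument is correct, just a detailed expansion of the paper's two-sentence proof.
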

\begin{proof}
In the simply connected case, we let $\Phi$ be the exponential map of $M$ restricted to the normal bundle of some principal orbit $\mathcal{O}$. Otherwise, we note that the map $\Phi:\mathbb{R}\times \mathbb{S}^n \to \mathbb{S}^1\times \mathbb{S}^n$ passes to the quotient.
\end{proof}

At a point $p=(r_0,\omega)\in (0,1)\times \mathbb{S}^n$, let $e_1,\cdots,e_n$ denote an orthonormal basis for $g$ restricted to $\mathbb{S}^n$. The Riemann curvature operator of the Riemannian metric $dr\otimes dr+f(r)^2 \overline{g}\vert_{\mathbb{S}^{n}}$ at the point $p$ is diagonal in the basis $\{\partial_r\wedge e_1,\cdots, \partial_r\wedge e_n, e_1\wedge e_2,\cdots, e_{n-1}\wedge e_n\}$, and we have 
\begin{align*}
    \mathcal{R}(\partial_r\wedge e_i)=-2\frac{f''}{f} \partial_r\wedge e_i, \qquad \mathcal{R}(e_i\wedge e_j)=\frac{2(1-(f')^2)}{f^2} e_i\wedge e_j.
\end{align*}

\begin{prop}\label{uniform_noncollapsing}
There exists $\kappa=\kappa(n)>0$ such that any complete, rotationally
symmetric Riemannian manifold $(M^{n+1},g)$ not diffeomorphic to
$\mathbb{S}^{1}\times\mathbb{S}^{n}$ is $\kappa(n)$-noncollapsed
on all scales.
\end{prop}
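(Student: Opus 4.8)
The plan is to reduce, by rescaling $g$, to the following statement: if $(M^{n+1},g)$ is as in the hypothesis and $|\Rm|\le 1$ on $B(x,1)$, then $\vol(B(x,1))\ge\kappa(n)$ for a universal $\kappa(n)>0$. The guiding idea is that for a rotationally invariant metric any collapse of a unit ball can only come from shrinking the $\mathbb{S}^n$-fibres; by the formulas for $\mathcal{R}$ recorded above this forces large curvature \emph{unless} one is close to a singular orbit, while near a singular orbit the metric is Euclidean-like and hence uniformly noncollapsed. By Proposition~\ref{rotdifgen}, $M$ is simply connected with $M_{\operatorname{reg}}\cong(\alpha,\beta)\times\mathbb{S}^n$ carrying $g=dr\otimes dr+\psi^2\overline g$ there, and $M_{\operatorname{sing}}$ consists of at most two points.

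First I would record the analytic consequences of the curvature bound. The formulas above exhibit $-\psi''/\psi$ and $(1-(\psi')^2)/\psi^2$ as sectional curvatures of $g$, so $|\Rm|\le1$ yields $|\psi''|\le C_n\psi$ and $|1-(\psi')^2|\le C_n\psi^2$ on $M_{\operatorname{reg}}\cap B(x,1)$, with $C_n$ a dimensional constant. I would then fix $\epsilon_0=\epsilon_0(n)\in(0,\tfrac18)$ with $C_n\epsilon_0^2<\tfrac12$, so that wherever $\psi<\epsilon_0$ inside $B(x,1)$ one has $(\psi')^2>\tfrac12$; in particular $\psi$ has no critical point there and moves at speed $>\tfrac1{\sqrt2}$ along the meridian direction.

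Next I would split according to $d(x,M_{\operatorname{sing}})$. \textbf{Case A} ($d(x,M_{\operatorname{sing}})<\tfrac34$, which includes $x\in M_{\operatorname{sing}}$): choosing $p\in M_{\operatorname{sing}}$ with $d(x,p)<\tfrac34$ and working in geodesic polar coordinates about $p$, where $g=dr\otimes dr+\psi^2\overline g$ with $\psi(0)=0$, $\psi'(0)=1$, the bound $|\psi''|\le C_n\psi$ on $\{r<\rho\}\subseteq B(p,\tfrac14)\subseteq B(x,1)$ forces $\psi(r)\ge\tfrac r2$ for $r\le\rho_n=\rho_n(n)\le\tfrac14$; since $\{r<\rho_n\}\subseteq B(p,\rho_n)\subseteq B(x,1)$, this gives $\vol(B(x,1))\ge\vol(\mathbb{S}^n,\overline g)\int_0^{\rho_n}\psi^n\,dr\ge c^{(1)}_n>0$. (For $M\cong\mathbb{S}^{n+1}$ the same ODE bound rules out the second singular point lying within distance $\rho_n$ of $p$, so these coordinates are genuinely valid on $\{r<\rho_n\}$.) \textbf{Case B} ($d(x,M_{\operatorname{sing}})\ge\tfrac34$, which is automatic when $M\cong\mathbb{R}\times\mathbb{S}^n$): the unit-speed meridian $\gamma$ through $x$ then stays in $M_{\operatorname{reg}}$ with $\psi\circ\gamma>0$ on $(-\tfrac34,\tfrac34)$, and I would show $\psi\circ\gamma\ge\epsilon_0$ on $(-\tfrac12,\tfrac12)$: were $\psi(\gamma(s_0))<\epsilon_0$ for some $|s_0|<\tfrac12$, then $(\psi')^2>\tfrac12$ at $\gamma(s_0)$, and following $\gamma$ in the direction in which $\psi$ decreases, $\psi$ keeps shrinking at speed $>\tfrac1{\sqrt2}$ (the bound $(\psi')^2>\tfrac12$ persisting since $\psi$ only decreases), so it reaches $0$ at meridian parameter of modulus $<\tfrac12+\sqrt2\,\epsilon_0<\tfrac34$, contradicting $\psi\circ\gamma>0$ on $(-\tfrac34,\tfrac34)$ — and every estimate invoked lies within $B(x,1)$. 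With $\psi\circ\gamma\ge\epsilon_0$ on $(-\tfrac12,\tfrac12)$ in hand, I would put $\rho_1:=\tfrac{\epsilon_0}{4}$, let $T$ be the union over $|s|<\tfrac12$ of the intrinsic $\rho_1$-ball of $\gamma(s)$ inside its orbit $\mathcal{O}(\gamma(s))$ (so $T\subseteq B(x,1)$, since each point of $T$ lies within $\tfrac12+\rho_1<1$ of $x$), and compute by Fubini $\vol(T)=\int_{-1/2}^{1/2}\vol\big((\mathbb{S}^n,\psi(\gamma(s))^2\overline g),B(\cdot,\rho_1)\big)\,ds$; since each orbit is a round sphere of radius $\ge\epsilon_0=4\rho_1$, a standard comparison bounds each integrand below by some $c^{(2)}_n>0$, giving $\vol(B(x,1))\ge c^{(2)}_n$. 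Then $\kappa(n):=\min(c^{(1)}_n,c^{(2)}_n)$ works.

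The step I expect to be the main obstacle is the internal claim of Case B — that a point where $\psi$ is small near $x$ must sit close to a singular orbit, proved by following the meridian "downhill" — together with the bookkeeping needed to keep every application of the curvature bound on a region genuinely contained in $B(x,1)$, plus the minor nuisance, when $M\cong\mathbb{S}^{n+1}$, that the warped coordinates near one pole may a priori reach the other. The remaining ingredients are routine volume-comparison and Fubini computations and a consistent choice of the constant hierarchy $C_n\gg1\gg\epsilon_0\gg\rho_n,\rho_1$.
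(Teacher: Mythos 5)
Your proposal is correct and follows essentially the same route as the paper: normalize to a unit curvature bound on a unit ball, translate it into the ODE bounds $|\psi''|\lesssim\psi$ and $|1-(\psi')^2|\lesssim\psi^2$, and run a two-case argument in which, near a singular orbit, the volume comes from the almost-linear growth of $\psi$ off the fixed point, while away from it $\psi$ is bounded below on a definite meridian interval and Fubini for the warped product gives the volume. The only cosmetic difference is that you split on the distance to $M_{\operatorname{sing}}$ (ruling out small $\psi$ far from the singular set by the same "downhill at speed $>1/\sqrt2$" argument the paper uses to locate a nearby singular orbit when $\psi$ is small), whereas the paper splits directly on the size of $\psi$ at the basepoint.
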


\begin{proof}
We know that $M$ is equivariantly diffeomorphic to one of $\mathbb{R}^{n+1},\mathbb{S}^{n+1},\mathbb{R}\times\mathbb{S}^{n}$ equipped with their standard actions. Using Proposition \ref{rotdifgen}, we can find an $O(n+1)$-equivariant diffeomorphism $\Phi:(a,b)\times \mathbb{S}^{n}\to M_{\text{reg}}$ so that 
\[
\Phi^* g=dr^{2}+\psi^{2}(r)g_{\mathbb{S}^{n}},
\]
where $\psi>0$. If $a>-\infty$,
then the classical smoothness conditions (see section 1.4.4 of \cite{Petersen}) for rotationally-invariant metrics imply that $\psi(a)=0$ and $\psi'(a)=1$, and if $b<\infty$, then $\psi'(b)=-1$
and $\psi(b)=0$. 
Now assume $x_{0}\in M$ satisfies $|Rm|\leq 1$
on $B(x_{0},1)$, and set $r_0=r(x_0)$. The Riemann curvature bounds imply that \[
\frac{|1-(\psi')^{2}|}{\psi^2}+\frac{|\psi''|}{\psi}\leq1
\]
for $x\in\mathbb{S}^{n+1}$ with $r(x)\in(a,b)\cap[r_{0}-1,r_{0}+1]$. We aim to find a uniform lower bound for the volume of $B(x_0,1)$ which does not depend on $x_0$ or the choice of Riemannian metric; to do this, we will split into two cases.

\textbf{Case One:} $\psi(r_0)<\frac{1}{10}$. We assume without loss of generality that $\psi'(r_0)\le 0$; then the differential inequality $\psi'(r)\le -\sqrt{ 1-\psi(r)^2}$ implies that $\psi'(r)\le -\frac{1}{2}$ for all $r\in \left[r_0,\min\{b,r_0+1\}\right]$, so that $b-r_0\le \frac{1}{2}$. Therefore, $B(r^{-1}(b),\frac{1}{2})\subset B(x_0,1)$, where $r^{-1}(b)$ is the singular orbit at $r=b$.  Now, the differential inequality $\left|\psi''(r)\right|\le \psi(r)$, coupled with $\psi(b)=0$ and $\psi'(b)=-1$ implies that $b-a\ge 1$ and $\frac{\psi(b-r)}{\sin(r)}\geq 1$ for all $r\in (0,\frac{1}{10})$, so that
$\frac{\psi(b-r)}{r}\in [\frac{1}{2},2]$. 
Therefore 
\begin{align*}
    \operatorname{Vol}(B(x_0,1))&\ge \operatorname{Vol}(B(r^{-1}(b),\frac{1}{2}))=(n+1)\omega_{n+1}\int_{b-\frac{1}{2}}^{b} \psi(r)^n dr\ge (n+1)\omega_{n+1}\int_{b-\frac{1}{10}}^{b}\frac{(b-r)^n}{2^n}dr \\ &=(n+1)\omega_{n+1}\int_0^{\frac{1}{10}}\frac{r^n dr}{2^n}. 
\end{align*}

\textbf{Case Two:} $\psi(r_0)\ge \frac{1}{10}$. Then $\left|1-(\psi'(r))^2\right|\le \psi(r)^2$ implies that $\psi'(r)\ge -1-\psi(r)$ for all $r\in \left[r_0,\min\{b,r_0+1\}\right]$, so that $b-r_0>\frac{1}{100}$, and $\psi(r)\ge \frac{1}{100}$ for $r\in [r_0,r_0+\frac{1}{100}]$. Therefore, if we let $\theta_0$ be the element of $\mathbb{S}^n$ corresponding to $x_0$, then $B(x_0,1)$ contains the set 
$$\{(r,\theta)\in ([-\frac{1}{100}+r_0,\frac{1}{100}+r_0]\times \mathbb{S}^n \ \vert \  d_{\overline{g}|\mathbb{S}^n}(\theta,\theta_0)<10^{-6}\}. $$ 
The uniform lower bound on $\psi$ on this set yields the required volume lower bound via Fubini's theorem. 
\end{proof}
We note that the above proposition fails in the case $M=\mathbb{S}^{1}\times\mathbb{S}^{n}$,
since for example $(\mathbb{S}^{1}\times\mathbb{S}^{n},k^{-1}g_{\mathbb{S}^{1}}+\overline{g}\vert_{\mathbb{S}^{n}})$
is a sequence of metrics with volume converging to zero as $k\to\infty$,
but uniformly bounded curvature. We can still say that in this case
$M$ is $\kappa(n)$-noncollapsed up to the scale of the length of
the $\mathbb{S}^{1}$ factor, using the reasoning of the above proposition.
Moreover, we know that if $M$ is a rotationally invariant metric
which is some time slice of a Ricci flow with surgery after the first
surgery time, then $M$ cannot be diffeomorphic to $\mathbb{S}^{1}\times\mathbb{S}^{n}$.
Thus $\kappa(t)$-noncollapsing for a rotationally invariant Ricci
flow with surgery follows from the usual $\kappa$-noncollapsing theorem
up to the first singular time, while the need to prove $\kappa$-noncollapsing
through surgery is removed by the above proposition.

\subsection{Rotationally-invariant Ricci flow}
Since the Ricci curvature operator commutes with pullbacks by diffeomorphisms, the uniqueness of solutions to the initial value problem implies that a Ricci flow on $\mathbb{S}^{n+1}$ or $\mathbb{S}^1\times \mathbb{S}^n$ which is initially rotationally-invariant stays that way as long as the Ricci flow is defined. 
Proposition \ref{rotdifgen} can be used to find a convenient form of the initial Riemannian metric. However, this form is not preserved by the Ricci flow. Instead, the time-varying metric can be expressed as 
\begin{equation}\label{Formg}
g_t= \varphi(x,t)^2 dx\otimes dx+\psi(x,t)^2 \overline{g}\vert _{\mathbb{S}^{n}}.
\end{equation}

The Ricci curvature of such a metric is 
\begin{equation}\label{FormRicci}
\text{Ric}(g_t)=n\left(\frac{\psi'\varphi'}{\psi \varphi^3}-\frac{\psi''}{\varphi^2 \psi}\right)\varphi^2 dx\otimes dx+\left(-\frac{\psi''}{\varphi^2 \psi} -(n-1)\frac{(\psi')^2}{\psi^2 \varphi^2} +\frac{\varphi '\psi'}{\varphi ^3 \psi}+\frac{n-1}{\psi^2} \right)\psi^2 \overline{g}\vert_{\mathbb{S}^{n}}.
\end{equation}

For each $t\in[0,T)$, define $s_{t}(x):=\int_{0}^{x}\varphi(y,t)dy$,
which we view as a 1-parameter family of functions on $(-1,1)$ rather
than a function on spacetime, so that $\partial_{s}=\frac{1}{\varphi}\partial_{x}$
has no time-like component. Note that $s_{t}'(x)=\varphi(t,x) >0$ implies
that $s_{t}$ is invertible. Then
$$g_t = ds_t \otimes ds_t +\psi^2(s_t,t)\overline{g},$$
and we can write the evolution equation satisfied by $\psi$ as
$$\partial_t \psi = \partial_s (\partial_s \psi) -(n-1)\left(\frac{1-(\partial_s \psi)^2}{\psi}\right),$$
where we view $\partial_s$ as a time-dependent vector field on spacetime.

\section{Geometry of high curvature regions}\label{geometryhighcurvature}

\subsection{Hamilton-Ivey pinching}
One of the major reasons why the study of Ricci flow has been so successful in three-dimensions is the Hamilton-Ivey pinching phenomenon, which ensures that near areas of unbounded curvature, the positive eigenvalues of the curvature operator dominate the negative eigenvalues in the sense of the Hamilton-Ivey estimate. This estimate ensures that blow-up limits near finite-time singularities have non-negative Riemann curvature. In four dimensions and higher, this pinching fails in full generality. Indeed, numerous authors have constructed higher-dimensional Ricci flows with finite-time singularities whose blow-up limits do not have non-negative Riemann curvature (see, for example, \cite{Appleton19}, \cite{GuoSong16} and \cite{Maximo14}). However, if we make the powerful assumption that our Ricci flow is rotationally-invariant, then we can recover the Hamilton-Ivey pinching property. 

\begin{defin}[Hamilton-Ivey pinching] We say that the rotationally-invariant Ricci flow $(M^{n+1},(g_t)_{t\in [0,T)})$ satisfies the Hamilton-Ivey estimate if for all $(x,t)\in M\times [0,T)$, we have 

\begin{align*}R(x,t)\geq -\nu(x,t) \left( \log(-\nu(x,t))+\log(1+t)-\frac{n(n+1)}{2} \right),
\end{align*}
where $\nu(x,t)$ denotes the smallest eigenvalue of $\mathcal{R}_{g_t}(x)$ with respect to $g_t$.
\end{defin}

\begin{thm}\label{rotpinch}
Let $(M,(g_t)_{t\in [0,T)})$ be an $n+1$-dimensional rotationally-invariant Ricci flow with $n\ge 2$. If $\nu(\cdot,0) \geq -1$ everywhere, then the flow satisfies the Hamilton-Ivey pinching estimate.
\end{thm}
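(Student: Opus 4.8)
The plan is to work directly with the warped-product description from Proposition~\ref{rotdifgen} together with the explicit curvature eigenvalues recorded after that proposition. On $M_{\operatorname{reg}}$ the curvature operator is diagonal with the two eigenvalue types
$$\lambda := -2\frac{\psi''}{\psi}\cdot\frac{1}{\varphi^2} \quad\text{(with multiplicity } n\text{)}, \qquad \mu := \frac{2\bigl(1-(\partial_s\psi)^2\bigr)}{\psi^2} \quad\text{(with multiplicity } \binom{n}{2}\text{)},$$
using the $s$-variable so that $\partial_s\psi = \psi'/\varphi$. The smallest eigenvalue $\nu$ is $\min\{\lambda,\mu,0\}$ up to whichever of these is negative, and the scalar curvature is $R = n\lambda + n(n-1)\mu$ (one should double-check the normalization against \eqref{FormRicci}, but $R$ is a fixed positive combination of $\lambda$ and $\mu$ with the $\mu$-coefficient much larger). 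The key structural observation is that $\mu$ is \emph{bounded below in a scale-invariant way}: rotationally-invariant Ricci flow preserves $1-(\partial_s\psi)^2 \ge$ (something controlled), and more to the point the first-order term $(\partial_s\psi)^2$ obeys a maximum principle. Concretely, I would first establish that $\partial_s\psi$ satisfies a nice evolution equation — this is a standard computation in the rotationally-invariant setting — from which $|\partial_s\psi|\le 1$ is preserved (or at worst $(\partial_s\psi)^2 \le 1 + Ct$), giving $\mu \ge -C/\psi^2$, hence $\mu$ is bounded below by a fixed multiple of $-(\,\text{a curvature scale}\,)$, in fact essentially $\mu \ge -\tfrac{2Ct}{\psi^2}$.

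The heart of the argument is then a reduction: because $R$ weights $\mu$ by the large factor $n(n-1)$ while the most negative eigenvalue is at most $\mu$ in magnitude on the sphere directions (and $\lambda$ enters $R$ with a positive sign), any negativity of $\nu$ coming from the $\mu$-eigenvalue already satisfies the Hamilton--Ivey inequality trivially, with room to spare — one just checks that $n(n-1)\mu \ge -|\mu|(\log|\mu| + \dots)$ holds whenever $|\mu|$ is not tiny, and when $|\mu|$ is tiny the logarithm is negative and the estimate is again immediate after absorbing the $-n(n+1)/2$ constant. So the only case that requires real work is when $\nu = \lambda < 0$, i.e. $\psi'' > 0$ somewhere. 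In that case I want to bound $\psi''/\psi$ (equivalently $-\lambda$) from below by comparing with $R$. Here I would use that $R$ evolves by $\partial_t R = \Delta R + 2|\Ric|^2 \ge \Delta R + \tfrac{2}{n+1}R^2$ together with the lower bound $R \ge n\lambda$ (since $\mu$ is bounded below and the $\mu$-contribution to $R$ is positive up to a controlled error); combined with $\lambda \ge \nu$ this should close into the Hamilton--Ivey form after the usual ODE comparison. An alternative and perhaps cleaner route: set $X := -\nu$ where $\nu<0$, and show the function $R + X(\log X + \log(1+t) - \tfrac{n(n+1)}{2})$ obeys a supersolution inequality for the heat operator given the pinching, by direct computation using the Uhlenbeck-frame ODE--PDE system for the curvature operator, exactly mimicking Hamilton's three-dimensional argument but now with the crucial simplification that all the eigenvalues are explicit and $\mu$ is a priori bounded below.

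The main obstacle I anticipate is the singular orbits: the warped-product formulas degenerate where $\psi\to 0$ (on $M^{n+1} \cong \mathbb{S}^{n+1}$ or $\mathbb{R}^{n+1}$), and both $\lambda$ and $\mu$ must be handled carefully there since $\psi''/\psi$ and $(1-(\partial_s\psi)^2)/\psi^2$ are $0/0$ expressions. Near a smooth singular orbit the metric is smooth and the curvature is bounded, so $\nu$ is bounded and the pinching inequality holds for trivial reasons there, but making this rigorous (e.g. via the smoothness conditions $\psi(a)=0$, $\psi'(a)=1$ cited in the proof of Proposition~\ref{uniform_noncollapsing}) and patching it to the maximum-principle argument on $M_{\operatorname{reg}}$ requires some care — one wants a genuine maximum principle on the closed manifold $M$, so I would phrase everything in terms of the globally-defined smooth curvature operator $\mathcal{R}_{g_t}$ and $\nu$, using the explicit eigenvalue formulas only as a tool to verify the differential inequalities on the open dense set $M_{\operatorname{reg}}$ and then invoking continuity. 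A secondary technical point is the precise form of the evolution inequality for $\nu$ (which is only Lipschitz, as a min of smooth functions), handled in the standard way by working with smooth barriers or Hamilton's maximum principle for symmetric tensors.
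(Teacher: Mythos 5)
Your proposal has a genuine gap, and it originates in a sign error in the statement of the estimate you are trying to prove. For $\nu<0$ the Hamilton--Ivey right-hand side is $-\nu\bigl(\log(-\nu)+\log(1+t)-\tfrac{n(n+1)}{2}\bigr)=|\nu|\bigl(\log|\nu|+\log(1+t)-\tfrac{n(n+1)}{2}\bigr)$, which is \emph{positive} and of order $|\nu|\log|\nu|$ as soon as $|\nu|(1+t)>e^{n(n+1)/2}$; it is not the negative quantity $-|\nu|(\log|\nu|+\cdots)$ that you compare against. Consequently your central reduction fails: the case $\nu=\mu<0$ is not ``trivial with room to spare,'' since the purely algebraic bound $R\ge\tfrac{n(n+1)}{2}\nu$ gives only $R\ge-\tfrac{n(n+1)}{2}|\nu|$, which is far weaker than the required $R\gtrsim|\nu|\log|\nu|$ in the high-curvature regime that Hamilton--Ivey is actually about. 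The auxiliary claim that $\mu$ is a priori bounded below is also not available: the hypothesis is only $\nu(\cdot,0)\ge-1$, which does not give $|\partial_s\psi|\le 1$ initially, and the most a maximum principle for $(\partial_s\psi)^2$ yields is $\mu\ge-C/\psi^2$, a bound that degenerates precisely where $\psi\to 0$ (necks and polar orbits) and in any case cannot replace the pinching.

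In the remaining case $\nu=\lambda<0$, the sketch ``use $\partial_t R\ge\Delta R+\tfrac{2}{n+1}R^2$ together with $R\ge n\lambda$'' cannot produce the logarithmic gain either; the actual mechanism is the evolution inequality for $\tfrac{R}{-\nu}-\log(-\nu)$ along the curvature reaction ODE, i.e.\ $(-\nu)\tfrac{dR}{dt}+\tfrac{d\nu}{dt}(R-\nu)\ge(-\nu)^3$, and this must be verified separately in \emph{both} eigenvalue cases $\nu=\mu$ and $\nu=\lambda$ — this two-case computation is exactly the content of Lemma \ref{rotinvarode} and is where the real work of the theorem lies. Once it is established, one feeds it into Hamilton's tensor maximum principle (Theorem \ref{hammax}) applied to the time-dependent convex set of curvature operators satisfying the pinching, which is the paper's proof and essentially the ``alternative route'' you mention at the end (with the sign corrected, the relevant nonnegative quantity being $R-(-\nu)(\log(-\nu)+\log(1+t)-\tfrac{n(n+1)}{2})$, or equivalently membership in the set $K(t)$). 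Your framing at the end — work with the globally defined curvature operator so that singular orbits cause no trouble, use the explicit two-eigenvalue structure only to verify the ODE inequalities, handle the Lipschitz minimum via Hamilton's maximum principle — is the right framework; but as written, the primary argument both mis-states the inequality to be proved and omits the case analysis that constitutes the proof.
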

Since rotationally-invariant Ricci flows have uniformly-vanishing Weyl tensors, Theorem \ref{rotpinch} is an immediate consequence of Theorem 1.1 in \cite{Zhang15}, which states that Hamilton-Ivey pinching occurs if the Weyl tensor vanishes for all times. However, the proof simplifies substantially in the case of rotationally-invariant Ricci flows. Since there does not seem to be many well-understood examples of Ricci flows with vanishing Weyl tensor that are \textit{not} rotationally-invariant, we find it appropriate to include the simpler proof in this section.

Naturally, the main ingredient in the proof of Theorem \ref{rotpinch} is Hamilton's tensor maximum principle:
\begin{thm}\label{hammax}
Let $V$ be a tensor bundle over $M$ with time-varying Riemannian metric $(g_t)_{t\in [0,T]}$. A smooth one-parameter family of sections $u_t:M\to V$ satisfying the diffusion equation 
\begin{align}\label{hamequation}
    \frac{\partial u}{\partial t}=\Delta_{g_t}u+F(x,u),
\end{align}
with $F$ a Lipschitz-continuous fiber-preserving map has the following maximum principle: for any closed, parallel, fiber-wise convex time-varying subset $K(t)\subset V$ which is invariant under the vector field $F(x,\cdot )$, we can conclude that $u(t,x)\in K_x(t)$ for all $(t,x)\in [0,T]\times M$ if $u(0,x)\in K_x(0)$ for all $x\in M$.
\end{thm}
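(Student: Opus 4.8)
The plan is to reduce the system to a single scalar differential inequality and then apply the ordinary maximum principle. I will take $M$ closed (as in the applications; the noncompact case follows by exhaustion given suitable control of $u$ at infinity). The metric $g_t$ makes each fiber $V_x$ Euclidean, so I set $\phi(x,t):=\operatorname{dist}_{g_t}\big(u(x,t),K_x(t)\big)\ge 0$; the hypothesis reads $\phi(\cdot,0)\equiv 0$, and the goal is $\phi\equiv 0$ on $[0,T]$. Fiberwise convexity makes the nearest-point projection single-valued, so $\phi$ is locally Lipschitz, and at any $(x_0,t_0)$ with $\phi>0$ there is a unique nearest point $q\in\partial K_{x_0}(t_0)$, a unit outward normal $\nu\in V_{x_0}$, and a supporting half-space $\{v:\langle v-q,\nu\rangle\le 0\}\supseteq K_{x_0}(t_0)$; equivalently $\phi(x,t)=\sup\{\langle u(x,t),\xi\rangle-h_{K_x(t)}(\xi):\xi\in V_x,\ |\xi|_{g_t}\le 1\}$, with the supremum attained at $\xi=\nu$, which is the form I would use to build barriers.

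The heart of the proof is the claim that on $\{\phi>0\}$, $\phi$ satisfies $\partial_t\phi\le\Delta_{g_t}\phi+C\phi$ in the barrier sense, with $C$ depending only on $n$, on $\sup_{M\times[0,T]}(|\Rm_{g_t}|+|u|+|\partial_t g_t|)$, on $\operatorname{Lip}(F)$, and on the speed of $t\mapsto K(t)$. At $(x_0,t_0)$ I would extend $\nu$ to the vector field $\bar\nu$ obtained by parallel transport along radial $g_{t_0}$-geodesics from $x_0$ and use the lower barrier $\beta(x,t)=\langle u(x,t),\bar\nu(x)\rangle-h_{K_x(t)}(\bar\nu(x))$, which is $\le\phi$ and equals $\phi$ at $(x_0,t_0)$. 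Two geometric inputs then drive the estimate. First, because $K$ is \emph{parallel} and parallel transport is a fiber isometry, $h_{K_x(t_0)}(\bar\nu(x))$ is independent of $x$; combined with the fact that $\bar\nu$ is harmonic to second order at $x_0$ — one has $\nabla\bar\nu(x_0)=0$, and since every radial second covariant derivative of $\bar\nu$ vanishes the full symmetrized Hessian does too, so $\Delta_{g_{t_0}}\bar\nu(x_0)=0$ — this gives $\Delta_{g_{t_0}}\beta(x_0,t_0)=\langle\Delta_{g_{t_0}}u(x_0,t_0),\nu\rangle$, whence at a \emph{spatial maximum} of $\phi$ one reads off $\langle\Delta_{g_t}u,\nu\rangle\le 0$. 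Second, for the reaction term write $F(x,u)=F(x,q)+[F(x,u)-F(x,q)]$: invariance of the family $K(t)$ under the flow of $F(x,\cdot)$ forces $\langle F(x_0,q),\nu\rangle$ not to exceed the outward rate of motion of $K_{x_0}(t)$ in the direction $\nu$, so this part cancels against the $\partial_t h_{K}$ contribution in $\partial_t\beta$, while Lipschitz continuity bounds the remainder by $\operatorname{Lip}(F)\,|u-q|=\operatorname{Lip}(F)\,\phi$. Assembling these, and tracking the lower-order terms produced by the time-dependence of $g_t$, yields the differential inequality; then $\Phi(t):=\max_M\phi(\cdot,t)$ is Lipschitz and satisfies $D^+\Phi\le C\Phi$ wherever $\Phi>0$ (apply the barrier at a point realizing the maximum, where $\Delta\phi\le0$ in the barrier sense), so $\Phi(0)=0$ forces $\Phi\equiv 0$ by Gr\"onwall, i.e.\ $u(x,t)\in K_x(t)$ for all $(x,t)$.

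The part I expect to demand the most care is not any single estimate but the rigorous packaging: checking that $\phi$ (or its square, which is $C^{1,1}$ in the fiber variable) is genuinely a barrier supersolution, that the nearest-point data $(q,\nu)$ depend Lipschitz-continuously on $(x,t)$ on $\{\phi>0\}$, and that the time-dependence of the fiber metric contributes a term of size $C\phi$ rather than a constant. One subtlety worth flagging in the Laplacian computation is that, a priori, curvature of the bundle connection could produce an extra term $\langle u(x_0,t_0),\Delta\bar\nu(x_0)\rangle$ of size only $O(|u|\,|\Rm|)$; it vanishes precisely because the radially parallel extension is harmonic to second order at its center, which is exactly why this test field is the right choice. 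A clean way to remove the metric's time-dependence at the outset is to apply Uhlenbeck's trick, replacing $(V,g_t,\nabla^{g_t})$ by an auxiliary bundle carrying a fixed metric; conceptually the theorem only asserts that heat flow preserves a parallel family of fiberwise convex sets and that, by hypothesis, the reaction $F$ does too.
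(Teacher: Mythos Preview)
The paper does not prove this theorem at all: it is stated as Hamilton's tensor maximum principle, a standard tool, and immediately applied to the curvature evolution equation. So there is no ``paper's own proof'' to compare against.

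Your outline is essentially the standard proof (as in Hamilton's original paper or the textbook treatments by Chow--Knopf and Chow--Lu--Ni): reduce to a scalar differential inequality for the fiberwise distance $\phi$ to $K$, use convexity to get a supporting half-space, use the parallel hypothesis to kill the spatial variation of the support function, use ODE-invariance of $K(t)$ to absorb the $\langle F(x,q),\nu\rangle$ term against the motion of $\partial K$, and use the Lipschitz bound on $F$ for the remainder. The computation $\Delta_{g_{t_0}}\bar\nu(x_0)=0$ for the radially parallel extension is correct for the reason you give (in normal coordinates each $\nabla^2_{\partial_i,\partial_i}\bar\nu(x_0)$ vanishes since $\bar\nu$ is parallel along the $i$-th axis and the Christoffel symbols vanish at the center). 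Your suggestion to invoke the Uhlenbeck trick to freeze the bundle metric is exactly how the paper handles the time-dependence when it applies the theorem to $\mathcal{R}$, so that is consistent. The only point I would tighten is the phrase ``invariance of the family $K(t)$ under the flow of $F$'': the hypothesis is that the \emph{time-dependent} family $K(t)$ is preserved by the ODE $\dot v=F(x,v)$, which is precisely what makes $\langle F(x,q),\nu\rangle$ bounded by the outward normal speed of $\partial K_x(t)$; you state this correctly but it is worth making the quantifier explicit.
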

As usual, the goal is to apply this tensor maximum principle to the Riemann curvature tensor $\mathcal{R}$, which is well-known to satisfy the following reaction-diffusion equation:
\begin{align*}
    \frac{d \mathcal{R}}{dt}=\Delta \mathcal{R}+\mathcal{R}^2+\mathcal{R}^{\#}
\end{align*}
after applying the Uhlenbeck trick (see Section 2.7 of \cite{ChowNi}, which also discusses the Lie algebra square $\mathcal{R}^{\#}$). Studying this evolution equation is substantially simplified in the rotationally-invariant setting because at any point on the manifold, there are only two eigenvalues of $\mathcal{R}$, namely  $-2\frac{\psi''}{\psi}=\lambda$ and $2\left(\frac{1-\psi'^2}{\psi^2}\right)=\mu$ (assuming arc-length parametrisation) so that the scalar curvature is given by $R=n\lambda+\frac{n(n-1)}{2}\mu$. The associated ODE $\frac{d\mathcal{R}}{dt}=\mathcal{R}^2+\mathcal{R}^{\#}$ then becomes 
\begin{align}\label{curvatureODEs}
\begin{split}
    \frac{d\lambda }{dt}&=\lambda^2+(n-1)\lambda\mu,\\
    \frac{d \mu}{dt}&=(n-1)\mu^2+\lambda^2.
    \end{split}
\end{align}
\begin{lem}\label{rotinvarode}
Assume that $n\ge 2$ and define $\nu=\min\{\lambda,\mu\}$. Then solutions of \eqref{curvatureODEs} satisfy
$$\frac{dR}{dt}\geq \frac{2R^2}{n+1},$$
$$(-\nu)\frac{d R}{dt}+\frac{d \nu}{dt}(R-\nu)\ge (-\nu)^3,$$ whenever $\nu\le 0$. 
\end{lem}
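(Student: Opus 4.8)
The plan is to analyze the ODE system \eqref{curvatureODEs} directly, exploiting that we only have two eigenvalues $\lambda,\mu$ with $n\ge 2$. For the first inequality, I would note that $R=n\lambda+\tfrac{n(n-1)}{2}\mu$, and compute $\frac{dR}{dt}=n(\lambda^2+(n-1)\lambda\mu)+\tfrac{n(n-1)}{2}((n-1)\mu^2+\lambda^2)$. This is a quadratic form in $(\lambda,\mu)$, and the claim $\frac{dR}{dt}\ge\frac{2R^2}{n+1}$ reduces to showing that $(n+1)\frac{dR}{dt}-2R^2\ge 0$ as a quadratic form; I expect this to be a positive semidefinite (in fact, a perfect-square-plus-nonnegative) expression after collecting terms, and one should be able to write it as a sum of squares with nonnegative coefficients depending on $n$. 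Concretely, $2R^2 = 2(n\lambda+\tfrac{n(n-1)}{2}\mu)^2$ and the difference should factor through $(\lambda-\mu)^2$ times something nonnegative, since equality is attained on the round sphere where $\lambda=\mu$ and indeed $\frac{dR}{dt}=\frac{2R^2}{n+1}$ there (this is the shrinking-sphere solution). So the first bound is really the statement that the round sphere minimizes the normalized scalar-curvature growth rate, and I would verify the SOS decomposition explicitly.

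For the second inequality, I would split into the two cases $\nu=\lambda$ and $\nu=\mu$ (where $\nu\le 0$). In the case $\nu=\lambda\le 0$: then $R-\nu = (n-1)\lambda+\tfrac{n(n-1)}{2}\mu$, and using $\frac{d\lambda}{dt}=\lambda^2+(n-1)\lambda\mu$ together with the already-established lower bound on $\frac{dR}{dt}$, the left side becomes $(-\lambda)\frac{dR}{dt}+(\lambda^2+(n-1)\lambda\mu)(R-\lambda)$, which I want to bound below by $(-\lambda)^3$. In the case $\nu=\mu\le 0$: then $R-\nu = n\lambda+(\tfrac{n(n-1)}{2}-1)\mu$ and $\frac{d\mu}{dt}=(n-1)\mu^2+\lambda^2$, and one argues similarly. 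In each case the inequality becomes a polynomial inequality in $\lambda,\mu$ on the relevant region ($\lambda\le\mu\le 0$ or $\mu\le\lambda$, $\mu\le 0$), and the strategy is again to reduce to a manifestly nonnegative combination of $(\lambda-\mu)^2$, $(-\nu)$, and squares. One should double-check the easy subcase where both $\lambda,\mu\le 0$, since then $R\le 0$ as well and the sign bookkeeping is cleanest, and then handle the mixed-sign subcase where $\nu\le 0$ but the other eigenvalue is positive.

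The main obstacle I anticipate is the second inequality in the mixed-sign regime, say $\nu=\mu<0<\lambda$ (a genuine possibility here, since $\lambda=-2\psi''/\psi$ can be large and positive on a forming neck while $\mu=2(1-\psi'^2)/\psi^2$ can dip slightly negative): there $R$ can be large and positive, $R-\nu$ is large and positive, and we need the combination $(-\nu)\frac{dR}{dt}+\frac{d\nu}{dt}(R-\nu)\ge(-\nu)^3$ to survive even though $\frac{d\nu}{dt}=(n-1)\mu^2+\lambda^2$ is positive — so the second term already helps, and the content is just that the first term doesn't ruin it; the delicate bookkeeping is making sure the $\lambda$-dependent pieces all have the right sign. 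I would organize the computation so that the desired inequality is written as $(\text{explicitly nonnegative})\ \ge 0$ in each case, keeping track of the constraint $n\ge 2$ (which is what makes several coefficients nonnegative). Once both ODE inequalities hold pointwise in $(\lambda,\mu)$-space on the stated regions, they are exactly the input needed for Hamilton's tensor maximum principle (Theorem \ref{hammax}) in the proof of Theorem \ref{rotpinch}.
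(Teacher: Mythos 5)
Your plan is essentially the paper's proof: the difference $\frac{dR}{dt}-\frac{2R^2}{n+1}$ turns out to be exactly $\frac{n(n-1)^2}{n+1}(\lambda-\mu)^2$, and the second inequality is handled by the same case split $\nu=\mu<0$ versus $\nu=\lambda<0$, each case reducing to a polynomial in $\lambda,\mu$ that is manifestly nonnegative given $n\ge 2$ and the sign and ordering constraints. The deferred algebra does close as you expect; for instance, in the case $\nu=\mu<0$ the left-hand side minus $(-\mu)^3$ equals $n\lambda^{2}(\lambda-\mu)-\lambda^{2}\mu-(n-2)\mu^{3}\ge 0$, and in the case $\nu=\lambda<0$ it equals $\lambda^{2}(n-1)\left(-\lambda+\left(\tfrac{n}{2}-1\right)(\mu-\lambda)\right)\ge 0$.
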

\begin{proof}
First we compute 
\begin{align*}
    \frac{ dR}{dt}&=n\frac{d\lambda}{dt}+\frac{n(n-1)}{2}\frac{d\mu}{dt}\\
    &=n(\lambda^2+(n-1)\lambda\mu)+\frac{n(n-1)}{2}((n-1)\mu^2+\lambda^2)\\
    &=n\left(\frac{(n+1)}{2}\lambda^2+(n-1)\lambda\mu+\frac{(n-1)^2}{2}\mu^2\right).
\end{align*}
We then conclude that
\begin{align*}
    \frac{dR}{dt}-\frac{2R^2}{n+1}&=
    n\left(\frac{(n+1)}{2}\lambda^2+(n-1)\lambda\mu+\frac{(n-1)^2}{2}\mu^2\right)-\frac{2}{n+1}\left(n^2\lambda^2+n^2(n-1)\lambda\mu+\frac{n^2(n-1)^2}{4}\mu^2\right)\\
    &=\lambda^2n\left(\frac{(n+1)}{2}-\frac{2n}{n+1}\right)+\left(n\frac{(n-1)^2}{2}-\frac{n^2(n-1)^2}{2(n+1)}\right)\mu^2+\left(n(n-1)-\frac{2n^2(n-1)}{n+1}\right)\lambda\mu\\
    &=\frac{n(n-1)^2}{n+1}(\lambda -\mu)^2\ge 0. 
\end{align*}
Now the desired estimate obviously holds if $\nu=0$ because this implies that $R\ge 0$ and $\frac{d\nu}{dt}\ge 0$. We now assume that $\nu<0$, and break the analysis into cases depending on which of $\lambda$ or $\mu$ coincides with $\nu$. \\

\noindent 
\noindent\textbf{Case One:} $\mu=\nu<0$.
We want to show that $-\mu \frac{dR}{dt}+\frac{d\mu}{dt}(R-\mu)\ge (-\mu)^3$. 
We see that 
\begin{align*}
  &-\mu \frac{dR}{dt}+\frac{d\mu}{dt}(R-\mu)+\mu^3\\
  &=-\mu n\left(\frac{(n+1)}{2}\lambda^2+(n-1)\lambda\mu+\frac{(n-1)^2}{2}\mu^2\right)+\left((n-1)\mu^2+\lambda^2\right)\left(n\lambda+\left(\frac{n^2-n-2}{2}\right)\mu\right)+\mu^3\\
  &=n\lambda^3-(n+1)\mu\lambda^2-(n-2)\mu^3\\
&= n\lambda^2(\lambda-\mu)-\lambda^2 \mu -(n-2)\mu^3\ge 0
\end{align*}
since $n\geq 2$, $\mu<0$ and $\lambda\ge \mu$. \\

\noindent\textbf{Case Two:} $\lambda=\nu<0$. 
We want to show that $-\lambda \frac{dR}{dt} + \frac{d\lambda}{dt}(R - \lambda) \geq (-\lambda)^3$.  
\begin{align*}
    & - \lambda \frac{dR}{dt} + \frac{d\lambda}{dt} (R - \lambda) + \lambda^3 \\&= -\lambda n \left(\frac{(n+1)}{2}\lambda^2+(n-1)\lambda\mu+\frac{(n-1)^2}{2}\mu^2\right) + (n-1)(\lambda^2 + (n-1)\lambda \mu )\left(\lambda + \frac{n}{2}\mu\right) + \lambda^3\\
    &= -n \left(\frac{n-1}{2}\right)\lambda^3
+(n-1)\left(\frac{n}{2}-1\right)\lambda^2\mu     \\
&=\lambda^2(n-1)\left(-\lambda+\left(\frac{n}{2}-1\right)(\mu-\lambda)\right)\ge 0,
\end{align*}
since $n\geq 2$, $\mu \geq \lambda$ and $\lambda<0$.
\end{proof}

\begin{proof}[Proof of Theorem \ref{rotpinch}] Let $K(t)$ denote the set of curvature operators $\mathcal{R}$ satisfying $\text{tr}(\mathcal{R})\geq \frac{n-1}{2(1+t)}$, and also
$$\text{tr}(\mathcal{R})\geq -\nu(\mathcal{R})\left( \log(-\nu (\mathcal{R}))+\log(1+t)-\frac{n(n+1)}{2}\right)$$
whenever $\nu(\mathcal{R})\leq -\frac{1}{1+t}$. It is straightforward to show that this defines a closed, parallel, fiber-wise convex time-varying subset $K(t)\subseteq V$. Moreover, Lemma \ref{rotinvarode} gives 
\begin{align*}
    \frac{d}{dt}\left(\frac{R}{-\nu}-\log(-\nu)\right)\ge -\nu, \hspace{6mm} R\geq \frac{n-1}{2(1+t)},
\end{align*} which are precisely the key estimates required to show that $K(t)$ is invariant under associated ODE
$$\frac{d\mathcal{R}}{dt}=\mathcal{R}^2+\mathcal{R}^{\#}$$
(see, for example, Theorem 10.17 of \cite{ChowII}). We can thus apply Theorem \ref{hammax} to conclude the Hamilton-Ivey estimate at any point $(x,t)\in M\times [0,T)$ where $\nu(x,t)\leq -\frac{1}{1+t}$. If instead $\nu(x,t)\geq -\frac{1}{1+t}$, we then have
$$\text{tr}(\mathcal{R})\geq \frac{n(n+1)}{2}\nu(x,t) \geq -\nu(\mathcal{R})\left( \log(-\nu(\mathcal{R}))+\log(1+t)-\frac{n(n+1)}{2}\right),$$
so the claim follows in this case as well
\end{proof}

\subsection{Equivariant Cheeger-Gromov compactness}

In this section, we discuss the Cheeger-Gromov limits of rotationally-invariant Riemannian manifolds. 

Let $\theta:O(n+1)\times M\to M$ denote the isometric action
of a rotationally invariant metric, so that $\theta_{h}:=\theta(h,\cdot)$
satisfies $\theta_{h}^{\ast}g=g$ for any $h\in O(n+1).$

\begin{prop} \label{equivariantcompactness}
Suppose $(M_{i}^{n+1},g_{i},p_{i})$ is a sequence of rotationally
invariant, complete, pointed Riemannian manifolds. Let $\theta_{i}:O(n+1)\times M_{i}\to M_{i}$
be the action by isometries, and suppose 
\[
\sup_{i\in\mathbb{N}}\sup_{M_{i}}|\nabla^{k}Rm|_{g_{i}}\leq C_{k},\hfill\operatorname{Vol}_{g_{i}}(B_{g_{i}}(p_{i},1))\geq v_{0}>0.
\]
If the instrinsic diameter of $(\mathcal{O}(p_{i}),g_{i}|_{\mathcal{O}(p_{i})})$
is uniformly bounded, then after passing to a subsequence, there is
a rotationally invariant, complete pointed Riemannian manifold $(M_{\infty}^{n+1},g_{\infty},p_{\infty})$,
an exhaustion $U_{i}\subseteq M_{\infty}$ of $O(n+1)$-invariant
open sets, and $O(n+1)$-equivariant embeddings $\varphi_{i}:U_{i}\to M_{i}$
such that $\varphi_{i}^{-1}(p_{i})\to p_{\infty}$ and 
\[
\lim_{i\to\infty}\sup_{K}|\nabla_{g_{\infty}}^{k}(\varphi_{i}^{\ast}g_{i}-g_{\infty})|_{g_{\infty}}=0
\]
for all $k\in\mathbb{N}$ and every compact subset $K\subseteq M_{\infty}$. 
\end{prop}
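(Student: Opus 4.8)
The plan is to deduce the equivariant compactness statement from the standard (non-equivariant) Cheeger--Gromov--Hamilton compactness theorem by promoting the limiting manifold and the approximating diffeomorphisms to equivariant ones. First I would apply the classical compactness theorem to $(M_i, g_i, p_i)$, which under the stated derivative-of-curvature bounds and volume lower bound yields a subsequence, a limit $(M_\infty, g_\infty, p_\infty)$, an exhaustion $U_i' \subseteq M_\infty$, and embeddings $\varphi_i' : U_i' \to M_i$ with $(\varphi_i')^* g_i \to g_\infty$ in $C^\infty_{loc}$. The point of the proof is \emph{not} that a limit exists, but that it can be taken rotationally invariant with equivariant approximating maps.

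The main step is to produce the limiting $O(n+1)$-action. For each $i$ we have the action $\theta_i : O(n+1) \times M_i \to M_i$ by isometries. Pulling back by $\varphi_i'$, on any fixed compact $K \subseteq M_\infty$ the maps $h \mapsto (\varphi_i')^{-1} \circ \theta_i(h, \cdot) \circ \varphi_i'$ (defined on the shrinking-but-exhausting domains where this composition makes sense) are isometries of $(\varphi_i')^* g_i$, hence have uniformly bounded derivatives of all orders by the curvature bounds; after a diagonal/Arzel\`a--Ascoli argument over $K$ and over $h$ in the compact group $O(n+1)$, a subsequence converges in $C^\infty_{loc}$ to a smooth map $\theta_\infty : O(n+1) \times M_\infty \to M_\infty$. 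One checks that $\theta_\infty$ is an isometric action of $O(n+1)$ on $(M_\infty, g_\infty)$: the group axioms pass to the limit, and isometry of each $\theta_\infty(h,\cdot)$ follows from $C^\infty$-convergence of both the maps and the metrics. The uniform diameter bound on the orbits $\mathcal{O}(p_i)$ is exactly what prevents the limiting action from degenerating (e.g.\ an orbit escaping to infinity or collapsing to a point), so that $\theta_\infty$ remains a genuine cohomogeneity-one action with principal isotropy $O(n)$; by Proposition 2.4 this makes $(M_\infty, g_\infty)$ rotationally invariant.

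Next I would upgrade the embeddings $\varphi_i'$ to equivariant ones. The natural device is averaging: since $O(n+1)$ is compact with Haar measure $dh$, and since $\varphi_i'$ is \emph{approximately} equivariant (it intertwines $\theta_\infty$ and $\theta_i$ up to an error going to zero in $C^\infty_{loc}$), one can correct $\varphi_i'$ to an honestly equivariant map $\varphi_i$ on a slightly smaller $O(n+1)$-invariant exhausting open set $U_i \subseteq M_\infty$. Concretely, fix the slice/orbit structure: on $M_{\infty,\mathrm{reg}}$ use Proposition 2.5 to write $g_\infty = dr^2 + \psi_\infty^2(r)\overline{g}$ and coordinatize $M_i$ near the relevant region similarly; then define $\varphi_i$ on an orbit $\mathcal{O}(x)$ by following $\varphi_i'$ on a single slice point and extending by the group action, which is well-defined because $\varphi_i'$ nearly commutes with the actions and the principal isotropy groups agree. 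The singular orbits (at most two points, by Proposition 2.3) are handled separately by noting they are isolated fixed points and matching them up. One then verifies $\varphi_i^* g_i \to g_\infty$ in $C^\infty_{loc}$, using that $\varphi_i$ differs from $\varphi_i'$ by a small ($C^\infty_{loc}$-to-zero) correction, and that $\varphi_i^{-1}(p_i) \to p_\infty$ after possibly adjusting $p_\infty$ within its orbit.

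The hard part will be the averaging/correction step: making precise that ``approximately equivariant'' maps can be replaced by genuinely equivariant embeddings without destroying the $C^\infty_{loc}$ convergence of the pulled-back metrics, and doing this uniformly across the exhaustion so that the $U_i$ still exhaust $M_\infty$. The cohomogeneity-one structure makes this tractable—there is essentially one transverse direction $r$ and the orbit directions are rigidly governed by the $O(n+1)$-action—but care is needed near the singular orbits where the slice coordinates degenerate, and one must confirm the limiting isotropy data (principal $O(n)$, singular orbits being fixed points) is inherited rather than jumping in the limit; the uniform orbit-diameter hypothesis is the key quantitative input ensuring this.
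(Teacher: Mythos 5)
Your route is genuinely different from the paper's. You run the non-equivariant Cheeger--Gromov--Hamilton theorem first, recover a limiting $O(n+1)$-action by Arzel\`a--Ascoli applied to the conjugated isometries $(\varphi_i')^{-1}\circ\theta_i(h,\cdot)\circ\varphi_i'$, and then try to correct the almost-equivariant embeddings $\varphi_i'$ to equivariant ones. The paper never passes through ``approximately equivariant'' maps at all: using Proposition \ref{rotdifgen} it reduces everything to one-dimensional estimates on the warping functions $\psi_i$ (Step 1), controls the metric in exponential coordinates at singular orbits (Step 2), and then, case by case in the four diffeomorphism types, builds the limit metric as $dr^2+\psi_\infty^2 g_{\mathbb{S}^n}$ and takes for $\varphi_i$ explicitly equivariant maps (normal exponential maps off a reference orbit, exponential maps at the poles, a radial dilation to glue the two polar charts of $\mathbb{S}^{n+1}$). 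What your approach buys is generality and brevity of concept: it would work verbatim for other symmetry groups, and it isolates the only new content (equivariance) from the compactness machinery. What the paper's approach buys is that equivariance is automatic by construction, so none of the delicate ``symmetrization'' analysis is needed, and the case analysis doubles as the identification of the limit's topology.

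Two points in your plan are more than routine and would need to be carried out to make it a proof. First, your slice-point extension ``follow $\varphi_i'$ at one point of the orbit and extend by the group action'' is not well defined as stated: it requires $\mathrm{Stab}_{\theta_\infty}(x)\subseteq\mathrm{Stab}_{\theta_i}(\varphi_i'(x))$, and approximate equivariance only gives that these two copies of $O(n)$ are close, not equal; you must first conjugate by a small element (or use an equivariant center-of-mass average of $h\mapsto\theta_i(h,\varphi_i'(\theta_\infty(h^{-1},x)))$), verify the corrected map is still an embedding, that the correction tends to zero in $C^\infty_{loc}$, and handle the degeneration of slice coordinates at singular orbits, including ruling out a mismatch of orbit types between $M_i$ and $M_\infty$. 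Second, the assertion that the limiting action is a faithful cohomogeneity-one action with principal isotropy $O(n)$ (so that Proposition 2.2 applies) does not follow from the orbit-diameter bound alone; the diameter bound prevents the orbits from blowing up (cf.\ Lemma \ref{flat}), but nondegeneracy of the limit action (e.g.\ that $-I$ does not become trivial in the limit) uses the curvature bound together with the volume lower bound to guarantee orbits of definite size within bounded distance of $p_\infty$, and well-definedness of your conjugated maps on exhausting domains needs locally uniform orbit-diameter bounds away from the reference orbit, which again come from the curvature bound via $|1-(\psi_i')^2|\leq C_0\psi_i^2$. None of this is fatal, but it is exactly the work the paper's explicit warped-product construction avoids.
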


\begin{proof}
\textbf{Step 1: Uniform bounds for the warping function away from
singular orbits. }Suppose $(M^{n+1},g,p)$ is a rotationally invariant,
complete, pointed Riemannian manifold satisfying
\[ \label{eq:orbitdiameter}
\sup_{M}|\nabla^{k}Rm|_{g}\leq C_{k},\qquad\text{diam}(\mathcal{O}(p),g|_{\mathcal{O}(p)})\leq\pi D,\qquad d_{g}(\mathcal{O}(p),M_{\text{sing}})\geq\alpha,
\]
for each $k\in \mathbb{N}$. 
Let $\psi:(-\alpha,\alpha)\to(0,\infty)$ be a warping as in Proposition \ref{rotdifgen}
such that $s(\mathcal{O}(p))=0$, $g=dr^{2}+\psi^{2}(r)\overline{g}$, and $\psi(0)\leq D$. In this step, we show the existence of constants $C(C_0,\cdots,C_k,\alpha,D,k)$ so that
\begin{align}\label{derivativeestimateofpsi}
    \left|\psi^{(k)}(s)\right|\leq C(C_{0},\cdots,C_k,\alpha,D,k)  \ \text{
for all}  \ s\in[-\frac{\alpha}{2},\frac{\alpha}{2}].
\end{align}
To find these constants, first note that
\begin{align}\label{C0CurvatureBound}
\frac{|\psi''(s)|}{\psi(s)}\leq C_{0},\quad \frac{|1-(\psi'(s))^{2}|}{\psi^{2}(s)}\leq C_{0}
\end{align}
for all $s\in(-\alpha,\alpha)$. The second of these inequalities implies that $\psi(s)\ge \min\{\frac{\alpha}{4},\frac{1}{2\sqrt{C_0}}\}$ for all $s\in [-\frac{\alpha}{2},\frac{\alpha}{2}]$. Indeed, if this were not the case, then there would be some $s_0\in [-\frac{\alpha}{2},\frac{\alpha}{2}]$ at which $\psi(s_{0})<\min\{\frac{\alpha}{4},\frac{1}{2\sqrt{C_0}}\}$;
without loss of generality, we can assume $\psi'(s_{0})\le 0$, for otherwise we may reverse the sign of $s$. We have $\psi'(s_0)<-\frac{3}{4}$ by the second inequality of \eqref{C0CurvatureBound} and the assumption that $\psi(s_0)<\frac{1}{2\sqrt{C_0}}$. Now define $$s_1=\max\left\{ s\in\left[s_{0},s_{0}+\tfrac{\alpha}{2}\right];\psi'(r)\le -\frac{1}{2}\mbox{ for all } r\in [s_0,s)\right\}.$$ 
Then, on the interval $[s_0,s_1]$ we have $\psi'\le 0$, so that 
$\psi(s)\le \psi(s_0)\leq\frac{1}{2\sqrt{C_0}}$ for all $s\in [s_0,s_1]$. Using the second inequality of \eqref{C0CurvatureBound} again, we conclude 
\begin{align*}
    \psi'(s)\le -\frac{3}{4} \ \text{for all} \ s\in [s_0,s_1].
\end{align*}
Thus, $s_1=s_0+\frac{\alpha}{2}$, and $\psi'(s)\le -\frac{1}{2}$ for all $s\in [s_0,s_0+\frac{\alpha}{2}]$ which implies that $\psi(s_0+\frac{\alpha}{2})\le \psi(s_0)-\frac{\alpha}{4}<0$, a contradiction. 
We thus obtain $\psi(s)\geq\min\{\frac{\alpha}{4},\frac{1}{2\sqrt{C_{0}}}\}$
for all $s\in[-\frac{\alpha}{2},\frac{\alpha}{2}]$. We therefore
also have 
\[
\left|\frac{d}{ds}\log\psi(s)\right|^2\leq C_{0}+\frac{1}{\psi^{2}(s)}\leq C(C_{0},\alpha),
\]
which we can integrate, using the fact that $\psi(0)\leq \frac{1}{2}D$, to obtain (\ref{derivativeestimateofpsi}) for $k=0$. Noting that 
\[
\left|\frac{d^{k}}{ds^{k}}\left(\frac{\psi''(s)}{\psi(s)}\right)\right|\leq C_{k},
\]
we obtain by induction on $k\ge 1$ that $|\psi^{(k)}(s)|\leq C(C_{0},...,C_{k},\alpha,D,k)$
for all $s\in[-\frac{\alpha}{2},\frac{\alpha}{2}]$.

\noindent \textbf{Step 2: Bounds for the metric in exponential coordinates
based at a singular orbit. }Suppose $o_{S}\in M_{i}$ is a singular
orbit, and set $\widetilde{g}_{i}:=(\exp_{o_{S}}^{g_{i}})^{\ast}g_{i}$.
Then the standard injectivity radius estimate  and Corollary 4.11 of \cite{Hamilton95} give $r_{0}=r_{0}(C_{0},v_{0})>0$
and $C_{k}'=C_{k}'(C_{0},...,C_{k},v_{0})<\infty$ such that 
\[
\frac{1}{2}g_{\mathbb{R}^{n+1}}\leq\widetilde{g}_{i}\leq2g_{\mathbb{R}^{n+1}},\qquad|(\nabla^{g_{\mathbb{R}^{n+1}}})^{k}\widetilde{g}_{i}|_{g_{\mathbb{R}^{n+1}}}\leq C_{k}
\]
on $B(0,r_{0})$ for all $k\in\mathbb{N}$. If $M_{i}$ has another
singular orbit $o_{N}$, then $\text{inj}_{g_{i}}(M_{i})\geq c(C_{0},v_{0})$
implies $d_{g_{i}}(o_{S},o_{N})<\infty$. On the other hand, we have
$\widetilde{g}_{i}=dr^{2}+\psi_{i}^{2}(r)g_{\mathbb{S}^{n}}$ in spherical
coordinates, where $\psi_{i}$ is the warping function using the distance
from $o_{S}$, so Step 1 gives, for any $L<\infty$ 
\[
c(C_{0},v_{0})\leq\psi_{i}\leq C(C_{0},v_{0},L),\qquad|\psi_{i}^{(k)}|\leq C_{k}''(C_{0},...,C_{k},v_{0},L)
\]
on $B(0,\min\{L,d_{g_{i}}(o_{N},o_{S})-r_{0}\})\setminus B(0,r_{0})$
for all $i\in\mathbb{N}$. Combining these estimates gives uniform
bounds for $\widetilde{g}_{i}$ on $B(0,\min\{L,d_{g_{i}}(o_{N},o_{S})-r_{0}\})$. $\square$

\noindent \textbf{Step 3: Compactness when $(M_{i})_{\operatorname{sing}}=\emptyset$.}
First assume $M_{i}=\mathbb{R}\times\mathbb{S}^{n}$. Letting $\psi_{i}:\mathbb{R}\to(0,\infty)$
be as in \ref{rotdifgen} with reference orbit $\mathcal{O}(p_{i})$, the bounds in Step
1 allow us to pass to a subsequence so that $\psi_{i}\to\psi_{\infty}$
in $C_{loc}^{\infty}(\mathbb{R})$, where $\psi_{\infty}\in C^{\infty}(\mathbb{R})$
is positive. Setting $g_{\infty}:=dr^{2}+\psi_{\infty}^{2}(r)g_{\mathbb{S}^{n}}$,
and letting $\varphi_{i}$ be the diffeomorphisms found in Proposition \ref{rotdifgen} so that $\mathcal{O}(p_{i})$ corresponds to $r=0$,
we have
\[
\varphi_{i}^{\ast}g_{i}=dr^{2}+\psi_{i}^{2}(r)g_{\mathbb{S}^n}\to g_{\infty}
\]
in $C_{loc}^{\infty}(\mathbb{R}\times\mathbb{S}^n)$. Because $\varphi_i$
is $O(n+1)$-equivariant, the claim follows when $M_{i}=\mathbb{R}\times\mathbb{S}^{n}$.
If $M_{i}=\mathbb{S}^{1}\times\mathbb{S}^{n}$, and if $\ell_{i}>0$
denotes the length of the $\mathbb{S}^{1}$ factor, we can assume either $\ell_i \geq 1$ or
\[
v_{0}\leq\text{Vol}_{g_{i}}(M_{i})\leq \ell_{i}C(C_{0},D)
\]
gives $\ell_{i}\geq c(v_{0},C_{0},D)$, where $D$ is as in \eqref{eq:orbitdiameter}, so we can pass to a subsequence
so that $\ell_{i}\to\ell_{\infty}\in(0,\infty]$. If $\ell_{\infty}=\infty$,
we can use the diffeomorphisms from Proposition \ref{rotdifgen} restricted to $(-\frac{-\ell}{2},\frac{\ell}{2})\times \mathbb{S}^n$ and argue as in the case $M_{i}=\mathbb{R}\times\mathbb{S}^{n}$.
Otherwise, let $\pi:\mathbb{R}\times\mathbb{S}^{n}\to([-\frac{\ell_{\infty}}{2},\frac{\ell_{\infty}}{2}]/\sim)\times\mathbb{S}^{n}$
be the quotient map, let $g_{\infty}$ be the Riemannian metric on
the image of $\pi$ induced by $dr^{2}+\psi_{\infty}^{2}(r)g_{\mathbb{S}^{n}}$. Let $N_{\zeta}\in T_{\zeta}M_i$ be the chosen unit normal vector of $\mathcal{O}(p_i)$ at $\zeta \in \mathbb{S}^n\cong \mathcal{O}(p_i)$, so that the rescaled normal exponential map $\widetilde\varphi_{i}:\mathbb{R}\times\mathbb{S}^{n}\to\mathbb{S}^{1}\times\mathbb{S}^{n},(s,\zeta)\mapsto\exp_{\mathcal{O}(p_{i})}^{g_{i}}(\frac{\ell_{i}}{\ell_{\infty}}sN_{\zeta})$
passes to the quotient to give diffeomorphisms $\varphi_{i}:\mathbb{S}^{1}\times\mathbb{S}^{n}\to\mathbb{S}^{1}\times\mathbb{S}^{n}$
satisfying 
\[
\pi^{\ast}(\varphi_{i}^{\ast}g_{i}-g_{\infty})=\left(\frac{\ell_{i}^{2}}{\ell_{\infty}^{2}}-1\right)dr^{2}+\left(\psi_{i}^{2}\left(\frac{\ell_{i}}{\ell_{\infty}}r\right)-\psi_{\infty}^{2}(r)\right)g_{\mathbb{S}^{n}},
\]
converging to 0 in $C_{loc}^{\infty}(\mathbb{R}\times\mathbb{S}^{n})$.
Thus $\varphi_{i}^{\ast}g_{i}\to g_{\infty}$ in $C^{\infty}(\mathbb{R}\times\mathbb{S}^{n})$.
In either case, we have that $\varphi_{i}^{-1}(p_{i})\in\{s_{0}\}\times\mathbb{S}^{n}$
for some $s_{0}\in\mathbb{R}$ or $s_{0}\in\mathbb{S}^{1}$, so we
can pass to a further subsequence so that $\varphi_{i}^{-1}(p_{i})\to p_{\infty}.$ 

\noindent \textbf{Step 4: Compactness when $M_{i}=\mathbb{R}^{n+1}$.
}If $d_{g_{i}}(p_{i},0^{n+1})\to\infty$, we can argue as in Step
3, so assume $\limsup_{i\to\infty}d_{g_{i}}(p_{i},0^{n+1})<\infty$.
Step 2 implies that we can pass to a subsequence so that $(\exp_{0^{n+1}}^{g_{i}})^{\ast}g_{i}\to g_{\infty}$
in $C_{loc}^{\infty}(\mathbb{R}^{n+1})$, where $g_{\infty}$ is a
smooth Riemannian metric. Then pass to a subsequence so that $(\exp_{0^{n+1}}^{g_{i}})^{-1}(p_{i})\to p_{\infty}$. 

\noindent \textbf{Step 5: Compactness when $M_{i}=\mathbb{S}^{n+1}$.
}If $\max\{d_{g_{i}}(p_{i},o_{S}),d_{g_{i}}(p_{i},o_{N})\}\to\infty$, we can argue as
in Step 3 or Step 4, so assume $\limsup_{i\to\infty}\max\{d_{g_{i}}(p_{i},o_{S}),d_{g_{i}}(p_{i},o_{N})\}<\infty$.
We can therefore pass to a subsequence so that $r_{i}:=d_{g_{i}}(o_{S},o_{N})\to r_{\infty}\in(0,\infty)$.
By Step 2, we can pass to further subsequences so that $g_{i}^{S}:=(\exp_{o_{S}}^{g_{i}})^{\ast}g_{i}\to g_{\infty}^{S}$
and $g_{i}^{N}:=(\exp_{o_{N}}^{g_{i}})^{\ast}g_{i}\to g_{\infty}^{N}$
in $C_{loc}^{\infty}(B(0^{n+1},r_{\infty}))$. We note that the transition
maps $\eta_{i}:=(\exp_{o_{N}}^{g_{i}})^{-1}\circ\exp_{o_{S}}^{g_{i}}:B(0^{n+1},r_{i})\setminus\{0\}\to B(0^{n+1},r_{i})\setminus\{0\}$
are given by $x\mapsto\left(r_{i}-|x|\right)\frac{x}{|x|}$. Now define
\[
M_{\infty}:=\left(B(0_{S}^{n+1},r_{\infty})\sqcup B(0_{N}^{n+1},r_{\infty})\right)/\sim,
\]
where we identify $x\in B(0_{S}^{n+1},r_{\infty})$ with $y\in B(0_{N}^{n+1},r_{\infty})$
if and only if $y=(r_{\infty}-|x|)\frac{x}{|x|}$. Equip $B(0_{S}^{n+1},r_{\infty})$
with the metric $g_{\infty}^{S}$, and $B(0_{N}^{n+1},r_{\infty})$
with $g_{\infty}^{N}$. Setting $\eta_{\infty}(x):=(r_{\infty}-|x|)\frac{x}{|x|}$
for $x\in B(0^{n+1},r_{\infty})\setminus\{0\}$, we have $\eta_{i}\to\eta_{\infty}$
in $C_{loc}^{\infty}(B(0^{n+1},r_{\infty})\setminus\{0\})$. Thus
$\eta_{i}^{\ast}g_{i}^{N}=g_{i}^{S}\to g_{\infty}^{S}$ and $\eta_{i}^{\ast}g_{i}^{N}\to\eta_{\infty}^{\ast}g_{\infty}^{N}$
in $C_{loc}^{\infty}$, which implies $g_{\infty}^{S}=\eta_{\infty}^{\ast}g_{\infty}^{N}$,
so that the metric on $B(0_{S}^{n+1},r_{\infty})\sqcup B(0_{N}^{n+1},r_{\infty})$
passes to the quotient to define a metric $g_{\infty}$ on $M_{\infty}$
satisfying $g_{\infty}|B(0_{S}^{n+1},r_{\infty})=g_{\infty}^{S}$
and $g_{\infty}|B(0_{N}^{n+1},r_{\infty})=g_{\infty}^{N}$. Define
$\varphi_{i}(x):=\exp_{0_{S}}^{g_{i}}(\frac{r_{i}}{r_{\infty}}x)$
for $x\in B(0_{S}^{n+1},r_{\infty})$ and $\varphi_{i}(y):=\exp_{0_{N}}^{g_{i}}(\frac{r_{i}}{r_{\infty}}y)$
for $y\in B(0_{N}^{n+1},r_{\infty})$, so that if $x\sim y$, then
\begin{align*}
\varphi_{i}(y)= & \left(\exp_{0_{N}}^{g_{i}}\circ\eta_{i}\right)(\frac{r_{i}}{r_{\infty}}y)=\exp_{0_{N}}^{g_{i}}\left(\left(r_{i}-\frac{r_{i}}{r_{\infty}}|y|\right)\frac{y}{|y|}\right)\\
= & \exp_{0_{N}}^{g_{i}}\left(\frac{r_{i}}{r_{\infty}}(r_{\infty}-|y|)\frac{y}{|y|}\right)=\exp_{0_{N}}^{g_{i}}(\frac{r_{i}}{r_{\infty}}x)=\varphi_{i}(x).
\end{align*}
By similar reasoning, we see that $x\sim y$ only if $\varphi_{i}(x)=\varphi_{i}(y)$,
so that $\varphi_{i}$ induces diffeomorphisms $\varphi_{i}:M_{\infty}\to M_{i}$
whose restriction to $B(0_{S}^{n+1},r_{\infty})$ satisfy
\[
\varphi_{i}^{\ast}g_{i}-g_{\infty}=\varphi_{i}^{\ast}g_{i}^{S}-g_{\infty}^{S}=(\tau_{r_{i}^{2}r_{\infty}^{-2}})^{\ast}g_{i}^{S}-g_{\infty}^S,
\]
where $\tau_{\lambda}:\mathbb{R}^{n+1}\to\mathbb{R}^{n+1}$ is the
dilation $x\mapsto\lambda x$. Because $\tau_{r_{i}^{2}r_{\infty}^{-2}}\to\text{id}$
in $C_{loc}^{\infty}$, we obtain $\varphi_{i}^{\ast}g_{i}-g_{\infty}\to0$
in $C_{loc}^{\infty}(B(0_{S}^{n+1},r_{\infty}))$. Arguing similarly
on $B(0_{N}^{n+1},r_{\infty})$, we have $\varphi_{i}^{\ast}g_{i}\to g_{\infty}$
in $C^{\infty}(M_{\infty})$, and the claim follows after passing
to a further subsequence so that $\varphi_{i}^{-1}(p_{i})\to p_{\infty}$. 
\end{proof}

\begin{lem} \label{flat}
Suppose we have a sequence $(M_{i}^{n},g_{i},p_{i})$ of rotationally invariant
Riemannian manifolds with $$\lim_{i}\inf_{M_{i}}Rm_{g_{i}}\geq0$$ which converges
in the $C^{\infty}$ Cheeger-Gromov sense to the complete, pointed Riemannian manifold $(M_{\infty},g_{\infty},p_{\infty})$, such that the intrinsic diameters of $(\mathcal{O}(p_{i}),g|_{\mathcal{O}(p_i)})$ approach $\infty$ as $i\to\infty$. Then
$g_{\infty}$ is flat and  $M_{\infty}=\mathbb{R}^{n}$. 
\end{lem}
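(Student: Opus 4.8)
The plan is to describe everything through the warping function $\psi_i$, show that it becomes uniformly large on $g_i$-balls of every fixed radius, deduce that $(M_i,g_i,p_i)$ Cheeger--Gromov converges to flat $\mathbb{R}^n$, and then quote uniqueness of Cheeger--Gromov limits to identify $(M_\infty,g_\infty,p_\infty)$.

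First I would set up coordinates. By Proposition \ref{rotdifgen} we may write $g_i = dr\otimes dr + \psi_i^2(r)\,\overline{g}$ near $p_i$, with $r$ the arclength from $\mathcal{O}(p_i)$, normalized so $r(p_i)=0$; put $\Lambda_i := \psi_i(0)$. Since $(\mathcal{O}(p_i),g_i|_{\mathcal{O}(p_i)})$ is then a round $\mathbb{S}^{n-1}$ of radius $\Lambda_i$, the diameter hypothesis is precisely $\Lambda_i\to\infty$. Writing $\epsilon_i := \max\{0,-\inf_{M_i} Rm_{g_i}\}\to 0$ and using the formula for the curvature operator of a rotationally invariant metric recorded just before Proposition \ref{uniform_noncollapsing}, whose distinct eigenvalues are $-2\psi_i''/\psi_i$ and $2(1-(\psi_i')^2)/\psi_i^2$, the bound $Rm_{g_i}\ge -\epsilon_i$ yields
\[
-\frac{\psi_i''}{\psi_i}\ge -\epsilon_i, \qquad \frac{1-(\psi_i')^2}{\psi_i^2}\ge -\epsilon_i
\]
wherever these coordinates are valid. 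In particular $|(\log\psi_i)'| = |\psi_i'/\psi_i| \le \sqrt{\psi_i^{-2}+\epsilon_i}$, so for large $i$ the function $\log\psi_i$ is $2$-Lipschitz on $\{\psi_i\ge 1\}$.

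The crux — and the step I expect to be the main obstacle — is to show that $\psi_i$ stays large: for every fixed $R>0$ and all large $i$, the warped coordinates are valid on $\{|r|<R\}$ and $\psi_i(r)\ge \Lambda_i e^{-2|r|}$ there. I would prove this by a bootstrap: if $\psi_i$ first attained the value $1$ (or reached a singular orbit, where it vanishes) at some $r_1\in(0,R)$, the $2$-Lipschitz bound on $[0,r_1)$ would give $\psi_i(r_1)\ge \Lambda_i e^{-2R}\to\infty$, a contradiction; likewise on $(-R,0]$. This uses $\Lambda_i\to\infty$ and the near-nonnegativity of curvature in an essential way, and also that $r$ does not wrap around — which is why one needs $M_i\not\cong\mathbb{S}^1\times\mathbb{S}^{n-1}$ (automatic in all applications of this lemma, cf. the remark after Proposition \ref{uniform_noncollapsing}; if it failed, $g_\infty$ would still be flat but $M_\infty$ could be $\mathbb{S}^1\times\mathbb{R}^{n-1}$). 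Granting this, $g_i = dr\otimes dr + \psi_i^2(r)\overline{g}$ on $\{|r|<R\}\times\mathbb{S}^{n-1}$, a region that contains $B_{g_i}(p_i,R)$ since $r$ is $1$-Lipschitz for $g_i$, and $\psi_i\to\infty$ uniformly there.

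Once this is established the remainder is routine. Since $M_\infty$ is complete, $\overline{B_{g_\infty}(p_\infty,R)}$ is compact, so Cheeger--Gromov convergence yields $|\nabla^k Rm_{g_i}|\le C_k(R)$ on $B_{g_i}(p_i,R)$ for large $i$; in warped coordinates this becomes $|\psi_i^{(j)}/\psi_i|\le C_j'(R)$ on $\{|r|<R\}$ for all $j$. Hence $u_i := \psi_i/\Lambda_i$ is bounded in $C^\infty_{loc}(\{|r|<R\})$, while $(\log u_i)' = \psi_i'/\psi_i\to 0$ uniformly with $u_i(0)=1$ forces $u_i\to 1$ in $C^0_{loc}$; Arzel\`a--Ascoli then upgrades this to $u_i\to 1$ in $C^\infty_{loc}(\{|r|<R\})$, and feeding this back shows $\sup_{B_{g_i}(p_i,R')}|Rm_{g_i}|\to 0$ for each $R'<R$, so $g_\infty$ is flat. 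To identify $M_\infty$, I would fix $\theta_0\in\mathbb{S}^{n-1}$ corresponding to $p_i$ and, on an exhausting family of boxes in $\mathbb{R}\times\mathbb{R}^{n-1}$ (kept inside the valid range, with second coordinate of size $<\pi\Lambda_i$), set $F_i(r,x):=(r,\exp_{\theta_0}^{\overline{g}}(x/\Lambda_i))$. These are diffeomorphisms onto neighborhoods of $p_i=F_i(0,0)$, and
\[
F_i^{\ast}g_i = dr\otimes dr + u_i^2(r)\,[(\exp_{\theta_0}^{\overline{g}})^{\ast}\overline{g}]_{x/\Lambda_i}
\]
converges in $C^\infty_{loc}$ to the Euclidean metric, using $u_i\to 1$ and $\Lambda_i\to\infty$ (so $x/\Lambda_i\to0$ and all positive-order $x$-derivatives of the bracket vanish in the limit). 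Thus $(M_i,g_i,p_i)\to(\mathbb{R}^n,g_{\mathrm{Eucl}},0)$ in the pointed $C^\infty$ Cheeger--Gromov sense, and by uniqueness of such limits $(M_\infty,g_\infty,p_\infty)$ is isometric to it, giving both conclusions of the lemma.
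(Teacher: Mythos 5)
Your argument is correct, and its analytic core is the same as the paper's: both proofs express everything through the warping function, use the almost-nonnegative curvature to get $|(\log\psi_i)'|\le\sqrt{\psi_i^{-2}+\epsilon_i}$, deduce that $\psi_i\to\infty$ locally uniformly in $r$ (so the coordinate range opens up), and then combine this with the uniform curvature-derivative bounds furnished by convergence to a complete limit to conclude that the curvature tends to zero locally uniformly — the paper does this for $\xi_i=\log(\psi_i/\psi_i(0))$ with a subsequence argument in $C^2$, while your Arzel\`a--Ascoli upgrade of $u_i=\psi_i/\Lambda_i$ to $C^\infty_{loc}$ is the same device pushed to all orders (which you need later). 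Where you genuinely diverge is the identification of $M_\infty$: the paper only concludes that the limit is flat and noncompact, and then invokes Proposition \ref{uniform_noncollapsing} to obtain positive asymptotic volume ratio and rule out nontrivial quotients of Euclidean space via the fundamental group; you instead build explicit charts $F_i$ from the warped coordinates and the exponential map of the orbit sphere, show $(M_i,g_i,p_i)\to(\mathbb{R}^n,g_{\mathrm{Eucl}},0)$ directly, and quote uniqueness of pointed Cheeger--Gromov limits. Your route is more self-contained (no appeal to noncollapsing or asymptotic volume ratio), at the modest cost of needing $u_i\to1$ in every $C^k$ norm rather than just $C^0$ control of curvature. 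Your explicit caveat about $M_i\cong\mathbb{S}^1\times\mathbb{S}^{n-1}$ is also an accurate reading of the situation: in that case the topological conclusion can genuinely fail (the limit can be $\mathbb{S}^1\times\mathbb{R}^{n-1}$ while still flat), and the paper's own proof implicitly excludes it as well, since Proposition \ref{uniform_noncollapsing} does not apply to $\mathbb{S}^1\times\mathbb{S}^{n}$; only the flatness part of the lemma is used in the applications, so flagging this rather than fixing the statement is reasonable. Two points you leave implicit but which are immediate and worth a sentence if written up: the curvature-derivative bounds from Cheeger--Gromov convergence are a priori only on $B_{g_i}(p_i,R)$, and one passes to the full tube $\{|r|<R\}$ because $|\nabla^kRm|$ is constant on orbits and every orbit with $|r|<R$ meets that ball along the normal geodesic through $p_i$; and the bound $|\psi_i^{(j)}/\psi_i|\le C_j'(R)$ requires the short induction converting covariant-derivative bounds on the two sectional curvatures into bounds on derivatives of $\psi_i$ (the same step the paper performs for $\xi_i''$ and $\xi_i'''$).
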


\begin{proof}
By assumption, we have an increasing exhaustion of $M_{\infty}$ by relatively compact open sets $\{U_{i}\}_{i\in \mathbb{N}}$, and embeddings $\varphi_{i}:U_{i}\to M_{i}$
satisfying $\varphi_{i}(p_{\infty})=p_{i}$ and
\[
\lim_{i\to\infty}\sup_{K}|\nabla_{g_{\infty}}^{k}(\varphi_{i}^{\ast}g_{i}-g_{\infty})|_{g_{\infty}}=0
\]
for any $k\in\mathbb{N}$ and any compact subset $K\subseteq M_{\infty}$.
Using Proposition \ref{rotdifgen} based at $\mathcal{O}(p_{i})$, we can identify $(M_{i})_{\operatorname{reg}}\cong(\alpha_{i},\beta_{i})\times\mathbb{S}^{n}$,
where $-\infty\leq\alpha_{i}<0<\beta_{i}\leq\infty$, and $\mathcal{O}(p_i)$ corresponds to $\{0\}\times \mathbb{S}^n$. With this identification, we have $g_{i}=dr^{2}+\psi_{i}^{2}(r)g_{\mathbb{S}^{n}},$ where
$\psi_{i}\in C^{\infty}((\alpha_{i},\beta_{i}),(0,\infty))$. Recall
that the sectional curvatures on $(M_{i})_{\operatorname{reg}}$ are given by
\[
\dfrac{1-(\psi'_{i})^{2}}{\psi_{i}^{2}},\quad -\dfrac{\psi_{i}''}{\psi_{i}},
\]
so there exists a sequence of positive numbers $\{\epsilon_i\}_{i\in \mathbb{N}}$ converging to $0$ so that $1-(\psi_{i}')^{2}\geq-\epsilon_{i}\psi_{i}^{2}$ and $\psi_{i}''\leq\epsilon_i\psi_{i}$
on $(\alpha_{i},\beta_{i})$. We can assume without loss of generality that $\epsilon_i\le \frac{1}{2}$ for all $i\in \mathbb{N}$. 

We claim that\textbf{ $\lim_{i\to\infty}\alpha_{i}=-\infty$ } and\textbf{
$\lim_{i\to\infty}\beta_{i}=\infty$} , and that for each large $K>0$, the quantity $\xi_i(r):=\log\left(\frac{\psi_i(r)}{\psi_i(0)}\right)$ is uniformly bounded for $r\in [-K,K]$. To see this, observe that we have the inequality
\[
\left|\dfrac{d}{dr}\log\left(1+\psi_{i}(r)\right)\right|\leq\dfrac{|\psi_{i}'(r)|}{1+\psi_{i}(r)}\leq\dfrac{\sqrt{1+\epsilon_{i}\psi_{i}^{2}(t)}}{1+\psi_{i}(r)}\leq 1
\]
for all $r\in(\alpha_{i},\beta_{i})$; this allows us to conclude 
\begin{align}\label{somenonsense}
\left|\log\left(\frac{1+\psi_i(r)}{1+\psi_i(0)}\right)\right|\le \left|r\right|
\end{align}
for each $r\in (\alpha_i,\beta_i)$. If $\alpha_i$ were bounded then $\lim_{i\to \infty}\psi_i(0)=\infty$, $\lim_{t\to \alpha_i^-}\psi_i(t)=0$, and (\ref{somenonsense}) combine to give a contradiction, hence $-\alpha_i$ and $\beta_i$ must become arbitrarily large. By the fact $\lim_{i\to \infty}\psi_i(0)=\infty$ again, and since (\ref{somenonsense}) implies $\xi_i(r)$ is uniformly bounded on compact subsets, we get $\psi_i\to\infty$ uniformly on compact sets.

Since
\[
-\epsilon_{i}\leq\dfrac{1-(\psi'_{i})^{2}}{\psi_{i}^{2}}\leq\frac{1}{\psi_{i}^{2}},
\]
the orbital sectional curvatures converge locally uniformly to 0.
Since $\frac{|\psi_{i}'|}{\psi_i}\leq \sqrt{\epsilon_{i}+\psi_i^{-2}}$, and since $\psi_i\to\infty$ uniformly on compact sets, we have $|\xi_{i}'|\to0$ locally
uniformly in $C_{loc}^{0}$. Moreover, $\sup_{r\in[-\frac{1}{2}\delta^{-1},\frac{1}{2}\delta^{-1}]}|\xi_{i}''(r)|$ can be bounded in terms of $\sup_{i}\sup_{B_{g_{i}}(p_{i},\delta^{-1})}|Rm_{g_i}|_{g_{i}}$, and is thus bounded uniformly.
Similarly, $\sup_{r\in[-\frac{1}{2}\delta^{-1},\frac{1}{2}\delta^{-1}]}|\xi_{i}'''(r)|$ also admits a uniform bound. 
Indeed, 
\begin{eqnarray*}
\xi_i''&=&\frac{\psi_i''}{\psi_i}-\frac{(\psi_i')^2}{\psi_i^2}\\
\xi_i'''&=&\frac{d}{dr}\left(\frac{\psi_i''}{\psi_i}\right)+\frac{d}{dr}\left(-\frac{(\psi_i')^2}{\psi_i^2}\right)\\
&=&\frac{d}{dr}\left(\frac{\psi_i''}{\psi_i}\right)+\frac{d}{dr}\left(\frac{1-(\psi_i')^2}{\psi_i^2}\right)-\frac{2\psi_i'}{\psi_i}\cdot\frac{1}{\psi_i^2},
\end{eqnarray*}
where the first two terms on the last line is bounded by $|\nabla Rm_{g_i}|_{g_i}$ and the last term approaches $0$ locally uniformly by our previous argument.
Thus, for any subsequence, we may pass to a further subsequence so that $\xi_{i}$ converges in $C_{loc}^{2}(\mathbb{R})$
to some $\xi_{\infty}\in C^{2}(\mathbb{R})$. However, $|\xi_{i}'|\to0$
in $C_{loc}^{0}(\mathbb{R})$ implies that $\xi_{\infty}$ is constant,
hence $\xi''_{\infty}\equiv0$. Since the subsequence was arbitrary,
we conclude that $\lim_{i\to\infty}\sup_{r\in[-\delta^{-1},\delta^{-1}]}|\xi_{i}''(r)|=0$
for any $\delta>0$. It follows from straightforward computation that $|Rm_{g_i}|_{g_{i}}\to0$ in $C_{loc}^{0}$.
Since the limit must be noncompact, we conclude that it is flat $\mathbb{R}^{n+1}$; to see this, recall that all rotationally invariant manifolds are $\kappa$-noncollasped with a universal $\kappa(n)$ (Proposition \ref{uniform_noncollapsing}), and hence the limit has positive asymptotic volume ratio. But a flat manifold with positive asymptotic volume ratio must have finite fundamental group, and hence cannot be a nontrivial quotient of $\mathbb{R}^{n+1}$.
\end{proof}

We now extend this compactness theorem to the setting of Ricci flows.

\begin{prop} \label{flowequivcompact} Suppose that $(M_i^{n+1},(g_t^i)_{t\in (\alpha ,\beta]},p_i)$ is a sequence of complete, pointed, rotationally invariant Ricci flows, with $\theta_i:O(n+1)\times M_i \to M_i$ the isometric action. Assume that $\alpha<0 <\beta$, $\sup_i \sup_{M_i \times (\alpha,\beta]}|Rm_{g^i}|_{g^i}\leq C<\infty$ and $\inf_i \operatorname{Vol}_{g_i}(B_{g_0^i}(p_i,1))\geq v_0$ for all $i\in\mathbb{N}$. If the intrinsic diameter of $\mathcal{O}(p_i)$ equipped with $g_0^i$ is uniformly bounded, then there is a complete, pointed, rotationally invariant Ricci flow $(M^{n+1}_{\infty},(g_t^{\infty})_{t\in (\alpha,\beta]},p_\infty)$, an exhaustion $U_i \subseteq M_{\infty}$ of $O(n+1)$-invariant precompact open sets, and embeddings $\varphi_i:U_i \to M_i$ such that $\varphi_i(p_{\infty})=p_i$, $\varphi_i \circ \theta_h^{\infty}=\theta_h^i\circ \varphi_i$ for all $h\in O(n+1)$, and 
$$\lim_{i\to \infty} \sup_{t\in [\alpha',\beta]} \sup_K |\nabla_{g_t^i}(\varphi_i^{\ast}g_t^i-g_t^{\infty})|_{g_t^{\infty}}=0$$
for each $k\in \mathbb{N}$, $\alpha'\in (\alpha,\beta)$ and compact set $K\subseteq M_{\infty}$. \end{prop}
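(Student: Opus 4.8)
The plan is to reduce this to the static compactness result Proposition \ref{equivariantcompactness} by working at a single time slice and then propagating the convergence to the whole time interval via standard parabolic estimates. First, since $\sup_i\sup_{M_i\times(\alpha,\beta]}|Rm_{g^i}|_{g^i}\le C$, Shi's derivative estimates (applied on the time interval $(\alpha,\beta]$, with the lower bound on the interval length giving uniform control) yield uniform bounds $\sup_i\sup_{M_i\times[\alpha',\beta]}|\nabla_{g^i}^k Rm_{g^i}|_{g^i}\le C_k(k,C,\alpha'-\alpha)$ for every $k$ and every $\alpha'\in(\alpha,\beta)$. In particular, these hold on any fixed slice $t_0\in(\alpha,\beta)$. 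Moreover the uniform curvature bound and distance-distortion estimate for Ricci flow show that the volume lower bound $\Vol_{g_0^i}(B_{g_0^i}(p_i,1))\ge v_0$ and the orbit-diameter bound for $g_0^i$ transfer (with adjusted constants) to the slice $g_{t_0}^i$; the orbits $\mathcal{O}(p_i)$ are the same submanifolds and their intrinsic diameters change by a bounded factor. Thus the hypotheses of Proposition \ref{equivariantcompactness} are satisfied for the sequence $(M_i,g_{t_0}^i,p_i)$.

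Next I would apply Proposition \ref{equivariantcompactness} at the slice $t_0$ to extract a subsequence, a rotationally invariant limit $(M_\infty,h,p_\infty)$, an exhaustion $U_i\subseteq M_\infty$ of $O(n+1)$-invariant precompact open sets, and $O(n+1)$-equivariant embeddings $\varphi_i\colon U_i\to M_i$ with $\varphi_i^{-1}(p_i)\to p_\infty$ and $\varphi_i^\ast g_{t_0}^i\to h$ in $C^\infty_{loc}$. After composing each $\varphi_i$ with an isometry of $M_i$ (which preserves equivariance, since such isometries can be taken to commute with the $O(n+1)$-action — concretely, translations along the radial direction of the warped-product model) we may arrange $\varphi_i^{-1}(p_i)=p_\infty$ exactly, or else simply absorb the convergence $\varphi_i^{-1}(p_i)\to p_\infty$ as in the statement. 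Now pull back the whole flows: set $\widetilde g_t^i:=\varphi_i^\ast g_t^i$ on $U_i\times(\alpha,\beta]$. Each $\widetilde g_t^i$ is a rotationally invariant Ricci flow (pullback by a diffeomorphism commutes with Ricci flow), and by the Shi-type bounds above together with the equivalence $\tfrac12 g_{t_0}^i\le g_t^i\le 2g_{t_0}^i$ coming from the curvature bound (which controls the time-derivative $\partial_t g=-2Rc$), the metrics $\widetilde g_t^i$ and all their covariant derivatives are uniformly bounded on $K\times[\alpha',\beta]$ for every compact $K\subseteq M_\infty$ and $\alpha'\in(\alpha,\beta)$, in a fixed background metric (say $h$). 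By Arzelà--Ascoli, a diagonal subsequence converges in $C^\infty_{loc}(M_\infty\times(\alpha,\beta])$ to a limiting family $(g_t^\infty)_{t\in(\alpha,\beta]}$, which is a Ricci flow (the equation passes to the limit), is complete (by the uniform metric equivalence with the complete metric $h$), is rotationally invariant (each $\widetilde g_t^i$ is $O(n+1)$-invariant and invariance is a closed condition), and satisfies $g_{t_0}^\infty=h$. Equivariance of the comparison maps $\varphi_i$ is inherited directly from Proposition \ref{equivariantcompactness}, giving $\varphi_i\circ\theta_h^\infty=\theta_h^i\circ\varphi_i$.

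The one point requiring a little care — and the main (mild) obstacle — is extending the convergence all the way \emph{down} to times $t$ near $\alpha$, since the statement only asserts convergence on $[\alpha',\beta]$ for $\alpha'>\alpha$; this is automatic because the Shi estimates degenerate as $t\to\alpha^+$, but the formulation already restricts to $[\alpha',\beta]$, so no further work is needed. A second subtlety is making sure the limit flow is defined on the half-open interval $(\alpha,\beta]$ rather than only on each $[\alpha',\beta]$: one takes a sequence $\alpha_j\searrow\alpha$, runs the diagonal argument on each $[\alpha_j,\beta]$, and patches (the limits agree on overlaps by uniqueness of $C^\infty_{loc}$ limits). Finally, to ensure $M_\infty$ genuinely carries a complete flow for \emph{all} $t\in(\alpha,\beta]$ and not a flow that could escape to infinity, one uses that the uniform two-sided bound $c(C,\beta-\alpha)h\le g_t^\infty\le C(C,\beta-\alpha)h$ holds on all of $M_\infty$ for $t$ in any $[\alpha',\beta]$, so completeness of $h$ forces completeness of each $g_t^\infty$. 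Everything else is a routine assembly of Shi's estimates, the distance-distortion lemma for Ricci flow, and the already-established static equivariant compactness theorem.
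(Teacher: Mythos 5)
Your proposal is correct and takes essentially the same route as the paper, whose entire proof is to apply the equivariant time-slice compactness and then invoke the classical fact from \cite{Hamilton95} that, under the assumed curvature bounds, convergence at one time implies convergence at all times — exactly the Shi-estimate/Arzel\`a--Ascoli propagation you spell out. One small quibble: radial translations are not isometries of a general rotationally invariant metric, so your parenthetical device for forcing $\varphi_i(p_\infty)=p_i$ exactly does not work as stated, but your fallback of retaining only $\varphi_i^{-1}(p_i)\to p_\infty$ is precisely what Proposition \ref{equivariantcompactness} provides and suffices.
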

\begin{proof} This follows from the classical fact (see \cite{Hamilton95}) that, given the assumed curvature bounds, convergence at one time implies convergence at all times.
\end{proof}

\begin{cor} \label{closenessimproved}
For any $\epsilon>0$ and $\kappa>0$, there exists $\delta=\delta(n,\epsilon,\kappa)>0$
such that the following holds. Suppose $(M^{n+1},(g_t)_{t\in[0,T)},(x_{0},t_{0}))$
is a pointed, complete rotationally invariant solution of Ricci flow
with bounded curvature, and that 
\[
\{(x,t)\in M\times [0,T);d_{g_t}(x,x_{0})<Q^{-\frac{1}{2}}\delta^{-\frac{1}{2}},t\in[t_{0}-Q^{-1}\delta^{-1},t_{0}]\}
\]
equipped with the rescaled metrics $Qg_t$, where $Q:=R(x_{0},t_{0})$,
is $\delta$-close to the corresponding subset of a $\kappa$-solution.
Then there exists a rotationally invariant $\kappa$-solution $(\widehat{M},\widehat{g})$
and an $O(n+1)$-equivariant diffeomorphism $\eta$ from
\[
\{(x,t)\in M;d_{g_t}(x,\mathcal{O}(x_{0}))<Q^{-\frac{1}{2}}\epsilon^{-\frac{1}{2}},t\in[t_{0}-Q^{-1}\delta^{-1},t_{0}]\}
\]
to the corresponding subset $U$ of $(\widehat{M},\widehat{g})$,
such that $||(\eta^{-1})^{\ast}(Qg_t)-\widehat{g}||_{C^{\left\lfloor \epsilon^{-1}\right\rfloor }(U,g^{\infty})}<\epsilon$.
\end{cor}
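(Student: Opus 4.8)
The plan is to upgrade the hypothesized (non-equivariant) $\delta$-closeness to a $\kappa$-solution into an $O(n+1)$-equivariant closeness, using the equivariant compactness machinery of Proposition \ref{equivariantcompactness} together with the classification of rotationally symmetric $\kappa$-solutions from \cite{LiZhang18}. I argue by contradiction: suppose the statement fails for some fixed $\epsilon>0$, $\kappa>0$. Then there is a sequence $\delta_j\to 0$ and pointed rotationally invariant Ricci flows $(M_j,(g^j_t),(x_{0,j},t_{0,j}))$ with bounded curvature such that, after rescaling by $Q_j:=R(x_{0,j},t_{0,j})$ and shifting $t_{0,j}$ to $0$, the parabolic region of radius $Q_j^{-1/2}\delta_j^{-1/2}$ is $\delta_j$-close to (a subset of) a $\kappa$-solution, yet no rotationally invariant $\kappa$-solution admits an $O(n+1)$-equivariant diffeomorphism onto the prescribed smaller region that is $\epsilon$-close in $C^{\lfloor\epsilon^{-1}\rfloor}$.

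The key steps, in order: first, the $\delta_j$-closeness to a $\kappa$-solution gives, after rescaling, uniform curvature bounds $|Rm|\le C$ and uniform $\kappa$-noncollapsing on the parabolic regions in question, with the regions exhausting all of spacetime as $j\to\infty$ (since $\delta_j\to 0$). Proposition \ref{uniform_noncollapsing} already guarantees $\kappa(n)$-noncollapsing provided the time slices are not diffeomorphic to $\mathbb{S}^1\times\mathbb{S}^n$; in the remaining case the $\kappa$-solution hypothesis supplies noncollapsing directly, and one checks the orbit diameters are uniformly bounded (either the basepoint lies on a neck, where the orbit $\mathcal{O}(x_{0,j})$ after rescaling has diameter comparable to that of the round cylinder cross-section, or near a Bryant-type cap, where again the orbit through the normalized basepoint has controlled diameter). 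Second, apply Proposition \ref{flowequivcompact} to extract a subsequential limit: a complete, pointed, rotationally invariant Ricci flow $(M_\infty,(g^\infty_t)_{t\le 0},x_\infty)$ together with $O(n+1)$-equivariant embeddings $\varphi_j:U_j\to M_j$ realizing smooth local convergence. Third, identify the limit: since each $g^j$ is $\delta_j$-close to \emph{some} $\kappa$-solution and $\delta_j\to 0$, the limit flow $(M_\infty,(g^\infty_t))$ is itself an ancient, $\kappa$-noncollapsed solution with bounded nonnegative curvature operator (nonnegativity passing to the limit via the Hamilton--Ivey estimate of Theorem \ref{rotpinch} applied after the parabolic rescaling, or directly from the $\kappa$-solution structure), hence a rotationally invariant $\kappa$-solution; by \cite{LiZhang18} it is one of the classified models. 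Fourth, for $j$ large the equivariant embedding $\varphi_j$ restricted to the preimage of the prescribed region $\{d_{g^j_t}(x,\mathcal{O}(x_{0,j}))<Q_j^{-1/2}\epsilon^{-1/2},\ t\in[-\delta_j^{-1}Q_j^{-1},0]\}$ furnishes the desired $O(n+1)$-equivariant diffeomorphism $\eta$ onto an open subset $U$ of this $\kappa$-solution $(\widehat M,\widehat g)$, and the $C^\infty_{loc}$ convergence $\varphi_j^*g^j_t\to g^\infty_t$ forces $\|(\eta^{-1})^*(Q_jg^j_t)-\widehat g\|_{C^{\lfloor\epsilon^{-1}\rfloor}(U)}<\epsilon$ for $j$ sufficiently large, contradicting the choice of the sequence.

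The main obstacle I expect is the bookkeeping at Step 4: ensuring the region on which $\eta$ is required to be an equivariant diffeomorphism (defined in terms of distance to the \emph{orbit} $\mathcal{O}(x_{0,j})$, and at the slightly larger radius $\epsilon^{-1/2}$ rather than $\delta^{-1/2}$) sits inside the image $\varphi_j(U_j)$ for $j$ large, and that the comparison norm is measured in the correct metric ($g^\infty$ versus $\widehat g$ versus the rescaled $g^j$). This requires that the limit region $\{d_{g^\infty_t}(x,\mathcal{O}(x_\infty))<\epsilon^{-1/2},\ t\in[-\delta_j^{-1},0]\}$ — whose time-extent grows without bound — is genuinely exhausted by the $U_j$; this is exactly where one uses that the $\delta_j$-closeness holds on parabolic regions whose radii and backward time-extents both tend to infinity, so the convergence is on all of $(-\infty,0]\times(\text{everything})$ and in particular covers any fixed finite region for $j$ large. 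A secondary point is verifying the orbit-diameter bound needed to invoke Proposition \ref{flowequivcompact}; this follows from the canonical neighborhood structure of $\kappa$-solutions (neck or cap) but should be spelled out. The rest is a routine diagonal-subsequence and contradiction argument.
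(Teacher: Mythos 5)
Your overall strategy is exactly the paper's: argue by contradiction, use the hypothesis (with $\delta_j\to 0$) and the compactness of $\kappa$-solutions to get non-equivariant pointed convergence of the rescaled flows to a $\kappa$-solution with $R(x_\infty,0)=1$, then upgrade to equivariant convergence via Proposition \ref{flowequivcompact} to land on a rotationally invariant $\kappa$-solution, contradicting the choice of counterexamples. The one substantive step you flag but do not handle correctly is the uniform bound on the intrinsic diameters of the orbits $\mathcal{O}(x_{0,j})$ in the rescaled metrics, which is the hypothesis needed to invoke Proposition \ref{flowequivcompact}. Your justification --- that $x_{0,j}$ lies on a neck or cap, so the orbit has diameter comparable to the cylinder cross-section --- implicitly assumes that the spherical factor of the nearby $\kappa$-solution is aligned with the $O(n+1)$-orbits of $M_j$. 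But the hypothesized $\delta_j$-closeness is \emph{not} equivariant (that is precisely what the corollary is meant to produce), so a priori the orbits of the intrinsic rotational symmetry could be enormous even though the local geometry is neck-like in some unrelated coordinates; the "neck or cap" reasoning therefore begs the question.

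The correct way to close this gap, and the route the paper takes, is Lemma \ref{flat}: if along a subsequence the rescaled orbit diameters $\operatorname{diam}(\mathcal{O}(x_{0,j}),Q_jg^j_{t_{0,j}})$ tended to infinity, then (curvature of the limit being nonnegative, since the limit is a $\kappa$-solution) the non-equivariant Cheeger--Gromov limit of the time slices would have to be flat $\mathbb{R}^{n+1}$, contradicting $R_{g^\infty}(x_\infty,0)=1$. With that replacement your Steps 2--4 go through as written (and your concern about exhausting the prescribed region of radius $\epsilon^{-1/2}$ and growing time-extent is handled by the fact that the closeness hypothesis holds on parabolic regions whose spatial and backward temporal extents both diverge). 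A minor caution: do not route the nonnegativity of the limit curvature through Theorem \ref{rotpinch}, since the rescaled flows carry no normalization $\nu\geq -1$ at their initial time; the limit is a $\kappa$-solution directly from the hypothesis, as you note in your alternative, and that is the argument to keep.
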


\begin{proof}
Suppose $(M_{i},(g_t^i)_{t\in[0,T)},(x_i,t_i))$ is a contradictory
sequence, so that if $Q_{i}:=R_{g^i}(x_{i},t_{i})$, then\\ $(M_{i},Q_{i}g_{Q_i^{-1}t}^i,(x_{i},t_{i}))$
converge in the pointed Cheeger-Gromov sense to a $\kappa$-solution
$(M_{\infty},g_t^{\infty},(x_{\infty},0))$ with $R_{g^{\infty}}(x_{\infty},0)=1$.
By Lemma \ref{flat}, the intrinsic diameters of $(\mathcal{O}(x_{i}),Q_i g_{t_i}^i)$ must be uniformly bounded, so Proposition \ref{flowequivcompact} gives equivariant convergence to
a rotationally invariant $\kappa$-solution, a contradiction.
\end{proof}

\subsection{Geometry of rotationally invariant $\kappa$-solutions}

In this section, we improve the description of high curvature regions of rotationally invariant Ricci flows, using the equivariant compactness theorem and various results from \cite{LiZhang18}.

\begin{defin}
A $\kappa$-solution of Ricci flow is an ancient solution $(M^{n},(g_t)_{t\in(-\infty,0]})$
of Ricci flow with uniformly bounded, nonnegative curvature operator,
that is $\kappa$-noncollapsed on all scales.
\end{defin}

In the rotationally-invariant setting, we know exactly what the relevant non-compact $\kappa$-solutions look like:
\begin{thm}[Li-Zhang \cite{LiZhang18}]\label{kappa-solution-classification-noncompact}
A non-negatively curved and non-compact rotationally invariant $\kappa$-solution is (isometric to) the Bryant soliton, or a quotient of the shrinking cylinders. 
\end{thm}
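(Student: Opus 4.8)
The statement to prove is Theorem~\ref{kappa-solution-classification-noncompact}: a non-negatively curved, non-compact, rotationally invariant $\kappa$-solution is either the Bryant soliton or a quotient of the shrinking round cylinders $\mathbb{R}\times\mathbb{S}^{n-1}$. Since this is quoted verbatim from \cite{LiZhang18}, the plan is not to reprove it from scratch but to indicate how the classification is organized and which inputs from the present paper's setup make it go through. First I would reduce to the case where the $\kappa$-solution $(M^n,(g_t)_{t\in(-\infty,0]})$ has $M\cong\mathbb{R}^n$ or $M\cong\mathbb{R}\times\mathbb{S}^{n-1}$: by Proposition~\ref{rotdifgen} a connected, non-compact rotationally invariant manifold is $\mathbb{R}^n$, $\mathbb{R}\times\mathbb{S}^{n-1}$, or a quotient of the latter, so after passing to the universal cover (which is again a $\kappa$-solution, with the same $\kappa$ up to a universal factor since $\kappa$-noncollapsing lifts) we are in one of the two simply connected cases, and at the end we descend the classification back to the quotient.

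\textbf{The cylindrical case.} Suppose $M\cong\mathbb{R}\times\mathbb{S}^{n-1}$, so $g_t=ds_t\otimes ds_t+\psi^2(s_t,t)\,\overline{g}|_{\mathbb{S}^{n-1}}$. Nonnegativity of the curvature operator forces, via the formulas $\mathcal R(\partial_r\wedge e_i)=-2\frac{\psi''}{\psi}\partial_r\wedge e_i$ and $\mathcal R(e_i\wedge e_j)=\frac{2(1-(\psi')^2)}{\psi^2}e_i\wedge e_j$ recalled after Proposition~\ref{rotdifgen}, that $\psi''\le 0$ and $(\psi')^2\le 1$ at every spatial slice. A concave positive function on all of $\mathbb{R}$ must be constant, so $\psi(\cdot,t)$ is spatially constant for each $t$; then the Ricci flow ODE $\partial_t\psi=-(n-2)/\psi$ integrates to give exactly the shrinking cylinder. (One must check $n\ge 3$ here; for $n=2$ the "$\mathbb S^{n-1}$" is $\mathbb S^1$ and $\mathbb R\times\mathbb S^1$ is flat, excluded by $\kappa$-noncollapsing, so this case is vacuous.) The key subtlety is justifying that the ancient solution is genuinely defined on all of $\mathbb R$ in the $s$ variable for all $t$, which follows from completeness and the uniform curvature bound.

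\textbf{The $\mathbb{R}^n$ case — the main obstacle.} This is the hard part and the one carried out in detail in \cite{LiZhang18}. Here $M\cong\mathbb{R}^n$ with a single fixed point $o$, and the goal is to show $g$ is the Bryant soliton. The strategy is Perelman's asymptotic-soliton argument adapted to the rotationally invariant setting: take a sequence of rescalings $(M, Q_i g_{Q_i^{-1}t}, o)$ with $Q_i\to 0$ (backward in time), and use the monotonicity of Perelman's reduced volume together with the $\kappa$-noncollapsing to extract a non-flat, non-compact gradient shrinking soliton limit; the equivariant compactness Proposition~\ref{flowequivcompact} guarantees this limit is again rotationally invariant. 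A rotationally invariant non-flat non-compact shrinking soliton must be the round cylinder (by the cylindrical analysis above applied to the soliton equation), so the asymptotic geometry of $(M,g_t)$ at spatial infinity is cylindrical. One then upgrades this: the flow has a unique end which is asymptotically cylindrical, and a Liouville-type / uniqueness argument (using that the Bryant soliton is the unique rotationally invariant steady soliton with this asymptotic profile, or else directly analyzing the warping-function ODE system using concavity of $\psi$ and the structure of the rescaled flow near $o$) pins $(M,g_t)$ down to the Bryant soliton. The genuinely delicate points are: showing the asymptotic shrinking soliton is non-flat (this is where $\kappa$-noncollapsing enters crucially, as in Lemma~\ref{flat}), ruling out collapsed or compact limits, and the final rigidity step identifying the steady soliton — each of which is handled in \cite{LiZhang18} via careful ODE analysis of $\psi$ combined with the no-local-collapsing hypothesis built into the definition of $\kappa$-solution.
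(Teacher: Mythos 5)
The paper does not prove this statement at all: it is imported verbatim from \cite{LiZhang18} and used as a black box, so your decision to defer to that reference is exactly the paper's own ``proof,'' and on that level the two treatments agree. Your supporting sketch is broadly sound where it is concrete: the reduction via Proposition \ref{rotdifgen} and the universal cover is fine, and the cylindrical case (concavity of a positive warping function on all of $\mathbb{R}$ forces it to be constant, after which the flow is the standard shrinking cylinder) is a complete argument. The one caution is that your outline of the $\mathbb{R}^{n+1}$ case compresses the actual content of \cite{LiZhang18} into a single vague step: knowing that the asymptotic (backward-limit) shrinker is a round cylinder does not by any routine ``Liouville-type'' or ODE-uniqueness argument identify the solution as the Bryant soliton, because a priori the solution need not be a soliton at all. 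The heart of Li--Zhang's proof (following Brendle's three-dimensional argument in \cite{Brendle20}) is precisely to show that the ancient solution is a steady gradient soliton, via quantitative asymptotic-cylindricality estimates and a barrier/Lyapunov-type analysis, after which Bryant's uniqueness of the rotationally symmetric steady soliton applies. So your sketch should be read as a correct road map with the main rigidity step delegated to the cited reference, not as an alternative proof; since the paper itself cites rather than proves the theorem, this is an acceptable match.
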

In the compact setting, a recent work shows the following:

\begin{thm}[Brendle-Daskalopolous-Naff-Sesum \cite{Brendle-etal21}]\label{kappa-solution-classification-compact}
Let $(\mathbb{S}^{n+1},(g_t^1)_{t\in(-\infty,0]})$ and $(\mathbb{S}^{n+1},(g_t^2)_{t\in (-\infty,0]})$ be two ancient $\kappa$-solutions on $\mathbb{S}^{n+1}$ which are rotationally invariant. Assume that neither $(\mathbb{S}^{n+1},(g_t^1)_{t\in (-\infty,0]})$ nor $(\mathbb{S}^{n+1},(g_t^2)_{t\in (-\infty,0]})$ is a family of shrinking round spheres. Then $(\mathbb{S}^{n+1},(g_t^1)_{t\in (-\infty,0]})$ and $(\mathbb{S}^{n+1},(g_t^2)_{t\in (-\infty,0]})$ coincide up
to a (time-independent) diffeomorphism, a translation in time, and a parabolic rescaling.
\end{thm}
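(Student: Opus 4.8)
The plan is to exploit the rotational symmetry to collapse the problem to the analysis of a single scalar parabolic equation, and then to run a refined asymptotic analysis near the cylindrical part of the solution, following the template of Brendle's uniqueness proof for the Bryant soliton and the Brendle--Daskalopoulos--Sesum classification of three-dimensional ancient solutions. By Proposition~\ref{rotdifgen}, away from the two singular orbits we may write $g_t = ds^2 + \psi^2(s,t)\,\overline{g}$, so that the Ricci flow reduces to $\partial_t\psi = \partial_s^2\psi - (n-1)\tfrac{1-(\partial_s\psi)^2}{\psi}$, the equation recorded above in Section~2.3. In this symmetric setting the diffeomorphism freedom allowed in the statement is essentially exhausted once we fix the arclength gauge: it amounts to a choice of origin for $s$ together with the reflection interchanging the two tips. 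So the task is to compare two profile functions up to this reparametrization, a time translation, and a parabolic rescaling.

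First I would establish the coarse asymptotics as $t\to-\infty$. Since neither solution is a family of round spheres, each must contain, for $t$ sufficiently negative, an arbitrarily long almost-cylindrical neck. Taking pointed limits of the parabolically rescaled flows centered in the neck and invoking the Li--Zhang classification (Theorem~\ref{kappa-solution-classification-noncompact}), the only possible blow-down limit is the shrinking round cylinder $\mathbb{R}\times\mathbb{S}^n$; pointed limits centered at the tips converge to the unique rotationally symmetric steady (Bryant) soliton. After a single parabolic rescaling one may therefore normalize both solutions so that, on compact subsets of the neck, the rescaled metrics converge to the \emph{same} shrinking cylinder of a definite radius.

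The heart of the matter is the fine asymptotics of $\psi$ in the neck. Introducing the self-similar time $\tau = -\log(-t)$ and a rescaled space variable $y$, and writing $\psi$ as the cylinder radius plus a small perturbation $u(y,\tau)$, the equation linearizes to a Hermite-type operator $\mathcal{L} = \partial_y^2 - \tfrac{y}{2}\partial_y + 1$ on a Gaussian-weighted space, whose eigenfunctions are Hermite polynomials with eigenvalues $1 - k/2$. The unstable mode $k=0$ is pinned down by the rescaling normalization, the $k=1$ mode corresponds to translating the origin in $y$ (a reparametrization, hence normalizable), and the neutral mode $k=2$ governs the leading behaviour: via a Merle--Zaag-type alternative for the ODE system satisfied by the mode coefficients, combined with weighted energy estimates, one shows that $u$ is dominated by the $k=2$ mode with a universal $\tau^{-1}$ rate and a universal coefficient, with the remaining normalizations fixing the next-order term. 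Thus the profile of $\psi$ in the neck is asymptotically the same for both solutions. The delicate point here, as always, is controlling the transition region where the neck meets the Bryant tips — one needs cutoff and interpolation estimates guaranteeing that this region does not corrupt the Gaussian-weighted spectral analysis.

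Finally, once both normalized solutions share the same asymptotic expansion in the neck, the difference $w=\psi_1-\psi_2$ satisfies a linear parabolic equation whose coefficients converge to those of the cylinder linearization, and $w$ decays faster than any eigenmode permits. A backwards-uniqueness / Carleman-estimate argument — or, in this one-variable symmetric setting, a direct ODE-lemma argument on the mode coefficients of $w$ — then forces $w\equiv 0$ in the neck, and ordinary unique continuation for the Ricci flow propagates this to the tips, giving $g_t^1=g_t^2$ up to the allowed normalizations. I expect the main obstacle to be exactly Step~3: setting up the spectral decomposition rigorously on the compact sphere while the neck elongates, and breaking the apparent circularity — the leading $k=2$ coefficient must be shown universal \emph{before} one knows the two solutions agree — which is precisely what the Merle--Zaag alternative is designed to do, and whose verification is the technical core.
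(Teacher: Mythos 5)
The paper does not prove this statement at all: Theorem \ref{kappa-solution-classification-compact} is imported as a black box from Brendle--Daskalopoulos--Naff--\v{S}e\v{s}um \cite{Brendle-etal21}, and the surrounding text only uses it (together with Theorem \ref{kappa-solution-classification-noncompact}) to enumerate the rotationally invariant $\kappa$-solutions. So there is no internal proof to compare against; what you have written is a sketch of the strategy of the cited external work, and as such it does capture the correct architecture: backward blow-down to the shrinking cylinder, tips modeled on the Bryant soliton, cylindrical rescaling with the Hermite-type linearization $\partial_y^2-\tfrac{y}{2}\partial_y+1$ and eigenvalues $1-k/2$, a Merle--Zaag alternative isolating the neutral $k=2$ mode, and a final rigidity/unique-continuation step after killing the scaling and time-translation freedom.

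Judged as a proof, however, the proposal is a program rather than an argument, and the gaps are exactly the hard parts of \cite{Brendle-etal21}. You assert, but do not establish, the uniform a priori estimates that make the Gaussian-weighted spectral analysis legitimate on a neck whose length diverges as $\tau\to-\infty$ (the cutoff/interpolation control of the transition region you flag is the bulk of the work, and it interacts with the tip region, which must itself be matched to the Bryant soliton with quantitative errors). You treat only the branch of the Merle--Zaag dichotomy in which the neutral mode dominates; the branch where the stable modes dominate must be ruled out (or shown to force the round sphere/cylinder), and this requires a separate argument. The ``universal coefficient'' of the $k=2$ mode and the higher-order matching needed to fix the remaining two free parameters (time translation and parabolic scale) before comparing the two solutions are precisely the technical core and are only named. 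Finally, the last step is not ``ordinary unique continuation'': one needs a genuine backwards-uniqueness/overlap argument for the difference of the two profiles. None of this is a criticism of citing the result — the paper does the same — but the proposal should not be read as a self-contained proof of the theorem.
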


In any case, we have the following useful observations about $\kappa$ solutions.
\begin{thm}\label{universal kappa}
(Universal $\kappa_{0}$) For any $n\in\mathbb{N}$, there exists
$\kappa_{0}=\kappa_{0}(n)$ such that any rotationally invariant $\kappa$-solution
is $\kappa_{0}$-noncollapsed on all scales.
\end{thm}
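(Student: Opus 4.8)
The plan is to argue by contradiction using the equivariant compactness machinery established earlier, reducing to the classification results. Suppose no such universal $\kappa_0(n)$ exists; then there is a sequence of rotationally invariant $\kappa_i$-solutions $(M_i^{n},(g_t^i)_{t\in(-\infty,0]})$ with $\kappa_i\to 0$. By parabolic rescaling we may normalize so that $R_{g^i}(p_i,0)=1$ for a suitable basepoint $p_i$; since each $M_i$ has uniformly bounded nonnegative curvature operator and the curvature is maximized in a controlled way (by the standard $\kappa$-solution estimates: bounded curvature on all of $M_i\times(-\infty,0]$ after normalization, at least on compact time intervals), we obtain uniform local curvature and derivative bounds. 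First I would separate two cases according to whether the intrinsic diameters of the orbits $\mathcal{O}(p_i)$ stay bounded or diverge.

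If the orbit diameters stay uniformly bounded (along a subsequence), then the hypotheses of Proposition \ref{flowequivcompact} are met — uniform curvature bounds on a time interval $(\alpha,0]$ for each fixed $\alpha$, a volume lower bound coming from any fixed $\kappa_i$-noncollapsing at scale $1$ say (or a Bishop-type bound, since curvature is nonnegative and we have a point of scalar curvature $1$), and bounded orbit diameter — so a subsequence converges in the equivariant pointed Cheeger-Gromov sense to a complete pointed rotationally invariant Ricci flow $(M_\infty,(g_t^\infty)_{t\in(-\infty,0]},p_\infty)$ with bounded nonnegative curvature operator and $R_{g^\infty}(p_\infty,0)=1$. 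The limit is ancient and, being a smooth limit of nonnegatively curved bounded-curvature ancient flows on a fixed time interval and then a diagonal argument over $\alpha\to-\infty$, it is itself a $\kappa'$-solution for some $\kappa'>0$: indeed the limit is $\kappa_0(n)$-noncollapsed because all rotationally invariant manifolds (not diffeomorphic to $\mathbb{S}^1\times\mathbb{S}^n$, which is excluded since $\mathbb{S}^1\times\mathbb{S}^n$ does not admit a $\kappa$-solution structure) are $\kappa(n)$-noncollapsed on all scales by Proposition \ref{uniform_noncollapsing}, and a Ricci flow with such initial data stays noncollapsed. But then, by Theorem \ref{kappa-solution-classification-noncompact} in the noncompact case and Theorem \ref{kappa-solution-classification-compact} (together with the round sphere) in the compact case, $(M_\infty,g^\infty)$ is one of an explicitly classified list of models: the shrinking round sphere, the shrinking round cylinder (or a quotient), or the Bryant soliton. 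Each of these is $\kappa_0(n)$-noncollapsed on all scales for an explicit $\kappa_0(n)>0$ (for the cylinder and sphere this is immediate from the explicit metric; for the Bryant soliton it is a standard computation). On the other hand, noncollapsedness passes to equivariant Cheeger-Gromov limits, so $M_\infty$ is only $\limsup\kappa_i = 0$-noncollapsed, contradicting $\kappa_0(n)>0$.

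In the remaining case the orbit diameters of $\mathcal{O}(p_i)$ (in the normalized metric $g_0^i$) diverge. Here I would invoke Lemma \ref{flat}: since the time-zero slices $(M_i,g_0^i)$ have $\mathrm{Rm}\ge 0$ (curvature operator nonnegative), and — after a further subsequence and if they converged at all — would converge to a flat $\mathbb{R}^n$, which is incompatible with $R_{g^\infty}(p_\infty,0)=1$; more precisely, the divergence of orbit diameters combined with nonnegative curvature forces the local geometry near $p_i$ to look more and more like a flat product, driving $R(p_i,0)\to 0$, contradicting the normalization $R(p_i,0)=1$. (One can make this quantitative directly from the warping-function estimates in the proof of Lemma \ref{flat}, without even passing to a limit.) Hence this case cannot occur, and the bounded-orbit-diameter case always applies, completing the contradiction.

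The main obstacle I anticipate is ensuring that the limit flow $(M_\infty,g^\infty)$ genuinely qualifies as a $\kappa$-solution in the sense required by Theorems \ref{kappa-solution-classification-noncompact} and \ref{kappa-solution-classification-compact} — in particular verifying that it has \emph{uniformly} bounded curvature on all of $(-\infty,0]$ (not just on each compact subinterval) and that it is genuinely $\kappa'$-noncollapsed for some fixed $\kappa'>0$ rather than collapsed. The noncollapsing is handled cleanly by Proposition \ref{uniform_noncollapsing} as indicated; the uniform curvature bound follows from the fact that $\kappa$-solutions enjoy a universal curvature bound after normalization (Perelman's estimate, valid for nonnegatively curved ancient $\kappa$-solutions), which is inherited by the limit since the bound in the approximating sequence is uniform.
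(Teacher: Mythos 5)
There is a genuine gap: the contradiction scheme at the heart of your argument does not close. The hypothesis ``$M_i$ is a $\kappa_i$-solution with $\kappa_i\to 0$'' carries no collapsing information at all, since every $\kappa$-solution is automatically a $\kappa'$-solution for every $\kappa'<\kappa$; the correct negation of the theorem produces \emph{witnessing} balls $B_{g_{t_i}}(x_i,r_i)$ with $|Rm|\le r_i^{-2}$ but volume ratio $<1/i$, and these points and scales never enter your argument. As a result, your final step --- ``noncollapsedness passes to limits, so $M_\infty$ is only $\limsup\kappa_i=0$-noncollapsed'' --- is a non sequitur (noncollapsedness is a one-sided lower bound, and nothing forces the limit, or even the $M_i$ themselves, to be collapsed anywhere); indeed it flatly contradicts the preceding sentence, where you assert via Proposition \ref{uniform_noncollapsing} that the limit \emph{is} $\kappa_0(n)$-noncollapsed. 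If you tried to repair this by basing the blow-up at the witnessing collapsed balls, you would lose the volume lower bound $\operatorname{Vol}(B(p_i,1))\ge v_0$ that Proposition \ref{flowequivcompact} requires, so no Cheeger--Gromov limit can be extracted; this, not the uniform curvature bound you flag as the ``main obstacle,'' is the real obstruction, and it is exactly why Perelman's universal-$\kappa$ argument does not proceed by direct compactness.

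Note also an internal tension: the appeal to Proposition \ref{uniform_noncollapsing} that you make for the limit applies verbatim to each time-slice of the given $\kappa$-solution itself (once one rules out the $\mathbb{S}^1\times\mathbb{S}^n$ topology, which no $\kappa$-solution can have, since an ancient quotient of the shrinking cylinder collapses relative to the curvature scale at very negative times). If that appeal is legitimate, the theorem follows in one line and the entire compactness/classification apparatus --- and the purported contradiction --- is superfluous; if you intend that shortcut, it must be stated as such and the $\mathbb{S}^1\times\mathbb{S}^n$ exclusion proved. The paper instead follows Perelman's three-dimensional argument essentially verbatim: one passes to the asymptotic shrinking soliton via reduced-volume monotonicity, observes that in the rotationally invariant setting the asymptotic shrinkers are only the round sphere and the cylinder (rotational symmetry excludes spherical space forms, so no exceptional case is needed), and deduces the universal lower bound on the reduced volume and hence on $\kappa$. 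Your proposal, as written, does not reproduce either of these routes and does not yield the statement.
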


\begin{proof}
The proof is almost verbatim the 3-dimensional case. The necessary
ingredient is that all asymptotic shrinkers are either the round sphere
or the cylinder. Also, rotational symmetry rules out quotients of
the round sphere, so it is no longer necessary to exempt round space
forms.
\end{proof}

\begin{thm}
The set of pointed and rotationally-invariant $\kappa_0$-solutions $(M^{n+1},(g_t)_{t\in (-\infty,0]},p)$ with $R(p,0)=1$ is compact in the pointed equivariant Cheeger-Gromov-Hamilton topology. 
\end{thm}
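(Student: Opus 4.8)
The plan is to combine the classical Hamilton compactness theorem with the equivariant Cheeger-Gromov-Hamilton compactness established earlier in Proposition \ref{flowequivcompact}, using the universal noncollapsing $\kappa_0$ (Theorem \ref{universal kappa}) and the pinching/derivative estimates that are automatic for $\kappa$-solutions. Concretely, let $(M_i^{n+1},(g^i_t)_{t\in(-\infty,0]},p_i)$ be a sequence of pointed, rotationally invariant $\kappa_0$-solutions with $R_{g^i}(p_i,0)=1$. We want to extract a subsequence converging in the pointed equivariant Cheeger-Gromov-Hamilton topology to a rotationally invariant $\kappa_0$-solution $(M_\infty,(g^\infty_t)_{t\in(-\infty,0]},p_\infty)$ with $R_{g^\infty}(p_\infty,0)=1$.

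First I would establish the a priori estimates needed to apply Proposition \ref{flowequivcompact} on each compact time interval $[\alpha',0]$. The key point is the uniform curvature bound: by the trace Harnack inequality (valid since $\kappa$-solutions have bounded nonnegative curvature operator), $R$ is pointwise nondecreasing in $t$, so $R_{g^i}\le C(n)$ on $M_i\times[-A,0]$ near $p_i$ follows from a standard point-picking/$\kappa$-noncollapsing argument — exactly as in Perelman's derivation of the local curvature bound for $\kappa$-solutions, which does not use dimension three. Together with $\kappa_0$-noncollapsing (Theorem \ref{universal kappa}) and Shi's derivative estimates, this gives uniform $|\nabla^k Rm|$ bounds on $M_i\times[\alpha',0]$ and a uniform lower volume bound $\Vol_{g^i_0}(B_{g^i_0}(p_i,1))\ge v_0(n,\kappa_0)>0$.

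Next I would verify the orbit-diameter hypothesis of Proposition \ref{flowequivcompact}, namely that $\operatorname{diam}(\mathcal{O}(p_i),g^i_0|_{\mathcal{O}(p_i)})$ stays uniformly bounded. Suppose not; then along a subsequence these diameters diverge, and since $\kappa$-solutions have $Rm\ge 0$, Lemma \ref{flat} forces any Cheeger-Gromov limit to be flat $\R^{n+1}$. But the classical (non-equivariant) Hamilton compactness theorem, applied using the estimates from the previous paragraph, produces such a limit as a $\kappa_0$-solution; a flat $\kappa$-solution is incompatible with $R_{g^\infty}(p_\infty,0)=1$, a contradiction. (Alternatively one invokes the classification Theorems \ref{kappa-solution-classification-noncompact} and \ref{kappa-solution-classification-compact}: on each fixed central sphere of a cylinder, Bryant soliton, or compact $\kappa$-solution the orbit diameter, measured after normalizing $R(p_i,0)=1$, is controlled.) With the orbit-diameter bound in hand, Proposition \ref{flowequivcompact} yields a subsequence and $O(n+1)$-equivariant embeddings $\varphi_i:U_i\to M_i$ with $\varphi_i^*g^i_t\to g^\infty_t$ in $C^\infty_{loc}$ on $M_\infty\times(\alpha',0]$ for every $\alpha'$; a diagonal argument over $\alpha'\to-\infty$ extends the limit flow to $(-\infty,0]$.

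Finally I would check that the limit $(M_\infty,(g^\infty_t),p_\infty)$ is again a rotationally invariant $\kappa_0$-solution: completeness, rotational invariance, and the Ricci flow equation pass to the limit by Proposition \ref{flowequivcompact}; nonnegativity and boundedness of $\mathcal{R}_{g^\infty}$ are inherited by local uniform convergence; $\kappa_0$-noncollapsing on all scales passes to the limit because the defining inequality is closed under $C^\infty_{loc}$ convergence (and on large scales follows from Theorem \ref{universal kappa} applied to the limit, which is itself a $\kappa$-solution); and $R_{g^\infty}(p_\infty,0)=1$ since $\varphi_i^{-1}(p_i)\to p_\infty$ and curvature converges pointwise. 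I expect the main obstacle to be the first step — obtaining the uniform local curvature bound on a fixed backward time interval around the basepoints — since this is where the real input (Perelman's point-picking argument combined with $\kappa_0$-noncollapsing, or equivalently the Harnack inequality plus compactness bootstrap) enters; everything after that is a matter of assembling results already available in the excerpt.
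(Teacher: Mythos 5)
The skeleton of your argument — get a non-equivariant limit, bound the orbit diameters by playing Lemma \ref{flat} against $R(p_\infty,0)=1$, then upgrade to equivariant convergence with Proposition \ref{flowequivcompact} and a diagonal argument in time — is exactly the equivariance upgrade the paper itself performs (compare Corollary \ref{closenessimproved}), and that part of your proposal is sound. The genuine gap is in your first step. Point-picking together with $\kappa_0$-noncollapsing (Perelman's argument, which indeed does not depend on dimension three once one has bounded nonnegative curvature operator) yields only \emph{bounded curvature at bounded distance}: $R\le C(D)$ on $B_{g_0^i}(p_i,D)$, propagated backward in time by the trace Harnack. It does not give what you then assert, namely uniform bounds on all of $M_i\times[\alpha',0]$, and no such bound exists: normalize a Bryant soliton at a basepoint at distance $d_i\to\infty$ from its tip, or a compact sausage solution at its waist, and the curvature elsewhere is unbounded relative to $R(p_i,0)=1$. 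Consequently Proposition \ref{flowequivcompact} does not apply as stated (its hypothesis is a uniform global curvature bound), and — more seriously — your closing claim that boundedness of $\mathcal{R}_{g^\infty}$ is ``inherited by local uniform convergence'' is false: $C^\infty_{loc}$ convergence transfers only local bounds. Proving that the limit has \emph{globally} bounded curvature at time $0$, so that it is again a $\kappa_0$-solution (and so that the Harnack inequality may legitimately be used on it), is the hard second step of Perelman's compactness theorem for $\kappa$-solutions (cf. Theorem 46.1 of \cite{KleinerLott08}); your proposal omits it entirely.

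The paper does not redo that analysis: it takes the non-equivariant compactness of rotationally invariant $\kappa$-solutions from Theorem 1.2 of \cite{LiZhang18} and supplies only the equivariant refinement described above. To make your version self-contained you would either cite \cite{LiZhang18} at precisely this point, or add the missing argument that the pointed limit has bounded curvature on its time-$0$ slice (in the rotationally invariant setting this can be extracted from the warping function, using the concavity and gradient bound forced by $\Rm\ge 0$, but it must actually be carried out) before concluding that the limit lies in the set of normalized $\kappa_0$-solutions.
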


\begin{defin}[Equivariant Neck]
Let $g_t^{cyl}:=dr^2 + (1-t)g_{\mathbb{S}^n}$, $t\in (-\infty,1)$ denote the standard shrinking cylinder, normalized so that the singular time is $t=1$. We say that a rotationally-invariant open subset $U$ of a rotationally-invariant Riemannian manifold $(M,g)$ is an equivariant $\epsilon$-neck if there is an $O(n+1)$-equivariant diffeomorphism $\Phi:(-\frac{1}{\epsilon},\frac{1}{\epsilon})\times \mathbb{S}^{n}\to M$ so that $|\lambda \Phi^{\ast}g-{g}_0^{cyl}|_{C^{\lfloor \epsilon^{-1}\rfloor}((-\frac{1}{\epsilon},\frac{1}{\epsilon})\times \mathbb{S}^{n})}$ for some $\lambda>0$, where $g_{cyl}$ is the standard cylinder metric with scalar curvature 1. A rotationally invariant subset $U\times [a,b]  \subseteq M\times I$ of a Ricci flow is called a strong equivariant $\epsilon$-neck if there is a diffeomorphism $\Phi:\mathbb{S}^n\times (-\frac{1}{\epsilon},\frac{1}{\epsilon})\to M$ such that $|\lambda\Phi^{\ast}g-g^{cyl}|_{C^{\lfloor \epsilon^{-1}\rfloor}((-\frac{1}{\epsilon},\frac{1}{\epsilon})\times \mathbb{S}^{n}\times [-1,0])} $
We call any point in $\Phi(\mathbb{S}^n\times \{0\})$ a center of the neck $U$. 
\end{defin}

\begin{defin}[Equivariant Cap]
Given a rotationally invariant Riemannian manifold $(M,g)$, we say a point $x\in M$ is the center of an equivariant $(E,\epsilon)$-cap $U$ if  $U\subseteq M$ is  a rotationally invariant open subset such that the following hold:\\
$(i)$ $U$  is equivariantly diffeomorphic to $B(0,1)\subseteq \mathbb{R}^n$,\\
$(ii)$ there is a compact set $K\subset U$ so that $U\setminus K$ is an equivariant $\epsilon$-neck and $x\in K$,\\
$(iii)$ $\operatorname{diam}_g(U) \leq E R^{-\frac{1}{2}}(x)$, \\
$(iv)$ $\operatorname{Vol}_g(U)\geq E^{-1}R^{-\frac{n+1}{2}}(x)$,\\
$(v)$ $0<E^{-1}R(y)\leq R(x)\leq E R(y)$ for all $x,y\in U$.
\end{defin}


Let $(M,(g_t)_{t\in(-\infty,0]})$ be a rotationally invariant $\kappa$-solution and let $M_{\epsilon}\subseteq M$ be the points in $(M,g_0)$ which
are not the centers of strong $\epsilon$-necks. A necessary ingredient
for the canonical neighborhood theorem is the following, which roughly
implies that, if 3 distant points in a $\kappa$-solution are almost
colinear, then the middle point is in an $\epsilon$-neck. 
\begin{lem} \label{kaptechlemma}
For any $\theta\in(0,\pi],$and any $\epsilon>0$, there exists $C_{\kappa}(\theta,\epsilon)<\infty$
such that the following holds. If $p,q,q'\in M$ satisfy $\widetilde{\angle}(qpq')\geq\theta$
and 
\[
d_{g(0)}^{2}(p,q)R(p,0),\quad d_{g(0)}^{2}(p,q')R(p,0)\geq C,
\]
then $p\not\in M_{\epsilon}.$ Here $\widetilde{\angle}$ stands for the comparison angle.
\end{lem}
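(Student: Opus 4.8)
The plan is to argue by contradiction using the equivariant compactness theorem (Proposition \ref{flowequivcompact}) together with the classification of rotationally invariant $\kappa$-solutions (Theorem \ref{kappa-solution-classification-noncompact}). Suppose the statement fails for some fixed $\theta\in(0,\pi]$ and $\epsilon>0$: then there is a sequence of rotationally invariant $\kappa$-solutions $(M_i,(g^i_t)_{t\in(-\infty,0]})$ with points $p_i,q_i,q_i'\in M_i$ satisfying $\widetilde{\angle}(q_ip_iq_i')\geq\theta$ and $d^2_{g^i_0}(p_i,q_i)R(p_i,0),\ d^2_{g^i_0}(p_i,q_i')R(p_i,0)\to\infty$, yet $p_i\in (M_i)_\epsilon$, i.e.\ $p_i$ is not the center of a strong $\epsilon$-neck. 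Rescale so that $R(p_i,0)=1$; by Theorem \ref{universal kappa} the rescaled solutions are uniformly $\kappa_0$-noncollapsed, and they have uniformly bounded nonnegative curvature. The orbit $\mathcal{O}(p_i)$ has bounded intrinsic diameter: otherwise Lemma \ref{flat} (applied to the time-$0$ slices, whose curvatures are nonnegative) would force a flat $\mathbb{R}^{n+1}$ limit, contradicting $R(p_i,0)=1$. Hence Proposition \ref{flowequivcompact} applies and, after passing to a subsequence, $(M_i,g^i_t,p_i)$ converges equivariantly in the pointed Cheeger-Gromov-Hamilton sense to a rotationally invariant $\kappa_0$-solution $(M_\infty,(g^\infty_t)_{t\in(-\infty,0]},p_\infty)$ with $R(p_\infty,0)=1$.

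Next I would identify the limit. Since $d_{g^i_0}(p_i,q_i)\to\infty$ (in the rescaled metric, because $d^2R\to\infty$ with $R=1$), the limit $M_\infty$ is noncompact: indeed the balls $B_{g^i_0}(p_i,r)$ are proper subsets for every fixed $r$ once $i$ is large. By Theorem \ref{kappa-solution-classification-noncompact}, $(M_\infty,g^\infty_0)$ is either the Bryant soliton or a quotient of the shrinking cylinder; rotational invariance with $O(n)$ principal isotropy rules out nontrivial quotients, so it is the round cylinder $\mathbb{R}\times\mathbb{S}^n$ or the Bryant soliton. I would then use the almost-colinearity hypothesis to exclude the Bryant soliton. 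In the limit, after passing to a further subsequence, the unit-speed minimizing geodesics $\gamma_i$ from $p_i$ to $q_i$ and $\gamma_i'$ from $p_i$ to $q_i'$ converge to minimizing rays $\gamma_\infty,\gamma_\infty'$ emanating from $p_\infty$, and by the hinge/angle comparison in nonnegative curvature, the comparison angle lower bound $\widetilde{\angle}(q_ip_iq_i')\geq\theta$ passes to the limit to give $\angle(\gamma_\infty'(0),\gamma_\infty(0))\geq\theta>0$; in particular $p_\infty$ has at least two distinct ends of minimizing rays, so $M_\infty$ has (at least) two ends. The Bryant soliton has only one end, so $M_\infty$ must be the round shrinking cylinder. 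But on the round cylinder $\mathbb{R}\times\mathbb{S}^n$ (with the $O(n+1)$ action fixing the axis coordinate), \emph{every} point is the center of a strong $\epsilon$-neck — this is immediate from the explicit form $g^{cyl}_t=dr^2+(1-t)g_{\mathbb{S}^n}$. By smooth equivariant convergence, for $i$ large $p_i$ is therefore the center of a strong equivariant $\epsilon$-neck in $M_i$, contradicting $p_i\in(M_i)_\epsilon$. This contradiction proves the lemma, with $C_\kappa(\theta,\epsilon)$ the threshold extracted from the contradiction argument.

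The main obstacle I expect is the step passing the comparison-angle hypothesis to the limit and extracting the conclusion "$M_\infty$ has two ends." One has to be careful that the points $q_i,q_i'$ themselves may escape to infinity, so one cannot simply take their limits; instead one works with the (sub)sequential limits of the initial direction vectors $\gamma_i'(0),\ \dot\gamma_i(0)\in T_{p_i}M_i$, uses that $M_\infty$ being noncompact and $\kappa$-noncollapsed with nonnegative curvature has a well-behaved asymptotic geometry (so that minimizing geodesics of unbounded length from $p_\infty$ exist in the limiting directions — this is where completeness and the curvature bounds are used), and invokes Toponogov's theorem to control angles. An alternative, perhaps cleaner, route avoiding ends altogether: on the Bryant soliton one shows directly that any two minimizing rays from a common point make an angle bounded by some $\theta_0(\text{Bryant})<\pi$ that can be taken $<\theta$ for the relevant range, or more simply that the Bryant soliton's Tits/ideal boundary is a single point so large comparison angles are impossible for far-away nearly-antipodal points; choosing $\theta$ in the hypothesis and noting we only need \emph{some} $C_\kappa(\theta,\epsilon)$ lets us sidestep the borderline case $\theta$ close to $0$. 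Either way, the combinatorial/topological input (one end vs.\ two ends, and "every point of the cylinder is a neck center") is elementary once the equivariant compactness and classification are in hand; the delicate part is the geometric limiting argument for the geodesics.
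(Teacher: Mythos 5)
Your overall skeleton --- argue by contradiction, rescale so $R(p_i,0)=1$, use the equivariant compactness theorem (Proposition \ref{flowequivcompact}, with Lemma \ref{flat} controlling the orbit diameters), invoke the classification of noncompact rotationally invariant $\kappa$-solutions (Theorem \ref{kappa-solution-classification-noncompact}), and observe that a cylindrical limit would make $p_i$ a strong equivariant neck center --- is essentially the paper's strategy, which cites Li--Zhang for the cylindrical structure at spatial infinity and then follows the proof of Proposition 49.1 of \cite{KleinerLott08}. The gap is in the step you use to exclude the Bryant soliton. From the hypothesis you only extract that the limit rays $\gamma_\infty,\gamma_\infty'$ make an angle $\geq\theta$ at $p_\infty$, and you then assert that this forces $M_\infty$ to have two ends. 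That implication is false, and the Bryant soliton --- the very case you must rule out --- is the counterexample: it is one-ended, yet from its tip every geodesic is a minimizing ray, so there are pairs of rays making any prescribed angle up to $\pi$. (The first variant of your ``alternative route,'' that any two rays from a common point of the Bryant soliton make an angle bounded by some $\theta_0<\pi$, fails for the same reason.)

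What the argument needs is control of comparison angles at infinity, not of the angle at the vertex. Since the rescaled solutions have nonnegative curvature, Toponogov monotonicity gives $\widetilde{\angle}\big(\gamma_i(s)\,p_i\,\gamma_i'(s')\big)\geq\widetilde{\angle}(q_ip_iq_i')\geq\theta$ for all $s\leq d(p_i,q_i)$ and $s'\leq d(p_i,q_i')$; passing to the limit for each fixed $(s,s')$ and then letting $s,s'\to\infty$ shows the Tits angle between $\gamma_\infty$ and $\gamma_\infty'$ is at least $\theta$. On the Bryant soliton this is impossible: by the asymptotics \eqref{Bryantasym} the cross-sectional scale at distance $s$ from the tip grows like $\sqrt{s}=o(s)$ (equivalently, its blowdowns are shrinking cylinders, which is exactly the Li--Zhang input the paper uses), so the comparison angle at any fixed point between two points far out the single end tends to $0$. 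This forces $M_\infty$ to be the round shrinking cylinder, after which your conclusion (the strong equivariant neck structure passes to $p_i$ for large $i$, contradicting $p_i\in M_\epsilon$) goes through. Your second ``alternative'' sentence gestures at precisely this Tits-boundary argument, but as written the proposal neither carries out the monotonicity step that transports the hypothesis (about points $q_i,q_i'$ that escape to infinity) to the limit rays, nor rules out the Bryant soliton correctly; that exclusion is the heart of the lemma, so as it stands the proof is incomplete.
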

\begin{proof}  We use Theorem 1.1 of \cite{LiZhang18} to justify that (after rescaling) any pointed Cheeger-Gromov limit of a fixed $\kappa$-solution with basepoints at a fixed time and approaching spatial infinity must be the shrinking cylinder. Given this, the claim is established by following the proof of Proposition 49.1 in \cite{KleinerLott18}. 
\end{proof}

\begin{prop} \label{kapsols} For any $\epsilon>0$, there exists $E=E(\epsilon,n)<\infty$ such that for any rotationally invariant $\kappa$-solution $(M,g_t)_{t\in (-\infty,0]}$ and any $x\in M$, one of the following is true:\\
$(i)$ $x$ is the center of a strong equivariant $\epsilon$-neck,\\
$(ii)$ $x$ is the center of an equivariant $(E,\epsilon)$-cap,\\
$(iii)$ $M \cong \mathbb{S}^n$, and $\mbox{diam}_{g_0}(M)\leq E R^{-\frac{1}{2}}(x,0)$.
\end{prop}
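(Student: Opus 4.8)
The plan is to argue by contradiction and compactness, reducing everything to the structure theory of rotationally invariant $\kappa$-solutions already assembled in this section. Suppose the conclusion fails: then for some fixed $\epsilon>0$ there is a sequence of rotationally invariant $\kappa$-solutions $(M_i,(g_t^i)_{t\in(-\infty,0]})$ and points $x_i\in M_i$ such that $x_i$ is not the center of a strong equivariant $\epsilon$-neck, not the center of an equivariant $(E_i,\epsilon)$-cap for $E_i\to\infty$, and the third alternative fails as well. First I would normalize by the parabolic rescaling $R_{g^i}(x_i,0)=1$; by the equivariant compactness of pointed rotationally invariant $\kappa_0$-solutions (the theorem just above, using the universal $\kappa_0$ of Theorem \ref{universal kappa}), after passing to a subsequence $(M_i,(g_t^i),x_i)$ converges in the equivariant Cheeger-Gromov-Hamilton topology to a rotationally invariant $\kappa_0$-solution $(M_\infty,(g_t^\infty),x_\infty)$ with $R_{g^\infty}(x_\infty,0)=1$, via $O(n+1)$-equivariant diffeomorphisms.

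Next I would split on the topology of $M_\infty$, which by Proposition 2.2 (the classification of rotationally invariant manifolds) is one of $\mathbb{R}\times\mathbb{S}^n$, $\mathbb{R}^{n+1}$, $\mathbb{S}^{n+1}$, or $\mathbb{S}^1\times\mathbb{S}^n$; the last is excluded since a $\kappa$-solution is $\kappa$-noncollapsed on all scales (an $\mathbb{S}^1\times\mathbb{S}^n$ factor collapses at large scales as noted after Proposition \ref{uniform_noncollapsing}). If $M_\infty$ is noncompact, Theorem \ref{kappa-solution-classification-noncompact} (Li-Zhang) says it is either the Bryant soliton or a quotient of the shrinking cylinders; rotational symmetry rules out nontrivial quotients, so it is the round shrinking cylinder or the Bryant soliton. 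In the cylinder case, $x_\infty$ is the center of a strong $\epsilon$-neck in $M_\infty$ (every point is), and equivariant convergence pulls this back to a strong equivariant $\epsilon$-neck structure at $x_i$ for $i$ large — contradiction. In the Bryant soliton case, $x_\infty$ lies either in a fixed compact core or far out on the asymptotically paraboloid end; if it is far out it is a neck center (again contradiction as above), and if it lies in the core then there is a uniform $E=E(n)$ for which $x_\infty$ is the center of an equivariant $(E/2,\epsilon)$-cap (the core of the Bryant soliton capped off by a neck is the model $(E,\epsilon)$-cap), which again passes to $x_i$ by equivariant closeness for $i$ large, contradicting $E_i\to\infty$. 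If instead $M_\infty\cong\mathbb{S}^{n+1}$, then $M_\infty$ is compact; using the compactness of the space of normalized pointed rotationally invariant $\kappa_0$-solutions once more, the diameter at time $0$ is uniformly bounded by some $E(n)$, so $\operatorname{diam}_{g_0^\infty}(M_\infty)\le E R^{-1/2}(x_\infty,0)$, and this descends to alternative (iii) for $x_i$ — contradiction. (Here one must be slightly careful about what alternative (iii) should say; I will state it with the underlying manifold being a topological sphere rather than literally $\mathbb{S}^n$, and a point not near a neck region.)

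The main obstacle is making the extraction of a neck or cap structure at the \emph{finite} stage $x_i$ rigorous from the limiting structure at $x_\infty$: I need that equivariant Cheeger-Gromov-Hamilton convergence on parabolic neighborhoods of the relevant size transfers the defining $C^{\lfloor\epsilon^{-1}\rfloor}$-closeness estimates for strong necks and caps (including the neck being a \emph{strong} neck, so requiring uniform curvature control on a backward parabolic cylinder), with the model metrics and diffeomorphisms pulled back through the $\varphi_i$ of Proposition \ref{flowequivcompact}. This is where the equivariance of the approximating diffeomorphisms is essential — it is precisely what guarantees the produced neck/cap is equivariant — and it is the reason the equivariant compactness theorem was developed earlier. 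The rest (the topological case split, invoking Li-Zhang and the Bryant soliton geometry, the diameter bound in the compact case) is bookkeeping given the results already in this section.
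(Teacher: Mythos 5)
Your proposal is correct in outline but takes a genuinely different route from the paper. You normalize at the point $x$ itself, extract an equivariant limit using the compactness of pointed rotationally invariant $\kappa_0$-solutions, classify the limit (shrinking cylinder, Bryant soliton, or compact sphere) via Theorem \ref{kappa-solution-classification-noncompact}, and transfer the neck/cap/diameter structure back to $x_i$; since $E_i\to\infty$, any finite constant attached to the fixed limit suffices, and the equivariance of the comparison maps from Proposition \ref{flowequivcompact} is exactly what makes the transferred neck or cap equivariant. The paper argues more directly: it first uses Theorem 1.4 of \cite{LiZhang18} together with Proposition \ref{flowequivcompact} to obtain a uniform bound $\operatorname{diam}_{g_0}(M_\epsilon)\leq C(n,\epsilon)$ on the set of points that are not strong neck centers; then, if $x\in M_\epsilon$ and $\operatorname{diam}_{g_0}(M)\geq 2C(n,\epsilon)$, it picks a neck center $y$ at controlled distance from $x$, uses that the corresponding equivariant neck separates $M$ (as $M\cong\mathbb{S}^{n+1}$ or $\mathbb{R}^{n+1}$), and takes the cap to be the union of that neck with the component containing $x$. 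The paper's construction is shorter, exhibits the cap explicitly, and produces the constant $E(\epsilon,n)$ directly from $C(n,\epsilon)$; your argument is softer and trades this for the (routine but necessary) verification that strong-neck and $(E,\epsilon)$-cap data pass through equivariant Cheeger--Gromov--Hamilton convergence, which you correctly identify as the main point and which is precisely what the equivariant compactness theory supplies.

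One inaccuracy to repair in the compact case: it is not true that compact normalized rotationally invariant $\kappa_0$-solutions have a uniform diameter bound $E(n)$ --- a time slice of the compact ancient solution of Theorem \ref{kappa-solution-classification-compact} (the ``sausage''), based at a neck point, has arbitrarily large diameter relative to $R^{-1/2}(x,0)$, which is exactly why alternative $(ii)$ is needed for compact solutions. Your contradiction still closes, because all you need is that the particular limit $M_\infty$ has finite diameter while $E_i\to\infty$ (together with $M_i$ being diffeomorphic to $M_\infty\cong\mathbb{S}^{n+1}$ for large $i$); replace the appeal to a uniform bound by that observation.
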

\begin{proof} After rescaling, we can assume $R(x,0)=1$. By Theorem 1.4 of \cite{LiZhang18} and Proposition \ref{flowequivcompact}, we have $\text{diam}_{g_0}(M_\epsilon)\leq C(n,\epsilon)$. If $x\notin M_{\epsilon}$, then $(i)$ holds, so assume $x\in M_{\epsilon}$ and $\text{diam}_{g_0}(M)\geq 2C(n,\epsilon)$. Then there is a point $y \in M\setminus M_{\epsilon}$ with $d_{g_0}(x,y)\leq 2C(n,\epsilon)+2\epsilon^{-1}$; let $N \subseteq M$ be the corresponding equivariant $\epsilon$-neck. Because $M\cong \mathbb{S}^{n+1}$ or $M\cong \mathbb{R}^n$, we know that $N$ separates $M$ into exactly two components; let $U$ be the union of $N$ with the component containing $x$. Then $U$ is an equivariant cap, and $x$ is a center of $U$.
\end{proof}

\subsection{The canonical neighborhood theorem}

We now justify the fact that Perelman's canonical neighborhood theorem (the version stated as Theorem 52.7 in \cite{KleinerLott08}, with a slightly different statement originally stated as Theorem 12.1 in \cite{Perelman02}) also holds in the higher-dimensional rotationally invariant setting.

\begin{thm}
Given $\epsilon,\kappa,\sigma>0$, there exists $r_{0}=r_{0}(\epsilon,\kappa,\sigma,n)>0$
such that the following holds. Let $(M^{n+1},(g_t)_{t\in[0,T]})$ be
a closed $O(n+1)$-invariant Ricci flow, that is $\kappa$-noncollapsed
below the scale $\sigma$, such that $T\geq1$ and $(M,g_0)$ satisfies $\mathcal{R}_{g_0}\geq -1$. Then, for any $(x_{0},t_{0})\in M\times[1,T]$ with $Q:=R(x_{0},t_{0})\geq r_{0}^{-2}$,
$P(x_{0},t_{0},(\epsilon Q)^{-\frac{1}{2}},-(\epsilon Q)^{-1})$ is, after
rescaling by $Q$, equivariantly $\epsilon$-close to the corresponding subset of
a $\kappa$-solution.
\end{thm}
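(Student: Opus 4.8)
# Proof Proposal

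The plan is to run Perelman's classical contradiction/compactness argument for the canonical neighborhood theorem, but using the \emph{equivariant} compactness machinery developed earlier in this section in place of the standard Cheeger-Gromov-Hamilton compactness. Suppose the statement fails. Then there is a fixed $\epsilon,\kappa,\sigma>0$ and a sequence of closed $O(n+1)$-invariant Ricci flows $(M_i^{n+1},(g_t^i)_{t\in[0,T_i]})$, each $\kappa$-noncollapsed below scale $\sigma$ with $T_i\ge 1$ and $\mathcal{R}_{g_0^i}\ge -1$, together with points $(x_i,t_i)\in M_i\times[1,T_i]$ with $Q_i:=R_{g^i}(x_i,t_i)\to\infty$, such that the parabolic neighborhood $P(x_i,t_i,(\epsilon Q_i)^{-1/2},-(\epsilon Q_i)^{-1})$, rescaled by $Q_i$, is \emph{not} equivariantly $\epsilon$-close to the corresponding subset of any $\kappa$-solution.

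First I would invoke Hamilton-Ivey pinching. Since $\mathcal{R}_{g_0^i}\ge -1$, Theorem \ref{rotpinch} applies and gives the Hamilton-Ivey estimate along each flow; after rescaling by $Q_i\to\infty$ and recentering time at $t_i$, this forces any subsequential limit to have nonnegative curvature operator. Next comes the standard point-selection argument (as in Perelman, or sections 51--52 of \cite{KleinerLott08}): using a Bamler-Kleiner-Perelman-type point-picking, replace $(x_i,t_i)$ by nearby points where the curvature is comparably large and is the near-maximum in a suitably large backward parabolic neighborhood, so that after parabolic rescaling by $Q_i$ one obtains, on larger and larger parabolic neighborhoods, uniform curvature bounds. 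The $\kappa$-noncollapsing below scale $\sigma$, combined with $Q_i\to\infty$, upgrades to $\kappa$-noncollapsing on all scales in the limit. Crucially, the rotational invariance is preserved under rescaling and the recentering, so each rescaled flow is still $O(n+1)$-invariant; moreover Lemma \ref{flat} guarantees that the orbit diameters $\operatorname{diam}(\mathcal{O}(x_i),Q_i g_{t_i}^i)$ stay uniformly bounded (otherwise the blow-up limit would be flat $\mathbb{R}^{n+1}$, contradicting that $R\equiv 1$ at the basepoint). This is exactly the hypothesis needed to apply the equivariant Cheeger-Gromov-Hamilton compactness theorem, Proposition \ref{flowequivcompact}.

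Applying Proposition \ref{flowequivcompact} to the rescaled flows $(M_i, Q_i g^i_{Q_i^{-1}t + t_i}, x_i)$ produces, after passing to a subsequence, an \emph{equivariant} limit: a complete pointed rotationally invariant ancient Ricci flow with bounded nonnegative curvature operator, $\kappa$-noncollapsed on all scales — that is, a rotationally invariant $\kappa$-solution — with the convergence realized by $O(n+1)$-equivariant diffeomorphisms. But then, for $i$ large, the rescaled parabolic neighborhood $P(x_i,t_i,(\epsilon Q_i)^{-1/2},-(\epsilon Q_i)^{-1})$ is equivariantly $\epsilon$-close to the corresponding subset of this $\kappa$-solution, contradicting the choice of the sequence. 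The main obstacle is \emph{not} any single one of these steps individually — each is classical — but rather verifying that the equivariant convergence from Proposition \ref{flowequivcompact} meshes with the notion of ``$\epsilon$-close'' used in the statement (i.e.\ that equivariant Cheeger-Gromov closeness implies closeness of the designated parabolic subsets with equivariant comparison maps of the required regularity $C^{\lfloor\epsilon^{-1}\rfloor}$), and in carrying the point-selection argument through while maintaining both rotational invariance and the uniform orbit-diameter bound needed to feed into Lemma \ref{flat} and Proposition \ref{flowequivcompact}. Once those bookkeeping points are settled, the proof is a direct transcription of Perelman's argument with the equivariant compactness theorem substituted at the key step.
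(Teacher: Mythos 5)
Your proposal takes essentially the same route as the paper: rerun Perelman's contradiction/point-picking argument using Hamilton-Ivey pinching (Theorem \ref{rotpinch}) and $\kappa$-noncollapsing, and obtain the equivariant limit $\kappa$-solution via the equivariant compactness machinery (Lemma \ref{flat} together with Proposition \ref{flowequivcompact}, exactly as in Corollary \ref{closenessimproved}). The paper's proof is precisely this transcription of the three-dimensional argument, with the only additional bookkeeping being that the bounded-curvature-at-bounded-distance step also needs the canonical neighborhood property of rotationally invariant $\kappa$-solutions (Proposition \ref{kapsols}) and the almost-collinear-points lemma (Lemma \ref{kaptechlemma}), which you implicitly fold into the ``classical'' part of the argument.
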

\begin{proof}
The argument is again almost identical to the three-dimensional (non-rotationally invariant) case. For the reader's convenience, we list below the facts about three-dimensional Ricci flow (which fail in general for higher dimensions) which are used in the proof.\\
$(1)$ The Hamilton-Ivey pinching condition (Lemma B.6 of \cite{KleinerLott08}),\\
$(2)$ The Cheeger-Gromov compactness of three-dimensional $\kappa$-solutions (Theorem 46.1 and Lemma 59.4 of \cite{KleinerLott08}),\\
$(3)$ The canonical neighborhood property of three-dimensional $\kappa$-solutions (Lemma 59.4 in \cite{KleinerLott08}),\\
$(4)$ If two minimizing segments with far-away endpoints form an angle bounded away from zero, then the point of intersection is the center of an $\epsilon$-neck (Proposition 49.1 in \cite{KleinerLott08}).\\
We observe that each of these facts is true for rotationally-invariant Ricci flow in any dimension: $(1)$ is Lemma \ref{rotpinch} (with the constant $\frac{n(n-1)}{2}$ instead of 3), $(2)$ is Theorem 1.2 of \cite{LiZhang18}, $(3)$ is Proposition \ref{kapsols}, and $(4)$ is Lemma \ref{kaptechlemma}.
\end{proof}

\section{Rotationally-invariant Ricci flow through singularities}

\subsection{Ricci flow with surgery}

In this subsection, we review the notion of standard solutions, and the classification of ends of the incomplete Ricci flow time slice at the first singular time. Then we give a simpler construction of the surgery process, which uses the warping function induced from an equivariant $\delta$-neck structure rather than a conformal change, and preserves the $O(n+1)$-symmetry. Finally, we explain how the proof of long-time existence of Ricci flow with $(\delta,r)$-cutoff simplifies as a result of the automatic $\kappa$-noncollapsedness of rotationally invariant metrics. 

\begin{thm} [c.f. Section 60 of \cite{KleinerLott08}] Suppose that $g$ is a rotationally invariant complete Riemannian metric on $\mathbb{R}^{n+1}$ with nonnegative curvature, whose scalar curvature satisfies $R\geq a$ everywhere for some universal constant $a>0$, and which is equal to a round cylindrical metric with scalar curvature $1$ on the complement of some compact subset. Then there is a unique complete Ricci flow with bounded curvature $(\mathbb{R}^{n+1},(g_t)_{t\in [0,\frac{n}{2})})$ such that $g_0=g$. Moreover, this flow satisfies the following:\\
$(i)$ there exists $\kappa_0>0$ such that $(\mathbb{R}^{n+1},(g_t)_{t\in[0,\frac{n}{2})})$ is $\kappa$-noncollapsed on scales less than 1;\\
$(ii)$ $Rm(g_t)\geq 0$ for all $t\in [0,\frac{n}{2})$;\\
$(iii)$ $g_t$ is rotationally invariant for all $t\in [0,\frac{n}{2})$;\\
$(iv)$ for any $\tau_0\in (0,1)$ and $\epsilon>0$, there exists $r_0=r_0(\tau_0,\epsilon)>0$ such that for any $(x_0,t_0)\in \mathbb{R}^{n+1}\times[\tau_0,\frac{n}{2})$ with $Q:=R(x_0,t_0)\geq r_0^{-2}$, the solution in
$$\{(x,t)\in \mathbb{R}^{n+1}\times [t_0-(Q\epsilon)^{-1},t_0];d_{g_{t_0}}(x,x_0)<(\epsilon Q)^{-\frac{1}{2}} \}$$
is equivariantly-$\epsilon$-close to the corresponding subset of a $\kappa$-solution;\\
$(v)$ $\lim_{t\to \frac{n}{2}}\liminf_{x\in \mathbb{R}^{n+1}}R(x,t)=\infty$.\\
We call this Ricci flow solution the standard Ricci flow corresponding to $g$.
\end{thm}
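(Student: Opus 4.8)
The plan is to establish each of the five listed properties by reduction to existing results, treating this as essentially the rotationally invariant analogue of the theory of standard solutions in Perelman's work and in Section 60 of \cite{KleinerLott08}. First I would address existence and uniqueness. For existence on $[0, \frac{n}{2})$, one writes the metric in the warped-product form $g_t = ds_t \otimes ds_t + \psi^2(s_t, t) \overline{g}$ and appeals to the evolution equation $\partial_t \psi = \partial_s(\partial_s \psi) - (n-1)(1-(\partial_s \psi)^2)/\psi$ derived in Section 2.3, noting that the cylindrical ends provide good control at spatial infinity so that short-time existence holds and can be continued. The constant $\frac{n}{2}$ arises because on the cylindrical end the metric is $(1 - \frac{2}{n}(n-1)t/\cdots)$-type shrinking — more precisely, a round $\mathbb{S}^n$ of scalar curvature $1$ becomes extinct at time exactly $\frac{n}{2}$ under Ricci flow (since $\partial_t \psi = -(n-1)/\psi$ on the cylinder gives $\psi^2(t) = \psi^2(0) - 2(n-1)t$, and normalizing so $R = n(n-1)/\psi^2 = 1$ at $t=0$ gives extinction at $t = \frac{n}{2}$). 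Uniqueness among complete bounded-curvature solutions follows from Chen–Zhu's uniqueness theorem for complete Ricci flows of bounded curvature. I expect the main subtlety here is verifying that the flow remains a legitimate rotationally invariant metric with a smooth singular orbit for all $t < \frac{n}{2}$ — i.e., that the smoothness conditions $\psi(a,t) = 0$, $\psi'(a,t) = 1$ at the origin persist — which follows from the equivariance of Ricci flow but should be remarked upon.

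Next, properties $(ii)$ and $(iii)$: rotational invariance $(iii)$ is immediate from the uniqueness of solutions to the Ricci flow initial value problem together with the fact that the Ricci tensor commutes with pullback by isometries, exactly as noted at the start of Section 2.3. For $(ii)$, nonnegativity of the curvature operator is preserved under Ricci flow by Hamilton's tensor maximum principle (Theorem \ref{hammax}) applied to the cone $\mathcal{R} \geq 0$, which is closed, convex, parallel and invariant under the ODE $\frac{d\mathcal{R}}{dt} = \mathcal{R}^2 + \mathcal{R}^\#$; one must be slightly careful since the manifold is noncompact, but the bounded-curvature assumption together with the cylindrical structure at infinity allows the standard noncompact maximum principle (as in Chow–Lu–Ni) to apply. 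Then $(v)$ follows because the cylindrical ends must shrink: on the end the scalar curvature is $n(n-1)/\psi^2$ and the warping function there satisfies a differential inequality forcing $\psi \to 0$ as $t \to \frac{n}{2}$, so $\liminf_x R(x,t) \to \infty$; one can make this precise by a comparison argument with the exact shrinking cylinder, using that the cap region is compact with bounded geometry so the infimum of $R$ is attained near infinity for $t$ close to $\frac{n}{2}$.

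The heart of the theorem is property $(iv)$, the canonical neighborhood property, and also $(i)$, $\kappa$-noncollapsing. For $(i)$: since the standard solution lives on $\mathbb{R}^{n+1} \not\cong \mathbb{S}^1 \times \mathbb{S}^n$, one could try to invoke Proposition \ref{uniform_noncollapsing} directly, but that proposition requires bounded curvature on unit balls, not a priori available; instead $(i)$ follows for $t < \frac{n}{2}$ from Perelman's no-local-collapsing theorem applied on a fixed compact time interval (using that the initial metric has bounded geometry and the flow exists on a time interval bounded away from $\frac{n}{2}$), exactly as in the three-dimensional case. For $(iv)$: this is proved by contradiction and compactness, precisely mirroring the proof of the canonical neighborhood theorem in Section 3.4 of this paper. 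Assume a sequence of points $(x_i, t_i)$ with $t_i \geq \tau_0$ and $Q_i := R(x_i, t_i) \to \infty$ but whose parabolic neighborhoods, rescaled by $Q_i$, are not $\epsilon$-close to any $\kappa$-solution. Using $(i)$ for $\kappa$-noncollapsing, $(ii)$ for nonnegative curvature, Theorem \ref{rotpinch} (vacuously, as curvature is already nonnegative), and crucially the equivariant Cheeger–Gromov–Hamilton compactness Proposition \ref{flowequivcompact}, one extracts a limiting ancient solution; one checks it is $\kappa$-noncollapsed, has bounded nonnegative curvature, and is rotationally invariant, hence is a rotationally invariant $\kappa$-solution — contradicting the assumption. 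The main obstacle here, and the reason this does not follow completely verbatim from \cite{KleinerLott08}, is the same point-picking / derivative estimate argument needed to guarantee bounded curvature of the rescaled flows on larger and larger parabolic neighborhoods backward in time; this requires the local derivative estimates of Shi together with a careful choice of times to avoid the limit escaping before time $\tau_0$, and one must confirm that Proposition \ref{flowequivcompact} applies, which needs the orbit diameters of the basepoints to stay bounded after rescaling — this last fact is supplied by Lemma \ref{flat} exactly as in the proof of Corollary \ref{closenessimproved}.
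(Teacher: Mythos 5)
The first thing to note is that the paper does not actually prove this theorem: it is stated with the citation ``c.f.\ Section 60 of \cite{KleinerLott08}'', the understanding being that Perelman's standard solution theory carries over once the equivariant compactness and $\kappa$-solution machinery of Sections 2--3 is in place. Your outline follows that same template, and most of the individual reductions you propose (Chen--Zhu for uniqueness in the bounded-curvature class, preservation of $\mathcal{R}\geq 0$ by the noncompact tensor maximum principle, equivariance from uniqueness, and the compactness-contradiction argument for $(iv)$ using Proposition \ref{flowequivcompact} and Lemma \ref{flat}) are the right ingredients. However, there is a genuine gap: you never argue that the flow actually survives until time $\tfrac{n}{2}$. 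Your discussion only explains why $\tfrac{n}{2}$ is the extinction time of the cylindrical end and then asserts that the flow ``can be continued''; but a priori the compact cap region could become singular strictly before $\tfrac{n}{2}$, and ruling this out is one of the substantive steps in the cited treatment (there the canonical neighborhood property is first established on the maximal existence interval and then used, together with the asymptotic cylindrical structure at spatial infinity, to show the maximal time equals the cylinder's extinction time). Statement $(v)$, and indeed the statement of the theorem itself, presuppose this; as written your proposal proves at best existence on some interval $[0,T)$ with $T\leq\tfrac{n}{2}$.

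A second, more local issue concerns $(i)$. You reject Proposition \ref{uniform_noncollapsing} on the grounds that bounded curvature on unit balls is ``not a priori available,'' but $\kappa$-noncollapsedness is a conditional statement: the volume lower bound is only required on those balls where $|Rm|\leq r^{-2}$ already holds, so the proposition applies verbatim to every time slice (each is a complete rotationally invariant manifold diffeomorphic to $\mathbb{R}^{n+1}$) and immediately yields the universal constant $\kappa_0(n)$ on all scales --- this is exactly the simplification the paper advertises in the introduction. Your substitute, Perelman's no-local-collapsing theorem on compact time subintervals, is not wrong, but it produces a constant depending on the particular initial metric $g$, whereas the theorem is stated for the whole family of admissible initial data and asserts a single $\kappa_0$; moreover the universal constant is what is needed later when standard solutions are grafted into the surgery construction. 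So for $(i)$ you should simply quote Proposition \ref{uniform_noncollapsing} slice by slice rather than reprove a weaker statement.
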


We now explain the surgery procedure, mostly following \cite{BamlerNotes}. We indicate several simplifications in the rotationally invariant setting.\\

\begin{defin} If $(M,(g_t)_{t\in I})$ is a rotationally invariant Ricci flow, $r,\epsilon>0$, $E<\infty$, then a point $(x,t)\in M\times I$ satisfies assumption $CNA(r,\epsilon,E)$ if either $R(x,t)\leq r^{-2}$ or the following hold:\\
$(i)$ $|\nabla R^{-\frac{1}{2}}(x,t)|+|\partial _t R^{-1}(x,t)|\leq C$,\\
$(ii)$ $(x,t)$ is the center of a strong equivariant $\epsilon$-neck or an equivariant $(E,\epsilon)$-cap.
\end{defin}

Let $(M,g)=(M^{1},g_{0}^{1})$ be a closed, rotationally invariant
Riemannian manifold with $R<1$, which is $\kappa_0(n)$-noncollapsed on scales
less than 1, and $|\mathcal{R}|_{g_0} \leq 1$. A manifold satisfying these conditions is said to have \emph{normalized geometry}.
Any closed rotationally invariant Riemannian manifold can be rescaled
to have normalized geometry. The curvature bounds ensure that
the first singular time $T_{1}\in(0,\infty]$ satisfies $T_{1}>\frac{1}{8}$. 

Fix a small constant $\epsilon>0$. Suppose by induction that $\mathcal{M}=\left(M^j ,(g_t^j)_{t\in [T_{j-1},T_j)}\right)_{j=1}^k$ is a Ricci flow with surgery (for a precise definition, see Appendix A.9 of \cite{KleinerLott17}) satisfying the Hamilton-Ivey pinching condition and assumption $CNA(r,\epsilon,E)$ at all points (except possibly those removed by surgery), and suppose $\delta>0$. Let $$\Omega^{k}:=\left\{ x\in M^k ; \limsup_{t\nearrow T_k} R(x,t)<\infty \right\}  $$
be the regular set, an open subset with a limiting (incomplete) metric
$g_{T_k}$. Note that
$$\Omega_{0}^{k}:=\left\{ x\in\Omega^{k};R(x,T_k)<\frac{1}{(\delta r)^2}\right\} $$
is relatively compact in $\Omega^{k}$. Since $\partial_{t}(R^{-1})$ is uniformly bounded, $R(\cdot,t)\to\infty$ uniformly on $M^{k}\setminus\Omega^{k}$.

Note that $\Omega^k \setminus \Omega_0^k$ is covered by disjoint open sets $N_i$, each equivariantly diffeomorphic to $\mathbb{B}^{n+1}$, $\mathbb{S}^{n+1}$, $\mathbb{S}^1 \times \mathbb{S}^{n}$, or $\mathbb{R}\times \mathbb{S}^n$, and they are each themselves covered by strong equivariant $\epsilon$-necks, $(C_1,\epsilon)$-caps, and $(C,\epsilon)$-components. Let $\widetilde{\Omega}^k\subseteq \Omega^k$ denote the union of all connected components of $\Omega^k$ which are not completely covered by some of the $N_i$. Observe that every end $N_i$ of $\widetilde{\Omega}^k$ is diffeomorphic to $\mathbb{R}\times \mathbb{S}^n$, is covered by equivariant $\epsilon$-necks, and $R(\cdot,T_k)\to \infty$ along this end; such an end is called an equivariant $\epsilon$-horn. Also note that there are only finitely many such $N_i$, since any $\epsilon$-neck intersecting $\Omega_0^k$ must have some definite volume.

\begin{lem} \label{epshorn} There exists $\epsilon_0>0$ such that the following statement holds for any $\epsilon \in (0,\epsilon_0)$. For any $\delta>0$, there exists $h=h(\delta)\in (0,1)$ such that if $r>0$ and assumption $CNA(r,\epsilon,E)$ holds for all non-presurgery points, then for every $\epsilon$-horn $N_i$ and every point $x\in N_i$ with $R(x,T_k)\geq \frac{1}{(hr)^2}$, $x$ is the center of a strong, equivariant $\delta$-neck at time $T_k$.
\end{lem}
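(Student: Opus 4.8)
\textbf{Proof plan for Lemma \ref{epshorn}.}

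The plan is to argue by contradiction using the canonical neighborhood theorem together with the classification of rotationally invariant $\kappa$-solutions. Fix $\epsilon_0$ small enough that the conclusions of Proposition \ref{kapsols} and the canonical neighborhood theorem apply, and so that an $\epsilon$-neck in a $\kappa$-solution (with $\epsilon<\epsilon_0$) forces the limit to be a cylinder rather than the tip of a Bryant soliton or a round sphere. Suppose the statement fails: then there is $\delta>0$ and sequences $h_\alpha\to 0$, $r_\alpha>0$, Ricci flows with surgery satisfying $CNA(r_\alpha,\epsilon,E)$, $\epsilon$-horns $N_{i_\alpha}$, and points $x_\alpha\in N_{i_\alpha}$ with $R(x_\alpha,T_{k_\alpha})\geq (h_\alpha r_\alpha)^{-2}$ which are \emph{not} centers of strong equivariant $\delta$-necks. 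Since $R(x_\alpha)\to\infty$ relative to $r_\alpha^{-2}$, the canonical neighborhood theorem (applied at the last presurgery-free portion of the flow, using that the horn point has $R\gg r_\alpha^{-2}$ and that $CNA$ holds) lets us rescale by $Q_\alpha:=R(x_\alpha,T_{k_\alpha})$ and extract, via the equivariant compactness Proposition \ref{flowequivcompact}, a pointed limit which is a rotationally invariant $\kappa_0$-solution $(M_\infty,(g_t^\infty),x_\infty)$ with $R(x_\infty,0)=1$.

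The second step is to show this limit $\kappa$-solution is a (quotient of a) shrinking cylinder, which then gives the contradiction. Here I would use the defining geometry of the $\epsilon$-horn: $x_\alpha$ lies in an end $N_{i_\alpha}\cong\R\times\s^n$ which is covered by equivariant $\epsilon$-necks, and along which $R(\cdot,T_{k_\alpha})\to\infty$ in one direction while it decreases to a bounded value (of order $(\delta r_\alpha)^{-2}$) in the other direction, since the horn attaches to $\Omega_0^k$. Concretely, on either side of $x_\alpha$ along the horn there are points $q_\alpha, q_\alpha'$ with $d_{g_{T_{k_\alpha}}}(x_\alpha, q_\alpha)^2 Q_\alpha \to \infty$ and likewise for $q_\alpha'$, and the three points are nearly colinear because the necks foliating the horn force the minimizing geodesic to run essentially along the $\R$-axis; thus $\widetilde\angle(q_\alpha x_\alpha q_\alpha')\to\pi$. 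By Lemma \ref{kaptechlemma} (applied in the rescaled limit, or directly to the rescaled flows for large $\alpha$) $x_\infty$ is the center of an $\epsilon$-neck in the $\kappa_0$-solution $M_\infty$; by Theorem \ref{kappa-solution-classification-noncompact} and the smallness of $\epsilon_0$, $M_\infty$ is then a quotient of the round shrinking cylinder, and in fact the quotient must be trivial since the $\epsilon$-necks in the horn are genuine (non-$\Z_2$-quotiented) necks. Hence the rescaled flows near $x_\alpha$ converge smoothly to the standard shrinking cylinder, which means that for $\alpha$ large, $x_\alpha$ \emph{is} the center of a strong equivariant $\delta$-neck — contradicting our choice of $x_\alpha$.

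The main obstacle I anticipate is the geometric input needed to apply Lemma \ref{kaptechlemma}: one must produce, with uniform control, the two far-away points $q_\alpha,q_\alpha'$ on opposite sides of $x_\alpha$ along the horn and verify that the comparison angle at $x_\alpha$ is bounded away from zero (indeed tends to $\pi$). This requires knowing that one can travel a definite rescaled distance in \emph{both} directions from $x_\alpha$ within the horn before the curvature scale changes — in the high-curvature direction this is automatic since $R\to\infty$, but in the direction toward $\Omega_0^k$ one needs the neck structure (and the bound $R\geq$ const on the horn coming from the fact that $x_\alpha$ is deep inside, i.e. $h_\alpha\to 0$) to guarantee a long cylindrical region, and one must control the near-colinearity, which follows from the fact that in a long chain of $\epsilon$-necks the core curve is a near-geodesic. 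Once this is set up, everything else is a routine application of the results already established: the canonical neighborhood theorem, equivariant compactness, the colinearity lemma, and the $\kappa$-solution classification. A minor additional point to check is that the rescaled flow exists on a uniform backward time interval near $x_\alpha$ with bounded curvature, so that Proposition \ref{flowequivcompact} applies; this is exactly what the $CNA$ assumption and property $(i)$ (bounds on $|\partial_t R^{-1}|$) provide, giving curvature control on $P(x_\alpha,T_{k_\alpha},c Q_\alpha^{-1/2}, -c Q_\alpha^{-1})$ for a uniform $c>0$.
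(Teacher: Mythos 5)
The paper itself gives no independent argument here: it simply invokes the three-dimensional proof (Kleiner--Lott's ``$\delta$-necks in $\epsilon$-horns'' lemma), which runs as follows: rescale at the bad points $x_\alpha$, use the gradient bounds from $CNA$ plus bounded-curvature-at-bounded-distance and Hamilton--Ivey to extract a pointed limit \emph{of the time slices} that is complete with nonnegative curvature; since $h_\alpha\to 0$, the gradient estimate $|\nabla R^{-1/2}|\le C$ shows that points with $R\sim (\delta r_\alpha)^{-2}$ (toward $\Omega_0^k$) and points with $R\to\infty$ (toward the horn tip) are both at rescaled distance $\to\infty$, so the limit has two ends, splits off a line, and is a round cylinder; finally the convergence is extended backward in time to produce the \emph{strong} neck. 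Your proposal follows the same broad strategy up to the blow-up, but then diverges in a way that creates a genuine gap: you assert that the rescaled limit is a rotationally invariant $\kappa_0$-solution. Nothing in the hypotheses gives ancientness. Assumption $CNA(r,\epsilon,E)$ only provides $\epsilon$-closeness to $\kappa$-solutions on parabolic neighborhoods of a \emph{fixed} normalized size, together with derivative bounds that let the limit flow exist on a backward time interval of definite (rescaled) length; extending it to an ancient solution would require a separate bootstrapping argument (as in the proof of the canonical neighborhood theorem itself), and in the surgery setting one must also rule out interference from earlier surgeries. Since Lemma \ref{kaptechlemma} and Theorem \ref{kappa-solution-classification-noncompact} are statements about $\kappa$-solutions, neither may be applied to your limit (nor ``directly to the rescaled flows'', which are merely $\epsilon$-close to $\kappa$-solutions) without first closing this gap.

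A second, smaller logical slip: even granting a $\kappa$-solution limit, the inference ``$x_\infty$ is the center of an $\epsilon$-neck, hence by the classification $M_\infty$ is a quotient of the shrinking cylinder'' is not valid, because the Bryant soliton contains centers of $\epsilon$-necks at any point sufficiently far from the tip; being an $\epsilon$-neck center does not exclude it. The correct way to exclude the Bryant soliton is exactly the two-ends information you already set up with the points $q_\alpha,q_\alpha'$: unbounded rescaled distance on \emph{both} sides of $x_\alpha$ along the horn forces the limit to have two ends, which the one-ended Bryant soliton cannot have, and a complete nonnegatively curved two-ended manifold splits as a line times a cross-section, giving the cylinder directly --- no $\kappa$-solution structure or angle lemma needed. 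Finally, note that the conclusion requires a \emph{strong} equivariant $\delta$-neck, so after identifying the time-slice limit as a cylinder you still need the backward-in-time extension argument (using $CNA$ property $(i)$ and the unscathedness of the relevant backward parabolic regions) to see that the convergence persists for one unit of rescaled time; your proposal treats this as a ``minor point'', but it is a necessary step of the standard proof rather than an afterthought.
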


The proof is the same as the three-dimensional case.

Now fix $\delta>0$ to be determined below, and let $g_{\mathbb{S}^n}$ denote the standard round metric on the sphere with scalar curvature 1. By the lemma and previous discussion, if $N_i$ is any $\epsilon$-horn, and we choose any point $x_i \in N_i$ with $R(x_i,T)= \frac{1}{(hr)^2}$, then $x_i$ is the center of an equivariant $\delta$-neck $\mathcal{N}$ at time $T$. Let
$$\Phi: (-\delta^{-1},\delta^{-1})\times \mathbb{S}^n \to \mathcal{N}$$
be the corresponding $O(n+1)$-equivariant diffeomorphism, and write $\overline{g}:=\frac{1}{n(n-1)(rh)^2}\Phi^{\ast}g(t)$.
In particular,  
\[
\overline{g}-g_{cyl}=(\varphi^{2}(r)-1)dr^{2}+(\psi^{2}(r)-1)g_{\mathbb{S}^{n}},
\]
and for any $k\ge1$, 
\[
(\nabla^{g_{cyl}})^{k}(\overline{g}-g_{cyl})=\partial_{r}^{k}(\varphi^{2}(r))dr^{k+2}+\partial_{r}^{k}(\psi^{2}(r))dr^{k}\otimes g_{\mathbb{S}^{n}}.
\]
We can therefore conclude that 
\begin{equation}\label{neckmetriccomparison}
  ||\varphi-1||_{C^{k}((-\delta^{-1},\delta^{-1}))}+||\psi-1||_{C^{k}((-\delta^{-1},\delta^{-1}))}<\Psi(\delta,k),  
\end{equation}
where $\Psi(\delta |k)\to0$ as $\delta\to0$ for any fixed $k\in\mathbb{N}$. By possibly modifying $\delta$, We can replace $\Phi$ with the reparametrization
\[
\left(-\tfrac{1}{2}\delta^{-1},\tfrac{1}{2}\delta^{-1}\right)\times\mathbb{S}^{n}\to \mathcal{N},\quad (s,\zeta)\mapsto\Phi (\widetilde{s}^{-1}(s),\zeta),
\]
where $\widetilde{s}(r):=\int_0^r \varphi(\tau)d\tau$ is parametrization by arclength, to assume that actually $\varphi \equiv 1$. This is easily seen using the chain rule and the following consequence of (\ref{neckmetriccomparison}):
$$|\partial_r^k (s(r)-r)|\leq \Psi(\delta\,|\,k,\delta')\quad\text{ for all }\quad s\in (-\delta'^{-1},\delta'^{-1}),$$
where $\Psi(\delta\,|\,k,\delta')\to 0$ if $\delta\to 0$ while $k$ and $\delta'$ are kept fixed.

Now fix a partition of unity $(\phi _1,\phi _2)$ of $\mathbb{R}$ subordinate to the cover $(-\infty,\frac{1}{2})\cup (\frac{1}{4},\infty)$.

\begin{lem} \label{surger} There exist $D\in (\frac{5}{8},\infty)$, $\sigma, \delta_0 >0$ and $\widehat{\eta} \in C^{\infty} ((-\infty,D])$ such that the following hold:
\begin{enumerate}[(i)]
\item $\widehat{\eta}(r)=1$ for all $r\leq 0$;
\item for any $\delta\in (0,\delta_0)$ and $\psi\in C^{\infty}((-\delta^{-1},\delta^{-1}))$ satisfying 
$$||\psi - 1||_{C^k((-\delta^{-1},\delta))}<\delta,$$the rotationally invariant metric $\widetilde{g}=dr^2 + \widetilde{\psi}^2(r)g_{\mathbb{S}^n}$with warping function $$\widetilde{\psi}(r):= \big( \phi_1(r)\psi(r)+\phi_2(r) \big) \widehat{\eta}(r)$$satisfies $\widetilde{K}_{rad}(r)\geq K_{rad}(r)$, $\widetilde{K}_{orb}(r)>0$, $\widetilde{R}(r) \geq R(r)$ for all $r\in [0,\frac{1}{4}]$, and $\widetilde{K}_{rad}(r),\widetilde{K}_{orb}(r) \geq \sigma$ for all $r\in [\frac{1}{4},D]$; moreover, $\widetilde{g}$ extends to a smooth Riemannian metric on $((-\delta^{-1},D)\times\mathbb{S}^n)\cup \{0^{n+1}\} \cong \mathbb{B}^{n+1}$;
\item there exists $k>0$ such  that $\widehat{\eta}(r)=\frac{1}{\sqrt{k}}\sin \left( \sqrt{k}(D-r) \right)$ for all $r\in [D-\frac{1}{8},D]$;
\item the metric $g_{\operatorname{stan}}:=dr^2 + \widehat{\eta}^2(r)g_{\mathbb{S}^{n-1}}$ extends to a complete rotationally invariant metric on $\mathbb{R}^{n+1}$ with bounded nonnegative curvature, and $\inf_{\mathbb{R}^{n+1}} R>0$; we call $(\mathbb{R}^{n+1},g_{\operatorname{stan}})$ the standard metric;
\item for any $k\in \mathbb{N}$, we have
$$|\nabla^k (g_{\operatorname{stan}}-\widetilde{g})|_{g_{\operatorname{stan}}} \leq \Psi(\delta|k)$$ on the entire cap  $((-\delta,D)\times\mathbb{S}^n)\cup \{0^{n+1}\} \cong \mathbb{B}^{n+1}$.
\end{enumerate}
\end{lem}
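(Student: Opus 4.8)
The plan is to reduce everything to one-dimensional analysis of warping functions. Every metric involved is a warped product $dr^2+f(r)^2g_{\mathbb{S}^n}$, so $O(n+1)$-invariance and the smooth-extension issues over a singular orbit are automatic, and by the curvature formulas recalled in Section 2 the two sectional curvatures are $K_{rad}=-f''/f$ and $K_{orb}=(1-(f')^2)/f^2$, with scalar curvature $R=2nK_{rad}+n(n-1)K_{orb}$. I would first construct the profile $\widehat\eta$ together with the constants $D,k,\sigma$, and then check (i)--(v) as inequalities between explicit functions of $r$.

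For the construction of $\widehat\eta$: take $\widehat\eta\equiv 1$ on $(-\infty,0]$, and on $(0,D)$ let $\widehat\eta$ be smooth, strictly decreasing and strictly concave, closing the warped product up to a point at $r=D$ along a round spherical cap; the cap ODE $f''=-kf$ with $f(D)=0$, $f'(D)=-1$ gives $\widehat\eta(r)=\tfrac{1}{\sqrt k}\sin(\sqrt k(D-r))$ on a collar $[D-\tfrac18,D]$, which is (iii), and for this collar to lie inside $\{r\ge\tfrac12\}$ (where $\phi_1\equiv 0$) we need $D>\tfrac58$. I would additionally arrange: (a) on $[\tfrac14,D]$ the profile has $K_{rad},K_{orb}\ge 2\sigma$ for some $\sigma>0$, which is possible by taking $D$ large so the bending is gentle and $|\widehat\eta'|$ stays below $1$ (keeping $K_{orb}>0$), the round collar contributing curvature exactly $k$; and (b) on $(0,\tfrac14]$ the super-concavity bound $2\widehat\eta(-\widehat\eta'')\ge(n-1)(\widehat\eta')^2+|\widehat\eta'|\widehat\eta$ holds. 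This second point is the only genuinely new ingredient; it is met by taking $\widehat\eta(r)=1-\varepsilon e^{-1/r}$ near $r=0$ for $\varepsilon=\varepsilon(n)$ small, since dividing the bound through by $\varepsilon e^{-1/r}r^{-4}$ reduces it to $2(1-\varepsilon e^{-1/r})(1-2r)\ge(n-1)\varepsilon e^{-1/r}+r^2(1-\varepsilon e^{-1/r})$ on $(0,\tfrac14]$, which clearly holds for $\varepsilon$ small; the three pieces are joined smoothly preserving concavity. Then (i) and (iii) hold by construction, and (iv) holds because $g_{\operatorname{stan}}$ is a fixed cylinder ($K_{rad}=0$, $K_{orb}=1$) for $r\le 0$ and compact with $K_{rad}>0$, $K_{orb}\ge 0$ for $0\le r\le D$, so its curvature is bounded and nonnegative and $R$ equals a positive constant on the end while being positive on the compact part, whence $\inf R>0$.

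To verify (ii): fix the partition of unity and set $\widetilde\psi=(\phi_1\psi+\phi_2)\widehat\eta$ for $\|\psi-1\|_{C^k}<\delta<\delta_0$. On $[0,\tfrac14]$ we have $\phi_1\equiv 1$, $\phi_2\equiv 0$, so $\widetilde\psi=\psi\widehat\eta$ and $\widetilde K_{rad}-K_{rad}=-\widehat\eta''/\widehat\eta-2(\psi'/\psi)(\widehat\eta'/\widehat\eta)$; since $|\psi'/\psi|\le C\delta$, property (b) together with $\widehat\eta\le 1$ forces this to be $\ge 0$ once $\delta_0$ is small, and the analogous computation for $R$ (again using $\widehat\eta\le 1$, which makes the leading orbital term nonnegative) gives $\widetilde R\ge R$, while $|\widetilde\psi'|=|\psi'\widehat\eta+\psi\widehat\eta'|$ is small, so $\widetilde K_{orb}>0$. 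On $[\tfrac14,D]$, from $\|\phi_1\psi+\phi_2-1\|_{C^k}=\|\phi_1(\psi-1)\|_{C^k}\le C(k)\delta$ we get $\|\widetilde\psi-\widehat\eta\|_{C^k([\tfrac14,D])}\le C(k)\delta$ (and $\widetilde\psi\equiv\widehat\eta$ near $r=D$, so nothing bad happens at the pole), so the curvatures of $\widetilde g$ converge uniformly on $[\tfrac14,D]$ to those of $g_{\operatorname{stan}}$ as $\delta\to 0$ and are $\ge\sigma$ there once $\delta_0$ is small, using (a). Finally $\widetilde\psi>0$ on $(-\delta^{-1},D)$ and $\widetilde\psi\equiv\widehat\eta$ near $r=D$, so $\widetilde g$ closes smoothly over the pole exactly like $g_{\operatorname{stan}}$, which gives the stated identification of the domain with $\mathbb{B}^{n+1}$.

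Finally, for (v): since $g_{\operatorname{stan}}-\widetilde g=(\widehat\eta^2-\widetilde\psi^2)g_{\mathbb{S}^n}$, in a $g_{\operatorname{stan}}$-orthonormal frame this tensor equals $1-(\widetilde\psi/\widehat\eta)^2$ on the spherical block and vanishes otherwise, where $\widetilde\psi/\widehat\eta$ equals $\psi$ for $r\le\tfrac14$, equals $\phi_1\psi+\phi_2$ for $\tfrac14\le r<D$, and equals $1$ near $r=D$; in each case this function of $r$ has $C^k$-norm bounded by $\Psi(\delta\,|\,k)$. As the difference tensor vanishes identically near the pole and $g_{\operatorname{stan}}$ is a fixed metric of bounded geometry, converting these $C^k$ bounds in $r$ into the claimed bound on $|\nabla^k_{g_{\operatorname{stan}}}(g_{\operatorname{stan}}-\widetilde g)|_{g_{\operatorname{stan}}}$ is routine. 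The only place where real care is needed is the construction of $\widehat\eta$: this is essentially Perelman's standard surgery cap ported to the rotationally invariant higher-dimensional setting, and the subtlety is to produce one profile and threshold $\delta_0=\delta_0(n)$ making all of (ii) hold at once --- in particular to reconcile ``curvature essentially unchanged on $[0,\tfrac14]$'' with ``curvature $\ge\sigma$ on $[\tfrac14,D]$'' across the matching point $r=\tfrac14$, which is exactly what the super-concavity bound in (b) achieves.
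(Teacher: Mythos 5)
Your proposal follows essentially the same route as the paper: your bump $1-\varepsilon e^{-1/r}$ is the paper's $1-ae^{-b/r}$ (small $a$, fixed $b$ instead of $b=100n$, $a=(100n)^{-2}$), your ``super-concavity'' inequality $2\widehat\eta(-\widehat\eta'')\ge (n-1)(\widehat\eta')^2+|\widehat\eta'|\widehat\eta$ is precisely what the paper's explicit computation of $\widetilde K_{rad}$, $\widetilde K_{orb}$, $\widetilde R$ requires on $[0,\tfrac14]$ once $\delta_0=\delta_0(n)$ is small, and your handling of $[\tfrac14,D]$ (curvature stability where $\widehat\eta$ is bounded below, exact round cap on the collar), of (iv), and of (v) via $g_{\operatorname{stan}}-\widetilde g=\widehat\eta^2\bigl(1-(\phi_1\psi+\phi_2)^2\bigr)g_{\mathbb{S}^n}$ matches the paper's argument. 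The one step you assert rather than carry out is that the exponential piece can be ``joined smoothly preserving concavity'' to a profile that terminates as an exact round cap: the paper does this by prescribing the derivative $\widehat\zeta$, stretching it by a mollified step $\sigma_\lambda$, and choosing $\lambda$ via the intermediate value theorem so that $\int_0^{D}\widehat\eta'\,dr=-1$, which is what forces the profile to close up at $r=D$ with the $\tfrac{1}{\sqrt k}\sin\bigl(\sqrt k(D-r)\bigr)$ form while keeping $-1<\widehat\eta'<0$ and $\widehat\eta''<0$ throughout; this tuning (matching both value and slope at the cap junction) deserves to be spelled out, though it is the same construction you have in mind and does not affect the correctness of your plan.
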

\begin{proof} Define 
$$\eta(r):= \left\{ \begin{array}{cc}
    1, & r\in (-\infty,0], \\
    1-ae^{-b/r}, & r\in [0,1],
\end{array} \right.$$
where $0<a<\frac{1}{2}<b<\infty$ are constants to be determined, so that $\frac{1}{2}\leq \eta(r)\leq 1$ for all $r\in [0,1]$. For any $\psi$ satisfying the hypotheses of $(ii)$, we compute the curvature of $dr^2+(\eta(r)\psi(r))^2g_{\mathbb S^n}$:
\begin{align*} \widetilde{K}_{rad}(r) &= K_{rad}(r) - \frac{\eta''(r)}{\eta(r)} - 2\frac{\psi'(r)}{\psi(r)}\frac{\eta'(r)}{\eta(r)} \\ &\geq K_{rad}(r)+ \frac{ab}{2r^4}(b-2r-8\delta r^2)e^{-b/r},  
\end{align*}
\begin{align*}
    \widetilde{K}_{orb}(r) &= \frac{1}{\psi^2(r)}\left(\frac{1}{\eta^2(r)}-1 \right) +K_{orb}(r) +2\frac{\psi'(r)}{\psi(r)}\frac{\eta'(r)}{\eta(r)} - \left( \frac{\eta'(r)}{\eta(r)} \right)^2 \\ &\geq K_{orb}(r) - 4\delta \cdot \frac{ab}{r^2}e^{-b/r}-4 \left( \frac{ab}{r^2}e^{-b/r} \right)^2. 
\end{align*}
Combining the expressions above, we have
\begin{align*} \widetilde{R}(r)&= 2n\widetilde{K}_{rad}(r) +n(n-1)\widetilde{K}_{orb}(r)
\\&= R(r)+ \frac{nab}{2r^4}\left( (2-8na)b - 4r -16n\delta r^2 \right) e^{-b/r}.
\end{align*}
By choosing $\delta_0 =\delta_0 (b)>0$ sufficiently small, and by letting $b=100n$ and $a=\frac{1}{(100n)^2}$, we see that  $\widetilde{K}_{rad}(r)\geq K_{rad}(r)$, $\widetilde{K}_{orb}(r)>0$, $\widetilde{R}(r) \geq R(r)$ for all $r\in [0,\frac{1}{2}]$. 

Let $\chi\in C_{c}^{\infty}([0,\frac{1}{2}))$ be a cutoff function
satisfying $\chi'\leq0$ and $\chi|_{[0,\frac{1}{4}]}\equiv1$, and define $\widehat\zeta:(-\infty,\tfrac{5}{8}]\rightarrow \mathbb R$ with 
\begin{gather*}
  \widehat{\zeta}(r):=\left\{\begin{array}{cll}
    \eta'(r)&\text{if} &r\in(-\infty,\tfrac{1}{4}],
  \\
  \chi(r)\eta'(r)-(1-\chi(r))\cos\left(\frac{\pi}{2}\left(\frac{5}{8}-r\right)\right)&\text{if} & r\in [\frac{1}{4},\tfrac{1}{2}],
  \\
  -\cos\left(\frac{\pi}{2}\left(\frac{5}{8}-r\right)\right)&\text{if} &r\in[\tfrac{1}{2},\tfrac{5}{8}],
  \end{array}\right.
\end{gather*}
which clearly satisfies $-1<\widehat\zeta(r)<0$ for $r\in(0,\frac{5}{8})$. For any $r\in[\frac{1}{4},\frac{1}{2}]$, we have $\eta''(r)<0$ and $\eta'(r)>-\frac{1}{10}>-\cos\left(\frac{\pi}{2}\left(\frac{5}{8}-r\right)\right)$, so that
$$
\widehat{\zeta}'(r)<\chi'(r)\left(\eta'(r)+\cos\left(\frac{\pi}{2}\left(\frac{5}{8}-r\right)\right)\right)\leq0\quad\text{ for all }r\in[\tfrac{1}{4},\tfrac{1}{2}].
$$As a consequence, we have 
$$\widehat\zeta'(r)< 0 \quad \text{ for all } r\in(0,\tfrac{5}{8}).$$For each $\lambda\geq\frac{1}{4}$, define $$\sigma_{\lambda}(t):=\left(\chi_{(-\infty,\frac{1}{4}]\cup[\frac{1}{4}+\lambda,\infty)}+\frac{1}{4\lambda}\chi_{[\frac{1}{4},\frac{1}{4}+\lambda]}\right)\ast\phi,$$
where $\phi\in C_{c}^{\infty}(\mathbb{R})$ is a nonnegative mollifier with $\int_{\mathbb{R}}\phi dx=1$ and $\text{supp}(\phi)\subseteq(-\frac{1}{100},\frac{1}{100})$. Then $\int_{0}^{\frac{3}{8}+\lambda}\sigma_{\lambda}(s)ds=\frac{5}{8}$, so if we set $\zeta_{\lambda}(t):=\widehat{\zeta}(\int_{0}^{t}\sigma_{\lambda}(s)ds)$, then $\zeta_{\frac{1}{4}}\equiv\widehat{\zeta}$, $\zeta_{\lambda}|_{[0,\frac{1}{8}]}=\widehat\zeta|_{[0,\frac{1}{8}]}$, and $\zeta_{\lambda}|_{[\frac{26}{100}+\lambda,\frac{3}{8}+\lambda]}$
is the derivative of the warping function of a spherical cap. Moreover,
we have $\zeta_{\lambda}'(t)<0$ for all $t\in(0,\frac{3}{8}+\lambda)$, $\int_{0}^{\frac{5}{8}}\zeta_{\frac{1}{4}}(t)dt\geq-\frac{5}{8}>-1$,
and $\int_{0}^{\frac{3}{8}+\lambda}\zeta_{\lambda}(t)dt\leq \int_{\frac{1}{8}}^{\frac{3}{8}+\lambda}\zeta_{\lambda}(t)dt\leq \int_{\frac{1}{8}}^{\frac{3}{8}+\lambda}\zeta_{\lambda}(\frac{1}{8})dt=(\lambda+\frac{1}{4})\eta'(\frac{1}{8})\to-\infty$
as $\lambda\to\infty$. By the intermediate value theorem, there exists $\lambda\in(\frac{1}{4},\infty)$ such that $\int_{0}^{\frac{3}{8}+\lambda}\zeta_{\lambda}(t)dt=-1$.
Finally, setting $\widehat{\eta}(t):=1+\int_{0}^{t}\zeta_{\lambda}(s)ds$,
we get that $\widehat{\eta}|_{[\frac{26}{100}+\lambda,\frac{3}{8}+\lambda]}$
is the warping function of a spherical cap, $\widehat{\eta}|_{[0,\frac{1}{8}]}\equiv\eta|_{[0,\frac{1}{8}]}$,
$-1<\widehat\eta'(t)<0$ for all $t\in(0,\frac{3}{8}+\lambda)$, and $\widehat\eta''(t)<0$
for all $t\in(0,\frac{3}{8}+\lambda]$. Set $D:=\frac{3}{8}+\lambda$. Clearly, $\widehat{\eta}$ satisfies assertions $(i),(iii),(iv)$, and the part of assertion $(ii)$ concerning $r\in [0,\frac{1}{4}]$. We now compute
\begin{align*}
\widetilde{\psi}'(r)= &\ \left( \phi_1(r)\psi(r) +\phi_2(r) \right) \widehat{\eta}'(r)+\left( \phi_1(r)\psi'(r)+\phi_1'(r)(\psi(r)-1) \right) \widehat{\eta}(r),
\\
\widehat{\psi}''(r)=&\ \left( \phi_1(r)\psi(r)+\phi_2(r)\right) \widetilde{\eta}''(r)+2 \left( \phi_1(r)\psi'(r)+\phi_1'(r)(\psi(r)-1)\right)\widehat{\eta}'(r) 
\\
& \ +\left( 2\phi_1'(r)\psi'(r)+\phi_1'(r)\psi''(r)+\phi_1''(r)(\psi(r)-1) \right) \widehat{\eta}(r).
\end{align*}
Because $\widehat{\eta}$ is smooth, we know that there exists a universal constant $\widehat{\sigma}>0$ such that $-\widehat{\sigma}<\widehat{\eta}'(r)<1-\widehat{\sigma}$ and $\widehat{\eta}''(r)<-\widehat{\sigma}$ for all $r\in [\frac{1}{4},D-\frac{1}{8}]$. Thus
$$\widetilde{\psi}'(r)\leq (1+\delta)(1-\widehat{\sigma})+3\delta <1-\frac{1}{2}\widehat{\sigma},$$
$$\widetilde{\psi}''(r) \leq -(1-\delta)\widehat{\sigma} +20\delta <-\frac{1}{2}\widehat{\sigma}$$
for all $r\in [\frac{1}{4},D-\frac{1}{8}]$ assuming $\delta_0 <\delta_0(\widehat{\sigma})$. Assertion $(ii)$ follows. 
Finally, we note that
$$\widetilde{g}-g_{\operatorname{stan}}=\widehat{\eta}^2(r)\phi_1(r)(\psi(r)-1))\left( \phi_1(r)\psi(r)+\phi_2(r)+1 \right)dg_{\mathbb S^n},$$
which vanishes identically for $r\in [D-\frac{1}{8},D]$, while $\widehat{\eta}(r)\geq \sigma'$ for all $r\in (-\infty,D-\frac{1}{8}]$, for some universal constant $\sigma'>0$. Because $\phi_1,\psi,\phi_2,\widehat{\eta}$ are bounded in $C^k$ by some universal constants, we conclude that
$$|(\nabla^{g_{\operatorname{stan}}})^k\widetilde{g}-g_{\operatorname{stan}}|_{g_{\operatorname{stan}}}\leq C(k,\sigma')||\psi -1||_{C^k((-\delta^{-1},\delta^{-1}))}$$
on the entire surgery cap.
\end{proof}

By condition $(ii)$ ,  $\widetilde{g}$ satisfies the Hamilton-Ivey pinching condition. We use the diffeomorphism $\Phi$ to patch together the metric $\widetilde{g}$ with the Riemannian manifold $(\Omega_0,\overline{g})$.\\

Arguing exactly as in Lemma 63.1 of \cite{KleinerLott08}, we obtain a constant $E(\epsilon)$ large enough so that the standard solution $(\mathbb{R}^{n+1},g_{\operatorname{stan}})$ constructed above satisfies the assumption $CNA(1,\epsilon,E)$, except that the equivariant $\epsilon$-neck structure only extends to the initial time (rather than having duration 1 after the appropriate rescaling).

\begin{defin} We say that a Ricci flow with surgery $\mathcal{M}$ defined on $[0,T]$ satisfies the a priori assumptions with respect to a function $r:[0,T]\to (0,\infty)$ if it satisfies the Hamilton-Ivey pinching condition and assumption $CNA(r,\epsilon,E)$ for all points $(x,t)\in \mathcal{M}$ with $t\in [0,T]$ except possibly those which are removed by surgery.
\end{defin}

\begin{defin}\label{apassump} Suppose $\mathcal{M}$ is a Ricci flow with surgery defined on $[0,T]$ which satisfies the a priori assumptions. Given a nonincreasing function $\delta:[0,T]\to (0,\delta_0)$, where $\delta_0 >0$ is as in Lemma \ref{surger}, we say that $\mathcal{M}$ is a Ricci flow with $(r,\delta)$-cutoff if at each singular time $t$, the forward time slice $\mathcal{M}_t^+$ is obtained from $\mathcal{M}_t^-$ via the surgery procedure described above. More precisely, we discard all components of $\Omega \subseteq \mathcal{M}_t^-$ which do not intersect $\Omega_{\delta(t)r(t)}$, use Lemma \ref{epshorn} to find points $x_i$ in each horn $N_i \subseteq \Omega \subseteq \mathcal{M}_t^-$ with $R(x_i,t)=\max \{ h^{-2}(\delta(t)),\delta^{-2}(t) \}r^{-2}(t)$ which are centers of equivariant $\delta(t)$-necks, where we cut and apply Lemma \ref{surger}, gluing in caps to obtain $\mathcal{M}_t^+$.
\end{defin}

\begin{prop} There exist decreasing sequences $r_j, \delta_j>0$ such that for any nonincreasing functions $r,\delta: [0,\infty)\to(0,\infty)$ with $r<r_j$, $\delta <\delta _j$ on each time interval $[2^{j-1}\epsilon,2^j\epsilon]$, any rotationally invariant Riemannian manifold $(M^{n+1},g_0)$ with normalized initial data admits a 
rotationally invariant Ricci flow with $(r,\delta)$-cutoff defined for all time.
\end{prop}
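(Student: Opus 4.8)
The plan is to adapt Perelman's long-time existence theorem for Ricci flow with surgery (\cite{Perelman03}; see also the exposition in \cite{KleinerLott08}) to the rotationally invariant setting, via a simultaneous induction over the dyadic time intervals $[2^{j-1}\epsilon, 2^j\epsilon]$, with the initial interval (on which the flow is still smooth, since normalized geometry forces $T_1 > \tfrac{1}{8}$) as the base case. The induction hypothesis at stage $k$ produces constants $r_1 \ge \cdots \ge r_k > 0$ and $\delta_0 \ge \delta_1 \ge \cdots \ge \delta_k > 0$ so that: whenever $r, \delta$ are nonincreasing with $r \le r_j$, $\delta \le \delta_j$ on $[2^{j-1}\epsilon, 2^j\epsilon]$ for all $j \le k$, every rotationally invariant $(M^{n+1}, g_0)$ with normalized geometry admits a rotationally invariant Ricci flow with $(r,\delta)$-cutoff on $[0, 2^k\epsilon]$ that satisfies the a priori assumptions there, is $\kappa_0(n)$-noncollapsed below scale $1$, and undergoes only finitely many surgeries. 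For the inductive step one continues the flow past $t = 2^{k-1}\epsilon$ — ordinary Ricci flow between singular times — and at each singular time $T$ one discards every component of $\Omega \subseteq \mathcal{M}_T^-$ missing $\Omega_{\delta(T)r(T)}$ and performs surgery along the $\epsilon$-horns exactly as in Definition~\ref{apassump}, using Lemma~\ref{epshorn} to locate the cutting necks and Lemma~\ref{surger} to glue in the caps.

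Three points must be verified on the new interval. First, Hamilton--Ivey pinching persists: it is preserved under smooth Ricci flow by Theorem~\ref{rotpinch}, and Lemma~\ref{surger}(ii) gives $\widetilde{K}_{rad} \ge K_{rad}$, $\widetilde{K}_{orb} > 0$, $\widetilde{R} \ge R$ on the surgery region, so the pinched curvature operator of the presurgery slice is only improved. Second, $\kappa_0(n)$-noncollapsing is automatic: after the first surgery no time slice is diffeomorphic to $\mathbb{S}^1 \times \mathbb{S}^n$, so Proposition~\ref{uniform_noncollapsing} applies directly to every slice — this is exactly where our argument departs from Perelman's and simplifies it, since we never propagate noncollapsing across a surgery or through the smooth flow. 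Third, only finitely many surgeries occur on a finite time interval: the total volume is bounded above there (using the lower scalar-curvature bound from Hamilton--Ivey pinching, with the surgery caps contributing negligible volume) while each surgery on a horn removes volume at least $v(\delta, r) > 0$; this is the standard argument of \cite{Perelman03, KleinerLott08}.

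It remains to secure the assumption $CNA(r_k, \epsilon, E)$ — the Ricci-flow-with-surgery version of the canonical neighborhood theorem — which one argues by contradiction. If it fails for every candidate $r_k$, one obtains a sequence of flows with $(r,\delta)$-cutoff together with points $(x_i, t_i)$, $t_i \in [2^{j-1}\epsilon, 2^j\epsilon]$, with $Q_i := R(x_i, t_i) \to \infty$ at which the canonical neighborhood property fails, chosen to be first such points, and parabolically rescales by $Q_i$. The a priori control on earlier intervals, the Hamilton--Ivey estimate (which forces any blow-up limit to have nonnegative curvature operator), the universal noncollapsing, and Lemma~\ref{flat} (ruling out orbit collapse, so that orbit diameters stay bounded) together allow the equivariant compactness theorem (Proposition~\ref{flowequivcompact}) to extract a pointed limit which is a rotationally invariant $\kappa_0$-solution; but by Proposition~\ref{kapsols} every point of such a solution is the center of a strong equivariant $\epsilon$-neck or an $(E,\epsilon)$-cap, or lies in a small compact component that would have been discarded — contradicting the choice of $(x_i,t_i)$ once $E = E(\epsilon, n)$ is large. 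The constants are fixed in the order: $\epsilon$ given; then $E = E(\epsilon, n)$ from Proposition~\ref{kapsols}; then $r_k$ from this contradiction; and finally $\delta_k$ small (depending on $r_k$) so that Lemmas~\ref{epshorn} and~\ref{surger} apply and the surgery caps are $C^{\lfloor \epsilon^{-1} \rfloor}$-close to the standard solution.

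The main obstacle, exactly as for Perelman, is making the above blow-up rigorous: one needs the rescaled flows to have uniformly bounded curvature on a backward parabolic neighborhood of $(x_i, t_i)$ of definite rescaled size, which a surgery occurring nearby in space-time could destroy. This is handled by proving in the same induction a bounded-curvature-at-bounded-distance statement, and by observing that where a surgery does intrude the region is modeled on the standard solution — which itself satisfies $CNA(1, \epsilon, E)$ by the analogue of Lemma~63.1 of \cite{KleinerLott08} recorded after Lemma~\ref{surger} — so that taking $\delta_k$ small enough makes this modeling accurate enough to continue the argument. Assembling these mutually dependent statements in the right logical order, with the surgery parameter $\delta_k$ chosen last, is the delicate bookkeeping; every geometric ingredient, however, is already in hand from Sections~2--4.
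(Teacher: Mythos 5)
Your proposal is correct and follows essentially the same route as the paper, which simply cites the three-dimensional argument (Section 80 of \cite{KleinerLott08}) and notes that the only substantive change is replacing Perelman's $\kappa$-noncollapsing-through-surgery theorem by the automatic noncollapsing of Proposition \ref{uniform_noncollapsing} — exactly the simplification you single out. Your more detailed outline (pinching preserved by Lemma \ref{surger}(ii), finitely many surgeries by the volume argument, the canonical neighborhood assumption propagated by the equivariant blow-up/compactness contradiction with the standard solution handling surgery intrusions) is precisely the content the paper delegates to that citation.
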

\begin{proof} The proof is a modification of the three-dimensional case (Section 80 of \cite{KleinerLott08}), where the $\kappa$-noncollapsing through surgery theorem is replaced by Proposition \ref{uniform_noncollapsing}. We outline the proof for the convenience of the reader.

\end{proof}

\begin{thm} \label{extinction} For any rotationally invariant and closed Riemannian manifold $(M^{n+1},g_0)$, there exists $T=T(g_0)<\infty$ such that any Ricci flow with surgery with $(M,g_0)$ as initial condition becomes extinct before time $T$.
\end{thm}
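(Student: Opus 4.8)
The plan is to show that the (normalized) scalar curvature is bounded below by a positive quantity that grows unboundedly in finite time, which forces extinction. Rescale so that $(M,g_0)$ has normalized geometry, so that $\mathcal{R}_{g_0} \geq -1$ and Theorem \ref{rotpinch} applies: the flow with surgery satisfies the Hamilton-Ivey pinching estimate at all non-presurgery points, and moreover the surgery procedure (Lemma \ref{surger}(ii)) preserves the pinching condition at the surgery cap. The key observation is the first inequality in Lemma \ref{rotinvarode}: along the ODE associated to the curvature evolution on a rotationally invariant flow, $\frac{dR}{dt} \geq \frac{2R^2}{n+1}$. Since rotational invariance gives exactly two curvature eigenvalues everywhere, the reaction term $\mathcal{R}^2 + \mathcal{R}^\#$ acting on the scalar curvature is pointwise controlled by this ODE, so by the scalar maximum principle $R_{\min}(t) := \min_{M_t} R(\cdot, t)$ satisfies, in the barrier/support sense, $\frac{d}{dt} R_{\min} \geq \frac{2 R_{\min}^2}{n+1}$ on each smooth interval of the flow with surgery.

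Next I would argue that this differential inequality is not destroyed by surgery. At a surgery time, $\mathcal{M}_t^+$ is obtained from $\mathcal{M}_t^-$ by discarding components and gluing in caps via Lemma \ref{surger}. On the discarded components nothing needs to be said. On the glued-in standard caps, condition $(iv)$ of Lemma \ref{surger} guarantees $\inf_{\mathbb{R}^{n+1}} R > 0$ for the standard metric, and condition $(ii)$ gives $\widetilde{R}(r) \geq R(r)$ on the portion $[0,\tfrac14]$ that overlaps with the pre-surgery neck, together with a definite positive lower bound $\widetilde{K}_{rad}, \widetilde{K}_{orb} \geq \sigma$ on $[\tfrac14, D]$; moreover the surgery is performed at a point where $R = \max\{h^{-2}(\delta), \delta^{-2}\} r^{-2}$, which is large. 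Hence on all of $\mathcal{M}_t^+$ we have $R(\cdot, t) \geq R_{\min}(t^-)$ — that is, the scalar curvature minimum does not decrease across a surgery. (Here I use that surgeries occur only in high-curvature regions, so the minimum of $R$ is never attained near a surgery cap; the cap contributes curvature comparable to the large surgery scale.) Combined with the differential inequality on smooth intervals, $R_{\min}(t)$ is a nondecreasing function that satisfies $\frac{d}{dt} R_{\min} \geq \frac{2}{n+1} R_{\min}^2$ wherever it is differentiable.

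Now I integrate. If the initial scalar curvature minimum is $R_{\min}(0) = -c_0 \leq 0$, the estimate $R \geq \frac{n-1}{2(1+t)}$ coming from the pinching argument (see the proof of Theorem \ref{rotpinch}, where $\operatorname{tr}(\mathcal{R}) \geq \frac{n-1}{2(1+t)}$) already tells us that $R_{\min}(t) > 0$ for all $t > 0$; fix any time $t_1 > 0$ and set $R_{\min}(t_1) =: c_1 > 0$. From $\frac{d}{dt} R_{\min} \geq \frac{2}{n+1} R_{\min}^2$ we get, by separation of variables, $\frac{1}{R_{\min}(t_1)} - \frac{1}{R_{\min}(t)} \geq \frac{2}{n+1}(t - t_1)$, hence $R_{\min}(t) \to \infty$ as $t \nearrow t_1 + \frac{(n+1)}{2 c_1}$. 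Since the flow with surgery is known to exist for all time (by the previous proposition), but a genuine (non-extinct) time slice would have to have finite curvature away from surgeries, the only possibility is that the manifold has become empty — i.e. the flow is extinct — by time $T := t_1 + \frac{(n+1)}{2c_1}$, which depends only on $g_0$. This gives the claimed $T = T(g_0) < \infty$.

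The main obstacle I expect is the bookkeeping at surgery times: one must verify carefully that the glued-in cap never lowers $R_{\min}$, and that $R_{\min}$ is genuinely attained at a smooth point of $\mathcal{M}_t$ so that the scalar maximum principle and the ODE comparison apply on each interval between surgeries. This requires invoking that surgeries take place only where $R \sim (\delta r)^{-2}$ is large while $R_{\min}$ is controlled from below by the Hamilton-Ivey/pinching bound $\frac{n-1}{2(1+t)}$ — which on the relevant bounded time interval $[t_1, T]$ is bounded away from the surgery curvature scale once $r, \delta$ are taken small enough. The rest — the ODE comparison and the integration — is routine.
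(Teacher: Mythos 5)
Your argument hinges on the step ``the estimate $R\geq \frac{n-1}{2(1+t)}$ coming from the pinching argument already tells us that $R_{\min}(t)>0$ for all $t>0$,'' and this is exactly where the proof breaks. The quantity that is actually preserved by the maximum principle (and what the Hamilton--Ivey setup uses) is a \emph{negative} lower bound on $R$ improving like $-C/(1+t)$: the Riccati comparison $\frac{d}{dt}R_{\min}\geq \frac{2}{n+1}R_{\min}^2$ is solved by $R_{\min}(t)=-\frac{C}{1+t}$, and such solutions never blow up. A positive lower bound of the form $\frac{c}{1+t}$ cannot hold for a general closed rotationally invariant initial metric: if $R_{\min}(g_0)\leq 0$ somewhere (which is allowed --- the hypothesis is only $\nu\geq -1$ after normalization), then by continuity $R_{\min}(t)$ is still nonpositive or barely negative for small $t>0$, contradicting any uniform positive bound. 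Without a time $t_1$ at which $R_{\min}(t_1)=c_1>0$, the separation-of-variables step produces no finite blow-up time, and neither smooth evolution nor surgery is guaranteed to ever make $R_{\min}$ positive (a neckpinch can occur in a region of large curvature while the scalar curvature stays negative elsewhere, and the surgery analysis you sketch only shows $R_{\min}$ does not decrease, not that it becomes positive). So the proposal proves extinction only under the extra hypothesis $R(g_0)>0$, which is the classical positive-scalar-curvature extinction argument, not the general statement.

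The paper avoids scalar curvature altogether and instead runs a monotonicity argument on the warping function: writing $g_t=ds^2+\psi^2 g_{\mathbb{S}^n}$, the evolution $\partial_t\psi=\partial_s^2\psi-(n-1)\frac{1-(\partial_s\psi)^2}{\psi}$ gives, at a spatial maximum of $\psi$ (where $\partial_s\psi=0$, $\partial_s^2\psi\leq 0$), the bound $\partial_t\psi\leq -\frac{n-1}{\psi}$, hence $\max\psi^2(\cdot,t)$ decreases at a fixed linear rate on each smooth time interval; the $\delta$-cutoff surgery only removes material or glues in caps inside small necks, so it cannot increase $\max\psi^2$. Since $\psi>0$ on any nonempty time slice, the flow with surgery must be extinct by a time of order $\max_M\psi^2(\cdot,0)$, which depends only on $g_0$ and requires no sign condition on the curvature. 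If you want to salvage your approach, you would have to either restrict to $R(g_0)>0$ or supply a genuinely new argument that $R_{\min}$ becomes positive in finite time for all rotationally invariant initial data, which is not available from the lemmas you cite.
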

\begin{proof}
Suppose the Ricci flow with surgery is given by the sequence of Ricci flows $(M_j,g_t^j)_{t\in [T_{j-1},T_j)}$, $j=1,...,N$, where $N\in \mathbb{N}\cup {\infty}$, and if $N=\infty$, then $\lim_{j\to \infty}T_j=\infty$. Let us consider a component $\widetilde M_j\subset M_j$ and fix an orbit $\mathcal{O}_j$ of $(\widetilde M_j,g_{T_{j-1}}^j)$, and let $\psi_j: \widetilde M_j \times [T_{j-1},T_j)$ be the corresponding warping function (with spatial parameter the signed distance from $\mathcal{O}_j$). Then 
$$\partial_t \psi_j = \partial_s^2 \psi_j -n\frac{1-(\partial_s\psi_j)^2}{\psi_j},$$
so that the maximum principle implies 
$$\max_{\widetilde M_j} \psi_j^2(\cdot,t)\leq \max_{\widetilde M_j} \psi_j^2(\cdot,T_{j-1})-n(t-T_{j-1})$$ 
for all $t\in [T_{j-1},T_j)$. Let $\Omega_j \subseteq M_j$ be the regular set of $(M_j, (g_t^j)_{t\in [T_{j-1},T_j)})$, so that $\psi_j| _{\Omega_j\cap\widetilde M_j}$ extends smoothly to a warping function $\psi_j(\cdot,T_j)$ for the rotationally-invariant Riemannian manifold $(\Omega_j\cap\widetilde M_j, \overline{g}_j)$, which thus satisfies $\max_{\Omega_j\cap\widetilde M_j}\psi_j(\cdot, T_j)\leq \max_{\widetilde M_j}\psi_j(\cdot,T_{j-1})-n(T_j-T_{j-1})$.
Moreover, $(M_{j+1},g_{T_j}^{j+1})$ is obtained from $(\Omega_j, \overline{g}_j)$ by the $\delta$-cutoff procedure, so that
$$\max_{M_{j+1}} \psi_{j+1}(\cdot,T_j)\leq \max_{\widetilde M_j\subset M_j}\max_{\Omega_j\cap\widetilde M_j}\psi_j(\cdot, T_j),$$
and the claim follows.
\end{proof}

\subsection{Singular Ricci flow spacetimes}

In this section, we show that, given any sequence of rotationally invariant Ricci flows with surgery whose time-0 time slices converge smoothly and whose surgery parameters converge to zero, the Ricci flow spacetimes also converge in the sense of \cite{KleinerLott17}. Most properties of this convergence are ensured by Theorem 4.1 of \cite{KleinerLott17}, whose proof carries over almost verbatim to our setting.  It remains to establish that the limit is also rotationally invariant in the appropriate sense, that the convergence is $O(n+1)$-equivariant, and that the limit satisfies the equivariant canonical neighborhood assumptions.

\begin{defin} A Ricci flow spacetime $\mathcal{M}$ is rotationally invariant if it admits a faithful $O(n+1)$ action $\theta: O(n+1)\times \mathcal{M}\to \mathcal{M}$ satisfying the following:\\
$(i)$ $\theta(A,\mathcal{M}_t)=\mathcal{M}_t$ for all $t\in [0,T)$ and $A\in O(n+1)$,\\
$(ii)$ for any $x\in \mathcal{M}$, there exists an $O(n+1)$-invariant product domain $U\subseteq \mathcal{M}$ containing $x$ and a time-preserving $O(n+1)$-equivariant diffeomorphism $\Phi:U\to M\times (-\epsilon,\epsilon)$ to a rotationally invariant Ricci flow.
\end{defin}

\begin{defin} A rotationally invariant Ricci flow spacetime $\mathcal{M}$ satisfies the equivariant $\epsilon$-canonical neighborhood assumption at scales $(r_1,r_2)$ if for every $x\in \mathcal{M}$ with $r_2^{-2} \leq R(x) \leq r_1^{-2}$,  $(\mathcal{M}_{\mathfrak{t}(x)},g_{\mathfrak{t}(x)},x)$ is equivariantly $\epsilon$-close to a rotationally invariant $\kappa$-solution.
\end{defin}

\begin{thm} (c.f. Theorem 4.1 in \cite{KleinerLott17}) \label{spacetimeexists} Let $\{\mathcal{M}^{j};j\in\N\}$ be a sequence of rotationally invariant $(n+1)$-dimensional Ricci flows with surgery, where $\mathcal{M}_{0}^{j}$ are compact normalized Riemannian manifolds such that $\{\mathcal{M}_{0}^{j};j\in\mathbb{N}\}$ is precompact in the $C^{\infty}$ Cheeger-Gromov topology, and let the surgery parameter $\delta_{j}:[0,\infty)\to(0,\infty)$ for $\mathcal{M}^{j}$ satisfy $\lim_{j\to\infty}\delta_{j}(0)=0$. Then, after passing to a subsequence, there is a Ricci flow spacetime $(\mathcal{M}^{\infty},\mathfrak{t}_{\infty},\partial_{\mathfrak{t}_{\infty}},g_{\infty})$ and a sequence of $O(n+1)$-equivariant diffeomorphisms $\Phi_{j}:U_{j}\to V_{j}$, where $U_{j}\subseteq\mathcal{M}^{j}$ and $V_{j}\subseteq\mathcal{M}^{\infty}$ are open, such that:

\begin{enumerate}[(i)]
\item For any $\overline{t}<\infty$ and $\overline{R}<\infty$, we have \begin{align*}\left\{x\in\mathcal{M}^{j};\mathfrak{t}_{j}(x)\leq\overline{t}\mbox{ and }R_{j}(x)\leq\overline{R}\right\} & \subseteq U_{j},
\\
\{x\in\mathcal{M}^{\infty};\mathfrak{t}_{\infty}(x)\leq\overline{t}\mbox{ and }R_{\infty}(x)\leq\overline{R}\} & \subseteq V_{j},\end{align*}
whenever $j\in\N$ is sufficiently large.
\item $\Phi_{j}$ is time-preserving, and $(\Phi_{j})_{\ast}\partial_{t_{j}}$ and $(\Phi_{j}^{-1})^{\ast}g_{j}$ converge smoothly on compact subsets of $\mathcal{M}^{\infty}$ to $\partial_{t_{\infty}}$ and $g_{\infty}$, respectively.
\item If $\mathcal{V}_{j},\mathcal{V}_{\infty}:[0,\infty)\to[0,\infty)$ is the volume at time t, then $\lim_{j\to\infty}\mathcal{V}_{j}=\mathcal{V}_{\infty}$ locally uniformly. 
\end{enumerate}
Furthermore, the following hold:
\begin{enumerate}[(a)]
\item $R_{\infty}:\mathcal{M}_{\leq T}^{\infty}\to\mathbb{R}$ is bounded from below and is proper for all $T\geq 0$.

\item $\mathcal{M}^{\infty}$ satisfies the Hamilton-Ivey pinching condition.

\item $\mathcal{M}^{\infty}$ is $\kappa_0$-noncollapsed on all scales, and for any $\epsilon>0$, $\mathcal{M}$ satisfies the equivariant $\epsilon$-canonical neighborhood assumption at scales $(0,r)$, where $r=r(\epsilon)>0$.
\end{enumerate}
\end{thm}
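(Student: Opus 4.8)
The plan is to obtain the underlying Ricci flow spacetime, the maps $\Phi_j$, and properties (i)--(iii), (a), (b) by running the proof of Theorem 4.1 of \cite{KleinerLott17} essentially verbatim, replacing each of its three-dimensional inputs by the rotationally invariant higher-dimensional version established above: Hamilton--Ivey pinching by Theorem \ref{rotpinch}, the classification and compactness of $\kappa$-solutions by Theorem \ref{kappa-solution-classification-noncompact} and the compactness statement following Theorem \ref{universal kappa}, and the canonical neighborhood theorem by the one of Section \ref{geometryhighcurvature} (together with its extension through surgery, which is uniform in the cutoff parameters exactly as in three dimensions). The $\kappa_0$-noncollapsing of $\mathcal{M}^\infty$ on all scales then follows by passing the $\kappa_0$-noncollapsing of the $\mathcal{M}^j$ --- with the universal constant $\kappa_0 = \kappa_0(n)$ of Proposition \ref{uniform_noncollapsing} and Theorem \ref{universal kappa} --- to the limit using the smooth convergence in (ii). What remains, and what is genuinely new, is to upgrade the convergence to be $O(n+1)$-equivariant, to check that $\mathcal{M}^\infty$ is a rotationally invariant spacetime in the sense of the definition, and to promote the canonical neighborhood assumption to its equivariant form.

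\emph{Equivariance of the limit.} Fix $\overline R, \overline t < \infty$. By property (a) the sublevel set $K_{\overline R, \overline t} := \{R_\infty \le \overline R\} \cap \mathcal{M}^\infty_{\le \overline t}$ is compact, and its preimages in the $\mathcal{M}^j$ are $O(n+1)$-invariant since $\theta_j$ preserves time and scalar curvature. For $j$ large these preimages are trapped between two fixed compact curvature-and-time sublevel sets of $\mathcal{M}^j$, on which $\Phi_j^{\ast} g_j \to g_\infty$ smoothly; in particular the orbit diameters $\operatorname{diam}(\mathcal{O}(x), g_j)$ over $x$ in this region are bounded uniformly in $j$. (No global bound on orbit diameters can be expected --- a rotationally invariant hyperbolic-type metric has bounded curvature but exponentially growing orbits --- so the properness in (a) is used essentially here.) We may therefore apply Proposition \ref{flowequivcompact}: transporting the isometric actions $\theta_j$ by $\Phi_j$, they subconverge on $K_{\overline R, \overline t}$ to an isometric, time-preserving $O(n+1)$-action. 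Exhausting $\mathcal{M}^\infty$ by such compact sets and diagonalizing yields a smooth, faithful, isometric, time-preserving action $\theta_\infty : O(n+1) \times \mathcal{M}^\infty \to \mathcal{M}^\infty$; faithfulness follows since any product domain of $\mathcal{M}^\infty$ is (again by Proposition \ref{flowequivcompact}) modelled on a genuine rotationally invariant Ricci flow, on whose principal orbits $O(n+1)$ acts through $O(n+1)/O(n)$ effectively, and this same observation verifies condition (ii) in the definition of a rotationally invariant spacetime. Finally, carrying out the construction of the $\Phi_j$ in \cite{KleinerLott17} with Proposition \ref{flowequivcompact} in place of the ordinary Cheeger--Gromov--Hamilton compactness theorem makes each $\Phi_j$ $O(n+1)$-equivariant, i.e.\ $\Phi_j \circ \theta^j_h = \theta^\infty_h \circ \Phi_j$.

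\emph{Equivariant canonical neighborhood assumption.} Fix $\epsilon > 0$ and let $\epsilon' = \epsilon'(\epsilon, n) > 0$ be small. The canonical neighborhood theorem for rotationally invariant Ricci flows with cutoff --- the equivariant analogue of Perelman's canonical neighborhood theorem with surgery, obtained by combining the canonical neighborhood theorem of Section \ref{geometryhighcurvature} with the surgery construction of Section 4.1 --- provides a scale $r_{\mathrm{can}} = r_{\mathrm{can}}(\epsilon', n, \kappa_0) > 0$, independent of $j$, such that at every point of $\mathcal{M}^j$ of scalar curvature at least $r_{\mathrm{can}}^{-2}$ the rescaled parabolic neighborhood is $\epsilon'$-close to that of a $\kappa_0$-solution; by Corollary \ref{closenessimproved} (in whose proof Lemma \ref{flat} supplies the orbit-diameter bound after rescaling) this closeness is realized by an $O(n+1)$-equivariant diffeomorphism onto a rotationally invariant $\kappa_0$-solution. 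Given $x \in \mathcal{M}^\infty$ with $R_\infty(x) \ge r_{\mathrm{can}}^{-2}$, set $x_j := \Phi_j^{-1}(x)$ (defined for $j$ large by (i)), so $R_j(x_j) \to R_\infty(x)$; then $(\mathcal{M}^j_{\mathfrak t_j(x_j)}, R_j(x_j) g_j, x_j)$ is equivariantly $\epsilon'$-close to a pointed rotationally invariant $\kappa_0$-solution with unit scalar curvature at the basepoint. By the compactness of the space of such solutions in the pointed equivariant Cheeger--Gromov--Hamilton topology (the theorem following Theorem \ref{universal kappa}) we pass to a subsequence along which these converge equivariantly, and since $\Phi_j^{\ast} g_j \to g_\infty$ equivariantly near $x$ we conclude that $(\mathcal{M}^\infty_{\mathfrak t_\infty(x)}, g_\infty, x)$ is equivariantly $\epsilon$-close to a rotationally invariant $\kappa_0$-solution (for $\epsilon'$ small enough in terms of $\epsilon$). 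This is the equivariant $\epsilon$-canonical neighborhood assumption at scales $(0, r_{\mathrm{can}})$, completing (c).

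\emph{Main obstacle.} The new content lies in the equivariance step: one must extract a \emph{global} limiting $O(n+1)$-action and arrange the $\Phi_j$ to be equivariant, which requires that the equivariant local limits furnished by Proposition \ref{flowequivcompact} on distinct curvature-time sublevel sets and for distinct group elements be mutually compatible, and it relies crucially on the properness of $R_\infty$ to control orbit diameters on the relevant precompact regions --- a control that fails globally.
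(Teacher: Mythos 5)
Your outline identifies the right ingredients (run Kleiner--Lott's Theorem 4.1 with the rotationally invariant inputs, then upgrade via the equivariant compactness results and Corollary \ref{closenessimproved}), and your treatment of the equivariant canonical neighborhood assumption is close in spirit to the paper's. But there is a genuine gap at exactly the point you yourself label the ``main obstacle'': you never actually produce the global $O(n+1)$-action on $\mathcal{M}^{\infty}$ or the equivariant $\Phi_j$. Applying Proposition \ref{flowequivcompact} on curvature--time sublevel sets (or, as the paper does, applying the proof of Proposition \ref{equivariantcompactness} on small unscathed parabolic neighborhoods supplied by the canonical neighborhood theorem) only gives \emph{local} equivariant limits, each defined up to a choice of subsequence and up to composition with an isometry. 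The nontrivial content of the theorem is to show these local structures are mutually compatible and can be glued: the paper does this via Claim 1 (any $O(n+1)$-equivariant diffeomorphism of $(a,b)\times\mathbb{S}^n$ has the form $(r,\zeta)\mapsto(\phi(r),\pm\zeta)$, so the local actions agree on overlaps), Claim 2 (normalizing the limiting transition isometries to the identity), and Claim 3 (an $O(n+1)$-compatible averaging map $\Sigma_2$, used as in Bamler's gluing argument to interpolate the local equivariant embeddings $\eta^j_{x_i}$ into globally defined equivariant maps $\Phi_j$). Asserting that ``carrying out the construction of the $\Phi_j$ in \cite{KleinerLott17} with Proposition \ref{flowequivcompact} in place of the ordinary compactness theorem makes each $\Phi_j$ equivariant'' presupposes precisely this gluing step; without an equivariant interpolation/averaging device the re-run of the Kleiner--Lott construction does not obviously produce equivariant maps.

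A secondary point: your claim that ``no global bound on orbit diameters can be expected'' and that properness of $R_\infty$ is therefore essential is not how the paper proceeds, and in this setting it is also not true. Since the flows with surgery start from closed normalized manifolds, the evolution equation $\partial_t\psi=\partial_s^2\psi-n\frac{1-(\partial_s\psi)^2}{\psi}$ together with the maximum principle (as in the proof of Theorem \ref{extinction}) shows that $\mathcal{D}^j(t)=\max_{x\in\mathcal{M}^j_t}\operatorname{diam}_{g^j_t}(\mathcal{O}(x))$ is nonincreasing in $t$, also across surgeries, so the orbit diameters are bounded uniformly in $j$ and $t$ by the time-$0$ bound coming from Cheeger--Gromov precompactness of the initial slices. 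Your local, properness-based bound on compact sublevel sets can probably be made to work (scalar curvature and time are constant on orbits, so orbits stay in the sublevel sets), but it is more roundabout and, as written, slightly circular in how it places $\Phi_j(\mathcal{O}(x_j))$ inside a fixed compact subset of $\mathcal{M}^\infty$; the paper's monotonicity argument removes the issue entirely.
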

\begin{proof} By Theorem 4.1 of \cite{KleinerLott17}, there is a Ricci flow spacetime $\mathcal{M}^{\infty}$ satisfying the desired conclusions, except that $\mathcal{M}^{\infty}$ may not be a rotationally invariant spacetime, $\Phi_j$ may not be $O(n+1)$-equivariant, and the equivariant $\epsilon$-canonical neighborhood assumption may fail.

We observe that the intrinsic diameters of $O(n+1)$-orbits are uniformly bounded: smooth converge at time 0 gives $\sup_j \max_{x\in \mathcal{M}_0^j} \mbox{diam}_{g_0^j}(\mathcal{O}(x))<\infty$. Moreover, the quantities 
$$\mathcal{D}^j(t):=\max_{x\in \mathcal{M}_t^j} \mbox{diam}_{g_t^j}(\mathcal{O}(x))$$
are nonincreasing in $t$ for each fixed $j\in \mathbb{N}$ by the proof of Proposition \ref{extinction}.  

For each $x\in \mathcal{M}^{\infty}$, the canonical neighborhood theorem gives $r>0$ such that $P_{\mathcal{O}}(\Phi_j^{-1}(x),r))$ is unscathed, and $\rho \geq r$ on this set for all $j\in \mathbb{N}$ large enough. Thus the proof of Proposition \ref{equivariantcompactness} applies to give, after passing to a subsequence, a connected (possibly incomplete) Ricci flow $(M_x,g_t^x)_{t\in I_x}$ along with time-preserving and $\partial_{\mathfrak{t}}$-preserving diffeomorphisms $\xi_x^j:M_x \times (\mathfrak{t}(x)-\epsilon_x,\mathfrak{t}(x)+\epsilon_x) \to \mathcal{M}^j$ such that $(\xi_x^j)^{\ast}g^j \to g^x$ in $C_{loc}^{\infty}(M_x \times I_x)$. 

In particular, $\Phi_j \circ \xi_x^j:(M_x,(\xi_x^j)^{\ast}g^j)\to (\mathcal{M}^{\infty},(\Phi^{-1}_j)^{\ast}g^j)$ are open isometric embeddings of Ricci flow spacetimes, with $(\xi_x^j)^{\ast}g^j\to g^x$ and $(\Phi^{-1}_j)^{\ast}g^j\to g_{\infty}$ in $C_{loc}^{\infty}$ as $j\to \infty$. It follows that, after passing to a subsequence, $(\Phi_j \circ \xi_x^j)_{j\in \mathbb{N}}$ converges in $C_{loc}^{\infty}$ to an open isometric embedding of Ricci flow spacetimes $\xi_x^{\infty}:M_x \times I_x \to \mathcal{M}^{\infty}$. That is, $\xi_x^{\infty}$ is a diffeomorphism onto an open subset $W_x$ of $\mathcal{M}^{\infty}$, which is time-preserving, $\partial_{\mathfrak{t}}$-preserving, and is a Riemannian isometry when restricted to each time slice. We use $\xi_x^{\infty}$ to endow $W_x$ with the structure of a rotationally invariant product domain, by defining
$$A\cdot p := \xi_x^{\infty}(A\cdot (\xi_x^{\infty})^{-1}(p))$$
for $A\in O(n+1)$ and $p\in W_x$. Now define $\eta_x^j:=\xi_x^j \circ(\xi_x^{\infty})^{-1}: W_x \to \mathcal{M}^j$, which are $O(n+1)$-equivariant time-preserving embeddings of Ricci flow spacetimes. Then 
$$(\eta_x^j)^{\ast}g^j = ((\xi_x^{\infty})^{-1})^{\ast}(\xi_x^j)^{\ast}g^j\to((\xi_x^{\infty})^{-1})^{\ast}g^x=g_\infty$$
in $C_{loc}^{\infty}(W_x)$ as $j\to \infty$. Similarly, we have $((\eta_x^j)^{-1})_* \partial_{\mathfrak{t}}^i \to \partial_{\mathfrak{t}}$ in $C_{loc}^{\infty}(W_x)$.  
\\

\noindent \textbf{Claim 1:}
For any $O(n+1)$-equivariant diffeomorphism $\Phi: (a,b) \times \mathbb{S}^n \to (c,d) \times \mathbb{S}^n$, there is a unique diffeomorphism $\phi :(a,b)\to (c,d)$ such that $\Phi(r,\zeta)=(\phi(r),\pm \zeta)$ for all $(r,\zeta)\in (a,b)\times \mathbb{S}^n$.
\begin{proof}[Proof of the claim]
There are unique smooth maps $\rho: (a,b)\times \mathbb{S}^n \to (c,d)$ and $\theta: (a,b)\times \mathbb{S}^n \to \mathbb{S}^n$ such that $\Phi=(\rho,\theta)$, and $O(n+1)$-equivariance implies that $\rho(r,\cdot)$ is constant, and $\theta(r,\cdot):\mathbb{S}^n\to \mathbb{S}^n$ is $O(n+1)$-equivariant for each $r\in (a,b)$. Set $\phi(r):=\rho(r,\zeta)$ for any $\zeta \in \mathbb{S}^n$. Moreover, any $O(n+1)$-equivariant map $\mathbb{S}^n\to \mathbb{S}^n$ is equal to $\pm \mbox{id}_{\mathbb{S}^n}$, so the claim follows since $r\mapsto \theta(r,\cdot)$ is continuous with respect to the compact-open topology. Because $\Phi^{-1}$ is also of the form $\Phi^{-1}(r,\zeta)=(\psi(r),\pm \zeta)$, it follows that $\psi=\phi^{-1}$, hence $\phi$ is a diffeomorphism. \end{proof}

By applying Claim 1 to the $O(n+1)$-equivariant diffeomorphisms $(\eta_x^j)^{-1} \circ \eta_{x'}^j$ on $W_x \cap W_{x'}$, we see that the 
$O(n+1)$-actions on $W_x$ and $W_{x'}$ agree on their overlap, so we have a well-defined structure of a rotationally invariant Ricci flow spacetime on $\mathcal{M}^{\infty}$.  Moreover $(\eta_x^j)^{-1}\circ \eta_{x'}^j$ converge in $C_{loc}^{\infty}$ to an $O(n+1)$-equivariant isometry $\vartheta: W_x \cap W_{x'}\to W_x \cap W_{x'}$ of Ricci flow spacetimes.
\\

\noindent\textbf{Claim 2:} We can assume that $\vartheta = \text{id}_{W_x \cap W_{x'}}$.
\begin{proof}[Proof of the claim]
The only $O(n+1)$-equivariant isometries $(a,b)\times \mathbb{S}^n \to (a,b)\times \mathbb{S}^n$ are of the form $(s,\zeta)\mapsto ((-1)^k s, (-1)^l \zeta)$, where $k,l \in\{0,1\}$. The claim thus follows by replacing $\eta_{x'}^j$ with its composition by the isometry $\vartheta$. \end{proof}

We now let $K\subseteq \mathcal{M}^{\infty}$ be a fixed compact subset, and choose a finite subset $x_1,...,x_N\in K$ such that $K \subseteq \cup_{i=1}^N W_{x_i}$. We can moreover assume that $W_{x_i}\not \subseteq W_{x_j}$ whenever $i\neq j$, so that we can inductively (for any fixed $j\in \mathbb{N}$) alter $\eta_{x_i}^j$ as in Claim 2 so that $(\eta_{x_i}^j)^{-1}\circ \eta_{x_k}^j \to \text{id}_{W_{x_i}\cap W_{x_k}}$ in $C_{loc}^{\infty}$. The remainder of the proof proceeds along the lines of Theorem 9.31 in [Bam3]. We need only to verify that an averaging function can be chosen which preserves the $O(n+1)$-equivariance of spacetime embeddings, which is summarized in the ensuing claim.
\\

\noindent\textbf{Claim 3:} There exists a neighborhood $\Delta_2 \subseteq (\mathcal{M}^{\infty})^2$ of the diagonal $\{(x,x)\in\mathcal M^\infty; x\in\mathcal M^\infty \}$ and a smooth map 
$$\Sigma_2: [0,1]^2 \times \Delta_2 \to \mathcal{M}^{\infty}$$
which satisfies the following:
\begin{enumerate}[(1)]
\item $\Sigma_2(1,0,x_1,x_2)=x_1$; $\Sigma_2(0,1,x_1,x_2)=x_2$;
\item $\Sigma_2(s_1,s_2,x,x)=x$;
\item if $t=\mathfrak{t}(x_1)=\mathfrak{t}(x_2)$, then $\mathfrak{t}(\Sigma_2(s_1,s_2,x_1,x_2))=t$;
\item if $x_1,x_2 \in \mathcal{M}_t^{\infty}$ are connected by a minimizing geodesic segment $\gamma$ transverse to the regular orbits, then $\Sigma(s_1,s_2,x_1,x_2)\in \gamma$.
\end{enumerate}
\begin{proof}[Proof of the claim]
In general, we let $\Delta_2 \subseteq (\mathcal{M}^{\infty})^2$ be the open subset of points $(x_1,x_2)$ such that $x_1,x_2$ survive for all times $t\in [\min(t_1,t_2),\max(t_1,t_2)]$, where $t_i :=\mathfrak{t}(x_i)$,and such that $$\mbox{inj}_{g_t}(x_1(t)),\mbox{inj}_{g_t}(x_2(t))>2d_t(x_1(t),x_2(t))$$
for all $t\in [\min(t_1,t_2),\max(t_1,t_2)]$. Then, for any $x_1,x_2 \in \Sigma_2$ and $t\in [\min(t_1,t_2),\max(t_1,t_2)]$, there is a unique constant-speed minimizing geodesic $\gamma_{x_1,x_2,t}:[0,1]\to \mathcal{M}_t$ with respect to $g_t$ from $x_1(t)$ to $x_2(t)$, which depends smoothly on $(x_1,x_2)\in \Sigma_2$ and on $t\in [\min(t_1,t_2),\max(t_1,t_2)]$. We define
$$\Sigma_2(s_1,s_2,x_1,x_2):=\gamma_{x_1,x_2,\frac{s_1}{s_1+s_2}t_1+\frac{s_2}{s_1+s_2}t_2}\left( \frac{s_1}{s_1+s_2} \right).$$ 
It is easy to check that then (1)-(3) are satisfied. \end{proof}

It remains to show that the equivariant $\epsilon$-canonical neighborhood assumptions hold. Here, we argue as in Proposition 5.15 of \cite{KleinerLott17}. The essential idea is to repeat the arguments of Perelman's original proof (noting as before that the equivariant compactness theorem guarantees equivariant closeness to $\kappa$-solutions), but to replace the point-picking argument (which could a priori be troublesome due to the incompleteness of $\mathcal{M}$) with the $O(n+1)$-invariant  $\epsilon$-neck and $\epsilon$-cap neighborhoods (see Definition \ref{apassump}) obtained by passing the a priori assumptions to the limit. 
\end{proof}

Finally, we show that a rotationally invariant Ricci flow spacetime $\mathcal{M}$ can also be characterized by a warping function, now viewed as a function $\psi: \mathcal{M}\to (0,\infty)$. This will be used in section 11.
\begin{prop} \label{spacetimewarp} Given a Ricci flow spacetime $\mathcal{M}$, define a positive function $\psi\in C^\infty(\mathcal{M})$ by letting $\pi \psi(x)$ be the intrinsic $g_t$-diameter of $\mathcal{O}(x)$, given $x\in \mathcal{M}$. There is a vector field $\partial_s \in \mathfrak{X}(\mathcal{M})$, unique up to sign on each component of $\mathcal{M}$, which is $g_t$-orthogonal to each orbit $\mathcal{O}(x)\subseteq \mathcal{M}_t$, and which satisfies $g_t(\partial_s ,\partial_s)=1$. Moreover, we have
$$[\partial_{\mathfrak{t}}, \partial_s]=-n\frac{\partial_{s} (\partial_s \psi)}{\psi}\partial_s,$$
$$\partial_{\mathfrak{t}}\psi = \partial_s (\partial_s \psi )-(n-1)\frac{1-(\partial_s \psi)^2}{\psi}.$$
\end{prop}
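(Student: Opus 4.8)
The plan is to reduce everything to the local product structure, identify $\psi$ with the classical warping function, and then obtain the two identities by warped-product computations organised invariantly. First I would invoke part (ii) of the definition of a rotationally invariant spacetime: every point of $\mathcal M_{\mathrm{reg}}$ (the union of the principal orbits) has an $O(n+1)$-invariant product neighbourhood, equivariantly and time-preservingly diffeomorphic to a rotationally invariant Ricci flow, on which by \eqref{Formg} the metric is $g_t=\varphi^2(x,t)\,dx\otimes dx+\psi^2(x,t)\,\overline g$, with $\partial_{\mathfrak t}=\partial_t$ in these coordinates. In such a chart the orbit through a point is a round sphere of radius $\psi(x,t)$, of intrinsic diameter $\pi\psi(x,t)$, so the function of the statement coincides with this warping function; in particular $\psi$ is smooth and positive on $\mathcal M_{\mathrm{reg}}$ (and extends continuously by $0$ along the fixed-point orbits). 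The unit normal to the orbits inside the time slice is $\partial_s=\varphi^{-1}\partial_x$, well defined up to the choice of direction of $x$; to produce it globally I would observe that the rank-one distribution $\mathcal D:=(T\mathcal O)^{\perp}\subseteq\ker d\mathfrak t$ has transition functions $\pm1$ between product charts (by Claim 1 in the proof of Theorem \ref{spacetimeexists}, equivariant diffeomorphisms of $(a,b)\times\mathbb S^n$ are of the form $(x,\zeta)\mapsto(\phi(x),\pm\zeta)$), and that over each connected component $\mathcal D$ is orientable — this follows from the classification of connected rotationally invariant manifolds, on each of which the radial line bundle on the regular part is trivial. Choosing an orientation componentwise yields $\partial_s$, unique up to sign per component, and $O(n+1)$-invariant.

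For the commutator identity I would argue invariantly. Both $\partial_{\mathfrak t}$ and $\partial_s$ are $O(n+1)$-invariant, hence so is $[\partial_{\mathfrak t},\partial_s]$; and $d\mathfrak t([\partial_{\mathfrak t},\partial_s])=\partial_{\mathfrak t}(0)-\partial_s(1)=0$, so the bracket is a section of $\ker d\mathfrak t$. An $O(n+1)$-invariant section of $\ker d\mathfrak t$ must, at a principal point, lie in the $O(n)$-fixed subspace of the spatial tangent space, which is precisely the line spanned by $\partial_s$; therefore $[\partial_{\mathfrak t},\partial_s]=b\,\partial_s$ for some function $b$. To evaluate $b$ I would differentiate the constant function $g(\partial_s,\partial_s)\equiv1$ along $\partial_{\mathfrak t}$, using the defining equation $\mathcal L_{\partial_{\mathfrak t}}g=-2\,\text{Ric}$ of a Ricci flow spacetime: $0=\partial_{\mathfrak t}\bigl(g(\partial_s,\partial_s)\bigr)=(\mathcal L_{\partial_{\mathfrak t}}g)(\partial_s,\partial_s)+2g([\partial_{\mathfrak t},\partial_s],\partial_s)=-2\,\text{Ric}(\partial_s,\partial_s)+2b$, so $b=\text{Ric}(\partial_s,\partial_s)$. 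Finally, the radial Ricci curvature of $ds^2+\psi^2\overline g$ is $-n\psi''/\psi=-n\,\partial_s(\partial_s\psi)/\psi$ — this is the $dx\otimes dx$-term of \eqref{FormRicci} read off in the arclength gauge $\varphi\equiv1$ — which gives the stated formula. (Equivalently, in a chart one computes $[\partial_t,\varphi^{-1}\partial_x]=-\varphi^{-1}(\partial_t\varphi)\,\partial_s$ and substitutes $\partial_t(\varphi^2)=-2\,\text{Ric}_{xx}$ from \eqref{FormRicci}.)

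The evolution equation for $\psi$ comes from the same two ingredients. Near a principal point, pick a local $\overline g$-unit vector field $W$ tangent to the orbits and extend it into the product chart so that it is independent of $x$ and $t$; then $[\partial_{\mathfrak t},W]=0$ and $g(W,W)=\psi^2$ as a function on $\mathcal M$, so $2\psi\,\partial_{\mathfrak t}\psi=\partial_{\mathfrak t}\bigl(g(W,W)\bigr)=(\mathcal L_{\partial_{\mathfrak t}}g)(W,W)=-2\,\text{Ric}(W,W)$. Reading the orbit-direction Ricci curvature off \eqref{FormRicci} in the arclength gauge, namely $\text{Ric}(W,W)=\bigl(-\psi''/\psi-(n-1)(\psi')^2/\psi^2+(n-1)/\psi^2\bigr)\psi^2$, and dividing by $\psi$, yields $\partial_{\mathfrak t}\psi=\psi''-(n-1)\tfrac{1-(\psi')^2}{\psi}=\partial_s(\partial_s\psi)-(n-1)\tfrac{1-(\partial_s\psi)^2}{\psi}$. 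Both identities are exactly the spacetime restatement of the formulas for a single rotationally invariant Ricci flow recorded at the end of Section 2.3, which serves as a consistency check.

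The computations are routine; the delicate points are global. The first is the triviality of $\mathcal D$ over each component, which is what makes ``unique up to sign on each component'' meaningful rather than merely local — this uses the classification of connected rotationally invariant manifolds together with the rigidity of equivariant diffeomorphisms (Claim 1 in the proof of Theorem \ref{spacetimeexists}). The second is the bookkeeping of parametrizations: one must be consistent that $\partial_s$ has no $\partial_{\mathfrak t}$-component and that $\partial_{\mathfrak t}$ corresponds to $\partial_t$ at \emph{fixed} spatial coordinate, so that $\partial_s\psi$, $\partial_s(\partial_s\psi)$, and $\partial_{\mathfrak t}\psi$ are chart-independent; passing to the arclength gauge $\varphi\equiv1$ at each time makes this transparent. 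Neither point is a genuine obstacle, so the proof should be short.
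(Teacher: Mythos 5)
Your proposal is correct and follows essentially the same route as the paper: construct $\partial_s$ in the local $O(n+1)$-invariant product charts, glue the local choices up to sign across a component, and then obtain the two identities by pushing forward the warped-product computations for a rotationally invariant Ricci flow (the paper simply cites the formulas recorded at the end of Section 2.3, which your invariant computation with $\mathcal{L}_{\partial_{\mathfrak{t}}}g=-2\operatorname{Ric}$ and \eqref{FormRicci} reproduces). Your extra attention to the sign/orientability bookkeeping and the gauge-independence of $\partial_s\psi$ only makes explicit what the paper leaves implicit.
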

\begin{proof} First consider $x_0 \in \mathcal{M}_{t_0}$, and let $U_{x_0}\subseteq \mathcal{M}$ be a product neighborhood of $x_0$ equipped with a spacetime isometry $\Phi: V_{x_0}\times I_{x_0} \to U_{x_0}$, where $V\subseteq \mathbb{S}^{n+1}$, $I_{x_0}\subseteq \mathbb{R}$ is an interval containing $t_0$, and $(V_{x_0},(g_t')_{t\in I_{x_0}})$ is a rotationally invariant Ricci flow. Let $\partial_s' \in \mathfrak{X}(V_{x_0}\times I_{x_0})$ be the vector field determined by choosing a signed distance function from the orbit $\mathcal{O}(x_0)$, and define $\partial_s^{x_0} := \Phi_{\ast}\partial_s'$. Now choose an open cover of such neighborhoods $(U_{x_j})_{j\in \mathbb{N}}$, and note that on overlaps $U_{x_j} \cap U_{x_k}$, we have $\partial_s^{x_j} =\pm \partial_s^{x_k}$. By changing the sign of the $\partial_s^{x_j}$ appropriately, we may therefore glue these vector fields together to obtain $\partial_s \in \mathfrak{X}(\mathcal{M})$. The claimed identities follow by pushing forward the corresponding identities on the $U_{x_j}\times I_{x_j}$
\end{proof}

\section{Comparison domain and preliminary lemmas}

\subsection{Comparison domain, comparison, and a priori assumptions}

Recall that Bamler-Kleiner \cite{BamlerKleiner17} proved their stability and uniqueness theorem for the singular Ricci flow by constructing a comparison between two Ricci flow space-times, and estimating the Ricci-DeTurck perturbation of the comparison map. In this subsection, we shall introduce  modifications to these definitions adapted to the rotationally-invariant setting. Since the original definitions are long and are not substantially different from ours, we shall only indicate their differences, and the reader may refer to \cite{BamlerKleiner17} for more details.

\begin{defin}[Definition 7.1 in \cite{BamlerKleiner17}, comparison domain.]
An $O(n+1)$-invariant comparison domain over the time interval $[0,t_J]$ on a $O(n+1)$-invariant Ricci flow spacetime is a triple $\left(\mathcal{N},\{\mathcal N^j\}_{1\leq j\leq J},\{t_j\}_{j=0}^J\right)$ satisfying \cite[Definition 7.1]{BamlerKleiner17} along with the following:
\begin{enumerate}[(1)]
    \item  Each slice of $\mathcal N$ is a union of $O(n+1)$-orbits.
    \item Each extension cap is also a union of orbits, with exactly $1$ singular orbit, and is $O(n+1)$-equivariantly diffeomorphic to the $(n+1)$-disk.
\end{enumerate}
\end{defin}

\begin{defin}[Definition 7.2 in \cite{BamlerKleiner17}, comparison]
Consider two $O(n+1)$-invariant Ricci flow spacetimes $\mathcal M$ and $\mathcal M'$ and an $O(n+1)$-invariant comparison domain $\left(\mathcal{N},\{\mathcal N^j\}_{1\leq j\leq J},\{t_j\}_{j=0}^J\right)$ defined on $\mathcal M$ over the time interval $[0,t_J]$. An $O(n+1)$-equivariant comparison from $\mathcal M$ to $\mathcal M'$ defined on $\left(\mathcal{N},\{\mathcal N^j\}_{1\leq j\leq J},\{t_j\}_{j=0}^J\right)$ is a triple $\left(\operatorname{Cut},\phi,\{\phi^j\}_{j=1}^{J^*}\right)$ (over the time interval $[0,t_{J^*}]$) satisfying \cite[Definition 7.2]{BamlerKleiner17} along with the following:
\begin{enumerate}[(1)]
    \item The disks and parabolic disks are the orbit version $B_{\mathcal{O}}$ and $P_{\mathcal{O}}$ as defined in Section 2.1.
    \item $\phi$ is $O(n+1)$-equivariant.
    \item Each cut is a union of $O(n+1)$-orbits, containing exactly one singular orbit, and is $O(n+1)$-equivariantly diffeomorphic to the $(n+1)$-disk.
\end{enumerate}
\end{defin}

We now introduce modifications to the a priori assumptions which determine the properties of the inductive construction of the comparison and its domain. We remind the reader that our definition of the Ricci-DeTurck perturbation is not in any way different from \cite[Definition 7.3]{BamlerKleiner17}, only that, because of our assumptions of the comparison domain and the comparison, the Ricci-DeTurck perturbation in our case is always $O(n+1)$-invariant.

\begin{defin}[Definition 7.4 in \cite{BamlerKleiner17}, equivariant a priori assumptions (APA1)---(APA6)]
Consider two $O(n+1)$-invariant Ricci flow spacetimes $\mathcal M$ and $\mathcal M'$. Let $\left(\mathcal{N},\{\mathcal N^j\}_{1\leq j\leq J},\{t_j\}_{j=0}^J\right)$ be a comparison domain defined on $\mathcal M$ over the time interval $[0,t_J]$ and let $\left(\operatorname{Cut},\phi,\{\phi^j\}_{j=1}^{J^*}\right)$ be a comparison from $\mathcal{M}$ to $\mathcal M'$ defined on $\left(\mathcal{N},\{\mathcal N^j\}_{1\leq j\leq J},\{t_j\}_{j=0}^J\right)$ over the time interval $[0,t_{J^*}]$. We say that $\left(\mathcal{N},\{\mathcal N^j\}_{1\leq j\leq J},\{t_j\}_{j=0}^J\right)$ and $\left(\operatorname{Cut},\phi,\{\phi^j\}_{j=1}^{J^*}\right)$ satisfy the equivariant a priori assumptions (APA1)---(APA6) with respect to the tuple of parameters $(\eta_{\operatorname{lin}},\delta_{\operatorname{n}},\lambda,D_{\operatorname{cap}},\Lambda,\delta_{\operatorname{b}},\epsilon_{\operatorname{can}},r_{\operatorname{comp}})$ if they satisfy \cite[Definition 7.4]{BamlerKleiner17} along with the following:
\begin{enumerate}[(1)]
    \item (APA3(a)) The boundary components of $\partial\mathcal N_{t_j-}$ are central $n$-spheres ($O(n+1)$-orbits) of $O(n+1)$-equivariant $\delta_{\operatorname{n}}$-necks at scale $r_{\operatorname{comp}}$.
    \item (APA4) The component $\mathcal C$ is $O(n+1)$-equivariantly diffeomorphic to an $(n+1)$-disk.
    \item (APA5(c)) The closeness between the extension cap $\mathcal C$ and the scaled Bryant soliton is measured by an $O(n+1)$-equivariant diffeomorphism.
\end{enumerate}
\end{defin}

\begin{defin}[Definition 7.5 in \cite{BamlerKleiner17}, equivariant a priori assumptions (APA7)---(APA13)]
Let $\left(\mathcal{N},\{\mathcal N^j\}_{1\leq j\leq J},\{t_j\}_{j=0}^J\right)$ and $\left(\operatorname{Cut},\phi,\{\phi^j\}_{j=1}^{J}\right)$ be a comparison domain and comparison as the previous definition, over the same time interval $[0,t_J]$. They are said to satisfy the equivariant a priori assumptions (APA7)---(APA13) with respect to the tuple of parameters $(T,E,H,\eta_{\operatorname{lin}},\nu,\lambda,\eta_{\operatorname{cut}},D_{\operatorname{cut}},W,A,r_{\operatorname{comp}})$ if they satisfy \cite[Definition 7.4]{BamlerKleiner17}.
\end{defin}

\subsection{Geometry of non-necklike canonical neighbourhood}
\newcommand{\dn}{\delta_{\operatorname{n}}}
\newcommand{\db}{\delta_{\operatorname{b}}}
\newcommand{\bardn}{\overline{\delta}_{\operatorname{n}}}
\newcommand{\bardb}{\overline{\delta}_{\operatorname{b}}}
\newcommand{\ecan}{\epsilon_{\operatorname{can}}}
\newcommand{\barecan}{\overline{\epsilon}_{\operatorname{can}}}
\newcommand{\Bry}[1]{(M_{\operatorname{Bry}},{#1}g_{\operatorname{Bry}},x_{\operatorname{Bry}})}
\newcommand{\rcomp}{r_{\operatorname{comp}}}
\newcommand{\barrcomp}{\overline{r}_{\operatorname{comp}}}
\newcommand{\etalin}{\eta_{\operatorname{lin}}}
\newcommand{\baretalin}{\overline{\eta}_{\operatorname{lin}}}
\newcommand{\Dcap}{D_{\operatorname{cap}}}
\newcommand{\uDcap}{\underline{D}_{\operatorname{cap}}}

In this subsection, we shall include some auxiliary lemmas which are essential for the simplication of \cite[Section 11]{BamlerKleiner17}. These lemmas, especially Lemma \ref{nonneckCN=Bryant_Application} below, basically say that, if a canonical neighborhood is not close to a neck, and geometrically it does not look like a closed component, then this  canonical neighborhood is very close (up to scaling) to a piece of the Bryant soliton not too far from the tip. 

In this section, we will always let $s$ be a signed distance function whose gradient is orthogonal to the orbits.  We first note a modification of Lemma 8.2 of \cite{BamlerKleiner17}.

\begin{lem} \label{eq:cannbd} If $$\delta \leq \overline{\delta}, \hspace{6 mm} \ecan \leq \barecan(\delta),$$
then there exists $C_0 = C_0(\delta)<\infty$ such that the following holds. Let $(M^n,g)$ be a rotationally invariant Riemannian manifold which satisfies the equivariant $\ecan$-canonical neighborhood assumption at some point $x\in M$ which is not the center of an equivariant $\delta$-neck at scale $\rho(x)$. Then there is a compact, connected, $O(n+1)$-invariant domain $V\subseteq \mathcal{M}_t$, where $t=\mathfrak{t}(x)$, such that the following hold:
\begin{enumerate}[(1)]
    \item $B_{\mathcal{O}}(x,\delta^{-1}\rho (x))\subseteq V$,
    \item $0 <\rho (y_1) <C_0 \rho(y_2)$ for all $y_1,y_2\in V$,
    \item $\mbox{diam}_t(V)<C_0 \rho(x)$,
    \item If $\partial V\neq \emptyset$, then $V$ is a rotationally invariant (n+1)-disk, and $\partial V\neq \emptyset$ is a central sphere of an equivariant $\delta$-neck.
\end{enumerate}
\end{lem}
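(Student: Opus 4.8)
The plan is to deduce this from the classification of rotationally invariant $\kappa$-solutions (Proposition \ref{kapsols}) together with the equivariant compactness theorem. First I would unwind the hypothesis: since $(M^n,g)$ satisfies the equivariant $\ecan$-canonical neighborhood assumption at $x$ and $x$ is not the center of an equivariant $\delta$-neck at scale $\rho(x)$, a contradiction-and-compactness argument (using Corollary \ref{closenessimproved} and Proposition \ref{flowequivcompact}, noting that by Lemma \ref{flat} the orbit diameters cannot degenerate) shows that $(M,\rho(x)^{-2}g,x)$ is $O(n+1)$-equivariantly as close as we like (on a ball of radius $\sim\epsilon^{-1}$, say, in $C^{\lfloor\epsilon^{-1}\rfloor}$) to the time-$0$ slice $(\widehat M,\widehat g,\widehat x)$ of a rotationally invariant $\kappa$-solution in which $\widehat x$ is \emph{not} the center of an $\epsilon$-neck (with $\epsilon$ chosen small depending on $\delta$). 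By Proposition \ref{kapsols}, $\widehat x$ is then the center of an equivariant $(E,\epsilon)$-cap, or $\widehat M\cong\mathbb S^n$ with $\mathrm{diam}_{\widehat g}(\widehat M)\le E\widehat R^{-1/2}(\widehat x)$. In the latter, closed case, I would simply take $V=M$ (which is then equivariantly diffeomorphic to $\mathbb S^n$ and has no boundary, and conclusions (1)--(3) are immediate from the diameter and scalar-curvature bounds on a $\kappa$-solution sphere, transferred back to $M$ via the almost-isometry); conclusion (4) is vacuous since $\partial V=\emptyset$.

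In the cap case, the idea is to pull back the cap $\widehat U\subseteq\widehat M$ under the approximating equivariant diffeomorphism $\eta$ of Corollary \ref{closenessimproved}. Concretely, the $(E,\epsilon)$-cap $\widehat U$ has a compact core $K$ with $\widehat U\setminus K$ an equivariant $\epsilon$-neck; I would choose $V$ to be the $O(n+1)$-invariant domain bounded by a central sphere $\widehat\Sigma$ of that neck (slightly inside $\widehat U$ so that $\widehat\Sigma$ is in the region where $\eta$ is defined and the $C^{\lfloor\epsilon^{-1}\rfloor}$-closeness is in force), and then set $V:=\eta^{-1}(\widehat U_{\widehat\Sigma})$ where $\widehat U_{\widehat\Sigma}$ is the part of the cap bounded by $\widehat\Sigma$. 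Since $\eta$ is $O(n+1)$-equivariant, $V$ is $O(n+1)$-invariant; since $\widehat U_{\widehat\Sigma}$ is equivariantly diffeomorphic to an $(n+1)$-disk, so is $V$, giving the first half of (4). The remaining half of (4) --- that $\partial V=\eta^{-1}(\widehat\Sigma)$ is a central sphere of an equivariant $\delta$-neck in $(M,g)$ --- follows because $\widehat\Sigma$ sits inside an equivariant $\epsilon$-neck of $\widehat M$ and $C^{\lfloor\epsilon^{-1}\rfloor}$-closeness of metrics under an equivariant diffeomorphism turns an $\epsilon$-neck into a $\delta$-neck provided $\epsilon\le\barecan(\delta)$ is chosen small enough; one must also track that the neck scales are comparable to $\rho(x)$ up to a factor depending only on $\delta$. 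The estimates (1)--(3) then come from the corresponding estimates on the cap $\widehat U_{\widehat\Sigma}$: the $(E,\epsilon)$-cap axioms $(iii)$ and $(v)$ give a diameter bound and a curvature-ratio bound in $\widehat g$ of the form $C(E)\widehat R^{-1/2}(\widehat x)$ and $C(E)$, and pulling back through $\eta$ (which is a $(1\pm\Psi(\epsilon))$-bi-Lipschitz equivariant map on the relevant region and distorts scalar curvature by a factor $1\pm\Psi(\epsilon)$) converts these into (2) and (3) with $C_0=C_0(\delta)$, after absorbing $E=E(\epsilon,n)=E(\delta,n)$; finally (1) is arranged by insisting at the outset that $\eta$, hence the comparison to the $\kappa$-solution, is valid on a ball of equivariant radius at least, say, $2\delta^{-1}\rho(x)$, which is automatic from Corollary \ref{closenessimproved} once $\ecan$ is small.

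The one genuinely delicate point --- and the step I expect to be the main obstacle --- is the bookkeeping of \emph{scales and constants}: one must choose the auxiliary parameter $\epsilon$ (the quality of $\kappa$-solution approximation) small enough relative to $\delta$ so that (a) an equivariant $\epsilon$-neck in the $\kappa$-solution, transplanted via an equivariant $C^{\lfloor\epsilon^{-1}\rfloor}$-almost-isometry, becomes a genuine equivariant $\delta$-neck at a controlled scale in $(M,g)$, and (b) the cap of the $\kappa$-solution, which is an $(E,\epsilon)$-cap with $E$ possibly large, still transplants to a domain whose diameter and curvature-oscillation are bounded by a single constant $C_0(\delta)$; this requires knowing that $E=E(\epsilon,n)$ stays bounded as we take $\epsilon=\epsilon(\delta)$, which it does since $\delta$ is fixed first. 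A secondary subtlety is that the approximating equivariant diffeomorphism $\eta$ of Corollary \ref{closenessimproved} is only defined on a ball of a \emph{fixed} (large) radius in rescaled coordinates, so one must verify that the central sphere $\widehat\Sigma$ of the neck-end of the cap lies within that ball; this is where the uniform diameter bound $\mathrm{diam}_{\widehat g}(\widehat M_\epsilon)\le C(n,\epsilon)$ from the proof of Proposition \ref{kapsols} is used, to guarantee that the relevant neck sits at bounded distance from $\widehat x$. Modulo these constant-chasing arguments, which are routine but must be done in the right order ($\delta$ fixed $\Rightarrow$ $\epsilon(\delta)$ chosen $\Rightarrow$ $E(\epsilon,n)$ and hence $C_0(\delta)$ determined $\Rightarrow$ $\barecan(\delta)$ forced by Corollary \ref{closenessimproved}), the lemma follows.
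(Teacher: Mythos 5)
The paper does not actually print a proof of this lemma (it is quoted as a rotationally invariant modification of Lemma 8.2 of \cite{BamlerKleiner17}), and your overall strategy --- use the equivariant canonical neighborhood assumption to pass to a rotationally invariant $\kappa$-solution, invoke the cap/compact dichotomy of Proposition \ref{kapsols}, and transplant an $O(n+1)$-invariant domain bounded by a neck orbit back to $M$ --- is indeed the intended route (your detour through Corollary \ref{closenessimproved} is unnecessary, since the \emph{equivariant} $\ecan$-canonical neighborhood assumption already hands you an equivariant almost-isometry to a $\kappa$-solution, but that is harmless).

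The genuine gap is in the cap case, in your choice of the bounding sphere. You take $\widehat\Sigma$ to be a central sphere of the neck-end $\widehat U\setminus K$ of the $(E,\epsilon)$-cap and set $V=\eta^{-1}(\widehat U_{\widehat\Sigma})$; but nothing in the definition of an $(E,\epsilon)$-cap guarantees that the portion of the cap bounded by $\widehat\Sigma$ contains $B(\widehat x,\delta^{-1})$, so conclusion (1), $B_{\mathcal{O}}(x,\delta^{-1}\rho(x))\subseteq V$, can fail. The cap's size is governed by $E=E(\epsilon,n)$ and, worse, in the construction of Proposition \ref{kapsols} the neck-end is centered at a point at distance at most $2C(n,\epsilon)+2\epsilon^{-1}$ from $\widehat x$ but its near end (and hence $\widehat\Sigma$) can lie at distance far smaller than $\delta^{-1}$ from $\widehat x$; there is no ordering of constants that forces ``cap $\supseteq$ $\delta^{-1}$-ball''. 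The correct selection is to place the bounding orbit at distance comparable to $2\delta^{-1}\rho(x)$ from $x$ and then argue that (i) such an orbit is the central sphere of an equivariant $\delta$-neck and (ii) $\rho$ oscillates by at most a factor $C_0(\delta)$ on the enclosed region. This is exactly where the classification of rotationally invariant $\kappa$-solutions enters: on the Bryant soliton one uses the asymptotics $\rho\sim\sqrt{d}$ and the fact that all points at distance $\geq C(\epsilon)$ from the tip are neck centers, while for a compact $\kappa$-solution one needs the dichotomy ``either $\operatorname{diam}\leq C(\delta)\rho(x)$ (then take $V$ to be the whole component, as in your closed case) or there is a neck orbit at the required distance''; in this paper those facts are packaged in Lemmas \ref{nonneckCN=Bryant} and \ref{nonneckCN=Bryant_Application} (and Lemma \ref{kaptechlemma}), and your proof needs them, or an equivalent point-picking argument, to secure (1)--(3) simultaneously. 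The remaining ingredients of your write-up (transplanting an $\epsilon$-neck to a $\delta$-neck under an equivariant $C^{\lfloor\epsilon^{-1}\rfloor}$-almost-isometry, the order $\delta\Rightarrow\epsilon(\delta)\Rightarrow E\Rightarrow C_0(\delta)\Rightarrow\barecan(\delta)$, and taking $V$ to be the connected component of $\mathcal{M}_t$ containing $x$ in the compact case) are fine.
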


\begin{lem}\label{nonneckCN}
If
\begin{eqnarray*}
\delta\leq\bar{\delta},\quad \ecan\leq\barecan(\delta),
\end{eqnarray*}
then there exists $C_0=C_0(\delta)$ such that the following holds. Let $\mathcal{M}$ be an $(n+2)$-dimensional rotationally invariant Ricci flow spacetime. Let $x\in\mathcal {M}_t$ for some $t\in[0,\infty)$. Assume that $\mathcal{M}$ satisfies the equivariant $\ecan$-canonical neighborhood assumption at $x$ and that $\operatorname{diam}_{g_t}(N)>C_0(\delta)\rho(x)$, where $N$ is the connected component of $\mathcal{M}_t$ that contains $x$. 
\begin{enumerate}[(1)]
\item With respect to these parameters Lemma \ref{eq:cannbd} holds at $x$.
\item If $x$ is not the center of an equivariant $\delta$-neck, then exactly one singular orbit of $\mathcal{M}_{t}$ is contained in $\displaystyle B_{\mathcal{O}}\big(x,C_0(\delta\big)\rho(x))$.
\item If (2) happens, then all $O(n+1)$-orbits lying between the orbit of $x$ and this singular orbit are entirely contained in $\displaystyle B_{\mathcal{O}}\big(x,C_0(\delta\big)\rho(x))$.
\end{enumerate}
\end{lem}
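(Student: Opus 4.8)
## Proof Proposal for Lemma \ref{nonneckCN}

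The plan is to deduce all three assertions from the equivariant canonical neighborhood assumption together with the classification of rotationally invariant $\kappa$-solutions (Theorem \ref{kappa-solution-classification-noncompact}) and the structural description in Proposition \ref{kapsols}, applied through the compactness statement of Lemma \ref{eq:cannbd}. First I would fix $\bar\delta$ and $\barecan(\delta)$ so that Lemma \ref{eq:cannbd} applies, and let $C_0 = C_0(\delta)$ be the constant from that lemma, enlarging it as needed in the course of the argument (each enlargement depends only on $\delta$). For assertion (1), there is essentially nothing to prove: since $\operatorname{diam}_{g_t}(N) > C_0(\delta)\rho(x) \geq \operatorname{diam}_t(V)$ for the domain $V$ produced by Lemma \ref{eq:cannbd}, the hypotheses of Lemma \ref{eq:cannbd} are met at $x$ and its conclusion holds verbatim; the diameter hypothesis only rules out the degenerate case where $N$ itself is a small closed component (item (iii) of Proposition \ref{kapsols}), which is exactly the situation in which $V$ would be all of $N$.

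For assertion (2), suppose $x$ is not the center of an equivariant $\delta$-neck. By the $\ecan$-canonical neighborhood assumption, $(\mathcal{M}_t, g_t, x)$ rescaled by $\rho(x)^{-2}$ is equivariantly $\ecan$-close to a pointed rotationally invariant $\kappa$-solution $(\widehat M, \widehat g, \widehat x)$; since $x$ is not a neck center and $\operatorname{diam}(N)$ is large, Proposition \ref{kapsols} forces $\widehat x$ to be the center of an equivariant $(E,\epsilon)$-cap in $\widehat M$ (the closed-component alternative (iii) is excluded by the diameter hypothesis once $C_0$ is large enough relative to $E(\epsilon,n)$). By the definition of an equivariant $(E,\epsilon)$-cap, $\widehat M$ near $\widehat x$ contains exactly one singular orbit, lying within bounded (rescaled) distance $E$ of $\widehat x$, and this cap region is $O(n+1)$-equivariantly diffeomorphic to a ball. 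Taking $\ecan$ small enough (depending on $\delta$ via the closeness needed to transfer this picture) and $C_0$ large enough that $B_\mathcal{O}(x, C_0\rho(x))$ contains the $\ecan$-approximation of the cap region, we conclude that $B_\mathcal{O}(x, C_0\rho(x))$ contains exactly one singular orbit of $\mathcal{M}_t$: at least one, because the cap has one and equivariant closeness plus the smoothness conditions on warping functions (Proposition \ref{rotdifgen} and the vanishing $\psi = 0$, $|\psi'| = 1$ at singular orbits) force a genuine singular orbit nearby; at most one, because a second singular orbit would make that component compact with diameter comparable to $\rho(x)$, contradicting the diameter hypothesis after enlarging $C_0$.

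For assertion (3), assuming the situation of (2), I would use the warped-product description on $M_{\operatorname{reg}}$ from Proposition \ref{rotdifgen}: parametrize the component $N$ near $x$ by the signed distance $s$ from the orbit of $x$, so $g_t = ds^2 + \psi^2(s)\,\overline{g}$, with the singular orbit occurring at some $s = s_* $ where $\psi(s_*) = 0$. The orbits lying "between" $\mathcal{O}(x)$ and the singular orbit are precisely those with $s$ between $0$ and $s_*$. On the cap region, equivariant closeness to the $(E,\epsilon)$-cap of the $\kappa$-solution gives a uniform upper bound $|s_*| \leq C(\delta)\rho(x)$ (the cap has bounded rescaled diameter, item (iii) of the cap definition), and moreover the warping function is monotone toward the tip in the model; transferring this via $\ecan$-closeness bounds the distance from $x$ to every intermediate orbit by $C_0(\delta)\rho(x)$, so the whole chain of intermediate orbits sits inside $B_\mathcal{O}(x, C_0(\delta)\rho(x))$. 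The main obstacle I anticipate is bookkeeping the chain of dependencies among $\bar\delta$, $\barecan(\delta)$, and the successive enlargements of $C_0(\delta)$ — in particular ensuring that "equivariantly $\ecan$-close to a $\kappa$-solution" is strong enough, in the relevant $C^{\lfloor \ecan^{-1}\rfloor}$ norm, to guarantee both the existence and the uniqueness of the nearby singular orbit (as opposed to, say, a region that merely looks like a cap but whose warping function stays bounded away from zero); this is where Lemma \ref{eq:cannbd}(4) and the classification in Proposition \ref{kapsols} must be invoked carefully to rule out the alternative that $V$ has nonempty boundary and is itself a neck-bounded disk with no singular orbit inside.
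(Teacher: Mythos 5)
Your proposal lands on the right conclusions but by a different mechanism than the paper. The paper's proof never goes back to the model after invoking Lemma \ref{eq:cannbd}: it takes the compact invariant domain $V$ produced there, uses the diameter hypothesis only to force $\partial V\neq\emptyset$, and then reads off (2) and (3) from topology — $V$ is $O(n+1)$-equivariantly an $(n+1)$-disk whose boundary is a single orbit, so the entire side of $\mathcal{O}(x)$ not containing $\partial V$ lies in $V$; since $V$ is compact, has diameter $<C_0\rho(x)$, and consists of smooth points, that side must close up at the unique singular orbit of $V$, and every intermediate orbit lies in $V\subseteq B_{\mathcal{O}}(x,C_0\rho(x))$. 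You instead re-invoke the canonical neighborhood assumption and Proposition \ref{kapsols} to place $x$ in an equivariant cap of a $\kappa$-solution and transfer the tip. That route is workable, but two of your worries dissolve: the existence of the nearby singular orbit needs no warping-function analysis, because the closeness map in the equivariant canonical neighborhood assumption is an $O(n+1)$-equivariant diffeomorphism and therefore carries the model's fixed point to a fixed point of $\mathcal{M}_t$ (once $\epsilon_{\operatorname{can}}^{-1/2}$ is large compared with $C_0(\delta)$, which is the correct order of parameter choice); and the scenario you flag at the end — a neck-bounded equivariant disk containing no singular orbit — is vacuous, since an $O(n+1)$-equivariant $(n+1)$-disk has exactly one fixed point. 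This last observation is precisely what makes the paper's argument short.

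There is, however, a genuine gap in your ``at most one'' step as written. If a second singular orbit lay in $B_{\mathcal{O}}(x,C_0\rho(x))$, the component $N$ would indeed be a rotationally invariant sphere, but its diameter would then only be bounded by a fixed multiple of $C_0\rho(x)$, not of $\rho(x)$; since the hypothesis is $\operatorname{diam}_{g_t}(N)>C_0\rho(x)$ with the same constant, ``enlarging $C_0$'' enlarges the ball and the diameter bound simultaneously, so no contradiction appears. The diameter hypothesis cleanly excludes only alternative (iii) of Proposition \ref{kapsols}, where the bound $\operatorname{diam}\leq E(\delta)\rho(x)$ involves a constant fixed before $C_0$. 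To repair uniqueness, argue on the model side: having excluded the compact case, the model is noncompact, hence by Theorem \ref{kappa-solution-classification-noncompact} a cylinder or Bryant soliton with at most one fixed point; choosing $\epsilon_{\operatorname{can}}\leq\overline{\epsilon}_{\operatorname{can}}(\delta)$ so small that $B_{\mathcal{O}}(x,C_0(\delta)\rho(x))$ lies inside the domain of the equivariant closeness map, injectivity of that map then gives at most one singular orbit in the ball. Alternatively, follow the paper: any singular orbit on the relevant side must lie in the equivariant disk $V$, which contains exactly one.
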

\begin{proof} 
Let $x$ satisfy the assumptions of (2). By Lemma \ref{eq:cannbd}, since $x$ is not the center of a equivariant $\delta$-neck, there exists a compact, connected domain $V\subseteq \mathcal{M}_{\mathfrak{t}(x)}$ satisfying (1)---(4) in the statement of that lemma. If $\partial V=\emptyset$, then $V$ equals the entire component of $\mathcal{M}_{\mathfrak{t}(x)}$, but $\mbox{diam}(V)<C_0 \rho (x)$, contradicting our hypothesis. Thus $\partial V\neq \emptyset$, and $V$ is equivariantly diffeomorphic to $\mathbb{B}^{n+1}$. Since $B(x,\delta^{-1}\rho(x))\subset V$, we have that $V$ must contain the orbit $\{s=s(x)\}$ at $x$. By Lemma \ref{eq:cannbd}, we also have that $\partial V$ is an $O(n+1)$ orbit. Therefore, either $\partial V\subset \{s>s(x)\}$ or $\partial V\subset \{s<s(x)\}$. If, say, the latter happened, then $\{s>s(x)\}\subset V$. Since $V$ is a set with bounded diameter, and consisting of smooth points, we have $\{s>s(x)\}$ must contain a singular orbit. This proves (2) and (3).
\end{proof}

\vspace{6 mm}

\begin{lem}\label{nonneckCN=Bryant}
If
\begin{eqnarray*}
\dn\leq\bardn,\quad \delta\leq\overline{\delta},
\end{eqnarray*}
then there exists $D_0=D_0(\delta_{\operatorname{n}},\delta)<\infty$ such that the following hold. Let $(M,g)$ be a time slice of a rotationally invariant $\kappa$-solution, and suppose $x\in M$ is not the center of an equivariant $\dn$-neck. Then  one of the following is true.
\begin{enumerate}[(1)]
\item $(M,g)$ is compact and $\operatorname{diam}(M)\leq D_0\rho(x)$.
\item $B_{\mathcal{O}}(x,\delta^{-1}\rho(x))$-neighborhood of $x$ is equivariantly $\delta$-close (in the $C^{\lfloor\delta^{-1}\rfloor}$ topology) to a piece of a scaled Bryant soliton (though $x$ does not necessarily corresponding to $x_{\operatorname{Bry}}$). 
\end{enumerate}

\noindent In case (1), we also have $C_0(D_0)^{-1}\rho(x)\leq\rho(y)\leq C_0(D_0)\rho(x)$ for all $y\in M$, where $C_0$ is a constant depending only on $D_0$.

\noindent In case (2), if we also assume
\begin{eqnarray*}
\delta\leq\bar{\delta}(\dn),
\end{eqnarray*}
then there exist $c(\dn)\leq 1\leq C(\dn)\leq \frac{1}{10}\delta^{-1}$, such that the aforementioned Bryant soliton has scale within $[c(\dn)\rho(x),2\rho(x)]$, and the distance from the tip to $x$ is no greater than $C(\dn)\rho(x)$.
\end{lem}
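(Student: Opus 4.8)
The plan is to argue by contradiction using the equivariant compactness of rotationally invariant $\kappa$-solutions (the theorem stated just after Theorem \ref{universal kappa}) together with the Li--Zhang classification (Theorem \ref{kappa-solution-classification-noncompact}). First I would fix $\dn$ and run a contradiction argument for the dichotomy (1)/(2): suppose there is a sequence $(M_i,g_i)$ of time slices of rotationally invariant $\kappa$-solutions with points $x_i$ that are not centers of equivariant $\dn$-necks, such that $(M_i,g_i)$ is noncompact or has diameter $>i\,\rho(x_i)$, yet the $B_{\mathcal O}(x_i,\delta^{-1}\rho(x_i))$-neighborhood of $x_i$ is not equivariantly $\delta$-close to any scaled piece of the Bryant soliton. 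Rescale so that $\rho(x_i)=1$ (equivalently $R(x_i,0)=$ const). By Proposition \ref{kapsols}, $x_i$ is then the center of an equivariant $(E,\dn)$-cap or lies in a compact $\mathbb S^{n+1}$ of controlled diameter; the latter is excluded in the limit by the diameter/noncompactness hypothesis, so $x_i$ is a cap center. Passing to a subsequence, the equivariant compactness theorem gives a limiting rotationally invariant $\kappa$-solution $(M_\infty,g_\infty,x_\infty)$ with $R(x_\infty,0)=$ const, and by Theorem \ref{kappa-solution-classification-noncompact} $M_\infty$ is either a quotient of the shrinking cylinder or the Bryant soliton. The cylinder case is impossible because then $x_\infty$ (hence $x_i$ for large $i$) would be the center of an equivariant $\dn$-neck; so $M_\infty$ is the Bryant soliton, and equivariant $C^{\lfloor \delta^{-1}\rfloor}$-closeness of the $B_{\mathcal O}(x_i,\delta^{-1})$-neighborhood to a scaled Bryant soliton follows for $i$ large, contradicting the choice of the sequence. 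This also simultaneously yields the claim in case (1): if the limit is compact one gets $\operatorname{diam}(M_i)\le D_0\rho(x_i)$ for some $D_0$, and then the curvature-ratio bound $C_0(D_0)^{-1}\rho(x)\le\rho(y)\le C_0(D_0)\rho(x)$ follows from the uniform bounds on $\kappa$-solutions of bounded diameter (again by equivariant compactness, or directly from the $\epsilon$-canonical neighborhood structure).

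For the final quantitative statement in case (2), I would unwind the closeness more carefully. Once we know the $\delta^{-1}\rho(x)$-neighborhood of $x$ is equivariantly $\delta$-close to a piece of $(M_{\operatorname{Bry}},\sigma g_{\operatorname{Bry}})$ for some scale $\sigma>0$, comparing scalar curvatures at $x$ forces $\sigma$ to be comparable to $\rho(x)$: the scalar curvature of the Bryant soliton is bounded above by its value at the tip and decays like (distance)$^{-2}$ away from the tip, so $R(x)=\rho(x)^{-2}$ being matched (up to a $\delta$-error) by the Bryant scalar curvature at the corresponding point pins $\sigma$ into an interval $[c(\dn)\rho(x),2\rho(x)]$ after possibly shrinking $\delta$ depending on $\dn$. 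For the distance bound, note that on the Bryant soliton, outside a fixed multiple of the tip the geometry is arbitrarily close (after rescaling) to a round cylinder of controlled neck radius; more precisely, a point at Bryant-distance $\ge L$ from the tip is, at the appropriate scale, the center of a $\dn$-neck once $L\ge L(\dn)$. Since $x$ is \emph{not} a $\dn$-neck center, and the $\delta$-closeness transfers neck/non-neck structure when $\delta\le\bar\delta(\dn)$, the point of the Bryant soliton corresponding to $x$ must lie within Bryant-distance $C(\dn)$ of the tip; rescaling back by $\sigma\in[c(\dn)\rho(x),2\rho(x)]$ gives $d(x,\text{tip})\le C(\dn)\rho(x)$, and one may further arrange $C(\dn)\le\frac{1}{10}\delta^{-1}$ by taking $\delta$ small, so that the tip itself lies inside $B_{\mathcal O}(x,\delta^{-1}\rho(x))$.

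The main obstacle I anticipate is the bookkeeping in the last paragraph: making the transfer of ``$\dn$-neck center / not a $\dn$-neck center'' rigorous across the equivariant $\delta$-approximation, and quantifying exactly how small $\delta$ must be relative to $\dn$ so that non-necks stay non-necks. This requires knowing that the $\dn$-neck condition at scale $\rho$ is stable under $C^{\lfloor\delta^{-1}\rfloor}$-perturbations of size $\delta$ — essentially an openness statement for the neck condition — together with a quantitative description of precisely which region of the Bryant soliton fails to be a $\dn$-neck (a ``cap region'' of definite size depending on $\dn$). The first is standard once $\lfloor\delta^{-1}\rfloor$ exceeds $\lfloor\dn^{-1}\rfloor$ and $\delta\ll\dn$; the second follows from the asymptotics of the Bryant soliton, which are by now well documented. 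The scale-matching (getting $\sigma\in[c(\dn)\rho(x),2\rho(x)]$ rather than just ``$\sigma\asymp\rho(x)$'') is routine once the closeness is in hand: it is a direct comparison of $R$ at corresponding points, controlled by the $C^0$-part of the $\delta$-closeness. The compactness and classification steps themselves are cited wholesale from \cite{LiZhang18} and the equivariant compactness results proved earlier in the paper, so they introduce no new difficulty.
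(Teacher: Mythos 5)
Your overall route is the same as the paper's: the dichotomy is obtained by a contradiction argument combining the equivariant compactness theorem for rotationally invariant $\kappa$-solutions with the Li--Zhang classification of the noncompact ones (your detour through Proposition \ref{kapsols} is unnecessary but harmless), and the pinching $C_0(D_0)^{-1}\rho(x)\leq\rho(y)\leq C_0(D_0)\rho(x)$ in case (1) can indeed be gotten by a further compactness argument in place of the paper's citation of Morgan--Tian.

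The quantitative part of case (2), however, is mis-assembled at one point. The lower bound $a\geq c(\dn)\rho(x)$ on the Bryant scale does \emph{not} follow from ``a direct comparison of $R$ at corresponding points'': matching $R(x)$ only yields $a\,\rho(\psi(x))\approx\rho(x)$, one relation in two unknowns, and a point far out on the Bryant soliton paired with a very small scale satisfies it just as well. What excludes this is precisely the non-neck hypothesis: one first fixes $c(\dn)$ so that every point $y$ of the unit-scale Bryant soliton with $\rho(y)\geq(2c(\dn))^{-1}$ is the center of an equivariant $\tfrac{1}{10}\dn$-neck (by the cylindrical asymptotics at infinity), and then notes that if $a<c(\dn)\rho(x)$ one would have $\rho(\psi(x))\geq\tfrac{1}{2}a^{-1}\rho(x)>(2c(\dn))^{-1}$, so $\psi(x)$ would be a neck center and, transferring the neck structure back through the $\delta$-closeness (with $\delta\leq\bar\delta(\dn)$), $x$ would be a $\dn$-neck center --- a contradiction. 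This is exactly the paper's argument, and it is the ingredient your own distance paragraph contains; but as you have ordered things you invoke the scale interval $[c(\dn)\rho(x),2\rho(x)]$ inside the distance argument while justifying its lower end only by the curvature comparison, so the write-up is circular. The fix is a reordering with no new ideas: get $a\leq 2\rho(x)$ from $\rho\geq\rho(x_{\operatorname{Bry}})=1$ on the Bryant soliton; get $a\geq c(\dn)\rho(x)$ from the neck-transfer argument just described (equivalently, first bound the position of $\psi(x)$ in unit-scale Bryant coordinates, which needs no scale information, and only then read off the scale); and finally get the distance bound either from your position bound together with the upper bound on $a$, or, as the paper does, from the asymptotic $\rho(y)\geq c(n)\,d_{g_{\operatorname{Bry}}}^{1/2}(x_{\operatorname{Bry}},y)$ combined with $\rho(\psi(x))\leq 2a^{-1}\rho(x)$ and $a\geq c(\dn)\rho(x)$.
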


\begin{proof}
To prove the first statement, we argue by contradiction. Suppose the lemma fails for some $\dn, \delta>0$. Let $(M_i,g_i,x_i)$ be a sequence of counterexamples, rescaled so that $\rho(x_i)=1$. Hence we have, that each $x_i$ is not the center of an equivariant $\dn$-neck at scale 1, that each $B(x_i,\delta^{-1})$ is not $\delta$-close to any Bryant soliton in the $C^{\lfloor\delta^{-1}\rfloor}$ sense, and that $\operatorname{diam}_{g_i}(M_i)\nearrow\infty$.

By the equivariant $\kappa$-compactness theorem, $(M_i,g_i,x_i)$ converge in the smooth Cheeger-Gromov sense to another final slice of a rotationally invariant $\kappa$-solution $(M_\infty,g_\infty, x_\infty)$, such that $x_\infty$ is not the center of an equivariant $\dn$-neck and $B_{\mathcal{O}}(x_\infty,\delta^{-1})$ is not $\delta$-close to any Bryant soliton in the $C^{\lfloor\delta^{-1}\rfloor}$ sense. But this is not possible, for $M_\infty$ is noncompact, hence it can only be either the standard cylinder or the Bryant soliton.

If (1) holds, then Lemma 9.65 of \cite{MorganTian} and $\operatorname{diam}(M)\leq D_0\rho(x)$ give
$$C_0(D_0)^{-1}\rho(x) \leq \rho (y) \leq C_0(D_0)\rho(x)$$
for all $y\in M$.

Suppose instead that (2) holds. Given $\dn \leq \bardn$, we can find $c(\dn)$ such that any $y\in \Bry{}$ (Bryant soliton of scale $1$) satisfying $\rho(y)\geq \big(2c(\dn)\big)^{-1}$ is the center of an equivariant $\frac{1}{10}\dn$-neck. This follows from the observation that for any sequence of basepoints $y_i \in M_{\operatorname{Bry}}$ with $d(x_{\operatorname{Bry}},y_i)\to \infty$, the sequence $$(M_{\operatorname{Bry}},\rho(y_i)^{-2}g_{\operatorname{Bry}},y_i)$$ 
converges in the pointed Cheeger-Gromov sense to a round cylinder. 

Now fix an open embedding
$$\psi: B_{\mathcal{O}}(x,\delta^{-1}\rho(x))\rightarrow W\subset\Bry{}$$
along with some $a>0$ such that
$$||g_{\operatorname{Bry}}-a^{-2}(\psi^{-1})^*g ||_{C^{\lfloor\delta^{-1}\rfloor}(W,g_{\operatorname{Bry}})}<\delta.$$
Because $\rho$ attains its minimum on $(M_{\operatorname{Bry}},g_{\operatorname{Bry}})$ at $x_{\operatorname{Bry}}$, we have $2a^{-1}\rho(x)\geq\rho(\psi(x))\geq\rho(x_{\operatorname{Bry}})=1$. On the other hand, we have $\rho(\psi(x))\geq\frac{1}{2}a^{-1}\rho(x)$ when $\delta \leq \overline{\delta}$. If $a<c(\dn)\rho(x)$, then $\psi(x)$ would be the center of a $\frac{1}{10}\dn$-neck, and so $x$ would be the center of a $\dn$-neck, contradicting our assumption on $x$. Hence $a\geq c(\dn)\rho(x)$. Finally, the scalar curvature asymptotics for $\Bry{}$ give $c(n)>0$ such that 
$$\rho(y) \geq c(n)d_{g_{Bry}}^{\frac{1}{2}}(x_{\operatorname{\text{Bry}}},y)$$
for all $y\in M_{\operatorname{Bry}}$, so
$$d_g(\psi^{-1}(x_{\operatorname{Bry}}),x) \leq  C(n,\dn)\cdot a\cdot (\rho(\psi(x)))^2\leq  C(n,\dn)\cdot a\cdot (2a^{-1}\rho(x))^2\leq C(n,\dn)\rho(x).$$
\end{proof}
\bigskip

Applying the above theorem to the a Ricci flow spacetime which satisfies the canonical neighborhood condition, we obtain the following result.
\\

\begin{lem}\label{nonneckCN=Bryant_Application}
If
\begin{eqnarray*}
\dn\leq\bardn,\quad \delta\leq\bar{\delta}(\dn),\quad D\geq\underline{D}(\dn, \delta), \quad \ecan\leq\barecan(\dn,\delta, D), 
\end{eqnarray*}
then there exist $c(\dn)\leq 1\leq C(\dn)\leq \frac{1}{100}\delta^{-1}$, such that the following holds. Let $x\in\mathcal{M}_t$ be a point in an $(n+1)$-dimensional rotationally invariant Ricci flow space-time $\mathcal{M}$, such that at this point the  equivariant $\ecan$-canonical neighborhood  condition holds. Assume
\begin{enumerate}[(1)]
\item $x$ is not the center of an equivariant $\dn$-neck and
\item there exists $y\in\mathcal{M}_t$ in the same component of $\mathcal{M}_t$ as $x$, such that $\rho(y)\geq D\rho(x)$ or $\rho(y)\leq D^{-1} \rho(x)$. 
\end{enumerate}
Then there exists an rotationally equivariant diffeomorphism
\begin{eqnarray*}
\psi: M_{\operatorname{Bry}}(\delta^{-1})\rightarrow W\subset\mathcal{M}_t,
\end{eqnarray*}
such that
\begin{eqnarray*}
\big| \big| (a\rho(x))^{-2}\psi^*g-g_{\operatorname{Bry}} \big| \big|_{C^{\lfloor\delta^{-1}\rfloor}(B(x_{\operatorname{Bry}},\delta^{-1}))}<\delta,
\\
d_{g_{\operatorname{Bry}}}(x_{\operatorname{Bry}},\psi^{-1}(x))\leq C(\dn),
\end{eqnarray*}
where $a\in[c(\dn),2]$.
\end{lem}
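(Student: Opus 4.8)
The plan is to pull the dichotomy of Lemma~\ref{nonneckCN=Bryant} back from a model $\kappa$-solution to the time slice $\mathcal{M}_t$ along the canonical neighborhood map, and then to use hypothesis~(2) to discard the compact alternative. First I would unpack the equivariant $\ecan$-canonical neighborhood assumption at $x$: there are a rotationally invariant $\kappa$-solution $(\bar M,(\bar g_\tau)_{\tau\le 0})$, a point $\bar x\in\bar M$ with $\rho_{\bar g_0}(\bar x)=1$, and an $O(n+1)$-equivariant embedding $\chi\colon B_{\bar g_0}(\bar x,\ecan^{-1})\to\mathcal{M}_t$ with $\chi(\bar x)=x$ and $\|\rho(x)^{-2}\chi^*g_t-\bar g_0\|_{C^{\lfloor\ecan^{-1}\rfloor}}<\ecan$. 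Since an equivariant $\tfrac12\dn$-neck chart at $\bar x$ would, upon precomposition with $\chi$, yield an equivariant $\dn$-neck chart at $x$ as soon as $\ecan\le\barecan(\dn)$, hypothesis~(1) forces $\bar x$ not to be the center of an equivariant $\tfrac12\dn$-neck.

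Next I would apply Lemma~\ref{nonneckCN=Bryant} to $(\bar M,\bar g_0,\bar x)$ with neck parameter $\tfrac12\dn\le\bardn$ and an auxiliary parameter $\delta_1=\delta_1(\dn,\delta)\le\overline\delta(\tfrac12\dn)$ to be fixed below; write $D_0=D_0(\tfrac12\dn,\delta_1)$, and let $c_*=c_*(\dn)\le 1$, $C_*=C_*(\dn)$ denote the constants that lemma furnishes for the parameter $\tfrac12\dn$ (so $c_*,C_*$ depend only on $\dn$). The outcome is that either $(1')$ $\bar M$ is compact with $\operatorname{diam}_{\bar g_0}(\bar M)\le D_0$ and $C_0^{-1}\le\rho_{\bar g_0}\le C_0$ on $\bar M$, where $C_0=C_0(D_0)$; or $(2')$ the $B_{\mathcal{O}}(\bar x,\delta_1^{-1})$-neighborhood of $\bar x$ is equivariantly $\delta_1$-close in $C^{\lfloor\delta_1^{-1}\rfloor}$ to an open piece of a Bryant soliton of scale $a\in[c_*,2]$ whose tip lies within $\bar g_0$-distance $C_*$ of $\bar x$. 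To exclude $(1')$, shrink $\ecan$ so that $\ecan^{-1}>D_0$; then in case $(1')$ we have $B_{\bar g_0}(\bar x,\ecan^{-1})=\bar M$, so $\chi$ embeds the closed manifold $\bar M$ into $\mathcal{M}_t$, and its image is open (invariance of domain), compact, and connected, hence a connected component of $\mathcal{M}_t$ --- necessarily the component containing $x$. Pulling the curvature bound through $\chi$ gives $(2C_0)^{-1}\rho(x)\le\rho(z)\le 2C_0\rho(x)$ for every $z$ in this component, so setting $\underline D(\dn,\delta):=2C_0(D_0)$ and taking $D\ge\underline D$ contradicts the existence of the point $y$ from hypothesis~(2). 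Hence $(2')$ holds.

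It remains to construct $\psi$. Inverting the closeness in $(2')$ (legitimate since $\delta_1$-closeness is quasi-symmetric for $\delta_1$ small), I would produce an $O(n+1)$-equivariant embedding $\iota\colon M_{\operatorname{Bry}}(\delta^{-1})\to\bar M$ with $\|a^2\iota^*\bar g_0-g_{\operatorname{Bry}}\|_{C^{\lfloor\delta^{-1}\rfloor}}\le C(n,\delta^{-1})\delta_1$ and $d_{g_{\operatorname{Bry}}}(x_{\operatorname{Bry}},\iota^{-1}(\bar x))\le a^{-1}C_*(1+C\delta_1)$; this is possible because $B_{\mathcal{O}}(\bar x,\delta_1^{-1})$ corresponds, in unit Bryant scale, to a ball of radius $\ge\tfrac12\delta_1^{-1}$ about $\bar x$, which contains $B(x_{\operatorname{Bry}},\delta^{-1})$ as soon as $\tfrac12\delta_1^{-1}-c_*^{-1}C_*\ge\delta^{-1}+1$, i.e. as soon as $\delta_1\le\delta_1(\dn,\delta)$ (note $c_*^{-1}C_*$ depends only on $\dn$). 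Put $\psi:=\chi\circ\iota\colon M_{\operatorname{Bry}}(\delta^{-1})\to W\subseteq\mathcal{M}_t$, which is $O(n+1)$-equivariant. Writing $\rho(x)^{-2}\chi^*g_t=\bar g_0+E$ with $\|E\|_{C^{\lfloor\ecan^{-1}\rfloor}}<\ecan$, we have $(a\rho(x))^{-2}\psi^*g_t-g_{\operatorname{Bry}}=(a^{-2}\iota^*\bar g_0-g_{\operatorname{Bry}})+a^{-2}\iota^*E$, and since $a^{-1}\le c_*^{-1}$ and the derivatives of $\iota$ are controlled by the $\delta_1$-closeness, this yields $\|(a\rho(x))^{-2}\psi^*g_t-g_{\operatorname{Bry}}\|_{C^{\lfloor\delta^{-1}\rfloor}}\le C(n,\delta^{-1},\dn)(\delta_1+\ecan)<\delta$ once $\delta_1$ and $\ecan$ are chosen small (the latter also $\le\barecan(\dn,\delta,D)$). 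Finally $\psi^{-1}(x)=\iota^{-1}(\bar x)$, so $d_{g_{\operatorname{Bry}}}(x_{\operatorname{Bry}},\psi^{-1}(x))\le a^{-1}C_*(1+C\delta_1)=:C(\dn)$, which is $\le\tfrac1{100}\delta^{-1}$ once $\delta\le\overline\delta(\dn)$, while $a\in[c_*,2]=:[c(\dn),2]$ with $c(\dn)\le 1$, as required.

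The main obstacle --- essentially the only place where genuine care is needed --- is the scale-and-radius bookkeeping of the last paragraph: Lemma~\ref{nonneckCN=Bryant} controls the model geometry only on a ball about $\bar x$ of radius $\sim\delta_1^{-1}$ in the scaled metric, whereas $\bar x$ may be displaced from the Bryant tip by up to $C_*\rho(\bar x)$ while the soliton scale $a$ ranges over $[c_*,2]$; one therefore has to feed a sufficiently small $\delta_1\ll\delta$ into Lemma~\ref{nonneckCN=Bryant} so that, after rescaling by $a^{-1}$ and recentering at the tip, the controlled region still covers $B(x_{\operatorname{Bry}},\delta^{-1})$. Everything else is composition of equivariant embeddings together with the routine propagation of $C^k$-closeness of Riemannian metrics under pullbacks of controlled geometry.
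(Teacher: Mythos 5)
Your argument is correct and follows essentially the same route as the paper: unpack the equivariant $\ecan$-canonical neighborhood assumption, apply Lemma \ref{nonneckCN=Bryant} at the model point with neck parameter $\tfrac12\dn$, exclude the compact alternative by combining its scale pinching with hypothesis (2) and the choice $D\ge\underline{D}$, and conclude from the Bryant case. The only difference is that you carry out explicitly the final scale-and-radius bookkeeping (via your auxiliary parameter $\delta_1$) that the paper, after feeding $\tfrac12\delta$ into Lemma \ref{nonneckCN=Bryant}, compresses into ``the lemma then follows immediately.''
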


\begin{proof}
Let $\phi : U \to V$ be an $O(n+1)$-equivariant diffeomorphism, where $U \subseteq \mathcal{M}_t$ is an $O(n+1)$-invariant open set satisfying
$$\left\{ y\in \mathcal{M}_t ; d_t(\mathcal{O}(x),y)< \ecan^{-\frac{1}{2}}\cdot \rho(x) \right\} \subseteq U,$$ and $V \subseteq M$ is an open subset of the final slice $(M,g)$ of a rotationally invariant $\kappa$-solution, such that $\rho(\phi (x))=1$ and
$$||\rho(x)^{-2}(\phi^{-1})^{\ast}g_t -g||_{C^{\lfloor \ecan^{-1} \rfloor}(V,g)} <\ecan.$$ Assuming $\ecan < \barecan(\dn)$, we have that $\phi(x)$ is not the center of an equivariant $\frac{1}{2}\dn$-neck, and $\phi(x)$ must fall into one of the cases in Lemma \ref{nonneckCN=Bryant}.

Let $\delta\leq\frac{1}{2}\bar\delta(\frac{1}{2}\delta_{\operatorname{n}})$, $D_0=D_0(\frac{1}{2}\dn, \frac{1}{2}\delta)$, and $D\geq\underline{D}:=2C_0(2D_0)$, where $\bar{\delta}(\cdot)$, $D_0(\cdot,\cdot)$, and $C_0(\cdot)$ are the parameters as defined in Lemma \ref{nonneckCN=Bryant}. We shall show that if $\ecan\leq\bar\epsilon_{\operatorname{can}}(\delta_{\operatorname{n}},\delta,D)$, then case (1) of Lemma \ref{nonneckCN=Bryant} (based at $x$) cannot occur. Assume by contradiction that $(M,g)$ is closed and $\operatorname{diam}(M)\leq D_0\rho(\phi(x))$, then we must have that $U$ contains the whole connected component of $\mathcal{M}_t$, and this component is smooth and closed. It then follows from Lemma \ref{nonneckCN=Bryant}(1) that $(\frac{2}{3}D)^{-1}\rho(x)\leq\rho(y)\leq \frac{2}{3}D\rho(x)$ for all $y\in \mathcal{M}_t$ in the same component as $x$. This is a contradiction to assumption (2) of the lemma.

With the above arrangement of parameters, we have that the geometry near $\phi(x)$ must be as described in case (2) of Lemma \ref{nonneckCN=Bryant}, the lemma then follows immediately.
\end{proof}

\section{Semi-local maximum principle}

\noindent In this section, we list the appropriate modifications that must be made in Section 9 of \cite{BamlerKleiner17}. The Anderson-Chow pinching estimate is necessary. For the rotationally invariant case, both \cite{Carson18} and \cite{ChenWu16} have already proved this estimate. We shall include a simpler version below.


\begin{lem}[Lemma 9.4 in \cite{BamlerKleiner17}]
Let $(M^{n+1},g_t)_{t\in(-T,0]}$ be a rotationally invariant Ricci flow on a connected $(n+1)$-manifold $M$. Assume that $(M,g_t)$ has nonnegative scalar curvature everywhere. Let $(h_t)_{t\in(-T,0]}$ be a rotationally invariant linearized Ricci-DeTurck flow:
\begin{align*}
    \frac{\partial h}{\partial t}=\Delta_{L,g_t}h.
\end{align*}
Assume that $|h|\leq CR$ on $M\times(-T,0]$ for some $C>0$ and that $|h|(x_0,0)=CR(x_0,0)$ for some $x_0\in M$. Then we have
\begin{eqnarray*}
|h|=CR\quad\text{ on }\quad M\times(-T,0].
\end{eqnarray*}
\end{lem}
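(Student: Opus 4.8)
The plan is to show that the ratio $w:=|h|/R$, wherever it is defined and smooth, is a subsolution of a linear parabolic equation with no zeroth-order term, and then to run the parabolic strong maximum principle together with the connectedness of $M$. First I would dispose of degeneracies: if $C=0$ then $h\equiv 0$ and there is nothing to prove, so assume $C>0$; and since $R\geq 0$ evolves by $\partial_t R=\Delta R+2|\operatorname{Ric}|^2$ with nonnegative reaction term, the strong maximum principle for $R$ shows that either $R>0$ on all of $M\times(-T,0]$, or there is $t_1\in[-T,0]$ with $R\equiv 0$ on $M\times(-T,t_1]$ and $R>0$ on $M\times(t_1,0]$. On a slab where $R\equiv 0$ one gets $|\operatorname{Ric}|^2\equiv 0$, so the flow is Ricci-flat there; a rotationally invariant Ricci-flat metric in dimension $n+1\geq 3$ is flat (the curvature formulas force $\psi''\equiv 0$ and $(\psi')^2\equiv 1$), hence $h$ solves $\partial_t h=\Delta h$ there and $|h|\leq CR=0$ gives $h\equiv 0=CR$. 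It therefore suffices to treat $M\times(t_1,0]$, and from now on I assume $R>0$ on $M\times(-T,0]$; in particular $|h|(x_0,0)=CR(x_0,0)>0$.

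Next I would derive the evolution inequality. Using $\partial_t h=\Delta_L h$, $\partial_t g^{ij}=2R^{ij}$, and the cancellation of the $\operatorname{Ric}$-linear terms of the Lichnerowicz Laplacian against the metric-variation terms, one obtains $\partial_t|h|^2=\Delta|h|^2-2|\nabla h|^2+4\mathcal Q(h)$, where $\mathcal Q(h)$ denotes the curvature term appearing in this evolution. Combining this with $\partial_t R=\Delta R+2|\operatorname{Ric}|^2$ and Kato's inequality $|\nabla|h||^2\leq|\nabla h|^2$ (valid where $|h|>0$), a direct computation gives, on $\{|h|>0\}$,
\[
\partial_t w\ \leq\ \Delta w+\frac{2}{R}\langle\nabla R,\nabla w\rangle+\frac{2}{R^2|h|}\bigl(R\,\mathcal Q(h)-|h|^2|\operatorname{Ric}|^2\bigr).
\]
So everything reduces to showing the last term is nonpositive, i.e. $R\,\mathcal Q(h)\leq|h|^2|\operatorname{Ric}|^2$ pointwise.

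This pointwise inequality is the place where rotational invariance is indispensable --- it is the rotationally invariant incarnation of the Anderson--Chow pinching estimate --- and I expect it to be the main obstacle. One uses that in the orthonormal frame $\{\partial_s,e_1,\dots,e_n\}$ the curvature operator and the $O(n+1)$-invariant tensor $h$ are simultaneously diagonal: writing $h=a\,ds\otimes ds+b\,(g|_{\mathbb{S}^n})$ and letting $\alpha=-\psi''/\psi$ and $\beta=(1-(\psi')^2)/\psi^2$ be the radial and orbital sectional curvatures, one has $|h|^2=a^2+nb^2$, $R=2n\alpha+n(n-1)\beta$, $|\operatorname{Ric}|^2=n^2\alpha^2+n(\alpha+(n-1)\beta)^2$, and $\mathcal Q(h)=2n\alpha ab+n(n-1)\beta b^2$. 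The inequality $R\,\mathcal Q(h)\leq|h|^2|\operatorname{Ric}|^2$ then becomes an elementary polynomial inequality in $(a,b,\alpha,\beta)$, which I would verify by regarding $|h|^2|\operatorname{Ric}|^2-R\,\mathcal Q(h)$ as a quadratic form in $\beta$ for fixed $(a,b,\alpha)$ and checking that its leading coefficient is nonnegative and its discriminant is nonpositive; for $n=2$ the discriminant works out to $-8\alpha^2a^2(a-b)^2\leq 0$, and the general $n$ follows from the same short computation (invoking $R\geq 0$ if needed). With this in hand, $w$ satisfies $\partial_t w\leq\Delta w+\langle X,\nabla w\rangle$ with $X:=\tfrac{2}{R}\nabla R$ locally bounded on $\{|h|>0\}$.

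Finally I would run the strong maximum principle. The function $w$ is continuous on $M\times(-T,0]$, smooth and a subsolution on $\{|h|>0\}$, satisfies $0\leq w\leq C$, and equals $C$ at $(x_0,0)$; moreover $\{|h|=0\}\subseteq\{w=0\}\subseteq\{w<C\}$, so the maximum of $w$ is attained only at smooth points. For each $t^\ast\in(-T,0]$ the set $Z_{t^\ast}:=\{x\in M:w(x,t^\ast)=C\}$ is closed, and if $x^\ast\in Z_{t^\ast}$ then $(x^\ast,t^\ast)$ is a spacetime maximum of $w$ at a smooth point, so the local parabolic strong maximum principle on a small parabolic cylinder with top at time $t^\ast$ forces $w\equiv C$ on that cylinder; hence $Z_{t^\ast}$ is also open, and since $M$ is connected, $Z_{t^\ast}$ is empty or all of $M$. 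Now $Z_0=M$ (it contains $x_0$), and the cylinder argument shows that $\{t^\ast\in(-T,0]:Z_{t^\ast}=M\}$ contains a left-neighborhood of each of its points; being closed in $(-T,0]$ and containing $0$, it equals $(-T,0]$. Therefore $|h|=CR$ on all of $M\times(-T,0]$, as claimed.
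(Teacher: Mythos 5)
Your proof is correct and follows essentially the same route as the paper's: reduce to $R>0$, combine the Kato-type inequality $\partial_t|h|\le\Delta|h|+2\operatorname{Rm}(h,h)/|h|$ with $\partial_t R=\Delta R+2|\operatorname{Ric}|^2$ and the rotationally invariant Anderson--Chow pinching $R\cdot\operatorname{Rm}(h,h)\le|h|^2|\operatorname{Ric}|^2$ (which is exactly the paper's subsequent lemma), and conclude with the parabolic strong maximum principle --- working with the quotient $|h|/R$ rather than the difference $|h|-CR$ is an immaterial change, and your handling of the set $\{|h|=0\}$ (the maximum of $w$ is only attained where $|h|>0$) is fine. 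The one step you merely asserted, the discriminant computation for general $n$, does close: writing $h=a\,ds^2+b\,g|_{\mathbb{S}^n}$, the quantity $|h|^2|\operatorname{Ric}|^2-R\,\mathcal{Q}(h)$ viewed as a quadratic in $\beta$ has leading coefficient $n(n-1)^2a^2\ge 0$ and reduced discriminant $-n^2(n-1)^2\alpha^2a^2(a-b)^2\le 0$ (the degenerate case $a=0$ being immediate), so the pointwise pinching holds for all $n\ge 2$ without even invoking $R\ge 0$, matching the paper's verification via the eigenvalues of a $2\times 2$ matrix.
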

\begin{proof}
First of all, we observe that we may assume $R>0$ everywhere on $M\times(-T,0]$ instead. Suppose this is not the case, then let $t_0$ be the last time when $R(\cdot,t_0)$ attains $0$. By the strong maximum principle and the assumption of the lemma, we have
$$0=|h|\leq CR=0 \quad\text{ everywhere on }\quad M\times(-T,t_0]$$
and the conclusion automatically holds on $M\times(-T,t_0]$. If $t_0=0$, then we are done. If $t_0<0$, since $R>0$ everywhere on $M\times(t_0,0]$, we may then replace $-T$ by $t_0$. 

By the same reasoning, we may also assume $|h|>0$ everywhere on $M\times(-T,0]$. The inequality below follows from a straightforward computation and Kato's inequality.
\begin{eqnarray*}
\frac{\partial}{\partial t}|h|
\leq \Delta |h|+\frac{2Rm(h,h)}{|h|^2}\cdot |h|.
\end{eqnarray*}

On the other hand, the curvature evolution equation implies that
\begin{eqnarray*}
\frac{\partial}{\partial t}(CR)=\Delta(CR)+\frac{2|Ric|^2}{R}\cdot CR.
\end{eqnarray*}
Since, by the next lemma, we have $$\frac{Rm(h,h)}{|h|^2}\leq\frac{|Ric|^2}{R},$$
the conclusion then follows from applying the parabolic strong maximum principle to $|h|-CR$.
\end{proof}

\begin{lem}
Suppose the Riemannian metric $g$ is rotationally invariant and has positive scalar curvature. Then for any symmetric $(0,2)$ tensor field $h$ that is also rotationally-symmetric, 
we have $$\frac{Rm(h,h)}{\left|h\right|^2}\le \frac{\left|Rc\right|^2}{R},$$ where 
$Rm(h,h)=R_{ijkl}h_{il}h_{jk}$. 
\end{lem}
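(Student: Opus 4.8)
The plan is to use the rotational symmetry to diagonalize $Rc$ and $h$ simultaneously, thereby reducing the inequality to the nonnegativity of an explicit binary quadratic form, and then to finish by completing a square.

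Since $Rm$, $Rc$, $R$, $|h|^2$ and $Rm(h,h)$ are all smooth and $M_{\operatorname{reg}}$ is dense in $M$, it suffices to prove the inequality at a point $x\in M_{\operatorname{reg}}$. Near $x$, write $g=ds\otimes ds+\psi^2(s)\,\overline{g}|_{\mathbb{S}^n}$ in the arc-length parametrization and pick a $g$-orthonormal frame $e_0=\partial_s,e_1,\dots,e_n$ with $e_1,\dots,e_n$ tangent to the orbit. By the curvature formulas recorded after Proposition \ref{rotdifgen}, the radial and orbital sectional curvatures at $x$ are $-\psi''/\psi$ and $(1-(\psi')^2)/\psi^2$; set $\lambda:=-2\psi''/\psi$ and $\mu:=2(1-(\psi')^2)/\psi^2$ as in Section \ref{geometryhighcurvature}, so that $Rc$ is diagonal in this frame with eigenvalue $\tfrac n2\lambda$ on $e_0$ and $\tfrac12(\lambda+(n-1)\mu)$ on each $e_i$. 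Hence
$$R=n\lambda+\tfrac{n(n-1)}{2}\mu,\qquad |Rc|^2=\tfrac n4\big(n\lambda^2+(\lambda+(n-1)\mu)^2\big),$$
so the hypothesis $R>0$ says $2\lambda+(n-1)\mu>0$, and then necessarily $|Rc|^2>0$. On the other hand, since $\mathbb{S}^n$ carries no nonzero $O(n+1)$-invariant $1$-form and its only $O(n+1)$-invariant symmetric $2$-tensors are multiples of the round metric, an $O(n+1)$-invariant symmetric $2$-tensor on $(\alpha,\beta)\times\mathbb{S}^n$ must take the form $p(s)\,ds\otimes ds+q(s)\,\overline{g}|_{\mathbb{S}^n}$; thus $h$ too is diagonal in the frame $e_0,\dots,e_n$, with a single eigenvalue $p$ on $e_0$ and a single eigenvalue $q$ on each $e_i$. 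Consequently $|h|^2=p^2+nq^2$, while contracting in this orthonormal eigenframe gives $Rm(h,h)=\sum_{i\ne j}R_{ijji}h_ih_j=n\lambda\,pq+\tfrac{n(n-1)}{2}\mu\,q^2$, the purely radial term $R_{0000}$ being zero.

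With these formulas the assertion becomes the nonnegativity on $\mathbb{R}^2$ of the binary quadratic form
$$Q(p,q):=\frac{|Rc|^2}{R}\,(p^2+nq^2)-n\lambda\,pq-\tfrac{n(n-1)}{2}\mu\,q^2.$$
Its leading coefficient $|Rc|^2/R$ is positive, so $Q\ge0$ is equivalent to nonnegativity of its discriminant,
$$\frac{|Rc|^2}{R}\Big(\frac{n|Rc|^2}{R}-\tfrac{n(n-1)}{2}\mu\Big)-\frac{n^2\lambda^2}{4}\ \ge\ 0.$$
Writing $a=\lambda$ and $b=(n-1)\mu$ and substituting $R=\tfrac n2(2a+b)$ together with $|Rc|^2/R=\big((n+1)a^2+2ab+b^2\big)/\big(2(2a+b)\big)$, a direct computation collapses the left-hand side to
$$\frac{n\,a^2}{4(2a+b)^2}\,\big((n-1)a-b\big)^2,$$
which is manifestly nonnegative (the denominator equals $4R^2/n\ne0$). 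Hence $Q\ge0$ on $M_{\operatorname{reg}}$, and by density $Rm(h,h)\le (|Rc|^2/R)\,|h|^2$ on all of $M$.

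The only genuinely structural input is the reduction of $h$ to this two-parameter family; the rest is the orthonormal-frame contraction for $Rm(h,h)$ and the one-variable discriminant algebra, which I expect to be the only (very mild) obstacle — and which already exhibits sharpness, since equality forces $a=0$ or $b=(n-1)a$, i.e. $\lambda=0$ or $\lambda=\mu$.
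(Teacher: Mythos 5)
Your argument is correct and follows essentially the same route as the paper: diagonalize $h$ and the curvature in an adapted orthonormal frame using the $O(n+1)$-symmetry, reduce to a two-variable inequality in $(h_{00},h_{11})$ involving only $\lambda,\mu$, and verify it algebraically --- the only difference being that you check nonnegativity of the binary quadratic form via its discriminant, which collapses to the single square $n a^2\big((n-1)a-b\big)^2/\big(4(2a+b)^2\big)$ with no case split, whereas the paper bounds the spectral radius of the associated $2\times 2$ matrix and treats $\mu>0$, $\mu<0$ separately. The only blemish is the parenthetical claim that the denominator equals $4R^2/n$: since $2a+b=2R/n$, it is $16R^2/n^2$, which is immaterial because all you need is $2a+b\neq 0$, guaranteed by $R>0$.
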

\begin{proof}
At a given point $p=(r,x)\in I\times \mathbb{S}^n$, let $e_0=\partial_r$, and let $\{e_i\}_{i=1}^{n}$ be a basis for $T_x \mathbb{S}^n$ which is orthonormal with respect to the restricted metric, and such that $h_{ij}=\delta_{ij} h_{ii}$. Because $h_{ii}$ are the eigenvalues of $h|T_x \mathbb{S}^n$, it is clear by $O(n+1)$-invariance that $h_{ii}=h_{jj}$ for $1\leq i,j\leq n$. The only independent components of the Riemann curvature are $R_{0ii0}=\lambda=-\frac{\psi''}{\psi}$ and 
$R_{ijji}=\mu=\frac{1-(\psi')^2}{\psi^2}$ (note here that $\lambda$ and $\mu$ are now half what they were in Section \ref{geometryhighcurvature}), where both $i,j$ are non-zero and distinct, and here the index $0$ corresponds to an input of $\partial_r$, and all other indices correspond to vectors on $\mathbb{S}^n$. 
Then 
\begin{align*}
    R_{ijkl}h_{il}h_{jk}&=2\sum_{i<j}R_{ijji}h_{ii}h_{jj}
    =2\lambda h_{00}\sum_{j=1}^{n}h_{jj}+2\sum_{1\le i<j\le n}\mu h_{ii}h_{jj}\\
    &=2n\lambda h_{00}h_{11}+n(n-1)\mu h_{11}^2,\\
    R_{00}&=n\lambda,\\
    R_{ii}&=\lambda+(n-1)\mu,\\
    \left|Rc\right|^2&=n^2\lambda^2+n(\lambda+(n-1)\mu)^2, \ \text{and} \\
    R&=2n\lambda+n(n-1)\mu .
\end{align*}
Consider the matrix 
\begin{align*}
   A=\begin{pmatrix}
    0&\sqrt{n}\lambda\\
    \sqrt{n}\lambda &(n-1)\mu
    \end{pmatrix};
\end{align*}
its eigenvalues are given by
\begin{align*}
    t_1=\frac{1}{2}\left((n-1)\mu+\sqrt{(n-1)^2\mu^2+4n\lambda^2}\right),\quad t_2=\frac{1}{2}\left((n-1)\mu-\sqrt{(n-1)^2\mu^2+4n\lambda^2}\right).
\end{align*}
Now, at any point $p$, the expression $\frac{Rm(h,h)}{\left|h\right|^2}$ coincides with $\frac{v^T Av}{\left|v\right|^2}$, where $v=(h_{00},\sqrt{n}h_{11})$. Therefore, the result will follow from the observation that  $$\max\{|t_1|,|t_2|\}\leq\frac{\left|Rc\right|^2}{R}.$$ This is clearly true if $\mu=0$. In the case when $\mu> 0$, we have that $\max\{|t_1|,|t_2|\}=t_1$. By the scale invariance of this inequality, we can assume $(n-1)\mu=1$, we have $2\lambda+1>0$ because $R>0$, and the following computation holds 
\begin{align*}
    &\quad\ \left(1+\sqrt{1+4n\lambda^2}\right)\le \frac{n\lambda^2+(\lambda+1)^2}{\lambda+\frac{1}{2}}\\
    &\Leftarrow(\lambda+\frac{1}{2})\sqrt{(1+4n\lambda^2)}\le n\lambda^2+(\lambda+1)^2-(\lambda+\frac{1}{2})\\
    &\Leftarrow(\lambda^2+\lambda+\frac{1}{4})(1+4n\lambda^2)\le \left((n+1)\lambda^2+\lambda+\frac{1}{2}\right)^2\\
   &\Leftarrow 0 \leq \big((n-1)\lambda-1\big)^2.
\end{align*}
In the case when $\mu<0$, we have that $\max\{|t_1|,|t_2|\}=-t_2$. Then, scaling so that $(n-1)\mu=-1$, we have $2\lambda-1>0$ because $R>0$, so we compute:
\begin{align*}
    &\quad\ \left(\sqrt{1+4n\lambda^2}-1\right)\le \frac{n\lambda^2+(\lambda-1)^2}{\lambda-\frac{1}{2}}\\
    &\Leftarrow(\lambda-\frac{1}{2})\sqrt{(1+4n\lambda^2)}\le n\lambda^2+(\lambda-1)^2+(\lambda-\frac{1}{2})\\
    &\Leftarrow(\lambda^2-\lambda+\frac{1}{4})(1+4n\lambda^2)\le \left((n+1)\lambda^2-\lambda+\frac{1}{2}\right)^2\\
   &\Leftarrow 0 \leq \big((n-1)\lambda+1\big)^2,
\end{align*}
as required.
\end{proof}

\begin{prop}[Theorem 9.8 in \cite{BamlerKleiner17}, The Vanishing Theorem]
Let $(M^{n+1},g_t)_{t\in(-\infty,0]}$ be a rotationally invariant $\kappa$-solution, and let $(h_t)_{t\in(-\infty,0]}$ be a rotationally invariant linearized Ricci-DeTurck flow. Assume that there are numbers $\chi>0$ and $C<\infty$ such that
\begin{eqnarray}
|h|\leq CR^{1+\chi}\quad\text{ on }\quad M\times(-\infty,0].
\end{eqnarray}
Then $h\equiv 0$ everywhere.
\end{prop}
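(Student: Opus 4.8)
The plan is to prove the contrapositive by a parabolic rescaling and compactness argument. After rescaling near a near-maximizer of $|h|R^{-1}$ and passing to a limit $\kappa$-solution, the semi-local maximum principle stated above will force the limiting Ricci--DeTurck perturbation to satisfy $|h_\infty|\equiv\bar C\,R_\infty$ identically; this will then be incompatible both with the bound $|h|\le CR^{1+\chi}$ and with the fact that the scalar curvature of a $\kappa$-solution must decay as $t\to-\infty$.

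First I would set $\bar C:=\sup_{M\times(-\infty,0]}|h|R^{-1}$, which is finite since a $\kappa$-solution has bounded curvature ($R\le\bar R<\infty$), whence $|h|\le CR^{1+\chi}\le C\bar R^{\chi}R$. It suffices to show $\bar C=0$, so assume $\bar C>0$ and choose $(x_i,t_i)$ with $|h|(x_i,t_i)R^{-1}(x_i,t_i)\to\bar C$. Put $Q_i:=R(x_i,t_i)\in(0,\bar R]$ and pass to a subsequence with $Q_i\to Q_\infty\in[0,\bar R]$. Parabolically rescale and shift time by $\hat g_i(t):=Q_i\,g(Q_i^{-1}t+t_i)$, and rescale $h$ in amplitude by $|h|(x_i,t_i)^{-1}$ (positive for $i$ large), obtaining a rotationally invariant linearized Ricci--DeTurck flow $\hat h_i$ on the $\kappa_0$-solution $\hat g_i$ (Theorem \ref{universal kappa}), normalized so that $\hat R_i(x_i,0)=1$ and $|\hat h_i|(x_i,0)=1$. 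Using scale-invariance of $|h|$, the two hypotheses become
\[
|\hat h_i|\le\frac{\bar C}{|h|(x_i,t_i)/R(x_i,t_i)}\,\hat R_i,\qquad |\hat h_i|\le\frac{C\,Q_i^{\chi}}{|h|(x_i,t_i)/R(x_i,t_i)}\,\hat R_i^{1+\chi},
\]
the first coefficient tending to $1$ and the second to $CQ_\infty^{\chi}\bar C^{-1}$.

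Since $|\hat h_i|\le 2\hat R_i$ for $i$ large and $\hat R_i$ is locally uniformly bounded, the equivariant compactness theorem for pointed rotationally invariant $\kappa_0$-solutions gives $(\hat g_i,x_i)\to(\hat g_\infty,x_\infty)$, and interior parabolic estimates for the linear equation $\partial_t\hat h_i=\Delta_{L,\hat g_i}\hat h_i$ along these $C^{\infty}_{loc}$-convergent backgrounds yield, after a further subsequence, $\hat h_i\to\hat h_\infty$ in $C^{\infty}_{loc}$: a rotationally invariant linearized Ricci--DeTurck flow on $\hat g_\infty$ with $|\hat h_\infty|\le\hat R_\infty$ and $|\hat h_\infty|(x_\infty,0)=1=\hat R_\infty(x_\infty,0)$. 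If $Q_\infty=0$, the second inequality passes to the limit as $|\hat h_\infty|\le 0$, contradicting $|\hat h_\infty|(x_\infty,0)=1$. If $Q_\infty>0$, then $\hat h_\infty\not\equiv 0$, and since $\hat g_\infty$ has nonnegative scalar curvature and $|\hat h_\infty|\le\hat R_\infty$ with equality at $(x_\infty,0)$, the semi-local maximum principle gives $|\hat h_\infty|\equiv\hat R_\infty$ on $M_\infty\times(-\infty,0]$; together with $|\hat h_\infty|\le(CQ_\infty^{\chi}\bar C^{-1})\hat R_\infty^{1+\chi}$ this forces $\hat R_\infty\ge c_0:=\big(\bar C/(CQ_\infty^{\chi})\big)^{1/\chi}>0$ everywhere on $M_\infty\times(-\infty,0]$. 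But on a $\kappa$-solution $\partial_t R=\Delta R+2|\mathrm{Ric}|^2\ge\Delta R+\tfrac{2}{n+1}R^2$ by Cauchy--Schwarz, so by the maximum principle (valid since $\hat g_\infty$ is complete with bounded curvature) the hypothesis $\hat R_\infty\ge c_0$ would give $\hat R_{\min}(t)\ge\big(c_0^{-1}-\tfrac{2}{n+1}(t-t_0)\big)^{-1}$ for $t\ge t_0$, which blows up at $t=t_0+\tfrac{n+1}{2c_0}$; choosing $t_0<-\tfrac{n+1}{2c_0}$ places this blow-up time before $t=0$, contradicting $\hat R_\infty\le\bar R$. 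Either way we reach a contradiction, so $\bar C=0$ and $h\equiv 0$.

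The main obstacle is the limiting step: one must ensure that the rescaled perturbations $\hat h_i$ --- a priori only locally uniformly bounded solutions of a linear parabolic system with variable coefficients --- subconverge in $C^{\infty}_{loc}$ to a genuine $O(n+1)$-invariant linearized Ricci--DeTurck flow on the limit $\kappa$-solution. This uses both the \emph{equivariant} $\kappa_0$-solution compactness theorem and interior parabolic (Schauder-type) estimates for the time-dependent Lichnerowicz heat operator $\Delta_{L,\hat g_i}$ along a smoothly convergent family of background Ricci flows. The remaining ingredients --- the scale-invariance bookkeeping, the semi-local maximum principle, and the elementary ODE comparison for $\hat R_{\min}$ --- are routine.
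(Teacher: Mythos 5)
Your argument is correct, and its skeleton is the same one the paper relies on: the paper's proof simply defers to Bamler--Kleiner's Theorem 9.8 (pass to a limit along a sequence nearly attaining $\sup |h|R^{-1}$, apply the strong maximum principle to force $|h_\infty|\equiv \bar C R_\infty$, then use $|h|\le CR^{1+\chi}$ to get a uniform positive lower bound on $R_\infty$, and rule that out), and your rescaling bookkeeping, the treatment of the $Q_\infty=0$ case, and the use of equivariant $\kappa_0$-compactness plus interior estimates to extract $\hat h_\infty$ all match that template. Where you genuinely diverge is in the last ingredient, which is the only point where the paper does something specific to this setting: the paper rules out a $\kappa$-solution with $R$ uniformly bounded below by invoking the classification of rotationally invariant $\kappa$-solutions (Theorems \ref{kappa-solution-classification-noncompact} and \ref{kappa-solution-classification-compact}: Bryant soliton, shrinking cylinder, shrinking sphere, Perelman's sausage), whereas you prove the general fact directly from $\partial_t R\geq \Delta R+\tfrac{2}{n+1}R^2$ and the maximum principle on complete ancient flows with uniformly bounded curvature. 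Your route is more elementary and works in any dimension without symmetry, at the cost of having to justify the noncompact maximum principle (routine here since $\kappa$-solutions have uniformly bounded curvature); the paper's route is a one-line appeal to results it already needs elsewhere. Two small points: the rigidity step you invoke is not the semi-local maximum principle (Proposition 9.1 of \cite{BamlerKleiner17}) but the strong-maximum-principle lemma (the analog of their Lemma 9.4, proved here via the Anderson--Chow-type pinching for rotationally symmetric $h$), which is indeed what is stated before the theorem and does apply, since your limit perturbation is $O(n+1)$-invariant and equality holds at the final time; and when defining $\bar C=\sup|h|R^{-1}$ one should dispose of the flat case first (if $R\equiv 0$ then $|h|\le CR^{1+\chi}$ already gives $h\equiv 0$; otherwise $R>0$ everywhere), and note that the final contradiction is with smoothness/finiteness of $\hat R_\infty$ up to time $0$ rather than with the unrescaled bound $\bar R$.
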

\begin{proof}
The proof is essentially identical to the original proof of \cite[Theorem 9.8]{BamlerKleiner17}. The only major difference occurs when it comes time to apply the fact that there are no $\kappa$-solutions with scalar curvature bounded uniformly from below. This is known to be true for three-dimensional flows. On the other hand, Theorems \ref{kappa-solution-classification-noncompact} and \ref{kappa-solution-classification-compact} together imply that the complete list of  $n+1$-dimensional rotationally-invariant Ricci flow $\kappa$-solutions is as follows:
\begin{itemize}
    \item the Bryant soliton;
    \item the shrinking cylinder;
    \item the shrinking sphere; or 
    \item Perelman's `sausage' ancient solution (a compact flow which asymptotically behaves like two Bryant solitons glued together). 
\end{itemize}
None of these has scalar curvature uniformly bounded from below.
\end{proof}
We now state two propositions which become useful later in the paper. 


\vspace{6 mm}

\begin{prop}[Proposition 9.1 in \cite{BamlerKleiner17}, the semi-local maximum principle]
If $$E>2,\quad H\geq\underline{H}(E),\quad \eta_{\operatorname{lin}}\leq \bar \eta_{\operatorname{lin}}(E),\quad \epsilon_{\operatorname{can}}\leq \bar \epsilon_{\operatorname{can}}(E),$$ then there are constants $L=L(E)$ and $C=C(E)<\infty$, such that the following holds. Let $\mathcal{M}$ be an $(n+1)$-dimensional rotationally invariant Ricci flow spacetime and let $x\in\mathcal{M}_t$ be a fixed point. Assume that $\mathcal{M}$ is $(\epsilon_{\operatorname{can}}\rho_1(x),t)$-complete and satisfies the $\epsilon_{\operatorname{can}}$-canonical neighborhood assumption at scales $(\epsilon_{\operatorname{can}}\rho_1(x),1)$. Then the parabolic neighborhood $P:=P_{\mathcal{O}}(x,L\rho_1(x))$ is unscathed and the following is true. Let $h$ be a rotationally invariant Ricci-DeTurck perturbation on $P$. Assume that $|h|\leq\eta_{\operatorname{lin}}$ everywhere on $P$ and define the scalar function
\begin{eqnarray}\label{definition of Q}
Q:=e^{H(T-\mathfrak{t})}\rho_1^E|h|
\end{eqnarray}
on $P$, where $T\geq t$ is some arbitrary number. Then
\begin{enumerate}[(1)]
    \item If $t>(L\rho_1(x))^2$ (i.e., if $P$ does not intersect the time-$0$ slice), then we have
    $$Q(x)\leq\frac{1}{100}\sup_PQ.$$
    \item If $t\leq (L\rho_1(x))^2$ (i.e., if $P$ intersects the time-$0$ slice), then we have
    $$Q(x)\leq\frac{1}{100}\sup_PQ+C\sup_{P\cap\mathcal{M}_0}Q.$$
\end{enumerate}
\end{prop}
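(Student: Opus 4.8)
The plan is to adapt the proof of Proposition 9.1 in \cite{BamlerKleiner17} to the rotationally invariant setting, where the key new input is the Vanishing Theorem (our version above) together with the equivariant canonical neighborhood structure. First I would set up the usual point-picking/contradiction scheme: suppose the conclusion fails, so there is a sequence of rotationally invariant Ricci flow spacetimes $\mathcal M_i$, points $x_i \in (\mathcal M_i)_{t_i}$, perturbations $h_i$, and parameters $H_i \to \infty$ (or $\eta_{\operatorname{lin},i}, \epsilon_{\operatorname{can},i} \to 0$) for which both alternatives (1) and (2) fail. The hypotheses — $(\epsilon_{\operatorname{can}}\rho_1(x_i), t_i)$-completeness and the $\epsilon_{\operatorname{can}}$-canonical neighborhood assumption at scales $(\epsilon_{\operatorname{can}}\rho_1(x_i), 1)$ — are exactly what is needed to guarantee, via the equivariant canonical neighborhood theorem and the orbit-diameter control (Lemma \ref{flat} and Proposition \ref{flowequivcompact}), that $P_{\mathcal{O}}(x_i, L\rho_1(x_i))$ is unscathed and that the rescaled flows (rescale so $\rho_1(x_i) = 1$ at the basepoint) subconverge in the \emph{equivariant} pointed Cheeger-Gromov-Hamilton sense to a rotationally invariant $\kappa$-solution $(\mathcal M_\infty, g_\infty, x_\infty)$. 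This is the step where I would invoke Proposition \ref{flowequivcompact} to keep track of equivariance of the limiting diffeomorphisms.

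Next I would pass the normalized perturbations to the limit. Rescale $h_i$ by dividing by $\sup_{P_i} Q_i$ (in case the supremum is attained, which one arranges by a standard local maximum argument, or by dividing by the value at a near-maximizer); the a priori bound $|h_i|\le \eta_{\operatorname{lin}}$, parabolic interior estimates for the linearized Ricci-DeTurck equation, and the uniform curvature bounds on the unscathed parabolic neighborhood give uniform $C^k_{loc}$ bounds, so after passing to a further subsequence the normalized perturbations converge to a rotationally invariant linearized Ricci-DeTurck flow $h_\infty$ on $\mathcal M_\infty \times (-\infty, 0]$, which is not identically zero (by the normalization). The crucial analytic point, exactly as in \cite{BamlerKleiner17}, is that the weight $\rho_1^E$ together with $H_i(T-\mathfrak t)\to\infty$ forces $h_\infty$ to satisfy a bound of the form $|h_\infty| \le C R_\infty^{1+\chi}$ for some $\chi>0$: this is where the factor $\rho_1^E$ with $E>2$ and the exponential weight do their work, converting the normalization into polynomial decay in $R_\infty$ on the ancient limit. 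Then the Vanishing Theorem (our Proposition above, whose only nontrivial ingredient beyond \cite{BamlerKleiner17} is the classification of rotationally invariant $\kappa$-solutions from Theorems \ref{kappa-solution-classification-noncompact} and \ref{kappa-solution-classification-compact}, none of which has scalar curvature uniformly bounded below) forces $h_\infty \equiv 0$, contradicting $h_\infty \not\equiv 0$.

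Finally I would recover the two separate alternatives (1) and (2) from the contradiction. The difference between the two cases is whether the rescaled parabolic neighborhoods $P_i$ reach back to the time-$0$ slice: in case (1) ($t_i > (L\rho_1(x_i))^2$) they do not, so the limit $\mathcal M_\infty$ is a genuine ancient solution and the argument above applies directly to conclude $Q_i(x_i) \le \frac{1}{100}\sup_{P_i} Q_i$ for large $i$; in case (2) ($t_i \le (L\rho_1(x_i))^2$) the time-$0$ slice is within bounded rescaled distance, so one keeps a boundary term and must instead derive the bound $Q_i(x_i) \le \frac{1}{100}\sup_{P_i}Q_i + C\sup_{P_i \cap (\mathcal M_i)_0} Q_i$, obtaining the constant $C=C(E)$ from the (parabolic) maximum principle on a finite-time region together with the polynomial weight — this mirrors the case split in \cite{BamlerKleiner17} precisely. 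The main obstacle I expect is \emph{bookkeeping the equivariance through the compactness and limiting argument}: one must be sure the limiting $\kappa$-solution is rotationally invariant and that the limiting perturbation is rotationally invariant, so that the Vanishing Theorem (which is stated for rotationally invariant $h$) applies; this is handled by Proposition \ref{flowequivcompact} and the fact that rotational invariance of $h_i$ is preserved under the rescaling and $C^k_{loc}$ limits. Everything else is a routine transcription of the arguments in \cite[Section 9]{BamlerKleiner17}, with facts (1)–(4) listed in the proof of the canonical neighborhood theorem substituting for their three-dimensional counterparts.
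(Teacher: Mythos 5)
Your proposal follows essentially the same route as the paper: the paper's proof simply observes that Bamler--Kleiner's argument for Proposition 9.1 carries over verbatim once one substitutes the equivariant compactness theorem, the universal $\kappa_0(n)$ for rotationally invariant $\kappa$-solutions, and the rotationally invariant Vanishing Theorem (via the Li--Zhang/Brendle et al.\ classification), which is exactly the substitution scheme you describe. Your expanded sketch of the contradiction/rescaling/limit argument is a faithful account of the original Bamler--Kleiner proof being transcribed, so there is nothing materially different to compare.
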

\begin{proof}
The proof of this proposition is no different from the original version, except for a few technicalities. First of all, there is a universal $\kappa_0=\kappa_0(n)>0$, such that any $(n+1)$-dimensional $\kappa$-solution is also a $\kappa_0$-solution (Theorem \ref{universal kappa}). Secondly, the auxiliary lemma applied in the proof \cite[Lemma 8.10]{BamlerKleiner17} is a consequence of the canonical neighborhood assumption and Perelman's $\kappa$-compactness theorem (c.f. \cite[Theorem 1.2]{LiZhang18}). Therefore, one may follow the arguments in the original proof to reach the required conclusion.
\end{proof}

\begin{prop}[Proposition 9.3 in \cite{BamlerKleiner17}, The Interior Decay Theorem]
If
\begin{gather*}
    E>2,\quad H\geq\underline{H}(E),\quad \eta_{\operatorname{lin}}\leq\bar \eta_{\operatorname{lin}}(E),\quad\alpha>0,
    \quad
    A\geq\underline{A}(E,\alpha),\quad\epsilon_{\operatorname{can}}\leq\bar\epsilon_{\operatorname{can}}(E,\alpha),
\end{gather*}
then there is a constant $C=C(E)<\infty$ such that the following holds. Let $\mathcal{M}$ be an $(n+1)$-dimensional rotationally invariant Ricci flow spacetime and let $x\in\mathcal{M}_t$ be a fixed point. Assume that $\mathcal{M}$ is $(\epsilon_{\operatorname{can}}\rho_1(x),t)$-complete and satisfies the $\epsilon_{\operatorname{can}}$-canonical neighborhood assumption at scales $(\epsilon_{\operatorname{can}}\rho_1(x),1)$. Consider the parabolic neighborhood $P:=P_{\mathcal{O}}(x,A\rho_1(x))$ and let $h$ be a rotationally invariant Ricci-DeTurck perturbation on $P$ such that $|h|\leq \eta_{\operatorname{lin}}$ everywhere. Define $Q$ as (\ref{definition of Q}).  Then
\begin{enumerate}[(1)]
    \item If $t>(A\rho_1(x))^2$ (i.e., if $P$ does not intersect the time-$0$ slice), then we have $$Q(x)\leq\alpha\sup_PQ.$$
    \item If $t\leq (A\rho_1(x))^2$ (i.e., if $P$ intersects the time-$0$ slice), then we have $$Q(x)\leq\alpha\sup_PQ+C\sup_{P\cap\mathcal{M}_0}Q.$$
\end{enumerate}
\end{prop}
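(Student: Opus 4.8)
The plan is to obtain this proposition by iterating the semi-local maximum principle (the preceding proposition) along a chain of parabolic neighborhoods, exactly as in the proof of Proposition 9.3 of \cite{BamlerKleiner17}; since the rotational symmetry has already been built into the semi-local maximum principle and into the canonical neighborhood structure, only bookkeeping needs to be redone, and I indicate it here. Fix $N=N(\alpha)\in\mathbb{N}$ with $100^{-N}\leq\alpha$, and let $L=L(E)$ be as in the semi-local maximum principle. A single application of that result improves $Q$ by the factor $\tfrac{1}{100}$ when passing from a point to the supremum of $Q$ over its $L\rho_1$-parabolic neighborhood, up to an additive boundary contribution if that neighborhood reaches the time-$0$ slice; $N$ successive applications then produce the factor $100^{-N}\leq\alpha$.

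Concretely, given $\varepsilon>0$, build a chain $x=x_0,x_1,\dots,x_N$ by choosing $x_{k+1}\in\overline{P_{\mathcal{O}}(x_k,L\rho_1(x_k))}$ with $Q(x_{k+1})\geq\sup_{P_{\mathcal{O}}(x_k,L\rho_1(x_k))}Q-\varepsilon$. For this chain to remain inside $P=P_{\mathcal{O}}(x,A\rho_1(x))$ one needs: (a) the regularity scales $\rho_1(x_k)$ all lie within a fixed factor $C=C(E)$ of $\rho_1(x)$; and (b) the accumulated spatial and temporal displacement is at most $A\rho_1(x)$. Point (a) follows from the $\epsilon_{\operatorname{can}}$-canonical neighborhood assumption, which supplies the bounds $|\nabla R^{-1/2}|$, $|\partial_t R^{-1}|\leq C$ on the high-curvature region (and trivial control where $R$ is bounded), so that $\rho_1$ varies in a bilipschitz fashion along any path of bounded length; given (a), a displacement bound $C(E)N\cdot\rho_1(x)$ holds along the chain, so we set $A=A(E,\alpha)\geq C(E)N$. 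With (a) in hand, the comparability factor is absorbed into $\epsilon_{\operatorname{can}}$, so the hypotheses of the semi-local maximum principle ($(\epsilon_{\operatorname{can}}\rho_1(x_k),\mathfrak{t}(x_k))$-completeness, the canonical neighborhood assumption at scales $(\epsilon_{\operatorname{can}}\rho_1(x_k),1)$, and $|h|\leq\eta_{\operatorname{lin}}$ on $P_{\mathcal{O}}(x_k,L\rho_1(x_k))\subseteq P$) hold at every $x_k$.

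Now iterate. In case (1), no $P_{\mathcal{O}}(x_k,L\rho_1(x_k))$ meets $\mathcal{M}_0$, so part (1) of the semi-local maximum principle gives $Q(x_k)\leq\tfrac{1}{100}\sup_{P_{\mathcal{O}}(x_k,L\rho_1(x_k))}Q\leq\tfrac{1}{100}\bigl(Q(x_{k+1})+\varepsilon\bigr)$; composing for $k=0,\dots,N-1$ yields $Q(x)\leq100^{-N}Q(x_N)+\varepsilon\sum_{k\geq1}100^{-k}\leq\alpha\sup_PQ+\varepsilon$, and letting $\varepsilon\to0$ gives the claim. In case (2), part (2) contributes an extra $C\sup_{P_{\mathcal{O}}(x_k,L\rho_1(x_k))\cap\mathcal{M}_0}Q\leq C\sup_{P\cap\mathcal{M}_0}Q$ at each step; composing and summing the geometric series $\sum_{k\geq0}100^{-k}C$ keeps the total boundary coefficient bounded by $\tfrac{100}{99}C=C(E)$, independent of $N$ and hence of $\alpha$, which is exactly the asserted form with $C=C(E)$.

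The only genuinely substantive point, and the one I expect to be the main obstacle, is establishing (a): the uniform bilipschitz control of $\rho_1$ along all chains of parabolic neighborhoods of bounded combinatorial length. This is where the equivariant $\epsilon_{\operatorname{can}}$-canonical neighborhood assumption (Theorem \ref{spacetimeexists}) is used to prevent the regularity scale from collapsing or blowing up within such a chain, together with the a priori bound $|h|\leq\eta_{\operatorname{lin}}$ ensuring the comparison geometry is undisturbed; everything else is the same geometric-series bookkeeping as in \cite[Section 9]{BamlerKleiner17}.
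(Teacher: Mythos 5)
Your proposal is correct and follows essentially the same route as the paper, which simply cites the original Bamler--Kleiner argument: iterate the semi-local maximum principle along near-maximizing points, with the containment and scale-comparability of the intermediate parabolic neighborhoods supplied by the bounded-curvature-at-bounded-distance lemma (itself a consequence of the equivariant canonical neighborhood assumption and the equivariant $\kappa$-compactness theorem), and sum the geometric series for the boundary term to get $C=C(E)$ independent of $\alpha$. One small bookkeeping remark: the per-step distortion of $\rho_1$ compounds, so the comparability factor and total displacement grow with $N(\alpha)$ rather than being a fixed $C(E)$ times $N$ as written, but since $A$ and $\epsilon_{\operatorname{can}}$ are allowed to depend on $\alpha$ this does not affect the argument.
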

\begin{proof}
As the case of the above proposition, the proof of this proposition is the same as the original proof of \cite[Proposition 9.3]{BamlerKleiner17}. Note that the auxiliary lemma \cite[Lemma 8.13]{BamlerKleiner17} applied therein also follows from the canonical neighborhood assumption and Perelman's $\kappa$-compactness theorem (c.f. \cite[Theorem 1.2]{LiZhang18}).
\end{proof}


\section{Bryant extensions}

In this section, we replicate the results in Section 10 of \cite{BamlerKleiner17} in our setting.  Because our Ricci flow spacetimes are rotationally invariant and the comparison maps between our Ricci flow spacetimes are all equivariant, some of the proofs are simpler. The main results of this section concern determining when almost isometries between parts of Bryant solitons can be extended to more comprehensive isometries.
\subsection{The Bryant soliton}

The Bryant soliton is a certain complete steady gradient Ricci soliton on $\mathbb{R}^{n+1}$. If we let $r:\mathbb{R}^{n+1}\to [0,\infty)$ be the Euclidean distance to the origin, then the Bryant soliton metric at a point $x\in \mathbb{R}^{n+1}$ is given by 
\begin{align*}
   g_{Bry}(r)=dr^2+w(r(x))^2g_{\mathbb{S}^{n}}, 
\end{align*}
where $g_{\mathbb{S}^{n}}$ is the standard round metric of Ricci curvature $n-1$ on the level set $r^{-1}(r(x))$, and $w:\mathbb{R}\to \mathbb{R}$ is smooth, odd about $r=0$, has $w'(0)=1$, and $w(r)>0$ for all $r>0$. 
We let $(M_{Bry},g_{Bry})$ denote the resulting Bryant soliton. 
The function $w$ is uniquely determined by the condition that there is a smooth and even function $u:\mathbb{R}\to \mathbb{R}$ so that the $(w,u)$ pair satisfies the gradient shrinking Ricci soliton equations
\begin{align*}
    -n\frac{w''}{w}+u''=0, \qquad -\frac{w''}{w}+(n-1)\frac{1-(w')^2}{w^2}+\frac{u'w'}{w}=0\\
\end{align*}
with $u''(0)=\frac{-1}{n+1}$. 
The scalar curvature at a point $x$ is 
\begin{align*}
    -2n\frac{w''(r)}{w(r)}-n(n-1)\left(\frac{1-w'(r)^2}{w(r)^2}\right);
\end{align*}
this becomes $1$ at the origin. 
The sectional curvatures of a generic rotationally-symmetric metric at a non-origin point $x$ are 
\begin{align*}
    K_{orb}=\frac{1-w'(r)^2}{w(r)^2}, \qquad K_{rad}=-\frac{w''(r)}{w(r)}.
\end{align*}

  In the case of the Bryant soliton, we find the following curvature quantities all converge to positive numbers as $r$ tends to $\infty$ (c.f. Section 4.4 of \cite{ChowI}):
\begin{align}\label{Bryantasym}
\frac{w(r)}{\sqrt{r}}, \qquad K_{orb}r, \qquad K_{rad}r^2.
\end{align}
Also note that for the Bryant soliton, the scalar curvature is maximised at the origin. 
\subsection{Comparisons between `almost' Bryant solitons}
Since the Bryant soliton is rotationally invariant, we find it convenient to define the following subsets of $\mathbb{R}^{n+1}$:
\begin{align*}M_{Bry}(D):=\{x\in M_{Bry};r(x)<D\}, \qquad M_{Bry}(D_1,D_2):=\{x\in M_{Bry};D_1 <r(x)<D_2\}.
\end{align*}
The main result of this section is the following proposition, which concerns the extensions of `almost' isometries of geometries defined on these annular regions of $\mathbb{R}^{n+1}$. 

\begin{prop}\label{BryantExtension}
There is a large $\underline{E}$, as well as constants $\underline{D}(E,C,\beta)$ and $\overline{\delta}(E,C,\beta,D,b)$ so that for each $E\ge \underline{E}$, $C>1$, $\beta>0$, $D\ge \underline{D}$, $0<b\le C$, $0<\delta<\overline{\delta}$ and $D'>0$, the following holds. 
Suppose that $g$ and $g'$ are $O(n+1)$-invariant Riemannian metrics on $M_{Bry}(D)$ and $M_{Bry}(D')$, respectively, such that, for some $\lambda\in [C^{-1},C]$, it holds that $||g-g_{Bry}||_{C^{[\delta^{-1}]}(M_{Bry}(D))}$,  $||\lambda^{-2}g'-g_{Bry}||_{C^{[\delta^{-1}]}(M_{Bry}(D'))}<\delta$. Suppose also that $\Phi:M_{Bry}(\frac{1}{2}D,D)\to M_{Bry}(D')$ is an $O(n+1)$-equivariant open embedding such that $h:=\Phi^{\ast}g'-g$ satisfies $\rho_g ^E |\nabla_g ^m h|_g \leq b$ on $M_{Bry}(\frac{1}{2}D,D)$. Then there is an $O(n+1)$-equivariant open embedding $\widetilde{\Phi}:M_{Bry}(D)\to M_{Bry}(D')$ such that the following holds: $\widetilde{\Phi}=\Phi$ on $M_{Bry}(D-1,D)$ and $\widetilde{h}:=\widetilde{\Phi}^{\ast}g'-g$ satisfies $\rho_g^3|\widetilde{h}|_g \leq \beta b$ on $M_{Bry}(D)$. 
\end{prop}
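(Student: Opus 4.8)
Here is how I would approach the proof.

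The plan is to reduce the proposition to a one–dimensional statement about warping functions, use the scale–detecting asymptotics \eqref{Bryantasym} of the Bryant soliton to show that the hypotheses force $\lambda$ to be very close to $1$ and $\Phi$ to be close to the standard radial inclusion, and then produce $\widetilde\Phi$ by interpolating between $\Phi$ near the outer boundary and an explicit model map near the tip. Throughout, $K\geq 1\geq k>0$ denote constants depending only on $n$ and on $E$, and $\Psi(\delta)$ a quantity tending to $0$ as $\delta\to0$. For the reduction, note that since $g$ and $g'$ are $O(n+1)$-invariant on the balls $M_{Bry}(D)$, $M_{Bry}(D')$ about the Bryant tip, Proposition~\ref{rotdifgen} writes them as warped products, and after passing to the $g$- and $g'$-arclength parameters $s,\sigma$ we have $g=ds\otimes ds+\psi_g^2\,\overline g$ and $g'=d\sigma\otimes d\sigma+\psi_{g'}^2\,\overline g$, while the $C^{[\delta^{-1}]}$-closeness hypotheses say that $\psi_g^2$ is relatively $\Psi(\delta)$-close to the Bryant warping $w^2$ and $\psi_{g'}^2$ is relatively $\Psi(\delta)$-close to $(\lambda\,w(\cdot/\lambda))^2$. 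Any $O(n+1)$-equivariant open embedding between warped-product cylinders has the radial form $(s,\zeta)\mapsto(f(s),\pm\zeta)$ (as in the proof of Theorem~\ref{spacetimeexists}), so $\Phi$ is encoded by a diffeomorphism $f$ with $h=(f'(s)^2-1)\,ds\otimes ds+(\psi_{g'}(f(s))^2-\psi_g(s)^2)\,\overline g$; since $f'^2-1$ occurs as an orthonormal component of the small tensor $h$, $f$ is increasing with $f'$ near $1$ and the sign is $+$.

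The crucial step is to prove $|\lambda-1|\le K(b\,D^{-E/2}+\delta)$. This cannot be avoided: any equivariant $\widetilde\Phi$ of the disk $M_{Bry}(D)$ into $M_{Bry}(D')$ must send the unique singular orbit (the $g$-tip) to the unique singular orbit (the $g'$-tip), where the scalar curvature is $\approx1$ at the former and $\approx\lambda^{-2}$ at the latter; forcing $d\widetilde\Phi$ at the tip to be (approximately) a rescaling by $\lambda^{-1}$ then makes $|\widetilde h|_g\asymp|\lambda-1|$ at points with $\rho_g\asymp1$, so we will need $|\lambda-1|\lesssim\beta b$. To get the bound, I would use that $\rho_g\ge k\sqrt D$ on $M_{Bry}(\tfrac12 D,D)$, whence $|h|_g\le K\,b\,D^{-E/2}$; in particular $|f'-1|\le K\,b\,D^{-E/2}$ and $|\psi_{g'}(f(s))^2-\psi_g(s)^2|\le K\psi_g(s)^2\,b\,D^{-E/2}\le K\,b\,D^{1-E/2}$. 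Writing $w(r)^2=c_n r(1+a(r))$ with $a(r)\to0$ (from \eqref{Bryantasym}) and substituting the near-Bryant forms of $\psi_g^2,\psi_{g'}^2$, the leading term forces the ``slope'' to match, i.e. $|c_n\lambda f(s)-c_n s|$ is small; the sub-leading corrections $a$ are evaluated at the almost-equal arguments $s$ and $f(s)/\lambda$ and hence nearly cancel, while the relative $\Psi(\delta)$-errors contribute $O(\delta D)$; this gives $|\lambda f(s)-s|\le K(b\,D^{1-E/2}+\delta D)$ on the annulus. Integrating $\lambda f'(s)-1=(\lambda-1)+\lambda(f'(s)-1)$ over the annulus, whose length is $\asymp D$, and comparing with the endpoint bounds yields $|\lambda-1|\le K(b\,D^{-E/2}+\delta)$ and, consequently, $\|f-\id\|_{C^0}\le K(b\,D^{1-E/2}+\delta D)$ there.

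For the construction, the hypotheses force $D'\ge kD/\lambda$, because $f$ embeds a $g$-arclength interval of length $\asymp D$ into the $g'$-arclength interval of $M_{Bry}(D')$, which has length $\asymp\lambda D'$; hence the Euclidean rescaling $\Lambda:x\mapsto\lambda^{-1}x$ restricts to an $O(n+1)$-equivariant embedding $M_{Bry}(\tfrac58 D)\hookrightarrow M_{Bry}(D')$. I would fix a cutoff $\chi$ equal to $1$ on $M_{Bry}(\tfrac58 D,D)$ and $0$ on $M_{Bry}(\tfrac12 D)$, and let $\widetilde\Phi$ equal $\Phi$ on $M_{Bry}(\tfrac58 D,D)\supseteq M_{Bry}(D-1,D)$, equal $\Lambda$ on $M_{Bry}(\tfrac12 D)$, and on the transition annulus $M_{Bry}(\tfrac12 D,\tfrac58 D)$ be the equivariant map whose radial part interpolates, via $\chi$, between those of $\Phi$ and $\Lambda$; the $C^0$-closeness of $f$ to $\id$ and of $\lambda$ to $1$ makes $\widetilde\Phi$ a smooth $O(n+1)$-equivariant open embedding of $M_{Bry}(D)$ into $M_{Bry}(D')$ agreeing with $\Phi$ on $M_{Bry}(D-1,D)$. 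The bound $\rho_g^3|\widetilde h|_g\le\beta b$ is then bookkeeping in powers of $D$: on $M_{Bry}(\tfrac58 D,D)$, $\widetilde h=h$ and $\rho_g\ge k\sqrt D$, so $\rho_g^3|\widetilde h|_g=\rho_g^{3-E}\,\rho_g^{E}|h|_g\le K\,b\,D^{(3-E)/2}$; on $M_{Bry}(\tfrac12 D)$, $\widetilde h=\Lambda^*g'-g$ with $|\widetilde h|_g\le K(|\lambda-1|+\delta)$ because $\Lambda^*(\lambda^2 g_{Bry})-g_{Bry}$ has $g_{Bry}$-norm $\le K|\lambda-1|$ directly from \eqref{Bryantasym}, so $\rho_g^3|\widetilde h|_g\le K\,D^{3/2}(|\lambda-1|+\delta)\le K\,b\,D^{(3-E)/2}+K\,\delta\,D^{3/2}$; and on the transition annulus the same holds up to the cutoff term $\chi'\cdot(f-\text{radial part of }\Lambda)$, which is $\le K(b\,D^{-E/2}+\delta)$ since $|\chi'|\le K/D$. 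Taking $\underline E$ a large universal constant so that $(3-E)/2<0$, then $\underline D(E,C,\beta)$ so large that $K\,b\,D^{(3-E)/2}\le\tfrac12\beta b$, and finally $\overline\delta(E,C,\beta,D,b)$ so small that $K\,\delta\,D^{3/2}\le\tfrac12\beta b$, gives the claim.

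The step I expect to be hardest is the quantitative rigidity of the second paragraph: converting ``$h$ is small on a distant annulus'' into ``$\lambda$ lies within $K(b\,D^{-E/2}+\delta)$ of $1$''. This is forced on us because an equivariant near-isometry cannot be extended across a soliton tip of mismatched scale, and it depends on the precise, scale-detecting form of the Bryant asymptotics \eqref{Bryantasym} and on the near-cancellation of their sub-leading corrections between $g$ and $g'$; once the scale and the domain of $\widetilde\Phi$ are under control, the interpolation and the powers-of-$D$ accounting are routine.
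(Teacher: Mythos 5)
Your overall architecture matches the paper's: reduce to the radial map $f$ via equivariance, argue that the scale $\lambda$ and the map must be close to $1$ and to a standard radial map, extend by a cutoff interpolation in an annulus, and close with powers-of-$D$ bookkeeping, choosing $\underline{E}$, then $\underline{D}(E,C,\beta)$, then $\overline{\delta}$. However, there is a genuine gap exactly at the step you yourself flag as hardest: the bound $|\lambda-1|\le K(b\,D^{-E/2}+\delta)$. Your derivation reads the scale off the leading-order asymptotics $w(r)^2=c_n r(1+a(r))$ and asserts that the sub-leading corrections, evaluated at $s$ and $f(s)/\lambda$, ``nearly cancel.'' But \eqref{Bryantasym} gives no rate for $a\to 0$, and the two arguments are only known to be close \emph{after} one knows $\lambda\approx 1$, so the cancellation is circular; what survives is an error of size $o_D(1)\cdot D$ in $|\lambda f(s)-s|$, hence an $o_D(1)$ error in $|\lambda-1|$ that is independent of $b$ and $\delta$. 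Since in the statement $D$ is fixed depending only on $(E,C,\beta)$ while $b>0$ may be arbitrarily small, an error in $|\lambda-1|$ not proportional to $b$ propagates to $\rho_g^3|\widetilde h|_g\gtrsim D^{3/2}\,o_D(1)$ near the tip, which cannot be absorbed into the target $\beta b$. So the quantitative rigidity is not established by the leading-order warping asymptotics alone.

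The paper's proof (via Proposition \ref{Bryantextensionsimple}) gets the $b$-scaling by using quantities that are locally computable from finitely many derivatives of the metric and that detect the scale \emph{exactly}: the closeness hypothesis (which includes $|\nabla^m h|$ for $m\le 4$, not just $m=0,1$) yields $|\operatorname{Ric}(g_1)-\operatorname{Ric}(\phi^{\ast}g_2)|_{C^1}\le b\,C_1 D^{-E}$, and the steady-soliton identities $R_{g_i}+|\nabla_{g_i}f|^2=\lambda_i^{-2}$ and $dR_{g_i}=2\operatorname{Ric}_{g_i}(\nabla_{g_i}f,\cdot)$ then force $|\lambda_2-1|\le b\,C_1\alpha^{-C_1}D^{-E+4}$; the monotonicity of $R_{g_{Bry}}(r)$ and bounds on its inverse $H$ subsequently give $|\phi(r)-r|_{C^2}\le b\,C_1\alpha^{-C_1}D^{-E+C_1}$, after which the interpolation (with the identity rather than with the rescaling $\Lambda$, which also sidesteps your unproven claim $D'\ge \tfrac{5}{8}D/\lambda$) and the warped-product pullback formula finish the argument. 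To repair your proof you would need either these soliton/curvature identities or refined Bryant asymptotics with explicit rates \emph{and} an argument producing errors proportional to $b$; as written, the second paragraph does not deliver the required scaling in $b$.
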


As in Section 10 of \cite{BamlerKleiner17}, to prove Proposition \ref{BryantExtension} it is convenient to first prove a simpler result: 
\begin{prop}\label{Bryantextensionsimple}
There is a universal constant $C_1>0$ and a function $\underline{D}(\alpha)$ so that if $0<\alpha<1$, $E\ge \underline{E}$, and $D\ge \underline{D}(\alpha)$, then the following holds. 
Let $g_i=\lambda_i^2g_{Bry}$ with $\lambda_1=1$ and $\lambda_2\in [\alpha^{-1},\alpha]$. Let $\Phi:M_{Bry}[\frac{D}{2},D]\to M_{Bry}$ be an $O(n+1)$-equivariant diffeomorphism onto its image, such that  $\left|\nabla_{g_1}^m (\Phi^* g_2-g_1)\right|_{g_1}\le b D^{-E}$ for some $b\le \alpha^{-1}$ and $m=0,1,2,3,4$. Then there is an $O(n+1)$-equivariant diffeomorphism onto its image $\tilde{\Phi}:M_{Bry}(D)\to M_{Bry}$ so that $\tilde{\Phi}=\Phi$ on $M_{Bry}(D-1,D)$, and $\left|\tilde{\Phi}^* g_2-g_1\right|\le b\cdot C_1\alpha^{-C_1}D^{-E+C_1}$.  
\end{prop}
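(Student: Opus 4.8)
The plan is to use $O(n+1)$-equivariance to reduce the statement to a one-variable problem about the warping reparametrization, and then to extend by a cut-off argument. By Claim 1 in the proof of Theorem \ref{spacetimeexists}, every $O(n+1)$-equivariant open embedding between product regions of a Bryant soliton is of the form $(r,\zeta)\mapsto(\phi(r),\pm\zeta)$ with $\phi$ a monotone function of the radial variable; so $\Phi$ is recorded by a diffeomorphism onto its image $\phi:[\tfrac D2,D]\to(0,\infty)$, and we must produce $\widetilde\phi:[0,D)\to(0,\infty)$ with $\widetilde\phi=\phi$ on $(D-1,D)$ such that $\widetilde\Phi(r,\zeta):=(\widetilde\phi(r),\pm\zeta)$ is a diffeomorphism onto its image. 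Writing $g_1=g_{Bry}=dr\otimes dr+w^2 g_{\mathbb{S}^n}$ and $g_2=\lambda_2^2 g_{Bry}$, one computes
\[
\Phi^\ast g_2-g_1=\big(\lambda_2^2(\phi')^2-1\big)\,dr\otimes dr+\big(\lambda_2^2 w(\phi)^2-w^2\big)\,g_{\mathbb{S}^n},
\]
and, since the $g_{\mathbb{S}^n}$-block is measured in $g_1$ with a factor $w^{-2}$, the hypothesis $|\nabla_{g_1}^m(\Phi^\ast g_2-g_1)|_{g_1}\le bD^{-E}$ for $m\le 4$ is equivalent to scalar bounds
\[
\big|\lambda_2\phi'(r)-1\big|+\Big|\tfrac{\lambda_2 w(\phi(r))}{w(r)}-1\Big|\lesssim_\alpha bD^{-E},
\]
together with their first few $r$-derivatives, valid on $[\tfrac D2,D]$.

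The crucial point is that $\lambda_2$ is forced close to $1$ and $\phi$ close to the identity. Here one uses the Bryant asymptotics \eqref{Bryantasym}: $w(r)=c\sqrt r\,(1+o(1))$ with the analogous control of $w'$ and $w''$, so in particular $r\,w'(r)/w(r)\to\tfrac12$. Integrating $\phi'=\lambda_2^{-1}+O_\alpha(bD^{-E})$ gives $\phi(r)=\lambda_2^{-1}r+\mathrm{const}+O_\alpha(bD^{-E+1})$, while substituting the $\sqrt{\cdot}$-asymptotics into $\lambda_2 w(\phi(r))=w(r)(1+O_\alpha(bD^{-E}))$ gives $\phi(r)=\lambda_2^{-2}r\,(1+o(1)+O_\alpha(bD^{-E}))$. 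Comparing these two expressions for $\phi$ at $r=D$ and $r=\tfrac D2$ forces $\lambda_2^{-1}-\lambda_2^{-2}=o(1)+O_\alpha(bD^{-E})$, hence $|\lambda_2-1|\lesssim_\alpha \omega(D)+bD^{-E}$, where $\omega(D)\to0$ is governed by the rate of convergence in \eqref{Bryantasym}. Feeding this back yields $|\phi(r)-r|\lesssim_\alpha D\,\omega(D)+bD^{-E+1}$ and $|\phi'(r)-1|\lesssim_\alpha\omega(D)+bD^{-E}$ on $[\tfrac D2,D]$, with the analogous higher-derivative bounds obtained the same way from the bounds on $\nabla^m(\Phi^\ast g_2-g_1)$.

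Now set $\widetilde\phi(r):=r$ for $r\in[0,D-2]$ and $\widetilde\phi(r):=r+\chi(r)\big(\phi(r)-r\big)$ for $r\in[D-2,D)$, where $\chi$ is a fixed smooth cut-off with $\chi\equiv0$ near $D-2$ and $\chi\equiv1$ on $[D-1,\infty)$; since $D-2\ge\tfrac D2$ for $D\ge\underline D(\alpha)$, the interpolation lives inside the annulus where $\phi$ is defined. Then $\widetilde\phi$ is smooth, equals $\phi$ on $(D-1,D)$, and $|\widetilde\phi'-1|\le\|\chi'\|_{C^0}\,|\phi-r|+|\phi'-1|<\tfrac12$ on $[D-2,D)$, so $\widetilde\phi$ is strictly increasing; being the identity near $r=0$, $\widetilde\Phi$ is an $O(n+1)$-equivariant diffeomorphism onto its open image in $M_{Bry}$. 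Finally one bounds $\widetilde h:=\widetilde\Phi^\ast g_2-g_1$ region by region: on $(D-1,D)$ it equals $\Phi^\ast g_2-g_1$, hence $\le bD^{-E}$; on $[D-2,D-1]$ it is controlled by the $C^0$--$C^1$ closeness of $\phi$ to the identity from the previous step and the fixed $C^1$-norm of $\chi$, costing a bounded power of $D$; and on $[0,D-2]$, where $\widetilde\phi=\mathrm{id}$ and $w(\widetilde\phi)=w$, the $dr\otimes dr$-block of $\widetilde h$ equals $\lambda_2^2-1$ and the $g_{\mathbb{S}^n}$-block has $g_1$-norm $|\lambda_2^2-1|$, so $|\widetilde h|_{g_1}\lesssim_\alpha|\lambda_2-1|$. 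Collecting the three regions yields a bound of the asserted shape, the loss $C_1$ accounting for the powers of $D$ and of $\alpha^{-1}$ picked up along the way (and $D$ taken large in terms of $\alpha$ so that the error terms not proportional to $b$ are absorbed).

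The step I expect to be the real difficulty is the rigidity of $\lambda_2$ in the second paragraph. Because the Bryant soliton is a steady — hence not self-similar — soliton, there is no exact $O(n+1)$-equivariant isometry between $g_{Bry}$ and $\lambda_2^2 g_{Bry}$ for $\lambda_2\ne 1$; what pins $\lambda_2$ down is precisely that $\Phi$ is an almost-isometry on a large annulus near the asymptotically cylindrical end, and converting this into a quantitative bound on $|\lambda_2-1|$ with a loss measured by a definite power of $D$ is where the asymptotic profile \eqref{Bryantasym}, together with a rate for the $o(1)$ error there, must be used essentially — exactly the mechanism used in Section~10 of \cite{BamlerKleiner17}. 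Once the form $(r,\zeta)\mapsto(\phi(r),\pm\zeta)$ is in hand, all the remaining steps are routine one-variable estimates.
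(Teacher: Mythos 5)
Your reduction to a one-variable problem via $\Phi(r,\zeta)=(\phi(r),\pm\zeta)$, your cut-off extension $\widetilde\phi=r+\chi\,(\phi-r)$, and the region-by-region estimate at the end all match the structure of the paper's argument. The genuine gap is in the rigidity step that you yourself flag as the crux: your mechanism for pinning down $\lambda_2$ only yields $|\lambda_2-1|\lesssim_\alpha\omega(D)+bD^{-E}$, where $\omega(D)$ is the (unquantified) rate of convergence in the asymptotics \eqref{Bryantasym}. That additive term is independent of $b$, and it cannot be ``absorbed by taking $D$ large in terms of $\alpha$'': the required conclusion is $|\widetilde\Phi^{\ast}g_2-g_1|\le b\,C_1\alpha^{-C_1}D^{-E+C_1}$, which is proportional to $b$, and on the inner region where $\widetilde\phi=\mathrm{id}$ the perturbation is exactly $(\lambda_2^2-1)g_1$, so you need $|\lambda_2-1|\lesssim bD^{-E+C_1}$ with no $b$-independent remainder. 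Since $b\le\alpha^{-1}$ may be arbitrarily small while $\underline{D}$ is only allowed to depend on $\alpha$, a bound of the form $\omega(D)+bD^{-E}$ does not imply the statement (and the linear-in-$b$ dependence is exactly what the later application to the Ricci--DeTurck perturbation needs). The same defect propagates into your bound $|\phi(r)-r|\lesssim D\,\omega(D)+bD^{-E+1}$.

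The paper closes this step differently, and this is the ingredient your proposal is missing: it uses the exact steady-soliton identities $R_{g_i}+|\nabla_{g_i}f|^2=\lambda_i^{-2}$ and $dR_{g_i}(X)=2\,\mathrm{Ric}_{g_i}(\nabla_{g_i}f,X)$. From the hypothesis (closeness of $\Phi^{\ast}g_2$ to $g_1$ up to four derivatives) one gets $C^1$-closeness of the two Ricci tensors on the annulus, and the identities then determine the constant $\lambda_2^{-2}$ from purely local data there, giving $|\lambda_2-1|\le bC_1\alpha^{-C_1}D^{-E+4}$ --- an error genuinely proportional to $b$. With that in hand, the paper pins down $\phi$ in $C^2$ by using the strictly monotone scalar curvature $R_{g_{Bry}}(r)$ as a radial coordinate: its inverse $H$ has $H'r^2$, $H''r^6$, $H'''r^{10}$ bounded, so $|\lambda_2^{-2}R_{g_{Bry}}(\phi(r))-R_{g_{Bry}}(r)|_{C^2}\le bD^{-E}$ converts into $|\phi(r)-r|_{C^2}\le bC_1\alpha^{-C_1}D^{-E+C_1}$, after which the gluing and the final estimate proceed essentially as you describe. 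If you replace your leading-order comparison of $w$-asymptotics by this soliton-identity argument (or by any other argument producing the $b$-proportional bound on $|\lambda_2-1|$ and on $|\phi-\mathrm{id}|_{C^2}$), the rest of your proof goes through.
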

\begin{proof}
It is helpful to note that 
any $O(n+1)$-equivariant diffeomorphism $\Phi$ between $(n+1)$-balls must have $0$ as a fixed point, and on $\mathbb{R}^{n+1}\setminus\{0\}\simeq (0,\infty)\times \mathbb{S}^n$, we have
\begin{align*}
    \Phi(r,x)=(\phi(r), Ax),
\end{align*}
where $\phi:(0,\infty)\to (0,\infty)$ is smooth, and smoothly extendable to an odd function on $\mathbb{R}$, and $A=\pm I$. As in Lemma 10.5 of \cite{BamlerKleiner17}, the proof of this result proceeds by combining the hypothesis of the proposition with the Bryant soliton asymptotics \eqref{Bryantasym} to conclude that the Ricci curvatures of $g_1$ and $\phi^* g_2$ satisfy 
\begin{align*}
    \left|Ric(g_1)-Ric(\phi^* g_2)\right|_{C^1}\le bC_1D^{-E}\quad\text{on}\quad M_{Bry}(\tfrac{D}{2},D)
\end{align*} for some universal constant $C_1$, provided $D$ is large enough. One then combines this estimate with the identities
\begin{align*}
    R_{g_i}+\left|\nabla_{g_{i}}f\right|^2=\lambda_i^{-2}, \qquad dR_{g_{i}}(X)=2Ric_{g_{i}}(\nabla_{g_{i}}f,X), \ i=1,2
\end{align*} to show that 
$\left|\lambda_2-1\right|\le bC_1\alpha^{-C_1}D^{-E+4}$, i.e., the scale of $g_2$ is close to $g_1$.

Now, it is well known that the scalar curvature function $R_{g_{Bry}}(r)$ is monotone decreasing as a function of $r$, and that $rR_{g_{Bry}}(r)$ is convergent to a positive number. In fact, the identity for $dR_{g_{Bry}}$ then also implies that $r^2R_{g_{Bry}}'(r)$ is also convergent to a negative number. Lemma B.1 of \cite{BamlerKleiner17} further implies that the second and third derivatives of $R_{g_{Bry}}(r)$ are uniformly bounded. Therefore, $R_{g_{Bry}}(r)$ admits an inverse $H:(0,R_{g_{Bry}}(0))\to (0,\infty)$ so that 
\begin{align*}
    H'(r)r^2, \qquad H''(r)r^{6}, \qquad H'''(r)r^{10}
\end{align*}
are all uniformly bounded. 

Now the scalar curvature of $g_2$ at a point distance $r$ from the tip is simply $\lambda_2^{-2}R_{g_{Bry}}$. Because $$|\lambda_2^{-2}R_{g_{Bry}}(\phi(r)))-R_{g_{Bry}}(r)|_{C^2}\leq bD^{-E},$$ the estimates for $H,H',H''$ imply $$|H(\lambda_2^{-2}R_{g_{Bry}}(\phi(r)))-r|_{C^2}\leq bD^{-E+C'}.$$ 
Combining this with the estimate for $|\lambda_2-1|$ and also using the bound for $H'''$, we obtain
$\left|\phi(r)-r\right|_{C^2}\le bC_1\alpha^{-C_1}D^{-E+C_1}$ on $[\frac{D}{2},D]$.

We now extend $\phi:(\frac{D}{2},D)\to (0,\infty)$ to $\tilde{\phi}:(0,D)\to (0,\infty)$ by defining $\tilde{\phi}(r)=\zeta_1(r)r+\zeta_2(r)\phi(r)$, where  $\{\zeta_1,\zeta_2\}$ is a partition of unity for $(0,D)$ so that the support of $\zeta_1,\zeta_2$ is contained in $(0,\frac{7D}{8})$ and $(\frac{5D}{8},D)$ respectively. It is clear that, by possibly enlarging $C_1$, that 
\begin{align*}
    \left|\tilde{\phi}(r)-r\right|_{C^2}\le bC_1\alpha^{-C_1}D^{-E+C_1}
\end{align*}
on $(0,D)$. If we define $\tilde{\Phi}(r,x)=(\tilde{\phi}(r),Ax)$, then the result follows from the formula 
\begin{align*}
    \Phi^*(dr^2+w(r(x))^2g_{\mathbb{S}^n})=(\tilde{\phi}'(r))^2dr^2+w(\tilde{\phi}(r))^2g_{\mathbb{S}^n}
\end{align*}
coupled with the estimate $\left|w'(r)\right|\le 1$ for all $r\in (0,\infty)$.
\end{proof}

\begin{proof}[Proof of Proposition  \ref{BryantExtension}]
The main idea behind this proof is to show that the closeness of $g$ and $g'$ to $g_1$ and $g_2$ respectively, coupled with the almost isometry $\Phi$, implies that the hypothesis of Proposition \ref{Bryantextensionsimple} is applicable so that we obtain an extension diffeomorphism so that $\tilde{\Phi}^*g_2-g_1$ is small in $g_1$. A straightforward computation reveals that the required estimates hold for $\tilde{\Phi}^*(\lambda^2 g_2)-g_1$. 
\end{proof}

\section{Construction of the comparison domain}
The proofs of several results in \cite[Section 11]{BamlerKleiner17} can be simplified considerably in the rotationally invariant setting. In particular, we give a new proof of \cite[Lemma 11.17]{BamlerKleiner17} under our assumption, which takes advantage of the classification of noncompact rotationally invariant $\kappa$-solutions in \cite{LiZhang18}. For the results in \cite[Section 11]{BamlerKleiner17} with no substantial simplification, we shall simply state the results under our assumption.

Before we proceed, we remind the reader that the main idea in the simplification is Lemma \ref{nonneckCN=Bryant_Application} above. We note that this key lemma required the construction of a new parameter $D(\dn,\delta)$, but in this section, it will become an auxiliary parameter, and it will not appear in the statement of the theorem below. The main goal of the section is to prove:
\begin{prop}[Proposition 11.1 in \cite{BamlerKleiner17}, extending the comparison domain]\label{comparisondomainextension}
Suppose that
\begin{gather}
    \etalin\leq\overline\eta_{\operatorname{lin}},\quad \dn\leq\overline\delta_{\operatorname{n}},\quad \lambda\leq\overline\lambda(\dn),\quad D_{\operatorname{cap}}\geq\underline{D}_{\operatorname{cap}}(\lambda),\quad \Lambda\geq\underline{\Lambda}(\dn,\lambda),\quad \db\leq\overline\delta_{\operatorname{b}}(\lambda,\Lambda),
    \\\nonumber
    \ecan\leq\barecan(\dn,\lambda,\Lambda,\db),\quad\rcomp\leq\barrcomp(\lambda,\Lambda),
\end{gather}
and assume that
\begin{enumerate}[(i)]
    \item $\mathcal{M}$ and $\mathcal{M}'$ are two $(n+1)$-dimensional, $O(n+1)$-invariant, and $(\ecan\rcomp,T)$-complete Ricci flow spacetimes that each satisfies the $O(n+1)$-equivariant $\ecan$-canonical neighborhood assumption at scales $(\ecan\rcomp,1)$.
    \item $(\mathcal{N},\{\mathcal{N}^j\}_{j=1}^J,\{t_j\}_{j=0}^J)$ is a $O(n+1)$-invariant comparison domain in $\mathcal{M}$ that is defined on the time-interval $[0,t_J]$. We allow the case $J=0$, in which the comparison domain is empty.
    \item $(\operatorname{Cut},\phi,\{\phi^j\}_{j=1}^J)$ is a $O(n+1)$-equivariant comparison from $\mathcal{M}$ to $\mathcal{M}'$ defined on $(\mathcal{N},\{\mathcal{N}^j\}_{j=1}^J,\{t_j\}_{j=0}^J)$ over the (same) time-interval $[0,t_J]$. In the case $J=0$, this comparison is the trivial comparison.
    \item $(\mathcal{N},\{\mathcal{N}^j\}_{j=1}^J,\{t_j\}_{j=0}^J)$ and $(\operatorname{Cut},\phi,\{\phi^j\}_{j=1}^J)$ satisfy the equivariant a priori assumptions (APA1)---(APA6) for parameters $(\etalin,\dn,\lambda,\Dcap,\Lambda,\db,\ecan,\rcomp)$.
    \item $t_{J+1}:=t_J+\rcomp^2\leq T$.
\end{enumerate}

Then there is a $O(n+1)$-invariant subset $\mathcal{N}^{J+1}\subset \mathcal{M}_{[t_J,t_{J+1}]}$ such that $(\mathcal{N}\cup\mathcal{N}^{J+1},\{\mathcal{N}^J\}_{j=1}^{J+1},\{t_j\}_{j=0}^{J+1})$ is a $O(n+1)$-invariant comparison domain defined on the time interval $[0,t_{J+1}]$ and such that $(\mathcal{N}\cup\mathcal{N}^{J+1},\{\mathcal{N}^J\}_{j=1}^{J+1},\{t_j\}_{j=0}^{J+1})$ and $(\operatorname{Cut},\phi,\{\phi^j\}_{j=1}^J)$ satisfy the equivariant a priori assumptions (APA1)---(APA6) for the parameters $(\etalin,\dn,\lambda,\Dcap,\Lambda,\db,\ecan,\rcomp)$.
\end{prop}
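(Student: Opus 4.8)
The plan is to follow the inductive construction of the comparison domain in \cite[Section 11]{BamlerKleiner17}, making modifications only where the rotationally invariant structure either forces the geometry to be simpler or where the $O(n+1)$-equivariance must be maintained. The basic strategy is: first construct a candidate for $\mathcal{N}^{J+1}$ by flowing the time-$t_J$ slice $\mathcal{N}_{t_J}$ forward in $\mathcal{M}_{[t_J,t_{J+1}]}$, then remove the parts of this flow that come too close to the singular set or develop high curvature, then repair the boundary so that each boundary component is again the central sphere of an equivariant $\delta_{\operatorname{n}}$-neck, and finally verify the equivariant a priori assumptions (APA1)--(APA6) hold for the enlarged domain. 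The new boundary components of $\mathcal{N}_{t_{J+1}}$ are produced by the usual "cutting along necks" procedure: at any point $x$ where $\rho$ drops to scale $\rcomp$, the canonical neighborhood assumption at $x$ implies $x$ is either in an equivariant $\delta_{\operatorname{n}}$-neck or, using Lemma \ref{nonneckCN=Bryant_Application}, near the tip of a scaled Bryant soliton; in the latter case the whole cap region has bounded diameter relative to $\rho(x)$, so it is discarded wholesale, and in the former case we cut along the central $O(n+1)$-orbit of the neck, which is automatically $O(n+1)$-invariant. This is exactly where the rotational symmetry simplifies matters, since the set of points "not in a $\delta_{\operatorname{n}}$-neck" is, by Lemma \ref{nonneckCN}, confined to a bounded region around a single singular orbit, and the combinatorial bookkeeping of \cite{BamlerKleiner17} regarding which components to keep becomes essentially trivial --- every end is an equivariant $\epsilon$-horn, and the "neck-like" versus "cap-like" dichotomy is clean.

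The key steps, in order, are as follows. First, extend $\mathcal{N}_{t_J}$ forward by $\partial_{\mathfrak{t}}$-flow to obtain a candidate product region $\mathcal{N}_{t_J} \times [t_J, t_{J+1}]$, which is unscathed as long as no component hits a singularity; where a component does encounter a singularity, use the canonical neighborhood assumption and properness of $R$ to see that the component is entirely covered by necks and caps, hence can be handled by the same discarding/cutting rule applied at earlier steps. Second, for each point $x$ in the candidate region with $\rho(x)$ of order $\rcomp$, invoke the canonical neighborhood assumption at $x$ (valid since $\ecan \rcomp < \rho(x) < 1$ for appropriate parameter choices) together with Lemma \ref{nonneckCN=Bryant_Application} to classify the neighborhood of $x$ as either a $\delta_{\operatorname{n}}$-neck or a Bryant-type cap; excise the Bryant caps and cut along the central spheres of the necks, obtaining $O(n+1)$-invariant boundary components as required by (1)--(2) of the equivariant comparison domain definition and by (APA3(a)). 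Third, check (APA1)--(APA6): (APA1), (APA2) concern the neck-quality of $\partial \mathcal{N}_{t_j-}$ for $j \le J$ and are inherited; (APA3(a)) and (APA4)--(APA5(c)) are the equivariant statements, and follow because the cuts and extension caps are, by construction, $O(n+1)$-invariant $(n+1)$-disks with one singular orbit, and because Lemma \ref{nonneckCN=Bryant_Application} provides the equivariant diffeomorphism to a scaled Bryant soliton needed for (APA5(c)); the remaining non-equivariant parts of (APA1)--(APA6) are verbatim from \cite[Section 11]{BamlerKleiner17}.

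The main obstacle I expect is step two: ensuring that the cutting locus can always be chosen to consist of central spheres of \emph{equivariant} $\delta_{\operatorname{n}}$-necks, with the requisite control on scale, and that after cutting, the resulting domain still satisfies the "no small necks far from the boundary" and diameter conditions built into the a priori assumptions. This is precisely what Lemma \ref{nonneckCN=Bryant_Application} is designed to supply --- it guarantees that a canonical neighborhood which is not neck-like is genuinely Bryant-like, with the tip within a controlled distance $C(\delta_{\operatorname{n}})\rho(x)$ and scale comparable to $\rho(x)$, so the excised cap region is uniformly controlled --- and the new auxiliary parameter $D = D(\delta_{\operatorname{n}},\delta)$ from that lemma must be chosen (and then absorbed) consistently with the parameter hierarchy $\delta_{\operatorname{n}} \le \overline\delta_{\operatorname{n}}$, $\lambda \le \overline\lambda(\delta_{\operatorname{n}})$, $\Lambda \ge \underline\Lambda(\delta_{\operatorname{n}},\lambda)$, and so on, displayed in the hypotheses. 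Once this parameter juggling is set up correctly, the rest of the argument is a direct transcription of Bamler--Kleiner's proof with the equivariance carried along, since every construction (forward flow, cutting, capping) is $O(n+1)$-natural.
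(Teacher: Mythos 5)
Your construction runs in the wrong time direction, and this is not a cosmetic difference: it creates a genuine gap. The paper (following Bamler--Kleiner) anchors the new slab at the \emph{forward} time $t_{J+1}$: it selects the collection $\mathcal{S}'$ of central orbits of equivariant $\dn$-necks at scale $\rcomp$ in $\mathcal{M}_{t_{J+1}}$, lets $\Omega$ be the union of the complementary components containing $\Lambda\rcomp$-thick points, proves backward survival (Lemma \ref{backwardsurvival}) and the key inclusion $\Omega(t_J)\subset\operatorname{Int}\mathcal{N}_{t_J-}$, classifies the components of $\mathcal{M}_{t_{J+1}}\setminus\operatorname{Int}\Omega$ into types (I)--(IV), and takes the $t_{J+1}$-slice of $\mathcal{N}^{J+1}$ to be $\Omega$ together with the components of types (I)--(III), flowed backward to $t_J$. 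Your scheme --- flow $\mathcal{N}_{t_J}$ forward, trim, then repair the boundary --- cannot produce extension caps: a Bryant-like region that lies \emph{outside} $\mathcal{N}_{t_J-}$ at time $t_J$ (because it was thin there) may have become $\Lambda\rcomp$-thick by time $t_{J+1}$, and (APA3) then forces it into $\mathcal{N}^{J+1}_{t_{J+1}}$; its backward flow protrudes from $\mathcal{N}_{t_J-}$, and this protrusion is precisely an extension cap. No forward flow of $\mathcal{N}_{t_J}$, however trimmed or repaired, ever adds such regions, so the resulting object fails the comparison-domain definition and (APA3) exactly in the interesting case. Moreover, $\mathcal{N}^{J+1}$ must be a product domain over $[t_J,t_{J+1}]$, so it is determined by its $t_{J+1}$-slice and cannot be modified at intermediate times; and points of $\mathcal{N}_{t_J}$ need not survive forward to $t_{J+1}$, whereas points that are thick at $t_{J+1}$ do survive backward --- which is the structural reason the construction must be based at the later time.

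Second, the verification of (APA5) is not ``verbatim from Bamler--Kleiner'' and is not supplied by Lemma \ref{nonneckCN=Bryant_Application} alone applied in $\mathcal{M}$: the substantive step is producing the component $\mathcal{C}'\subset\mathcal{M}'_{t_J}$ and the Bryant-like point $x'$ in the \emph{other} spacetime, within distance $\Dcap\rcomp$ of $\mathcal{C}'$ and at a scale in $[\Dcap^{-1}\rcomp,\Dcap\rcomp]$. In the paper this is done in the type-(I) lemma by transporting scale information through the existing comparison $\phi$ via the bi-Lipschitz scale-distortion lemma (Lemma \ref{B-K Lemma 8.22}), locating a non-neck-like point $x'$ near $\phi(\partial\mathcal{C})$, and then applying Lemma \ref{nonneckCN=Bryant_Application} in $\mathcal{M}'$. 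Your proposal never engages $\mathcal{M}'$ at all, and likewise omits the inclusion $\Omega(t_J)\subset\operatorname{Int}\mathcal{N}_{t_J-}$, which is one of the two places where the paper's rotationally invariant simplification (the signed-distance/orbit analysis) actually does work. You have correctly identified Lemma \ref{nonneckCN=Bryant_Application} and the auxiliary parameter $D(\dn,\delta)$ as the new ingredients, but the surrounding architecture either points the wrong way in time or is missing.
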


In this section, we will always let $s$ be a signed distance function for a component of $\mathcal{M}_{t_{J+1}}$ whose gradient is $g_{t_J}$-orthogonal to the orbits. As in \cite[Section 11]{BamlerKleiner17}, let us consider the collection $\mathcal{S}$ of all embedded $(n+1)$-spheres $\Sigma\subset \mathcal{M}_{t_{J+1}}$ that occur as central orbit of $O(n+1)$-equivariant $\dn$-necks at scale $\rcomp$ in $\mathcal{M}_{t_{J+1}}$.

\begin{lem}[Lemma 11.3 in \cite{BamlerKleiner17}]
We can find a subcollection $\mathcal{S}'\subset\mathcal{S}$ such that
\begin{enumerate}[(a)]
    \item $d_{t_{J+1}}(\Sigma_1,\Sigma_2)>10\rcomp$ for all distinct $\Sigma_1,\Sigma_2\in\mathcal{S}'$.
    \item For every $\Sigma\in\mathcal{S}$ there is an $\Sigma'\in\mathcal{S}'$ such that $d_{t_{J+1}}(\Sigma,\Sigma')<100\rcomp$.
\end{enumerate}
\end{lem}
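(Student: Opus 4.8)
The plan is to apply a standard greedy/Vitali covering argument to the collection $\mathcal{S}$ of central orbits of $O(n+1)$-equivariant $\dn$-necks at scale $\rcomp$ in $\mathcal{M}_{t_{J+1}}$, using the distance function $d_{t_{J+1}}$ on the single time slice $\mathcal{M}_{t_{J+1}}$. First I would note that each $\Sigma\in\mathcal S$ is a compact $O(n+1)$-orbit, so $d_{t_{J+1}}(\Sigma_1,\Sigma_2)$ is a genuine (finite, symmetric) quantity between pairs of such spheres, and in fact equals $\inf\{d_{t_{J+1}}(x,y):x\in\Sigma_1,y\in\Sigma_2\}$. The idea is to build $\mathcal S'$ so that the $5\rcomp$-balls (say) around its elements are pairwise disjoint while the $50\rcomp$-balls around its elements cover all of $\mathcal S$; the constants $10$ and $100$ in the statement then come out of the triangle inequality with a factor-of-$5$ blow-up, exactly as in the classical $5r$-covering lemma.

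The key steps, in order, are as follows. \emph{Step 1: reduce to a discrete/separated problem.} Observe that because each $\dn$-neck has definite volume at scale $\rcomp$ and $\mathcal M_{t_{J+1}}$ has bounded geometry on the relevant region (using $(\ecan\rcomp,T)$-completeness and the canonical neighborhood assumption), there can only be finitely many pairwise $10\rcomp$-separated elements of $\mathcal S$ within any bounded region; more robustly, one runs the argument on each connected component of $\mathcal M_{t_{J+1}}$ separately, and within a component Zorn's lemma (or transfinite greedy selection) produces a maximal $10\rcomp$-separated subcollection $\mathcal S'\subseteq\mathcal S$ — this gives property (a) for free by maximality we mean: \emph{Step 2: extract a maximal $10\rcomp$-separated subfamily.} Let $\mathcal S'\subseteq\mathcal S$ be maximal with respect to the property that $d_{t_{J+1}}(\Sigma_1,\Sigma_2)>10\rcomp$ for all distinct $\Sigma_1,\Sigma_2\in\mathcal S'$; such a maximal family exists by Zorn's lemma since the union of a chain of $10\rcomp$-separated families is again $10\rcomp$-separated. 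Property (a) holds by construction. \emph{Step 3: deduce the covering property (b).} Given any $\Sigma\in\mathcal S$, if $\Sigma\in\mathcal S'$ we are done with $\Sigma'=\Sigma$; otherwise, by maximality of $\mathcal S'$, the family $\mathcal S'\cup\{\Sigma\}$ is \emph{not} $10\rcomp$-separated, so there exists $\Sigma'\in\mathcal S'$ with $d_{t_{J+1}}(\Sigma,\Sigma')\leq 10\rcomp<100\rcomp$, which is exactly (b).

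The argument is almost entirely formal, so I do not expect a serious obstacle; the one point requiring a little care is making sure that ``distance between orbits'' behaves well and that the maximal family exists (i.e.\ that we are not secretly in a situation where every subfamily can be enlarged indefinitely). This is handled by the finiteness-in-bounded-regions observation in Step 1: any $10\rcomp$-separated family meeting a fixed bounded set is finite, because each element carries a definite amount of volume in a disjoint $5\rcomp$-neighborhood and the total volume of a bounded region of a Ricci flow spacetime slice with the canonical-neighborhood a priori bounds is finite. One then applies the greedy selection component-by-component, or region-by-region in an exhaustion, to get the global $\mathcal S'$. Since the entire content is the classical Vitali/$5r$-covering lemma adapted to orbits, I would simply remark that the proof is identical to \cite[Lemma 11.3]{BamlerKleiner17}, with $\mathcal S$ now consisting of $O(n+1)$-orbit spheres rather than arbitrary necks, and the separation/covering constants unchanged.
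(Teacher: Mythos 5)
Your proposal is correct and is essentially the argument behind this lemma: the paper simply cites \cite[Lemma 11.3]{BamlerKleiner17}, whose proof is exactly the maximal $10\rcomp$-separated subfamily selection you describe, with maximality (via Zorn or a greedy choice) giving (a) by construction and (b) with the even stronger bound $d_{t_{J+1}}(\Sigma,\Sigma')\leq 10\rcomp<100\rcomp$. The finiteness-in-bounded-regions remark is harmless but not needed, since the union of a chain of separated families is separated.
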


\begin{lem}[Lemma 11.4 in \cite{BamlerKleiner17}]
If
\begin{eqnarray*}
\dn\leq\bardn,\quad \lambda\leq\bar{\lambda}(\dn),\quad \Lambda\geq\underline{\Lambda}(\dn),\quad \ecan\leq\barecan(\dn),\quad \rcomp<1,
\end{eqnarray*}
then the collection $\mathcal{S}'$ separates the $100\lambda\rcomp$-thin points of $\mathcal{M}_{t_{J+1}}$ from the $\Lambda\rcomp$-thick points.
\end{lem}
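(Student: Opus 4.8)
The plan is to exploit the one‑dimensionality of the orbit space of each component of $\mathcal M_{t_{J+1}}$, which reduces the separation claim to the behaviour of a function on an interval and makes the argument shorter than its counterpart in \cite{BamlerKleiner17}. Recall that we must show that no connected component of $\mathcal M_{t_{J+1}}\setminus\bigcup_{\Sigma'\in\mathcal S'}\Sigma'$ contains both a $100\lambda\rcomp$‑thin point and a $\Lambda\rcomp$‑thick point. Arguing by contradiction, I would pick such points $x_1$ (thin) and $x_2$ (thick) and a path $\gamma$ joining them inside that component, so that $\gamma$ avoids $\bigcup_{\Sigma'\in\mathcal S'}\Sigma'$ and $x_1,x_2$ lie in a common component $N$ of $\mathcal M_{t_{J+1}}$. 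Since $\mathcal M_{t_{J+1}}$ is a time slice of a rotationally invariant Ricci flow spacetime, $N$ is not diffeomorphic to $\mathbb S^1\times\mathbb S^n$ (this type disappears after the first surgery, as recalled after Proposition \ref{uniform_noncollapsing}), so $N^\ast:=N/O(n+1)$ is a subinterval of $\mathbb R$. The curvature scale $\rho$ is $O(n+1)$‑invariant and hence descends to a continuous function on $N^\ast$; every $O(n+1)$‑invariant embedded sphere in $N$ is the preimage of a point of $N^\ast$; and the image of the projected path $\gamma^\ast\subseteq N^\ast$ contains the whole subinterval $[x_1^\ast,x_2^\ast]$ between the endpoints' orbits (nondegenerate since $\rho(x_1)\ne\rho(x_2)$) while avoiding the locally finite set $P'\subseteq N^\ast$ of centres of those spheres of $\mathcal S'$ that meet $N$.

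Next I would locate a neck. Since $\rho$ runs from $\le 100\lambda\rcomp$ to $\ge\Lambda\rcomp$ along $[x_1^\ast,x_2^\ast]$ and $100\lambda<1<\Lambda$ for the parameter choices, the intermediate value theorem gives $q\in(x_1^\ast,x_2^\ast)$ with $\rho(q)=\rcomp$; let $\Sigma_q\subset N$ be the orbit sphere over $q$. At any point of $\Sigma_q$ we have $\ecan\rcomp<\rho=\rcomp<1$ together with the equivariant $\ecan$‑canonical neighbourhood assumption, so Lemma \ref{eq:cannbd} (with $\delta=\dn$) applies: either that point is the centre of an equivariant $\dn$‑neck at scale $\rcomp$, so that $\Sigma_q\in\mathcal S$; or there is a compact connected $O(n+1)$‑invariant domain $V\supseteq\Sigma_q$ on which $\rho$ lies within a factor $C_0(\dn)$ of $\rcomp$, with $\operatorname{diam}_{t_{J+1}}(V)<C_0\rcomp$, and, if $\partial V\ne\emptyset$, with $V$ equivariantly diffeomorphic to $\mathbb B^{n+1}$ and $\partial V$ a central orbit sphere of an equivariant $\dn$‑neck. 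I would rule out this second alternative: if $\partial V=\emptyset$ then $V$ is a closed component, so $V=N$ has diameter $<C_0\rcomp$, forcing $\rho(x_2)\le C(\dn)\rcomp$, which contradicts thickness of $x_2$ once $\underline\Lambda(\dn)$ is chosen large; and if $\partial V\ne\emptyset$ then $V\cong\mathbb B^{n+1}$ contains exactly one singular orbit, so in orbit space $V^\ast$ is a subinterval of $N^\ast$ with one endpoint on $\partial N^\ast$, yet $x_1^\ast,x_2^\ast\notin V^\ast$ (because $\rho(x_1)\le 100\lambda\rcomp<C_0^{-1}\rcomp$ for $\bar\lambda(\dn)$ small, and $\rho(x_2)\ge\Lambda\rcomp>C_0\rcomp$), so $x_1^\ast,x_2^\ast$ both lie on the same, non‑terminal side of the interior endpoint of $V^\ast$, contradicting $q\in[x_1^\ast,x_2^\ast]\cap V^\ast$. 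Hence $\Sigma_q\in\mathcal S$.

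Finally I would pass from $\mathcal S$ to $\mathcal S'$. By the preceding lemma there is $\Sigma'\in\mathcal S'$ with $d_{t_{J+1}}(\Sigma_q,\Sigma')<100\rcomp$; let $q'\in N^\ast$ be its projection. Because $\dn\le\bardn$ is small, the equivariant $\dn$‑neck centred at $\Sigma_q$ has orbit‑space length at least of order $\dn^{-1}\rcomp$, and on it $\rho\in\bigl((1-\Psi(\dn))\rcomp,(1+\Psi(\dn))\rcomp\bigr)$; since $100\lambda<1-\Psi(\dn)$ and $\Lambda>1+\Psi(\dn)$, neither $x_1^\ast$ nor $x_2^\ast$ lies in this neck, so each is at orbit‑space distance $>\tfrac12\dn^{-1}\rcomp$ from $q$. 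As $|q-q'|<100\rcomp<\tfrac12\dn^{-1}\rcomp$, the point $q'$ lies strictly between $x_1^\ast$ and $x_2^\ast$, whence $q'\in[x_1^\ast,x_2^\ast]\cap P'\subseteq\gamma^\ast([0,1])\cap P'$, contradicting that $\gamma$ avoids $\bigcup_{\Sigma'\in\mathcal S'}\Sigma'$.

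The step I expect to be the crux — and the one where rotational symmetry genuinely streamlines \cite{BamlerKleiner17} — is the reduction to the interval $N^\ast$: its one‑dimensionality simultaneously forces $\gamma^\ast$ to sweep out all of $[x_1^\ast,x_2^\ast]$ (so the cap alternative is impossible and a crossing of an $\mathcal S$‑sphere can be upgraded to a crossing of a nearby $\mathcal S'$‑sphere) and makes a single neck sphere automatically separating. The only configuration that would break this, an $\mathbb S^1\times\mathbb S^n$ component with circular orbit space, does not arise here.
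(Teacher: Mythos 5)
Your proof is correct and follows essentially the same route as the paper's: reduce to the one-dimensional orbit space, use the intermediate value theorem to find an orbit of scale $\rcomp$, apply the canonical-neighborhood dichotomy (neck versus Bryant-like region with a singular orbit, which you take directly from Lemma \ref{eq:cannbd} where the paper cites its packaged form, Lemma \ref{nonneckCN}), rule out the non-neck case using the thin and thick points lying outside the controlled-scale region, and then pass from $\mathcal{S}$ to $\mathcal{S}'$ via the $100\rcomp$-closeness together with the $\dn^{-1}\rcomp$ neck length. The remaining differences are cosmetic (arguing by contradiction along a path rather than directly exhibiting an $\mathcal{S}'$-sphere whose $s$-coordinate lies strictly between $s(x)$ and $s(y)$).
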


\begin{proof}
Let $x$ and $y$ be points in the same components of $\mathcal{M}_{t_{J+1}}$ such that $\rho(x)\leq 100\lambda\rcomp$ and $\rho(y)\geq\Lambda\rcomp$. Without loss of generality, we assume $s(x)<s(y)$. We will show that there exists $t\in(s(x),s(y))$, such that $\{s=t\}$ is a member of $\mathcal{S}'$.

Since $\rho$ can be represented as a continuous function of $s$, we can then find a $t'\in(s(x),s(y))$, such that $\rho(t')=\rcomp$. If $\{s=t'\}$ is not the central sphere of a $\dn$-neck, then, by Lemma \ref{nonneckCN}, we have that all points in, say, $\{s\geq t'\}$ satisfy $C(\dn)^{-1}\rcomp\leq\rho\leq C(\dn)\rcomp$ (note that on each component, a singular orbit is either the maximum or the minimum point of $s$). This set will necessarily contain either $x$ or $y$, and this cannot happen if we take $\lambda\leq\bar{\lambda}(\dn)$ and $\Lambda\geq\underline{\Lambda}(\dn)$.

Hence, it must be that $\{s=t'\}$ is the central sphere of a $\dn$-neck, which is at most $100\rcomp$ away from some $\{s=t\}\in\mathcal{S}'$. Taking $\lambda\leq\bar{\lambda}(\dn)$ and $\Lambda\geq\underline{\Lambda}(\dn)$ again, we have that $x$ and $y$ cannot be in the $\dn$-neck centered at $\{s=t'\}$, that is, $|s(x)-t'|, |s(y)-t'|\geq\dn^{-1}\rcomp$. In consequence, we have $t\in(s(x),s(y))$. This finishes the proof.
\end{proof}

We let $\Omega\subset \mathcal{M}_{t_{J+1}}$ be the union of the closure of all components of $$\mathcal{M}_{t_{J+1}}\setminus\cup_{\Sigma\in\mathcal{S}'}\Sigma$$ that contain $\Lambda\rcomp$-thick points. Then we have that $\Omega$ is $O(n+1)$-equivariant and is weakly $100\lambda\rcomp$-thick.

\begin{lem}[Lemma 11.7 in \cite{BamlerKleiner17}]\label{backwardsurvival}
If $$\ecan\leq\barecan(\lambda),$$ then all points in $\Omega$ survives until time $t_J$.
\end{lem}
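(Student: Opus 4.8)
The goal is to show that every point of $\Omega$ survives backwards until time $t_J$, i.e. does not run into a surgery or into incompleteness in $\mathcal{M}_{[t_J,t_{J+1}]}$. The plan is to argue by contradiction: suppose $x\in\Omega\subseteq\mathcal{M}_{t_{J+1}}$ is a point whose backward worldline $x(t)$ ceases to exist at some time $t^*\in[t_J,t_{J+1})$. Because $\mathcal{M}$ is $(\ecan\rcomp,T)$-complete, this can only happen if $R(x(t))\to\infty$ as $t\searrow t^*$, so $\rho(x(t))\to 0$; in particular for $t$ slightly larger than $t^*$ the point $x(t)$ satisfies $\rho(x(t))\le \ecan\rcomp$, and the $\ecan$-canonical neighborhood assumption applies along this part of the worldline. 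The strategy is to trace this very-high-curvature behaviour back to time $t_{J+1}$ via a point-picking / distance-distortion argument and derive a contradiction with the fact that $x\in\Omega$ is weakly $100\lambda\rcomp$-thick, i.e. $\rho(x)=\rho(x(t_{J+1}))\ge 100\lambda\rcomp \gg \ecan\rcomp$.

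\textbf{Key steps.} First I would invoke the canonical neighborhood assumption at the almost-singular points $x(t)$ for $t$ near $t^*$: each such point is either the center of a strong equivariant $\dn$-neck or of an equivariant $(E,\epsilon)$-cap (or lies in a closed component of controlled diameter), and in the neck/cap cases the bound $|\partial_t R^{-1}|\le C$ gives control on how $R$ and hence $\rho$ can vary in a parabolic neighborhood. Second, I would use the standard consequence of the canonical neighborhood assumption that the scalar curvature at nearby points and nearby times is comparable, so that the region of $\mathcal{M}_{t_{J+1}}$ reachable from $x(t_{J+1})$ by a short $g_{t_{J+1}}$-path has curvature scale $\lesssim \rcomp$ — contradicting the existence of the $\Lambda\rcomp$-thick point that by definition lies in the same component as $x$ inside $\Omega$ and is not separated from $x$ by any $\Sigma\in\mathcal S'$. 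More precisely, one shows that the component of $\mathcal M_{t_{J+1}}\setminus\bigcup_{\Sigma\in\mathcal S'}\Sigma$ containing $x$, being bounded by central spheres of $\dn$-necks at scale $\rcomp$ and (by the structure of rotationally invariant manifolds and Lemma \ref{nonneckCN}) either a cap region of scale $\sim\rcomp$ or a product region, cannot contain a point whose worldline becomes singular within time $\rcomp^2$ while the component still contains a $\Lambda\rcomp$-thick point; this forces $t^*\le t_J$. Running the parabolic-rescaling estimate over the time interval of length at most $\rcomp^2 = t_{J+1}-t_J$ is exactly where the hypothesis $\rcomp\le\barrcomp$ and the smallness of $\ecan$ relative to $\lambda$ (through $\barecan(\lambda)$) are consumed.

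\textbf{Main obstacle.} The delicate part is the quantitative book-keeping: one must ensure that the curvature cannot blow up in backward time within a single comparison time-step of length $\rcomp^2$ at a point which is $100\lambda\rcomp$-thick at the end of that step, and this requires carefully combining the derivative bounds $|\nabla R^{-1/2}|+|\partial_t R^{-1}|\le C$ from $CNA$ with the bounded-curvature propagation of necks (so that an equivariant $\dn$-neck at scale $\rcomp$ at the final time persists backward a definite amount) and with the fact that a high-curvature cap region in the rotationally invariant setting is modelled on a Bryant soliton whose tip curvature is controlled — here Lemma \ref{nonneckCN=Bryant_Application} and the equivariant canonical neighborhood theorem do the work. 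The argument is essentially Bamler--Kleiner's Lemma 11.7 transported to our setting, with the simplification that rotational invariance restricts the local models to necks and (Bryant) caps, so I expect no genuinely new difficulty beyond verifying that all the constants can be chosen in the stated order $\ecan\le\barecan(\lambda)$.
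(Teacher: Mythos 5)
There is a genuine gap at the quantitative heart of the lemma. The only concrete mechanism your sketch offers for ruling out backward blow-up is integrating the canonical-neighborhood derivative bounds $|\nabla R^{-1/2}|+|\partial_t R^{-1}|\leq C$ along the worldline. But the constant $C$ there is a fixed (universal) constant, not one that can be made small, so this integration only shows that a point whose worldline dies at some $t^*\in[t_J,t_{J+1})$ has $\rho(x(t_{J+1}))\leq C'\rcomp$ at the final time. Since points of $\Omega$ are only guaranteed to be weakly $100\lambda\rcomp$-thick and $\lambda$ is a small parameter, this is not a contradiction: the naive gradient-bound argument is perfectly consistent with a $100\lambda\rcomp$-thick point becoming singular within backward time $\rcomp^2\gg(100\lambda\rcomp)^2$. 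The whole point of the hypothesis $\ecan\leq\barecan(\lambda)$ is that the protection must scale like $\lambda^{-1}$ times the local scale, and that cannot be extracted from the pointwise derivative estimates; it requires the compactness/$\kappa$-solution argument (in a $\kappa$-solution the curvature is nonincreasing backward in time, so at small $\ecan$ the scale cannot drop fast over parabolic regions of size $\ecan^{-1/2}$ times the scale). Your auxiliary steps involving $\mathcal S'$, the $\Lambda\rcomp$-thick point in the same component, Lemma~\ref{nonneckCN} and Lemma~\ref{nonneckCN=Bryant_Application} belong to the later separation/containment lemmas (the analogue of Lemma 11.9), not to this survival statement, and do not close the gap.

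The intended proof is a one-line application of the bounded-curvature-at-bounded-distance lemma, stated in the appendix as Lemma~\ref{B-K Lemma 8.8} (Lemma 8.10 of Bamler--Kleiner): choose $A:=(100\lambda)^{-1}$ and require $\ecan\leq\barecan(A)=\barecan(\lambda)$. For $x\in\Omega$ one has $\rho_1(x)\geq 100\lambda\rcomp$, so that lemma makes the parabolic neighborhood $P_{\mathcal O}\bigl(x,A\rho_1(x)\bigr)$ unscathed; its backward duration is $\bigl(A\rho_1(x)\bigr)^2\geq\rcomp^2=t_{J+1}-t_J$, hence $x$ survives until $t_J$. The nontrivial content (and the $\lambda$-dependence of $\barecan$) is entirely absorbed into that lemma, which is proved by contradiction and equivariant compactness against $\kappa$-solutions; your proposal would need to either invoke it or reproduce that compactness argument, and as written it does neither.
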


\begin{lem}[Lemma 11.8 in \cite{BamlerKleiner17}]\label{B-K Lemma 11.8}
If $J\geq 1$ and $$\dn\leq\bardn,\quad \Lambda\geq\underline\Lambda,\quad \ecan\leq\barecan,\quad \rcomp\leq\barrcomp(\Lambda),$$then for every $\Lambda\rcomp$-thick point $x\in\mathcal{M}_{t_{J+1}}$ we have $x(t_J)\in\operatorname{Int}\mathcal{N}_{t_J-}$.
\end{lem}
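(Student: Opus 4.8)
The plan is to argue by contradiction, following the structure of Lemma 11.8 in \cite{BamlerKleiner17} but exploiting the rotationally invariant structure to simplify the neck-chain argument. Suppose $x \in \mathcal{M}_{t_{J+1}}$ is a $\Lambda \rcomp$-thick point with $x(t_J) \notin \operatorname{Int} \mathcal{N}_{t_J-}$. By the backward survival Lemma \ref{backwardsurvival} (valid since $\ecan \leq \barecan(\lambda)$ and $x \in \Omega$), the point $x(t_J)$ exists, so $x(t_J) \in \mathcal{M}_{t_J} \setminus \operatorname{Int} \mathcal{N}_{t_J-}$. Since $J \geq 1$, the slice $\mathcal{N}_{t_J-}$ has boundary consisting of central $n$-spheres of equivariant $\dn$-necks at scale $\rcomp$ (by APA3(a)).

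First I would track the $s$-coordinate: fix a signed distance function $s$ on the relevant component of $\mathcal{M}_{t_J}$ whose gradient is $g_{t_J}$-orthogonal to the orbits, and compare $s(x(t_J))$ with the $s$-values of the boundary spheres $\partial \mathcal{N}_{t_J-}$. Because $x(t_J) \notin \operatorname{Int}\mathcal{N}_{t_J-}$, the orbit of $x(t_J)$ lies on the `outside' of some boundary sphere $\Sigma_0 = \partial_i \mathcal{N}_{t_J-}$, i.e.\ on the component of $\mathcal{M}_{t_J} \setminus \Sigma_0$ not meeting $\operatorname{Int}\mathcal{N}_{t_J-}$; call this component $U$. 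The idea is that $U$, being cut off by a small neck and not containing the comparison domain, must be geometrically `small' in the sense that its scale $\rho$ stays comparable to $\rcomp$ — this is exactly the kind of statement controlled by Lemma \ref{nonneckCN} and Lemma \ref{nonneckCN=Bryant_Application}. Concretely, if $x(t_J)$ were not itself the center of an equivariant $\dn$-neck, then since $\partial U = \Sigma_0$ is a single orbit, the component $U$ is equivariantly a disk (by Lemma \ref{eq:cannbd}(4) / Lemma \ref{nonneckCN}(1)), hence contains a unique singular orbit, and all orbits between $x(t_J)$ and that singular orbit lie in $B_{\mathcal{O}}(x(t_J), C_0(\dn)\rho(x(t_J)))$; combined with the fact that $\Sigma_0$ has scale $\rcomp$ and the curvature-derivative bounds in $CNA$, one propagates the scale bound $\rho \leq C(\dn)\rcomp$ along $U$.

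Next I would transport this scale bound forward in time. We have $\rho(x(t_J)) \leq C(\dn)\rcomp$, but by hypothesis $\rho(x) = \rho(x(t_{J+1})) \geq \Lambda \rcomp$, and $t_{J+1} - t_J = \rcomp^2$. Using the canonical neighborhood assumption — so that near $x$ the flow is $\ecan$-close to a $\kappa$-solution, on which $\partial_t(R^{-1})$ is bounded — the scalar curvature (hence $\rho^{-2}$) can change only by a controlled factor over a time interval of length $\rcomp^2$: explicitly $|\partial_t \rho^{-1}| \leq C$ on the relevant parabolic neighborhood gives $\rho(x(t_{J+1})) \leq C' \rho(x(t_J)) \leq C'' C(\dn)\rcomp$. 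Choosing $\Lambda \geq \underline{\Lambda}$ larger than $C'' C(\dn)$ yields a contradiction. The remaining subcase is that $x(t_J)$ \emph{is} the center of an equivariant $\dn$-neck; then one either shows this neck forces $x(t_J)$ to actually lie in $\operatorname{Int}\mathcal{N}_{t_J-}$ (by the defining properties of the comparison domain, whose boundary necks are `maximal' in the appropriate sense, cf.\ the normality condition in \cite[Def.\ 7.1]{BamlerKleiner17}), or one applies the scale-comparability of nearby orbits in a neck to again bound $\rho(x(t_J)) \leq 2\rcomp$ and runs the same forward-in-time argument.

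The main obstacle I anticipate is the careful bookkeeping at the boundary of $\mathcal{N}_{t_J-}$: making precise why `$x(t_J) \notin \operatorname{Int}\mathcal{N}_{t_J-}$' together with the structure of the boundary necks forces $x(t_J)$ into a region whose scale is genuinely $O(\rcomp)$, rather than merely bounded below. This requires knowing that each boundary component $\partial_i\mathcal{N}_{t_J-}$ genuinely separates off a `capped' or `necked-down' piece on which curvature cannot suddenly blow up backward — here the rotational symmetry is what saves us, since by Proposition \ref{flowequivcompact} and the classification of Lemma \ref{nonneckCN=Bryant}, the only relevant local models outside the comparison domain are necks, Bryant-soliton caps, and round components, on all of which $\rho$ varies by at most a dimensional constant over a disk of $\dn^{-1}$ neck-radii. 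Once the local geometry is pinned down this way, the time-propagation step and the choice of $\Lambda$ are routine; threading the parameter dependencies ($\dn \leq \bardn$, then $\Lambda \geq \underline\Lambda$, then $\ecan \leq \barecan$, then $\rcomp \leq \barrcomp(\Lambda)$) in the correct order is the last bit of care required.
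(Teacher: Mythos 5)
The paper does not actually reprove this statement: it is one of the results imported verbatim from \cite{BamlerKleiner17} (Lemma 11.8 there), and the standard argument runs in the opposite direction to yours. One shows directly that a $\Lambda\rcomp$-thick point $x\in\mathcal{M}_{t_{J+1}}$ survives to time $t_J$ with scale still comparable to $\Lambda\rcomp$ (using the equivariant canonical neighborhood assumption along the backward trajectory, e.g.\ the bound $|\partial_t R^{-1}|\le C$ or Lemma \ref{B-K Lemma 8.8}, which is where $\rcomp\le\barrcomp(\Lambda)$ and $\ecan\le\barecan$ enter), and then invokes the a priori assumptions on the \emph{given} comparison domain: (APA3)(c), which guarantees that all sufficiently thick ($\Lambda\rcomp$-thick) points of $\mathcal{M}_{t_J}$ lie in $\mathcal{N}_{t_J-}$, together with (APA3)(a), which forces points of $\partial\mathcal{N}_{t_J-}$ to have scale comparable to $\rcomp\ll\Lambda\rcomp$, so that $x(t_J)$ must lie in the interior.

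The genuine gap in your proposal is its central step: the claim that $x(t_J)\notin\operatorname{Int}\mathcal{N}_{t_J-}$ forces $\rho(x(t_J))\le C(\dn)\rcomp$. None of the lemmas you cite delivers this. Lemma \ref{eq:cannbd}, Lemma \ref{nonneckCN} and Lemma \ref{nonneckCN=Bryant_Application} describe the geometry near $x(t_J)$ at its \emph{own} scale $\rho(x(t_J))$; they do not transfer the scale $\rcomp$ of the boundary neck $\Sigma_0$ to the point $x(t_J)$. Membership in $\mathcal{N}_{t_J-}$ is not a local geometric condition, so no purely local canonical-neighborhood argument can show the complement is $O(\rcomp)$-thin: a priori the component $U$ could be a Bryant-like or slowly opening region whose scale grows far beyond $\rcomp$ away from $\Sigma_0$, and exactly this is what the a priori assumptions (APA3)(c)--(d) --- which you never invoke --- are designed to exclude. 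The neck-center subcase has the same defect: $x(t_J)$ being the center of some equivariant $\dn$-neck gives scale comparability only within that neck at scale $\rho(x(t_J))$, not $\rho(x(t_J))\le 2\rcomp$, and no ``maximality'' of the boundary necks is available; you also leave untreated the case where the component of $\mathcal{M}_{t_J}\setminus\operatorname{Int}\mathcal{N}_{t_J-}$ containing $x(t_J)$ is closed, so that no boundary sphere $\Sigma_0$ exists. Once (APA3)(c) is brought in, the contradiction scaffolding and the forward-in-time propagation become unnecessary --- your propagation estimate is just the backward estimate of the standard proof run in reverse --- and, as a minor point, survival of $x$ back to $t_J$ should be taken from Lemma \ref{B-K Lemma 8.8} rather than Lemma \ref{backwardsurvival}, whose hypotheses introduce a dependence on $\lambda$ that is absent from the parameter list of the lemma.
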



\begin{lem}[Lemma 11.9 in \cite{BamlerKleiner17}]
If $J\geq 1$ and
\begin{eqnarray*}
\dn\leq\bardn,\quad \lambda\leq\bar{\lambda}(\dn), \quad \Lambda\geq\underline{\Lambda}(\dn), \quad \ecan\leq\barecan(\dn), \quad \rcomp\leq\barrcomp(\Lambda),
\end{eqnarray*}
then $\Omega(t_J)\subset\operatorname{Int}\mathcal{N}_{t_J-}$.
\end{lem}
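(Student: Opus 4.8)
The plan is to combine the previous two lemmas. By definition, $\Omega$ is the union of the closures of those components of $\mathcal{M}_{t_{J+1}}\setminus\cup_{\Sigma\in\mathcal S'}\Sigma$ that contain $\Lambda\rcomp$-thick points; hence $\Omega$ is generated by its thick points together with the boundary spheres $\Sigma\in\mathcal S'$ bounding these components, which are central orbits of $\dn$-necks at scale $\rcomp$. The idea is that Lemma \ref{B-K Lemma 11.8} already places the flowed thick points $x(t_J)$ inside $\operatorname{Int}\mathcal N_{t_J-}$, so it remains to handle points of $\Omega$ that are \emph{not} thick: these are separated from the thick points only by portions of $\dn$-necks, and since a $\dn$-neck at scale $\rcomp$ has controlled geometry and survives backwards to time $t_J$ (by Lemma \ref{backwardsurvival}), an entire such component flows back into a small neighborhood of $\operatorname{Int}\mathcal N_{t_J-}$.

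In more detail, first I would fix a point $z\in\Omega$ and let $\mathcal C$ be the component of $\mathcal{M}_{t_{J+1}}\setminus\cup_{\Sigma\in\mathcal S'}\Sigma$ whose closure contains $z$; by construction $\mathcal C$ contains some $\Lambda\rcomp$-thick point $x$. By Lemma \ref{backwardsurvival}, every point of $\Omega$ (in particular $z$ and $x$) survives until $t_J$. By Lemma \ref{B-K Lemma 11.8}, $x(t_J)\in\operatorname{Int}\mathcal N_{t_J-}$. Now I want to connect $z$ to $x$ inside $\overline{\mathcal C}$ and show that the whole connecting arc flows back into $\operatorname{Int}\mathcal N_{t_J-}$. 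Since $\overline{\mathcal C}$ is bounded by spheres in $\mathcal S'$, and by the canonical neighborhood assumption together with Lemma \ref{nonneckCN}, the region $\overline{\mathcal C}$ is covered by $\dn$-necks except possibly near a singular orbit (a cap region), where a bounded-diameter estimate $\operatorname{diam}\le C(\dn)\rho$ holds. Using the $\epsilon$-closeness to $\kappa$-solutions and the fact that, in a $\dn$-neck at scale $\rcomp$, the geometry is $\dn$-close to the round cylinder of that scale backward over a definite time interval (here one uses $\rcomp\le\barrcomp(\Lambda)$ to ensure $\rcomp^2$ is small compared to the backward survival time, and $\lambda\le\bar\lambda(\dn)$, $\Lambda\ge\underline\Lambda(\dn)$ so that thick/thin points are genuinely separated), the backward flow map $\mathcal{M}_{t_{J+1}}\supseteq\overline{\mathcal C}\to\mathcal{M}_{t_J}$ is a near-isometry onto its image, which is again weakly $c\lambda\rcomp$-thick. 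The key structural input is that $\mathcal N_{t_J-}$ was itself constructed (by APA3(a)) to have boundary components that are central $n$-spheres of $\dn$-necks at scale $\rcomp$; hence $\mathcal{M}_{t_J}\setminus\operatorname{Int}\mathcal N_{t_J-}$ consists of components whose boundaries are such necks, and a point failing to lie in $\operatorname{Int}\mathcal N_{t_J-}$ would have to lie in such a complementary component. Tracing $\overline{\mathcal C}$ forward from $t_J$, if some point of $\Omega(t_J)$ lay outside $\operatorname{Int}\mathcal N_{t_J-}$, the corresponding forward-flowed region would be trapped in a complementary component bounded by a $\dn$-neck and thus could not contain the $\Lambda\rcomp$-thick point $x$, a contradiction with $x(t_J)\in\operatorname{Int}\mathcal N_{t_J-}$.

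The main obstacle I anticipate is the careful bookkeeping at the interface between a neck portion of $\overline{\mathcal C}$ and the boundary $\partial\mathcal N_{t_J-}$: one must rule out the pathology where $\overline{\mathcal C}(t_J)$ pokes partway out through a boundary neck of $\mathcal N_{t_J-}$ without fully exiting. This is handled by the observation that along $\overline{\mathcal C}$ the function $s$ is monotone (a singular orbit is an endpoint of the $s$-range of each component, as noted in the proof of Lemma 11.4), so the flowed component is an $s$-interval times $\mathbb S^n$ (or a cap), and its image under the near-isometric backward flow is again such an interval; comparing the scale function $\rho$ along it with the scale $\rcomp$ of the boundary necks of $\mathcal N_{t_J-}$, using that thick points survive with scale $\gtrsim\Lambda\rcomp$ and the neck scales are $\approx\rcomp$, forces the image $s$-interval to lie strictly on the interior side, i.e.\ inside $\operatorname{Int}\mathcal N_{t_J-}$. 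This follows essentially verbatim from the argument in \cite[Lemma 11.9]{BamlerKleiner17}; the rotational symmetry only simplifies matters, since the components involved are genuinely one-parameter families of orbits and the separating spheres are literally $O(n+1)$-orbits.
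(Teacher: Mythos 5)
Your reduction to Lemmas \ref{backwardsurvival} and \ref{B-K Lemma 11.8} is the right starting point, but the concluding step contains a genuine gap: you assert that if some point of $\Omega(t_J)$ lay outside $\operatorname{Int}\mathcal{N}_{t_J-}$, then the forward-flowed region ``would be trapped in a complementary component bounded by a $\delta_{\operatorname{n}}$-neck and thus could not contain the $\Lambda r_{\operatorname{comp}}$-thick point $x$.'' Nothing traps a connected set on one side of a neck central sphere: a single connected component $\Omega_0(t_J)$ can perfectly well contain $x(t_J)\in\operatorname{Int}\mathcal{N}_{t_J-}$ \emph{and} cross $\partial\mathcal{N}_{t_J-}$, and ruling out exactly this crossing is the entire content of the lemma. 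Your scale remark does not close this: $\Omega$ is only weakly $100\lambda r_{\operatorname{comp}}$-thick, and since $\lambda$ is small this is far below the boundary-neck scale $\approx r_{\operatorname{comp}}$, so points of $\Omega$ may have scale well under $r_{\operatorname{comp}}$ and there is no immediate scale obstruction to $\Omega_0(t_J)$ poking through a boundary neck of $\mathcal{N}_{t_J-}$. Likewise, the claim that the backward flow over a time of order $r_{\operatorname{comp}}^2$ at scales comparable to $r_{\operatorname{comp}}$ is a ``near-isometry'' is unjustified (and unnecessary; Lemma \ref{B-K Lemma 8.8} only gives bounded scale distortion). Deferring the remainder to ``essentially verbatim from Bamler--Kleiner'' leaves the heart of the argument unsupplied.

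What the paper actually does at this point is a contradiction argument along the $s$-coordinate. Pick $y\in\Omega_0$ with $y(t_J)\in\partial\mathcal{N}_{t_J-}$ (such $y$ exists because $\Omega_0(t_J)$ is connected, by Lemma \ref{backwardsurvival}); by (APA3)(a) together with Lemmas \ref{B-K Lemma 8.32} and \ref{B-K Lemma 8.8} one gets $\rho(y)<0.7\,r_{\operatorname{comp}}$ at time $t_{J+1}$. Between $x$ and $y$ there is then an orbit with $\rho=r_{\operatorname{comp}}$; one must show it is not the central sphere of a $\delta_{\operatorname{n}}$-neck, because otherwise a sphere of $\mathcal{S}'$ within distance $100\,r_{\operatorname{comp}}$ would have separated $y$ from the thick points, and a case analysis on the behavior of $\rho$ beyond that sphere (using Lemma \ref{B-K Lemma 11.8}, Lemma \ref{B-K Lemma 8.8}, the definition of $\Omega$ and of the discarded components, and crucially (APA3)(d)) produces a contradiction. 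Once the intermediate orbit is known not to be a neck, Lemma \ref{nonneckCN=Bryant_Application} gives a Bryant-like cap structure forcing $\{s\geq s(y)\}$ to be $c(\delta_{\operatorname{n}})r_{\operatorname{comp}}\geq 100\lambda r_{\operatorname{comp}}$-thick, so its time-$t_J$ image is a $50\lambda r_{\operatorname{comp}}$-thick disk which by (APA3)(d) cannot contain any component of $\mathcal{M}_{t_J}\setminus\mathcal{N}_{t_J-}$, contradicting $y(t_J)\in\partial\mathcal{N}_{t_J-}$. Your proposal never invokes (APA3)(d) nor the non-neck/Bryant structure, and these are precisely the inputs needed to exclude the crossing configuration; without them the argument does not go through.
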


\begin{proof}
We argue by contradiction. Suppose this lemma is not true. Let $\Omega_0$ be a component of $\Omega$, such that the conclusion fails for $\Omega_0$. Then we can find a $\Lambda\rcomp$-thick point $x\in\Omega_0$, and $y\in\Omega_0$ satisfying $y(t_J)\in\partial \mathcal{N}_{t_J-}$. Note that such $y$ always exist, since by Lemma \ref{backwardsurvival}, $\Omega_0(t_J)$ is connected, and one can connect $x(t_J)$ with a point in $\Omega_0(t_J)\setminus\operatorname{Int}\mathcal{N}_{t_J-}$ by a curve lying in $\Omega_0$. This curve must intersect $\partial \mathcal{N}_{t_J-}$, and the intersection can be taken to be $y(t_J)$. Since $y(t_J)$ is the center of a $\dn$-neck at scale $\rcomp$, we can argue as in the proof of \cite[Lemma 11.9]{BamlerKleiner17} and apply Lemma \ref{B-K Lemma 8.32} and Lemma \ref{B-K Lemma 8.8}, obtaining $\rho(y)<0.7\rcomp$ if we take
\begin{eqnarray*}
\dn\leq\bardn,\quad \lambda\leq\bar{\lambda}(\dn),\quad \rcomp\leq\barrcomp.
\end{eqnarray*}

Without loss of generality, we assume $s(x)<s(y)$. Let us choose $t'\in(s(x),s(y))$, such that $\rho(t')=\rcomp$ and $\rho<\rcomp$ on $(t',s(y)]$. We will next show that $\{s=t'\}$ cannot be the central sphere of a $\dn$-neck. 

Arguing by contradiction, assume that $\{s=t'\}$ is the central sphere of a $\dn$-neck, then there exists $\{s=t_0\}\in\mathcal{S}'$ at most $100\rcomp$ away from $\{s=t'\}$; we also would like to assume that $\{s=t_0\}$ is the member of $\mathcal{S}'$ closest to $\{s=t'\}$. Hence $\{s=t_0\}\subset \Omega_0$. We then study two different cases, based on the behavior of the function $\rho$ on the set $\{s\geq t_0\}$.
\begin{enumerate}[(1)]
\item $\rho$ drops below $100\lambda\rcomp$ at some point. Define 
$$t_1=\sup\{t>t_0; \rho(\overline{t})> 100\lambda\rcomp \text{ for all } \overline{t}\in [t_0,t)\} >s(y).$$ 
There are two subcases.
\begin{enumerate}[($i$).]
\item If there is some $t_2\in(s(y),t_1)$, such that $\rho(t_2)\geq\Lambda\rcomp$. (Note that by our choice of $t_0$, it cannot happen that $t_2\in(t_0,s(y))$.) By Lemma \ref{B-K Lemma 11.8}, we have that both $\{s=s(x)\}(t_J)$ and $\{s=t_2\}(t_J)$ are in $\operatorname{Int}\mathcal{N}_{t_J-}$, and all orbits between them must be $50\lambda\rcomp$-thick according to Lemma \ref{B-K Lemma 8.8}. This, according to (APA3)(d), then implies there can be no component of $\mathcal{M}_{t_J}\setminus\mathcal{N}_{t_J-}$ between these two orbits, and $y(t_J)\in\operatorname{Int}\mathcal{N}_{t_J-}$. This is a contradiction.
\item If $\rho(s)<\Lambda\rcomp$ for all $s\in(s(y),t_1)$, then $\{s=t_0\}$ should have been a boundary component of $\Omega$. The reason is as follows. Let $t_2>t_0$ be the smallest number such that $\{s=t_2\}\in\mathcal{S}'$. We further have the following two cases.
\begin{enumerate}[($a$).] 
\item If no such $t_2$ exists, then we have $\rho<\Lambda\rcomp$ on $\{s>t_0\}$. This is because there exists a $100\lambda\rcomp$-thin point on $\{s>t_0\}$ while in this same set there is no member of $\mathcal{S}'$ to separate it from $\Lambda\rcomp$-thick points. In this case the whole $\{s>t_0\}$ should be discarded. 
\item On the other hand, if such $t_2$ does indeed exist, then there should be no $\Lambda\rcomp$-thick points appearing in $\{t_0<s<t_2\}$. For if there exists $t\in(t_0,t_2)$, such that $\{s=t\}$ is $\Lambda\rcomp$-thick, then $t>t_1$, and $\{s=t_1\}$ (being $100\lambda\rcomp$-thin) and $\{s=t\}$ (being $\Lambda\rcomp$-thick) must be separated by some elements in $\mathcal{S}'$, and hence $\{s=t_2\}$ cannot be the first member of $\mathcal{S}'$ which we see in $\{s>t_0\}$. In this case, $\{t_0<s<t_2\}$ is a discarded component.  
\end{enumerate}
This contradicts our choice of $t_0$.
\end{enumerate}
\item $\rho(s)\geq 100\lambda\rcomp$ for all $\{s\geq t_0\}$. If this ever happens, then $s$ will attain its maximum only at a singular orbit, and $\{s\geq s(y)\}\subset\{s\geq t_0\}$ is topologically a disk, completely contained in $\Omega_0$. Then we have $\{s\geq s(y)\}(t_J)$ is a $50\lambda\rcomp$-thick disk. This cannot sustain any component of $\mathcal{M}_{t_J}\setminus\mathcal{N}_{t_J-}$ by (APA3)(d).
\end{enumerate}

Since $\{s=t'\}$ is not the center of a $\dn$-neck, we may then assume
\begin{eqnarray*}
\dn\leq\bardn,\quad \delta_\#\leq\bar{\delta}_\#(\dn),\quad \Lambda\geq\underline{\Lambda}(\dn,\delta_\#),\quad \ecan\leq\barecan(\dn,\delta_\#,\Lambda)
\end{eqnarray*}
such that there is an $O(n+1)$-equivariant diffeomorphism onto its image $\psi: \Bry{} \supset B(x_{\operatorname{Bry}},\delta_\#^{-1})\rightarrow \mathcal{M}_t$ and a positive number $a\in[c(\dn),2]$ satisfying the following:
\begin{gather*}
\big|g_{\operatorname{Bry}}-(a\rcomp)^{-2}\psi^*g\big|_{C^{\lfloor\delta_\#^{-1}\rfloor}}\leq\delta_\#,
\\
\operatorname{dist}_g(\psi(x_{\operatorname{Bry}}),x)\leq C(\dn)\rcomp,
\\
s(\psi(x_{\operatorname{Bry}}))\geq t',
\end{gather*}
where the last inequality is because the scale of $x$ is too large, and hence cannot be on the side of the singular orbit. It then follows that $\{s\geq s(y)\}\subset\{s\geq t'\}$ and on this set $\rho\geq c(\dn)\rcomp\geq 100\lambda\rcomp$ if we take $\lambda\leq\bar{\lambda}(\dn)$. By Lemma 8.8 again, $\{s\geq s(y)\}(t_J)$ is a $50\lambda\rcomp$-thick topological disk and hence cannot contain any component of $\mathcal{M}_{t_J}\setminus\mathcal{N}_{t_J-}$. This finishes the proof.
\end{proof}

 Recall that Bamler-Kleiner \cite[Section 11.3]{BamlerKleiner17} classified the components $Z$ of $\mathcal M_{t_{J+1}}\setminus\operatorname{Int}\Omega$ into four types ((I)---(IV)). We make the same classification here:
 \begin{enumerate}[(I)]
     \item $Z$ has non-empty boundary and all points on Z are weakly $10\lambda\rcomp$-thick (in particular, $Z$ is not a closed component of $M_{t_{J+1}}$).
     \item \begin{enumerate}
         \item $Z$ is diffeomorphic to an $(n+1)$-disk by an $O(n+1)$-equivariant diffeomorphism.
         \item $Z(t)$ is well-defined and $\lambda\rcomp$-thick for all $t\in[t_J,t_{J+1}]$.
         \item $Z(t_J)\subset\mathcal N_{t_J-}$ if $J\geq 1$.
     \end{enumerate}
     \item \begin{enumerate}
         \item $Z$ is diffeomorphic to an $(n+1)$-disk by an $O(n+1)$-equivariant diffeomorphism.
         \item $Z(t)$ is well-defined and $\lambda\rcomp$-thick for all $t\in[t_J,t_{J+1}]$.
         \item $\mathcal C:=Z(t_J)\setminus\operatorname{Int}\mathcal N_{t_J-}$ is a component of $\mathcal M_{t_J}\setminus\operatorname{Int}\mathcal N_{t_J-}$, and there is a component $\mathcal C'\subset\mathcal M'_{t_J}\setminus\phi(\operatorname{Int}\mathcal N_{t_J-})$ such that a priori assumptions (APA5)(a)--(e) hold, that is:
         \begin{itemize}
             \item $\mathcal C$ and $\mathcal C'$ are $3$-disks.
             \item $\partial\mathcal C'=\phi_{t_J-}(\partial\mathcal C)$.
             \item There is a point $x\in\mathcal C$ such that $(\mathcal M_{t_J},x)$ is $\db$-close to the pointed Bryant soliton $(M_{\operatorname{Bry}},g_{\operatorname{Bry}},x_{\operatorname{Bry}})$ at scale $10\lambda\rcomp$ via an $O(n+1)$-equivariant map.
             \item There is a point $x'\in\mathcal M'_{t_J}$, at distance $\le\Dcap\rcomp$ from $\mathcal C'$, such that $(\mathcal M'_{t_J},x')$ is $\db$-close to the pointed Bryant soliton $(M_{\operatorname{Bry}},g_{\operatorname{Bry}},x_{\operatorname{Bry}})$ at some scale in ther interval $[\Dcap^{-1}\rcomp,\Dcap\rcomp]$ via an $O(n+1)$-equivariant map.
             \item $\mathcal C$ and $\mathcal C'$ have diameter $\le\Dcap\rcomp$.
         \end{itemize}
     \end{enumerate}
      \item None of the above.
 \end{enumerate}
Note that, in addition to the original definitions of the above four types of connected components, we also require that all the $(n+1)$-disks are $O(n+1)$-invariant, and that all maps involved are $O(n+1)$-equivariant. The most substantial simplification is the following proof of \cite[Lemma 11.7]{BamlerKleiner17}.  Once this is established, the proof of Proposition \ref{comparisondomainextension} proceeds as in \cite{BamlerKleiner17}, where we choose $\mathcal{N}_{J+1}$ to be the union of $\Omega$ with components of type (I)-(III).

\begin{lem}[Lemma 11.7 in \cite{BamlerKleiner17}]
If
\begin{gather*}
\etalin\leq\baretalin,\quad \dn\leq\bardn,\quad \lambda\leq\bar{\lambda},\quad \Dcap\geq\uDcap(\lambda),\quad \Lambda\geq\underline{\Lambda}(\lambda),
\\
\ecan\leq\barecan(\lambda,\Lambda,\db),\quad \rcomp\leq\barrcomp(\lambda),\quad \db\leq\bardb(\lambda),
\end{gather*}
then the following holds:

If $Z\subset\mathcal{M}_{t_{J+1}}\setminus\operatorname{Int}\Omega$ is a component of type (I), then either $Z(t_J)\subset\mathcal{N}_{t_J-}$, or $Z$ is also of type (III).
\end{lem}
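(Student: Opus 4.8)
The plan is to follow the structure of Bamler--Kleiner's proof of \cite[Lemma 11.7]{BamlerKleiner17}, but to exploit rotational invariance to bypass the point-picking and topological arguments they need, instead using Lemma \ref{nonneckCN=Bryant_Application} to get a direct comparison with a piece of the Bryant soliton. So suppose $Z\subset\mathcal{M}_{t_{J+1}}\setminus\operatorname{Int}\Omega$ is of type (I), and assume $Z(t_J)\not\subset\mathcal{N}_{t_J-}$; I must show $Z$ is of type (III), i.e. verify (III)(a), (III)(b), and (III)(c). First I would record what being of type (I) buys us: $Z$ has nonempty boundary (so it is not a closed component), every $\partial$-component is a central sphere of an equivariant $\dn$-neck at scale $\rcomp$, and every point of $Z$ is weakly $10\lambda\rcomp$-thick; moreover since $Z\cap\operatorname{Int}\Omega=\emptyset$ and $\Omega$ collects the $\Lambda\rcomp$-thick components, $Z$ contains no $\Lambda\rcomp$-thick point, so $\rho<\Lambda\rcomp$ throughout $Z$, and combined with weak $10\lambda\rcomp$-thickness we get $10\lambda\rcomp\le\rho<\Lambda\rcomp$ on $Z$ up to the usual neck adjustments.

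Next I would analyze the topology of $Z$ via the signed distance function $s$. Since $Z$ is rotationally invariant with each boundary component an $O(n+1)$-orbit (a central neck sphere), $Z$ is either a ``tube'' $[a,b]\times\mathbb{S}^n$ with two boundary orbits, or a ``cap'' $((a,b]\times\mathbb{S}^n)\cup\{\text{pt}\}\cong\mathbb{B}^{n+1}$ with one boundary orbit and one interior singular orbit. In the tube case, I claim $Z(t_J)\subset\mathcal{N}_{t_J-}$: the two boundary orbits are $\dn$-necks at scale $\rcomp$, hence (by Lemma \ref{B-K Lemma 11.8}, or the backward-stability lemmas \ref{backwardsurvival}, and the canonical neighborhood assumption) their time-$t_J$ images lie in $\operatorname{Int}\mathcal{N}_{t_J-}$; all orbits between them are $50\lambda\rcomp$-thick at time $t_J$ (by the neck stability Lemma \ref{B-K Lemma 8.8}), so by (APA3)(d) there is no component of $\mathcal{M}_{t_J}\setminus\mathcal{N}_{t_J-}$ trapped between them, forcing $Z(t_J)\subset\mathcal{N}_{t_J-}$. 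This contradicts our standing assumption, so $Z$ must be the cap case: $Z$ is an $O(n+1)$-equivariant $(n+1)$-disk, which is (III)(a).

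For (III)(b), I would use that $\rho\ge 10\lambda\rcomp$ on $Z$ together with the canonical neighborhood assumption to control the backward evolution: the curvature derivative bounds from $CNA$ ensure $Z(t)$ is well-defined and $\rho\ge\lambda\rcomp$-thick for all $t\in[t_J,t_{J+1}]$, after adjusting constants (this is where $\ecan\le\barecan(\lambda)$ enters). For (III)(c), the tip orbit of $Z$, call it the center of the cap, is not the center of an equivariant $\dn$-neck; since $\rho\le\Lambda\rcomp$ there is a point $y\in Z$ (on a boundary neck) with $\rho(y)\ge\Lambda^{1/2}\rcomp$ or so, hence with $\rho(y)\ge D\rho(\text{tip})$ once $\Lambda\ge\underline{\Lambda}(\lambda)$ is large; so Lemma \ref{nonneckCN=Bryant_Application} applies at the tip and produces an $O(n+1)$-equivariant map identifying a neighborhood of the tip with a piece of the Bryant soliton at scale $\asymp\lambda\rcomp$, which gives the point $x\in\mathcal C=Z(t_J)\setminus\operatorname{Int}\mathcal{N}_{t_J-}$ with $(\mathcal{M}_{t_J},x)$ being $\db$-close to $(M_{\operatorname{Bry}},g_{\operatorname{Bry}},x_{\operatorname{Bry}})$ at scale $10\lambda\rcomp$ (taking $\db\le\bardb(\lambda)$ and $\ecan$ small). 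The remaining items of (APA5)---that $\mathcal C'\subset\mathcal{M}'_{t_J}\setminus\phi(\operatorname{Int}\mathcal{N}_{t_J-})$ is a disk with $\partial\mathcal C'=\phi_{t_J-}(\partial\mathcal C)$, that there is $x'$ near $\mathcal C'$ modeled on the Bryant soliton, and the diameter bounds---follow from transporting the boundary-neck structure across $\phi$ and applying the canonical neighborhood assumption on $\mathcal{M}'$, exactly as in \cite{BamlerKleiner17}. I expect the main obstacle to be the careful bookkeeping in the tube-versus-cap dichotomy and verifying that the diameter and scale bounds in (APA5) come out with the stated parameter dependencies; the genuinely new input, replacing Bamler--Kleiner's more delicate argument, is simply invoking Lemma \ref{nonneckCN=Bryant_Application} to recognize the cap as a Bryant soliton piece.
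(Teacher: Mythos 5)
You have the right target and the right final step (the construction of $\mathcal C'$ via the scale-distortion lemma and Lemma \ref{nonneckCN=Bryant_Application} is indeed how the paper finishes), but the core mechanism is missing, and your substitutes for it fail precisely at the borderline case the lemma is about. In your tube case you assert that all orbits of $Z$ are $50\lambda\rcomp$-thick at time $t_J$ ``by Lemma \ref{B-K Lemma 8.8}''; but points of a type (I) component are only weakly $10\lambda\rcomp$-thick at time $t_{J+1}$, and nothing you cite upgrades this to $50\lambda\rcomp$-thickness one full time step earlier --- indeed, under the standing assumption $Z(t_J)\not\subset\mathcal N_{t_J-}$, the set $Z(t_J)$ contains a component of $\mathcal M_{t_J}\setminus\operatorname{Int}\mathcal N_{t_J-}$ and hence $10\lambda\rcomp$-thin points, so the thickness you assert is exactly what fails; the argument is circular. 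The same issue infects your (III)(b): over a time interval of length $\rcomp^2$, the bound $|\partial_t R^{-1}|\leq C$ from the canonical neighborhood assumption only gives $R^{-1}(t)\gtrsim (10\lambda\rcomp)^2-C\rcomp^2$, which is vacuous for small $\lambda$. Finally, for (APA5)(c) your application of Lemma \ref{nonneckCN=Bryant_Application} at the tip produces Bryant-closeness at a scale comparable to $\rho$ at the tip, which a priori lies anywhere in $[10\lambda\rcomp,\Lambda\rcomp]$, whereas (APA5)(c) requires closeness at the specific scale $10\lambda\rcomp$, and at time $t_J$ rather than $t_{J+1}$.

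The missing idea, which is the engine of the paper's proof, is Lemma \ref{B-K Lemma 8.40} (nearly increasing scale implies Bryant-like geometry). From $Z(t_J)\not\subset\mathcal N_{t_J-}$ one extracts a point $x\in Z$ that is weakly $10\lambda\rcomp$-thick at $t_{J+1}$ while $x(t_J)$ is $10\lambda\rcomp$-thin; applying Lemma \ref{B-K Lemma 8.40} with $\beta=0$ and $a=10\lambda\rcomp$ yields an $O(n+1)$-equivariant Bryant slab $W\subset\mathcal M_{[t_J,t_{J+1}]}$ at scale $10\lambda\rcomp$ whose tip at time $t_{J+1}$ is $x$ itself; in particular $x$ lies on a singular orbit, which already rules out your ``tube'' case with no thickness propagation needed. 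The Bryant slice and slab lemmas (Lemmas \ref{B-K Lemma 8.41} and \ref{B-K Lemma 8.42}) then identify $\mathcal C_1=W_{t_{J+1}}\setminus\operatorname{Int}\Omega=Z$ and $\mathcal C_0=W_{t_J}\setminus\operatorname{Int}\mathcal N_{t_J-}=\mathcal C$ as equivariant $(n+1)$-disks, give the $9\lambda\rcomp$-thickness of $Z(t)$ for all $t\in[t_J,t_{J+1}]$, the diameter bound by $\Dcap\rcomp$, and (APA5)(c) at the correct scale $10\lambda\rcomp$, all at once; only afterwards does the paper run the argument you sketched in $\mathcal M'_{t_J}$ (scale distortion across $\phi$ plus Lemma \ref{nonneckCN=Bryant_Application}, using a pushed-forward very thick point) to produce $\mathcal C'$ and verify (APA5)(d),(e). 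Without invoking Lemma \ref{B-K Lemma 8.40}, your proof does not close.
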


\begin{proof}
Assume $Z\subset\mathcal{M}_{t_{J+1}}\setminus\operatorname{Int}\Omega$  is a component such that $Z(t_J)\not\subset\mathcal{N}_{t_J-}$. Since $\partial Z(t_J)\subset\operatorname{Int}\mathcal{N}_{t_J-}$, we have that $Z(t_J)$ must contain at least one component of $\mathcal{M}_{t_J}\setminus\mathcal{N}_{t_J-}$, and hence also contains a $10\lambda\rcomp$-thin point. Hence, there exists a $10\lambda\rcomp$-thick point $x\in Z$, while $x(t_J)$ becomes $10\lambda\rcomp$-thin.

In the course of proof, we shall introduce some auxiliary parameters
\begin{eqnarray*}
D_\#, \quad \delta_\#, 
\end{eqnarray*}
whose relation with other parameters will be determined later. Note that $\delta_\#$ will be the last parameter determined before $\ecan$.  Taking $\ecan\leq\barecan(\lambda, \delta_\#)$, we may apply Lemma \ref{B-K Lemma 8.40} to $x$ with $\alpha=10\lambda$, $\delta=\delta_\#$, $\beta=0$, $J=1$, $r=\rcomp$, and $a=10\lambda\rcomp$. Thus, there is a $O(n+1)$-equivariant time-preserving and $\partial_{\mathfrak{t}}$-preserving diffeomorphism
\begin{eqnarray*}
\psi: \overline{M_{\operatorname{Bry}}(\delta_\#^{-1})}\times[-(10\lambda)^{-2},0]\rightarrow W\subset \mathcal{M},
\end{eqnarray*}
such that $\psi(x_{\operatorname{Bry}},0)=x$, and
\begin{eqnarray*}
\big\|(10\lambda\rcomp)^{-2}\psi^*g-g_{\operatorname{Bry}}\big\|_{C^{\lfloor\delta_\#^{-1}\rfloor}}<\delta_\#.
\end{eqnarray*}
By taking $\delta_\#\leq\bar{\delta}_\#(\lambda,\Lambda)$, we may ensure that $W_{t_{J+1}}\cap\Omega\neq\emptyset$ and $W_{t_J}\cap\operatorname{Int}\mathcal{N}_{t_J-}\neq\emptyset$, and the Bryant slice lemma (Lemma \ref{B-K Lemma 8.41}) and the Bryant slab lemma (Lemma \ref{B-K Lemma 8.42}) give us clear geometric characterization of $\mathcal{C}_1=W_{t_{J+1}}\setminus\Omega$ and $\mathcal{C}_0=W_{t_J}\setminus \operatorname{Int}\mathcal{N}_{t_J-}$. Moreover, if $\Dcap \geq \uDcap(\lambda)$, then by the Lemma \ref{B-K Lemma 8.41}, $\mathcal{C}_0$ is an $O(n+1)$-invariant $(n+1)$-disk with diameter no greater than $ \Dcap \rcomp$. (APA5)(c) is also satisfied for $\mathcal C_0$ if we take $\delta_\#\leq\overline{\delta}_\#(\db)$. 

Let us then proceed to find $\mathcal{C}'$ as in (APA5). To this end, we observe that if we take $\delta_\#\leq\bar{\delta}_\#(\lambda)$, then there are points $y, z\in W_{t_J}\cap\operatorname{Int}\mathcal{N}_{t_J-}$, satisfying 
\begin{eqnarray*}
\operatorname{dist}(y,\mathcal{C}), \quad \operatorname{dist}(z,\mathcal{C})\leq C(\lambda)\rcomp,
\\
\rho(y)\geq 40C_{\operatorname{SD}}^2\rcomp,\quad \rho(z)\leq 2\rcomp,
\end{eqnarray*}
where $C_{\operatorname{SD}}$ is the constant from the bi-Lipschitz scale distortion lemma (Lemma \ref{B-K Lemma 8.22}). By Lemma \ref{B-K Lemma 8.22}, we have 
\begin{eqnarray*}
\operatorname{dist}\big(\phi(y),\phi(\partial\mathcal{C})\big), \quad \operatorname{dist}\big(\phi(z),\phi(\partial\mathcal{C})\big)\leq C(\lambda)\rcomp,
\\
\rho(\phi(y))\geq 20C_{SD}\rcomp,\quad \rho(\phi(z))\leq 4C_{SD}\rcomp.
\end{eqnarray*}
It then follows that there exists $\delta_o(\lambda)>0$ and some  $x'\in\phi(\mathcal{N}_{t_J-})$ within the $C(\lambda)\rcomp$-neighborhood of $\phi(\partial\mathcal{C})$, such that $x'$ is not the center of a $\delta_o(\lambda)$-neck, and such that $\rho(x')\in[C_{SD}\rcomp, 40C_{SD}\rcomp]$. We then take
\begin{eqnarray*}
\db\leq\bardb(\delta_o),\quad D_\#\geq\underline{D}(\db,\delta_o), \quad \ecan\leq\barecan(\db,D_\#,\delta_o),
\end{eqnarray*}
such that Lemma \ref{nonneckCN=Bryant_Application} holds for $\delta_{\operatorname{n}}=\delta_o$, $D=D_\#$, and $\delta=\db$. Furthermore, we take
\begin{eqnarray*}
\delta_\#\leq\bar{\delta}_\#(\db, D_\#,\delta_o),
\end{eqnarray*}
such that $W_{t_J}$ contains a $100C_{SD}^2D_\#\rcomp$-thick point $x''$. Moreover, taking $D_\# \geq D_\#(\Lambda)$, we can assume $100C_{SD}^2D_\# \rcomp \geq \Lambda \rcomp$, so that $x''\in \mathcal{N}_{t_J -}$. Then $\phi(x'')$ is $80C_{SD}D_\#\rcomp\geq 2D_\#\rho(x')$-thick and is on the same component of $\mathcal{M}'_{t_J}$ as $x'$.

Lemma \ref{nonneckCN=Bryant_Application} then implies the existence of an $O(n+1)$-equivariant diffeomorphism
$$\psi':\overline{M_{\operatorname{Bry}}(\db^{-1})}\rightarrow W'\subset \mathcal{M}'_{t_J}$$ 
and a scale $a\in [c(\delta_0)\rho(x'),2\rho(x')]$ satisfying the following:
\begin{eqnarray*}
\big\|a^{-2}\psi'^*g-g_{\operatorname{Bry}}\big\|_{C^{\lfloor\db^{-1}\rfloor}}<\db,
\\
d\big(\psi'(x_{\operatorname{Bry}}),x'\big)\leq C(\delta_o)\rho(x').
\end{eqnarray*}
Taking $\db\leq\bardb(\lambda)$, we may make sure that $W'$ contains $\phi(\partial\mathcal{C}_0)$, which is an orbit with scale no greater than $2C_{SD}\rcomp$. Let $\mathcal{C}'$ be the component of $W'\setminus \phi(\partial \mathcal{C}_0)$ containing the singular orbit. Since $\phi(W\setminus\mathcal{C}_0)$ contains large scale points, it follows that this image and the Bryant-like tip must lie on the opposite sides of $\phi(\partial\mathcal{C}_0)$, hence the diameter of $\mathcal{C}'$ is no greater than $ C(\lambda) \rcomp$. Taking into consideration $d\big(\psi'(x_{\operatorname{Bry}}),x'\big)\leq C(\delta_o)\rho(x')$, we have that (APA5)(d)(e) hold so long as we take $\Dcap\geq\uDcap(\lambda)$.
\end{proof}

\section{Construction of the comparison map}

After the previous section, the comparison domain $(\mathcal{N},\{\mathcal{N}^j\}_{j=1}^{J+1},\{t_j\}_{j=0}^{J+1})$ has already been extended forward by one step. The next inductive step is to extend the comparison $(\operatorname{Cut},\phi,\{\phi^j\}_{j=1}^J)$ one step forward under suitable a priori assumptions. We shall collect the statements from \cite{BamlerKleiner17} concerning this construction.

\begin{prop}[Proposition 12.1 in\cite{BamlerKleiner17}, extending the comparison map by one step]\label{BK-prop 12.1}
Suppose that
\begin{gather}
    T>0,\quad E>\underline{E},\quad H\geq \underline{H}(E),\quad\eta_{\operatorname{lin}}\leq \overline \eta_{\operatorname{lin}}(E),\quad \nu\leq\overline\nu(T,E,H,\eta_{\operatorname{lin}}),\quad\delta_{\operatorname{n}}\leq\overline \delta_{\operatorname{n}}(T,E,H,\eta_{\operatorname{lin}}),
    \\\nonumber
    \lambda\leq\overline\lambda,\quad D_{\operatorname{cap}}>0,\quad \eta_{\operatorname{cut}}\leq \overline\eta_{\operatorname{cut}},\quad D_{\operatorname{cut}}\geq \underline{D}_{\operatorname{cut}}(T,E,H,\eta_{\operatorname{lin}},\lambda, D_{\operatorname{cap}},\eta_{\operatorname{cut}}), 
    \\\nonumber
    W\geq \underline{W}(E,\lambda, D_{\operatorname{cut}}),\quad A\geq \underline{A}(E,\lambda,W),\quad \Lambda\geq\underline{\Lambda}(\lambda, A),\quad \delta_{\operatorname{b}}\leq\overline \delta_{\operatorname{b}}(T,E,H,\eta_{\operatorname{lin}},\lambda,D_{\operatorname{cap}},\eta_{\operatorname{cut}},D_{\operatorname{cut}},A,\Lambda),
    \\\nonumber
    \epsilon_{\operatorname{can}}\leq \overline \epsilon_{\operatorname{can}}(T,E,H,\eta_{\operatorname{lin}},\lambda,D_{\operatorname{cap}},\eta_{\operatorname{cut}},D_{\operatorname{cut}},W,A,\Lambda),\quad r_{\operatorname{comp}}\leq \overline r_{\operatorname{comp}}(T,H,\lambda,D_{\operatorname{cut}}).
\end{gather}
and assume that
\begin{enumerate}[(i)]
    \item $\mathcal{M}$ and $\mathcal{M}'$ are two $(n+1)$-dimensional $O(n+1)$-invariant and $(\epsilon_{\operatorname{can}}r_{\operatorname{comp}},T)$-complete  Ricci flow spacetimes that each satisfies the $O(n+1)$-equivariant $\epsilon_{\operatorname{can}}$-canonical neighborhood assumption at scales $(\epsilon_{\operatorname{can}}r_{\operatorname{comp}},1)$.
    \item $(\mathcal{N},\{\mathcal{N}^j\}_{j=1}^{J+1},\{t_j\}_{j=0}^{J+1})$ is a $O(n+1)$-invariant comparison domain in $\mathcal{M}$, which is defined over the time-interval $[0,t_{J+1}]$. We allow the case $J=0$.
    \item $(\operatorname{Cut},\phi,\{\phi^j\}_{j=1}^J)$ is a $O(n+1)$-equivariant comparison from $\mathcal{M}$ to $\mathcal{M}'$ over the time interval $[0,t_{J}]$. If $J=0$, then this comparison is trivial.
    \item $(\mathcal{N},\{\mathcal{N}^j\}_{j=1}^{J+1},\{t_j\}_{j=0}^{J+1})$ and $(\operatorname{Cut},\phi,\{\phi^j\}_{j=1}^J)$ satisfy the equivariant a priori assumptions (APA1)---(APA6) for the parameters $(\eta_{\operatorname{lin}},\delta_{\operatorname{n}},\lambda,D_{\operatorname{cap}},\Lambda,\delta_{\operatorname{b}},\epsilon_{\operatorname{can}},r_{\operatorname{comp}})$ and equivariant a priori assumptions (APA7)---(APA13) for the parameters $(T,E,H,\eta_{\operatorname{lin}},\nu,\lambda,\eta_{\operatorname{cut}},D_{\operatorname{cut}},W,A,r_{\operatorname{comp}})$.
    \item $t_{J+1}\leq T$.
    \item If $J=0$, then we assume in addition the existence of an $O(n+1)$-equivariant map $\zeta: X\rightarrow \mathcal{M}'_0$ with the following properties. First, $X\subset \mathcal{M}_0$ is an $O(n+1)$-invariant open set that contains the $\delta_{\operatorname{n}}^{-1}r_{\operatorname{comp}}$-tubular neighborhood around $\mathcal{N}_0$. Second, $\zeta: X\rightarrow \mathcal{M}'_0$ is an $O(n+1)$-equivariant diffeomorphism onto its image that satisfies the following bounds on $X$:
    \begin{align*}
        \left|\zeta^*g_0'-g_0\right|\leq &\eta_{\operatorname{lin}},
        \\
        e^{HT}\rho_1^E\left|\zeta^*g_0'-g_0\right|\leq &\nu \overline{Q}=\nu\cdot 10^{-E-1}\eta_{\operatorname{lin}}r_{\operatorname{comp}}^E,
        \\
        e^{HT}\rho_1^3\left|\zeta^*g_0'-g_0\right|\leq & \overline{Q}^*=\nu\cdot 10^{-1}\eta_{\operatorname{lin}}(\lambda r_{\operatorname{comp}})^3.
    \end{align*}
    Assume moreover that the $O(n+1)$-equivariant $\ecan$-canonical neighborhood assumption holds at scales $(0,1)$ on the image $\zeta(X)$.
\end{enumerate}
Then, under the above assumptions, there are a set $\operatorname{Cut}^J$ of pairwise disjoint $O(n+1)$-equivariant disks in $\mathcal{M}_{t_J}$, a time-preserving $O(n+1)$-equivariant diffeomorphism onto its image $\phi^{J+1}:\mathcal{N}^{J+1}\rightarrow \mathcal{M}'$ and a continuous map $$\overline{\phi}:\mathcal{N}\setminus \cup_{\mathcal{D}\in \operatorname{Cut}\cup\operatorname{Cut}^J}\mathcal{D}\rightarrow \mathcal{M}'$$ such that the following holds.

The tuple $(\operatorname{Cut}\cup\operatorname{Cut}^J,\overline{\phi},\{\phi^j\}_{j=1}^{J+1})$ is an $O(n+1)$-equivariant comparison from $\mathcal{M}$ to $\mathcal{M}'$ defined on $(\mathcal{N},\{\mathcal{N}\}_{j=1}^{J+1},\{t_j\}_{j=1}^{J+1})$ over the time-interval $[0,t_{J+1}]$. This comparison and the corresponding domain still satisfy equivariant a priori assumptions (APA1)---(APA6) for the parameters $(\eta_{\operatorname{lin}},\delta_{\operatorname{n}},\lambda,D_{\operatorname{cap}},\Lambda,\delta_{\operatorname{b}},\epsilon_{\operatorname{can}},r_{\operatorname{comp}})$ and equivariant a priori assumptions (APA7)---(APA13) for the parameters $(T,E,H,\eta_{\operatorname{lin}},\nu,\lambda,\eta_{\operatorname{cut}},D_{\operatorname{cut}},W,A,r_{\operatorname{comp}})$.

Lastly, in the case $J=0$ we have $\phi_0^1=\zeta
\vert_{\mathcal{N}_0^1}$.
\end{prop}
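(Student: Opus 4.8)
The plan is to follow the inductive construction in \cite[Section 12]{BamlerKleiner17} step by step, indicating at each stage where the rotational symmetry forces every object produced to be $O(n+1)$-invariant or $O(n+1)$-equivariant, so that the equivariant a priori assumptions are preserved. First I would recall the structure of the argument: the comparison map is extended by running the harmonic map heat flow (equivalently, solving the Ricci--DeTurck flow for the perturbation $h$) on the new time-slab $\mathcal N^{J+1}$, starting from the boundary data inherited from $\phi^J$ on $\mathcal N^{J}_{t_J+}$ (or from $\zeta$ when $J=0$), with appropriate boundary conditions along $\partial\mathcal N_{t}$ coming from the neck structure (APA3(a)). Because the Ricci--DeTurck/harmonic map heat flow commutes with the $O(n+1)$-action (the relevant operators are built from $g_t$, which is $O(n+1)$-invariant, and the boundary data are $O(n+1)$-equivariant by the inductive hypothesis and by the equivariant version of APA3(a)), uniqueness of solutions to the initial–boundary value problem forces $\phi^{J+1}$, and hence $\overline\phi$, to be $O(n+1)$-equivariant. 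This is the same uniqueness-forces-symmetry principle already used in Theorem \ref{spacetimeexists} and Proposition \ref{spacetimewarp}.

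Next I would treat the three ways in which $\mathcal N^{J+1}$ differs from the product $\mathcal N^{J}\times[t_J,t_{J+1}]$, exactly paralleling \cite{BamlerKleiner17}. Where a region is \emph{discarded} in passing from $\mathcal N^{J}_{t_J-}$ to $\mathcal N^{J+1}_{t_J+}$ (components of type (I) with $Z(t_J)\subseteq\mathcal N_{t_J-}$), the comparison and the perturbation $h$ simply restrict, and since restriction preserves equivariance there is nothing to check. Where a region of type (III) is \emph{added} — the extension caps $\mathcal C$ — the a priori assumption (APA5) (in its equivariant form, using the equivariant closeness to the Bryant soliton established via Lemma \ref{nonneckCN=Bryant_Application}) says $\mathcal C$ and the target $\mathcal C'$ are $O(n+1)$-invariant $(n+1)$-disks $\db$-close to scaled Bryant solitons by equivariant maps; here one invokes the Bryant extension result, Proposition \ref{BryantExtension}, to extend $\phi$ across $\mathcal C$ equivariantly with the required perturbation bound $\rho_g^3|\widetilde h|_g\le\beta b$, which feeds directly into (APA10)/(APA13). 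The cuts that must be performed — collected in $\operatorname{Cut}^J$ — are, by construction, central spheres of equivariant $\delta_{\operatorname{n}}$-necks capped off by the singular orbit, hence $O(n+1)$-invariant $(n+1)$-disks, so they satisfy the equivariant requirements on $\operatorname{Cut}$ by definition.

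Then I would verify that all the quantitative a priori assumptions (APA1)--(APA13) persist with the stated hierarchy of parameters. For (APA7)--(APA9) (the pointwise and integral bounds on $h$ and on $Q=e^{H(T-\mathfrak t)}\rho_1^E|h|$) one uses the semi-local maximum principle (Proposition 9.1 as adapted above) together with the Interior Decay Theorem (Proposition 9.3 as adapted) — both already stated in the rotationally invariant setting — to propagate smallness of $h$ across the new slab; the fact that $h$ is $O(n+1)$-invariant does not change these estimates, it only restricts the class of tensors to which they are applied. For (APA10)--(APA13) (closeness of $h$ to a parallel tensor on necks, the cut conditions, etc.) the arguments are verbatim from \cite{BamlerKleiner17} once one notes that every geometric input — necks, caps, Bryant regions — is taken in its $O(n+1)$-equivariant version, which is available by Lemma \ref{nonneckCN=Bryant_Application}, Lemma \ref{eq:cannbd}, and Proposition \ref{BryantExtension}. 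The last assertion, that $\phi^1_0=\zeta|_{\mathcal N^1_0}$ when $J=0$, is immediate from the construction: at the initial time the comparison map is \emph{defined} to be $\zeta$ restricted to the comparison domain, before any flow is run.

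The main obstacle I anticipate is the \emph{addition} step for type-(III) components, i.e.\ extending the comparison map and the Ricci--DeTurck perturbation across an extension cap while keeping both $O(n+1)$-equivariant and controlling $|\widetilde h|$ in terms of $|h|$ on the neck that bounds the cap. This is precisely the content of Proposition \ref{BryantExtension}, so the work here is really to check that its hypotheses are met with the right parameter dependencies — in particular that the scale $\lambda$ of the target Bryant region lies in $[C^{-1},C]$ and that $\rho_g^E|\nabla_g^m h|_g\le b$ on the collar, which follow from (APA5)(c)--(e) and the bootstrapping of the a priori bounds from the previous step. Everything else is a bookkeeping adaptation: one simply tracks the $O(n+1)$-action through the constructions of \cite[Section 12]{BamlerKleiner17}, using that each auxiliary lemma invoked there (point-picking, neck-finding, Bryant-comparison) has been replaced above by its equivariant counterpart.
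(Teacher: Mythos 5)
Your outline follows essentially the same route as the paper: extend the map over the extension caps via the Bryant extension result to obtain initial data, solve the harmonic map heat flow on the new slab, and then verify the a priori assumptions by propagating smallness of the Ricci--DeTurck perturbation with the semi-local maximum principle and the Interior Decay Theorem, with equivariance forced by uniqueness. The only cosmetic differences are that the paper (following Bamler--Kleiner) avoids imposing boundary conditions by grafting shrinking cylinders onto the space boundary of $\mathcal{N}^{J+1}$ rather than prescribing data along $\partial\mathcal{N}_t$, and it makes your bootstrapping step precise through the relaxed a priori assumptions (\ref{rapa7})---(\ref{rapa9}), Lemma \ref{conitunation}, and Proposition \ref{BK-prop 12.22}(c).
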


The proof of the above proposition consists of three steps. First of all, by virtue of the Bryant extension theorem, $\phi^J_{t_J}$ can be extended over the extension caps, and this provides the initial data for solving $\overline{\phi}$. Secondly, With this initial data, one can solve the harmonic map heat flow on the Ricci flow spacetime background. By grafting a shrinking cylinder on each component of the space boundary of $\mathcal{N}^{J+1}$, the boundary issue can be avoided. This harmonic map heat flow exists as long as the Ricci-DeTurck perturbation is small. Thirdly, by adjusting the parameters, the Ricci-DeTurck perturbation can be ensured to be small as far as it goes, and this implies that the harmonic map heat flow can be solved all the way until $t=t_{J+1}$. This solution is $\overline{\phi}$. We shall include below the statements of the propositions related to these steps. 

\begin{prop}[Proposition 12.3 in \cite{BamlerKleiner17}, extending the comparison over the extension caps]\label{BK-prop 12.3}
Suppose that
\begin{gather}
    E\geq\underline{E},\quad \eta_{\operatorname{lin}}\leq\overline\eta_{\operatorname{lin}},\quad \delta_{\operatorname{n}}\leq\overline\delta_{\operatorname{n}},\quad\lambda\leq\overline\lambda,\quad D_{\operatorname{cut}}\geq \underline{D}_{\operatorname{cut}}(T,E,H,\eta_{\operatorname{lin}},\lambda,D_{\operatorname{cap}},\eta_{\operatorname{cut}}),\quad \Lambda\geq\underline{\Lambda}
    \\\nonumber
    \delta_{\operatorname{b}}\leq\overline\delta_{\operatorname{b}}(T,E,H,\eta_{\operatorname{lin}},\lambda,D_{\operatorname{cap}},\eta_{\operatorname{cut}},D_{\operatorname{cut}},A,\Lambda),\quad\epsilon_{\operatorname{can}}\leq\overline\epsilon_{\operatorname{can}}(T,E,H,\eta_{\operatorname{lin}},\lambda,D_{\operatorname{cap}},\eta_{\operatorname{cut}},D_{\operatorname{cut}},A,\Lambda),
    \\\nonumber
    r_{\operatorname{comp}}\leq\overline r_{\operatorname{comp}}(T,H,\lambda, D_{\operatorname{cut}})
\end{gather}
and assume that assumptions (i)---(v) in Proposition \ref{BK-prop 12.1} hold and that $J\geq 1$.

Then there is a set of cuts $\operatorname{Cut}^J$ at time $t_J$, i.e., a family of $O(n+1)$-invariant $(n+1)$-disks in $\operatorname{Int} \mathcal{N}_{t_J+}$, and a rotationally equivariant diffeomorphism onto its image $\widehat{\phi}:\mathcal{N}_{t_J-}\cup\mathcal{N}_{t_J+}\to \mathcal{M}'_{t_J}$, such that the following hold:
\begin{enumerate}[(a)]
    \item Each $\mathcal{D}\in\operatorname{Cut}^J$ contains exactly one extension cap of the comparison domain $(\mathcal{N},\{\mathcal{N}^j\}_{j=1}^{J+1},\{t_j\}_{j=0}^{J+1})$ and each extension cap of this comparison domain that is in $\mathcal{M}_{t_J}$ is contained in one $\mathcal{D}\in\operatorname{Cut}^J$.
    \item $\widehat\phi=\phi_{t_J-}$ on $\mathcal{N}_{t_J-}\setminus \cup_{\mathcal{D}\in\operatorname{Cut}^J}\mathcal{D}$.
    \item Every cut $\mathcal{D}\in\operatorname{Cut}^J$ has diameter $<D_{\operatorname{cut}}r_{\operatorname{comp}}$ and contains a $\tfrac{1}{10}D_{\operatorname{cut}}r_{\operatorname{comp}}$-neighborhood of the corresponding extension cap in $\mathcal{D}$.
    \item The associated perturbation $\widehat h:=\widehat\phi^* g'_{t_J}-g_{t_J}$ satisfies $|\widehat h|\leq\eta_{\operatorname{lin}}$ on $\mathcal{N}_{t_J-}\cup\mathcal{N}_{t_J+}$ and $$e^{H(T-t_J)}\rho_1^3|\widehat h|\leq\eta_{\operatorname{cut}}\overline{Q}^*$$ on each $\mathcal{D}\in \operatorname{Cut}^J$.
    \item The $O(n+1)$-equivariant $\epsilon_{\operatorname{can}}$-canonical neighborhood assumption holds at scales $(0,1)$ on the image $\widehat\phi(\mathcal{N}_{t_J-}\cup\mathcal{N}_{t_J+}).$
\end{enumerate}
\end{prop}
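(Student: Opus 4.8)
The plan is to reduce the statement to one application of the Bryant extension theorem, Proposition \ref{BryantExtension}, for each extension cap added to the comparison domain at time $t_J$. Let $\mathcal{C}_1,\dots,\mathcal{C}_k$ denote these extension caps, so that $\mathcal{N}_{t_J+}$ is obtained from $\mathcal{N}_{t_J-}$ by gluing each $\mathcal{C}_i$ to $\mathcal{N}_{t_J-}$ along its boundary orbit $\partial\mathcal{C}_i\subseteq\partial\mathcal{N}_{t_J-}$. If $k=0$ we take $\operatorname{Cut}^J=\emptyset$ and $\widehat\phi=\phi_{t_J-}$, so assume $k\geq 1$ and fix $i$. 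By the equivariant a priori assumption (APA5), there is an $O(n+1)$-equivariant identification of an $O(n+1)$-invariant neighborhood of $\mathcal{C}_i$ in $(\mathcal{M}_{t_J},g_{t_J})$ with a rescaled Bryant region, of scale comparable to $10\lambda r_{\operatorname{comp}}$ and $\delta_{\operatorname{b}}$-close to $(M_{\operatorname{Bry}},g_{\operatorname{Bry}})$ in the relevant norm; likewise the corresponding $O(n+1)$-invariant region of $(\mathcal{M}'_{t_J},g'_{t_J})$ containing $\mathcal{C}_i'$ is identified, up to $\delta_{\operatorname{b}}$-error, with a rescaled Bryant region at some scale in $[\lambda D_{\operatorname{cap}}^{-1}r_{\operatorname{comp}},\lambda D_{\operatorname{cap}}r_{\operatorname{comp}}]$. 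I fix $D=D(D_{\operatorname{cut}},\lambda)$ so that the portion corresponding to $M_{\operatorname{Bry}}(D)$ has diameter $<D_{\operatorname{cut}}r_{\operatorname{comp}}$ and contains a $\tfrac{1}{10}D_{\operatorname{cut}}r_{\operatorname{comp}}$-neighborhood of $\mathcal{C}_i$; this forces $D_{\operatorname{cut}}$ to be large relative to $\lambda$ and $D_{\operatorname{cap}}$, as assumed, and I henceforth view all objects near $\mathcal{C}_i$ through these Bryant charts.

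On the collar $M_{\operatorname{Bry}}(\tfrac12 D,D)$, which corresponds to a subset of $\operatorname{Int}\mathcal{N}_{t_J-}$ because $\partial\mathcal{C}_i$ is the central orbit of an equivariant $\delta_{\operatorname{n}}$-neck at scale $r_{\operatorname{comp}}$ lying inside the comparison domain, the comparison map $\phi_{t_J-}$ pulls back to an $O(n+1)$-equivariant open embedding $\Phi\colon M_{\operatorname{Bry}}(\tfrac12 D,D)\to M_{\operatorname{Bry}}(D')$, and the perturbation $h=\Phi^*g'-g$ satisfies $\rho_g^E|\nabla_g^m h|_g\leq b$ for $m=0,\dots,4$, where $b$ is controlled by the $\rho^E$-weighted a priori bounds (APA7)--(APA13) on $\phi_{t_J-}$; in particular $b$ is a small multiple of $e^{-H(T-t_J)}\nu\overline{Q}$. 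I then apply Proposition \ref{BryantExtension} with this $E$, with $C$ chosen to absorb the ratio $D_{\operatorname{cap}}$ of Bryant scales, with $D,D'$ as above, and with $\beta$ a fixed small constant (depending only on $\eta_{\operatorname{cut}}$ and universal constants), obtaining an $O(n+1)$-equivariant open embedding $\widetilde\Phi\colon M_{\operatorname{Bry}}(D)\to M_{\operatorname{Bry}}(D')$ with $\widetilde\Phi=\Phi$ on $M_{\operatorname{Bry}}(D-1,D)$ and $\rho_g^3|\widetilde\Phi^*g'-g|_g\leq\beta b$ on all of $M_{\operatorname{Bry}}(D)$.

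Now I assemble $\widehat\phi$: it equals $\phi_{t_J-}$ on $\mathcal{N}_{t_J-}$ away from the cuts, and on each cut $\mathcal{D}_i$ --- which I take to be the $O(n+1)$-invariant $(n+1)$-disk corresponding to $M_{\operatorname{Bry}}(D)$ in the chart above --- it is defined by transporting $\widetilde\Phi$ back through the $\delta_{\operatorname{b}}$-close identifications. The two definitions agree on the overlap corresponding to $M_{\operatorname{Bry}}(D-1,D)$ by the conclusion $\widetilde\Phi=\Phi$ there, so $\widehat\phi$ is a well-defined $O(n+1)$-equivariant diffeomorphism onto its image; equivariance is automatic since $\phi_{t_J-}$, $\widetilde\Phi$, and the identifications are all equivariant. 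Setting $\operatorname{Cut}^J=\{\mathcal{D}_1,\dots,\mathcal{D}_k\}$, conclusion (a) holds by construction, (b) because $\widehat\phi=\phi_{t_J-}$ outside the cuts, and (c) by the choice of $D$.

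For (d), the bound $|\widehat h|\leq\eta_{\operatorname{lin}}$ holds on $\mathcal{N}_{t_J-}$ by (APA7)--(APA13) and on each $\mathcal{D}_i$ from $\rho_g^3|\widetilde h|_g\leq\beta b$ together with the lower bound $\rho_g\gtrsim r_{\operatorname{comp}}$ on a Bryant region of scale $\sim r_{\operatorname{comp}}$ --- recall the scalar curvature of the Bryant soliton is maximal at the tip --- once $\epsilon_{\operatorname{can}}$ and $r_{\operatorname{comp}}$ are sufficiently small; the weighted bound $e^{H(T-t_J)}\rho_1^3|\widehat h|\leq\eta_{\operatorname{cut}}\overline{Q}^*$ on each $\mathcal{D}_i$ is exactly $\rho_g^3|\widetilde h|_g\leq\beta b$ rewritten with the chosen $\beta$ and the size of $b$. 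Conclusion (e) follows because $\widehat\phi(\mathcal{N}_{t_J-}\cup\mathcal{N}_{t_J+})$ is an open subset of $\mathcal{M}'_{t_J}$, which satisfies the equivariant $\epsilon_{\operatorname{can}}$-canonical neighborhood assumption at scales $(\epsilon_{\operatorname{can}}r_{\operatorname{comp}},1)$, and no point of this image has scale below $\epsilon_{\operatorname{can}}r_{\operatorname{comp}}$, since the relevant portion of $\mathcal{M}'_{t_J}$ is $\delta_{\operatorname{b}}$-close to a Bryant region of scale comparable to $r_{\operatorname{comp}}$. The main obstacle I expect is the bookkeeping with scales: matching the unit-scale normalization of Proposition \ref{BryantExtension} against the scales $10\lambda r_{\operatorname{comp}}$ and $[\lambda D_{\operatorname{cap}}^{-1}r_{\operatorname{comp}},\lambda D_{\operatorname{cap}}r_{\operatorname{comp}}]$ from (APA5), choosing $D$ and $\beta$ so that (c) and (d) emerge with precisely the stated constants, and checking that $\partial\mathcal{C}_i$ sits far enough inside $\mathcal{N}_{t_J-}$ for the collar $M_{\operatorname{Bry}}(\tfrac12 D,D)$ to be genuinely covered by $\phi_{t_J-}$ --- the point where the precise form of (APA3)(a) and the smallness of $\delta_{\operatorname{n}}$ enter.
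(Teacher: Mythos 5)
Your proposal takes the same route the paper intends: the paper does not reprove this proposition but imports the Bamler--Kleiner argument, with the equivariant Bryant extension theorem (Proposition \ref{BryantExtension}) replacing its non-symmetric counterpart and with (APA5) supplying the equivariant Bryant charts near each extension cap in both $\mathcal{M}_{t_J}$ and $\mathcal{M}'_{t_J}$. Your construction of the cuts as Bryant disks $M_{\operatorname{Bry}}(D)$, the gluing of $\widetilde\Phi$ with $\phi_{t_J-}$ along the collar $M_{\operatorname{Bry}}(D-1,D)$, and the verification of (a)--(e) is exactly that argument, and the parameter dependencies you anticipate ($D$ and hence $D_{\operatorname{cut}}$ large in terms of $\lambda$, $D_{\operatorname{cap}}$, $\eta_{\operatorname{cut}}$; $\delta_{\operatorname{b}}$, $\epsilon_{\operatorname{can}}$ small thereafter) are the right ones.

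Two points are asserted too loosely to go through as written. First, Proposition \ref{BryantExtension} requires the weighted bounds $\rho_g^E|\nabla_g^m h|_g\le b$ up to order $m=4$ (cf.\ Proposition \ref{Bryantextensionsimple}), whereas the a priori assumptions (APA7)--(APA13) bound only $|h|$ itself; the derivative bounds do not follow ``from the $\rho^E$-weighted a priori bounds'' directly, but must be produced by interior smoothing estimates for the Ricci--DeTurck perturbation on the preceding unscathed parabolic neighborhood of the collar, where $|h|\le\eta_{\operatorname{lin}}$ and the geometry is controlled at scale comparable to $\lambda r_{\operatorname{comp}}$ by Lemma \ref{B-K Lemma 8.8} (this is how Bamler--Kleiner proceed). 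Second, the constant you feed into $b$ is misattributed: the bound $e^{HT}\rho_1^E|h|\le\nu\overline{Q}$ is the \emph{initial-data} hypothesis at $t=0$ and is not available at time $t_J$ (here $J\ge 1$); near an extension cap one must instead invoke the interior a priori bounds (APA7)--(APA9), verifying their hypotheses (e.g.\ that the relevant parabolic ball avoids earlier cuts) or using the unconditional variant, and the choice of $\beta$ --- hence of $D_{\operatorname{cut}}$, $\delta_{\operatorname{b}}$, $\epsilon_{\operatorname{can}}$ --- must absorb this constant together with the normalization from scale $\lambda r_{\operatorname{comp}}$ and the change of weight from $\rho^E$ to $\rho^3$, respecting the parameter order in the hypotheses (note $W$ is not among the parameters of this proposition, so the bound used cannot be allowed to propagate a $W$-dependence into $D_{\operatorname{cut}}$). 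With these repairs, and a word on why the whole image (not just the cut regions) is $\epsilon_{\operatorname{can}}r_{\operatorname{comp}}$-thick for (e) (use the thickness of $\mathcal{N}$ and the scale-distortion Lemma \ref{B-K Lemma 8.22}), your sketch coincides with the intended proof.
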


\begin{prop}[Proposition 12.22 in \cite{BamlerKleiner17}, extending the comparison map until we lose control]\label{BK-prop 12.22}
If
\begin{gather}
    E>2,\quad F>0,\quad H\geq\underline{H}(E),\quad\eta_{\operatorname{lin}}\leq\overline\eta_{\operatorname{lin}}(E),\quad \nu\leq\overline\nu(T,E,F,H,\eta_{\operatorname{lin}}),\\\nonumber
    \delta_{\operatorname{n}}\leq\overline\delta_{\operatorname{n}}(T,E,F,H,\eta_{\operatorname{lin}}),\quad 
    \lambda\leq\overline\lambda,\quad \delta_{\operatorname{b}}\leq\overline\delta_{\operatorname{b}}(T,E,F,H,\eta_{\operatorname{lin}},\lambda,D_{\operatorname{cut}},A,\Lambda),
    \\\nonumber
    \epsilon_{\operatorname{can}}\leq\overline\epsilon_{\operatorname{can}}(T,E,F,H,\eta_{\operatorname{lin}},\lambda,D_{\operatorname{cut}},A,\Lambda),\quad r_{\operatorname{comp}}\leq\overline r_{\operatorname{comp}},
\end{gather}
then the following holds.

Assume that assumptions (i)---(vi) in Proposition \ref{BK-prop 12.1} hold.

Recall that in the case $J=0$, assumption (vi) imposes the existence of an $O(n+1)$-invariant domain $X\subset\mathcal{M}_{t_J}$ and an $O(n+1)$-equivariant map $\zeta:X\to \mathcal{M}'_{t_J}$ with certain properties. In this case we set $\widehat \phi:=\zeta$.

In the case $J\geq 1$, we set $X:=\mathcal{N}_{t_J-}\cup\mathcal{N}_{t_J+}$ and consider the set $\operatorname{Cut}^J$ and the map $\widehat\phi: X\to \mathcal{M}'_{t_J}$ satisfying all assertions of Proposition \ref{BK-prop 12.3}.

Then there is some $t^*\in(t_J,t_{J+1}]$ and a smooth, time-preserving, $O(n+1)$-equivariant diffeomorphism onto its image $\phi^{J+1}:\mathcal{N}^{J+1}_{[t_J,t^*]}\to\mathcal{M}'$ with $\phi^{J+1}_{t_J}=\widehat\phi\vert_{\mathcal N_{t_J+}}$ whose inverse $(\phi^{J+1})^{-1}:\phi^{J+1}(\mathcal{N}^{J+1}_{[t_J,t^*]})\to \mathcal{N}^{J+1}_{[t_J,t^*]}$ evolves by harmonic map heat flow and such that the following hold for the associated perturbation $h^{J+1}:=(\phi^{J+1})^*g'-g$ (which is Ricci-DeTurck flow):
\begin{enumerate}[(a)]
    \item $|h^{J+1}|\leq 10\eta_{\operatorname{lin}}$ on $\mathcal{N}^{J+1}_{[t_J,t^*]}$.
    \item For any $t\in[t_J,t^*]$ and $x\in\mathcal{N}^{J+1}_t$ whose time-$t$ distance to $\partial \mathcal{N}^{J+1}_t$ is smaller than $Fr_{\operatorname{comp}}$ we have $$Q_+(x)=e^{H(T-\mathfrak{t}(x))}\rho_1^E(x)|h^{J+1}(x)|<\overline{Q}=10^{-E-1}\eta_{\operatorname{lin}}r_{\operatorname{comp}}^E.$$
    \item If even $|h^{J+1}|\leq \etalin$ on $\mathcal{N}^{J+1}_{t^*}$, then $t^*=t_{J+1}$.
    \item $\phi^{J+1}(\mathcal{N}^{J+1}_{[t_J,t^*]})$ is $\epsilon_{\operatorname{can}}r_{\operatorname{comp}}$-thick.
\end{enumerate}
\end{prop}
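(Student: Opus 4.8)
The plan is to adapt the proof of \cite[Proposition 12.22]{BamlerKleiner17}, exploiting the fact that in the rotationally invariant setting the comparison domain, the initial map $\widehat\phi$, and the Ricci-DeTurck perturbation are all automatically $O(n+1)$-invariant or $O(n+1)$-equivariant, and that $O(n+1)$-equivariance is preserved under harmonic map heat flow by uniqueness of solutions. \textbf{Setting up the initial data:} for $J\geq 1$ I would take the diffeomorphism $\widehat\phi:\mathcal{N}_{t_J-}\cup\mathcal{N}_{t_J+}\to\mathcal{M}'_{t_J}$ and the cut set $\operatorname{Cut}^J$ supplied by Proposition \ref{BK-prop 12.3}, which satisfies $|\widehat h|\leq\eta_{\operatorname{lin}}$ there; for $J=0$ the initial data is the equivariant map $\zeta$ of assumption (vi), with $|\zeta^*g_0'-g_0|\leq\eta_{\operatorname{lin}}$ on $X$. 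By (APA3(a)), every spatial boundary component of $\mathcal{N}^{J+1}_{t_J}$ is the central $O(n+1)$-orbit of an equivariant $\delta_{\operatorname{n}}$-neck at scale $r_{\operatorname{comp}}$, so on a collar of $\partial\mathcal{N}^{J+1}_{t_J}$ both $g_{t_J}$ and $\widehat\phi^*g'_{t_J}$ are, after rescaling, $C^{\lfloor\delta_{\operatorname{n}}^{-1}\rfloor}$-close to the standard shrinking cylinder metric $g^{\operatorname{cyl}}$.

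\textbf{Grafting and short-time existence:} to avoid imposing boundary conditions I would equivariantly graft a copy of $(\mathbb{R}\times\mathbb{S}^n,g^{\operatorname{cyl}})$ onto each spatial boundary component of $\mathcal{N}^{J+1}$, simultaneously in $\mathcal{M}$ and $\mathcal{M}'$, with the grafted pieces identified via $\widehat\phi$ on their overlap with the collar; the neck closeness makes this smooth to high order in $\delta_{\operatorname{n}}$, and the grafted region evolves by an exact (product) Ricci flow. Standard parabolic theory then produces, for some $t^*>t_J$, a time-preserving solution $\phi^{J+1}$ of the harmonic map heat flow (for the inverse map) on the enlarged domain with $\phi^{J+1}_{t_J}=\widehat\phi|_{\mathcal{N}^{J+1}_{t_J}}$ extended by the obvious map on the cylinders; by uniqueness and equivariance of the equation $\phi^{J+1}$ is $O(n+1)$-equivariant, and by parabolic regularity it is a diffeomorphism onto its image. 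I would then let $t^*\in(t_J,t_{J+1}]$ be the supremum of times for which $\phi^{J+1}$ exists on $\mathcal{N}^{J+1}_{[t_J,t^*]}$ with $|h^{J+1}|\leq 10\eta_{\operatorname{lin}}$; the bound $|\widehat h|\leq\eta_{\operatorname{lin}}$ forces $t^*>t_J$, and assertion (a) holds by definition of $t^*$.

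\textbf{The estimates (b), (c), (d):} assertion (b) is the analytic heart. For $x$ within time-$t$ distance $Fr_{\operatorname{comp}}$ of $\partial\mathcal{N}^{J+1}_t$, the canonical neighborhood assumption forces $\rho_1(x)$ to be comparable to $r_{\operatorname{comp}}$, the Ricci flow spacetime is $(\epsilon_{\operatorname{can}}r_{\operatorname{comp}},T)$-complete there, and the parabolic neighborhoods $P_{\mathcal{O}}(x,L\rho_1(x))$ and $P_{\mathcal{O}}(x,A\rho_1(x))$ reach into the grafted cylinder, where $h^{J+1}$ is controlled purely by the initial closeness and hence $Q:=e^{H(T-\mathfrak{t})}\rho_1^E|h^{J+1}|$ is of order $\nu\overline{Q}$ or $\Psi(\delta_{\operatorname{n}})\overline{Q}$. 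I would then apply the equivariant semi-local maximum principle and the equivariant Interior Decay Theorem (the rotationally invariant versions of \cite[Propositions 9.1 and 9.3]{BamlerKleiner17} established above) exactly as in \cite{BamlerKleiner17}: iterating $Q(x)\leq\tfrac1{100}\sup_PQ+C\sup_{P\cap\mathcal{M}_0}Q$ along a chain of parabolic neighborhoods running from the cylindrical region into the collar, and finishing with the interior decay estimate, yields $Q_+(x)<\overline{Q}$ once $\nu\leq\overline\nu(T,E,F,H,\eta_{\operatorname{lin}})$ and $\delta_{\operatorname{n}}\leq\overline\delta_{\operatorname{n}}(T,E,F,H,\eta_{\operatorname{lin}})$ (when $J=0$ the term $\sup_{P\cap\mathcal{M}_0}Q$ is controlled by the bounds in assumption (vi)). Assertion (c) follows by contradiction: if $|h^{J+1}|\leq\eta_{\operatorname{lin}}$, not merely $10\eta_{\operatorname{lin}}$, held on $\mathcal{N}^{J+1}_{t^*}$, one could re-graft cylinders and re-solve past $t^*$ while keeping $|h^{J+1}|\leq 10\eta_{\operatorname{lin}}$, contradicting maximality of $t^*$ unless $t^*=t_{J+1}$. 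Assertion (d) follows from (a): since $\eta_{\operatorname{lin}}$ is small, $|h^{J+1}|\leq 10\eta_{\operatorname{lin}}$ gives a uniform bi-Lipschitz comparison between $g$ and $(\phi^{J+1})^*g'$, and $\mathcal{N}^{J+1}$ is $\lambda r_{\operatorname{comp}}$-thick by construction, so $\phi^{J+1}(\mathcal{N}^{J+1}_{[t_J,t^*]})$ is $\tfrac12\lambda r_{\operatorname{comp}}$-thick, which exceeds $\epsilon_{\operatorname{can}}r_{\operatorname{comp}}$ once $\epsilon_{\operatorname{can}}\leq\overline\epsilon_{\operatorname{can}}$.

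\textbf{Main obstacle:} the principal difficulty is the interplay between the grafting construction and the a priori estimates — one must verify that the shrinking cylinders can be attached $O(n+1)$-equivariantly and compatibly with the Ricci flow spacetime structure to an order in $\delta_{\operatorname{n}}$ high enough that the grafted region contributes negligibly to $Q$, and that the semi-local maximum principle and interior decay estimates of Section 6 apply uniformly on the $Fr_{\operatorname{comp}}$-collar, where $\rho_1$ is essentially constant. Here the rotationally invariant definition of a $\delta_{\operatorname{n}}$-neck helps substantially, since such a neck is literally a warped product $C^{\lfloor\delta_{\operatorname{n}}^{-1}\rfloor}$-close to $dr^2+(1-t)g_{\mathbb{S}^n}$, so the grafting reduces to a one-variable interpolation of warping functions rather than a general gluing; once this is in place the remainder is parameter bookkeeping identical to \cite{BamlerKleiner17}.
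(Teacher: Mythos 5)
Your proposal is correct and follows essentially the same route as the paper, which states this proposition as the equivariant version of Bamler–Kleiner's Proposition 12.22 and defers to their proof, whose strategy — grafting exact shrinking cylinders onto the $\delta_{\operatorname{n}}$-neck boundary components, solving the harmonic map heat flow on the grafted spacetime with equivariance preserved by uniqueness, stopping when $|h^{J+1}|$ threatens $10\eta_{\operatorname{lin}}$, and controlling the boundary collar via the equivariant semi-local maximum principle and interior decay theorem — is exactly what you reproduce. The only small imprecision is in (d): thickness of the image should be deduced from the scale distortion lemma (Lemma \ref{B-K Lemma 8.22}), which uses the canonical neighborhood assumption on both spacetimes, rather than from the bi-Lipschitz bound alone, since $\rho_1$ is a curvature scale and is not directly controlled by metric comparison.
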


To explain the last of the three steps, we shall first of all consider a relaxed version of (APA7)---(APA9). The precise forms of these relaxed assumptions are presented in (\ref{rapa7})---(\ref{rapa9}), and they are weaker than the original a priori assumptions since the constants involved are larger. Then, whenever the comparison is known to satisfy the original version of these a priori assumptions up to some time $t'\in[t_J,t^*]$, it can be extended up to $\min\{t'+\tau,t^*\}$ for some uniform $\tau>0$ while satisfying the relaxed version of the a priori assumptions. Finally, by chiefly applying the Interior Decay Theorem \cite{BamlerKleiner17}, we show that the comparison satisfies the original version of the a priori assumptions so long as it satisfies the relaxed version. It then follows that the comparison exists up to $t_{J+1}$ and satisfies the original version of the a priori assumptions. 

We consider the comparison $(\operatorname{Cut}\cup\operatorname{Cut}^J,\overline{\phi},\{\phi^j\}_{j=0}^{J+1})$ constructed in the above proposition and let $(h,\{h^j\}_{j=1}^{J+1})$ be the associated Ricci-DeTurck perturbation. Let $Q$, $Q^*$, $\overbar{Q}$, and $\overbar{Q}^*$ be as defined before. Choose the maximum time $t^{**}\in[t_J,t^*]$ such that the following conditions hold for all $x\in\mathcal{N}^{J+1}_{[t_J,t^{**}]}\setminus\cup_{\mathcal{D}\in\operatorname{Cut}^J}\mathcal{D}$:
\begin{align}\label{rapa7}
    & Q(x)\leq 10\overbar{Q} \quad \text{whenever } P_{\mathcal{O}}(x,10A\rho_1(x))\cap\mathcal{D}=\emptyset\text{ for all } \mathcal{D}\in\operatorname{Cut}\cup\operatorname{Cut}^J,
    \\
    & Q(x)\leq 10W\overbar{Q},\label{rapa8}
    \\
    & Q^*(x)\leq 10\overbar{Q}^* \quad \text{ whenever } B_{\mathcal{O}}(x,10A\rho_1(x))\subset \mathcal{N}_{\mathfrak{t}(x)-}.\label{rapa9}
\end{align}
The following lemma then implies that, if the comparison satisfies the original version of the a priori assumptions, then it can be extended by a short time while keeping the relaxed version of a priori assumptions satisfied.

\begin{lem}[Lemma 12.47 in \cite{BamlerKleiner17}]\label{conitunation}
If
\begin{gather*}
    E\geq\underline{E},\quad F\geq\underline{F},\quad \etalin\leq\overline \eta_{\operatorname{lin}},\quad \delta_{\operatorname{n}}\leq \overline\delta_{\operatorname{n}},\quad\lambda\leq\overline\lambda,\quad\eta_{\operatorname{cut}}\leq\overline \eta_{\operatorname{cut}},\quad D_{\operatorname{cut}}\geq\underline{D}_{\operatorname{cut}}(\lambda),\quad W\geq\underline{W}(E,\lambda,D_{\operatorname{cut}}),
    \\
    A\geq \underline{A},\quad \Lambda\geq \underline{\Lambda},\quad\delta_{\operatorname{b}}\leq\overline\delta_{\operatorname{b}}(\lambda,D_{\operatorname{cut}},A,\Lambda),\quad \epsilon_{\operatorname{can}}\leq\overline\epsilon_{\operatorname{can}}(\lambda,D_{\operatorname{cut}},A,\Lambda),\quad r_{\operatorname{comp}}\leq\overline r_{\operatorname{comp}},
\end{gather*}
then we may choose $t^{**}>t_J$.

Furthermore, there is a constant $\tau=\tau(T,E,H,\eta_{\operatorname{lin}},\lambda,A,r_{\operatorname{comp}})>0$ with the following property. If a priori assumptions (APA7)---(APA9) hold up to $t'\in[t_J,t^*]$, meaning that for all $x\in \mathcal{N}^{J+1}_{[t_J,t']}\setminus\cup_{\mathcal{D}\in\operatorname{Cut}^J}\mathcal{D}$
\begin{align}
    & Q(x)\leq \overbar{Q} \quad \text{whenever } P_{\mathcal{O}}(x,A\rho_1(x))\cap\mathcal{D}=\emptyset\text{ for all } \mathcal{D}\in\operatorname{Cut}\cup\operatorname{Cut}^J,
    \\
    & Q(x)\leq W\overbar{Q},
    \\
    & Q^*(x)\leq \overbar{Q}^* \quad \text{ whenever } B_{\mathcal{O}}(x,A\rho_1(x))\subset \mathcal{N}_{\mathfrak{t}(x)-},
\end{align}
then (\ref{rapa7})---(\ref{rapa9}) hold for all $x\in \mathcal{N}^{J+1}_{[t_J,\min\{t'+\tau,t^*\}]}\setminus\cup_{\mathcal{D}\in\operatorname{Cut}^J}\mathcal{D}$.
\end{lem}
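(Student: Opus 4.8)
The plan is to verify that the relaxed bounds \eqref{rapa7}--\eqref{rapa9} — which differ from the non-relaxed a priori assumptions (APA7)--(APA9) only by a multiplicative slack: a factor $10$, a factor $10W$, or the extra room that the smallness of $\nu$ and $\eta_{\operatorname{cut}}$ leaves on the cuts — hold with strict inequality at $t_J$ and then persist, by a parabolic continuity argument, for a uniform length of time. This parallels \cite[Lemma 12.47]{BamlerKleiner17}; in our setting the argument is unchanged once one observes that the comparison domain $\mathcal N^{J+1}$, the extension caps, the cuts $\operatorname{Cut}^J$ and the map $\phi^{J+1}$ are all $O(n+1)$-equivariant, so that $h^{J+1}$ is $O(n+1)$-invariant. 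For the first assertion I would note that at time $t_J$ the perturbation $h^{J+1}_{t_J}$ equals $\widehat h$ on $\mathcal N_{t_J+}\setminus\bigcup_{\mathcal D\in\operatorname{Cut}^J}\mathcal D$, where the inductive hypothesis (assumption (iv) of Proposition \ref{BK-prop 12.1}) furnishes the non-relaxed bounds, and is controlled on the cuts by Proposition \ref{BK-prop 12.3}(d) (respectively by assumption (vi) of Proposition \ref{BK-prop 12.1} when $J=0$); since $\overline{Q}$ and $\overline{Q}^*$ carry no factor of $\nu$ whereas the cut bounds do, and $\eta_{\operatorname{cut}}<1$, all of \eqref{rapa7}--\eqref{rapa9} hold strictly at $t_J$. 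Because $\phi^{J+1}$ is a smooth, time-preserving diffeomorphism and $\rho_1$ and the metrics vary smoothly, $Q$ and $Q^*$ are continuous on $\mathcal N^{J+1}_{[t_J,t^*]}$, so the strict inequalities persist for a short time and $t^{**}>t_J$.

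For the uniform extension time I would first reduce to propagating from $t'$ to $\min\{t'+\tau,t^*\}$: on $[t_J,t']$ the relaxed bounds follow at once from (APA7)--(APA9), since the hypotheses of \eqref{rapa7} and \eqref{rapa9} (cut-avoidance of $P_{\mathcal O}(x,10A\rho_1(x))$, containment of $B_{\mathcal O}(x,10A\rho_1(x))$) are more restrictive than those of (APA7), (APA9) while the conclusions are weaker. Next I would use that, by construction and by the grafting of shrinking cylinders carried out in Proposition \ref{BK-prop 12.22}, $\mathcal N^{J+1}$ is uniformly $c\lambda r_{\operatorname{comp}}$-thick, so on $\mathcal N^{J+1}_{[t_J,t^*]}$ the curvature is bounded by $C(\lambda r_{\operatorname{comp}})^{-2}$ and $|\partial_{\mathfrak{t}}\log\rho_1|\le C(\lambda r_{\operatorname{comp}})^{-2}$. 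Combined with $|h^{J+1}|\le 10\eta_{\operatorname{lin}}$ (Proposition \ref{BK-prop 12.22}(a)) and interior parabolic estimates for the Ricci--DeTurck equation, this shows that $Q$ and $Q^*$ satisfy parabolic differential inequalities $\partial_{\mathfrak{t}}u\le\Delta u+\langle X,\nabla u\rangle+cu$ with $|X|,|c|\le C_1(T,E,H,\eta_{\operatorname{lin}},\lambda,r_{\operatorname{comp}})$ on $\mathcal N^{J+1}_{[t_J,t^*]}$. Applying the maximum principle on $\mathcal N^{J+1}_{[t',t'+\tau]}$, with boundary data controlled near $\partial\mathcal N^{J+1}$ by Proposition \ref{BK-prop 12.22}(b) and initial data controlled by (APA7)--(APA9) at $t'$, then gives — for $\tau=\tau(T,E,H,\eta_{\operatorname{lin}},\lambda,A,r_{\operatorname{comp}})$ small enough that $e^{C_1\tau}\le 10$ — the bounds \eqref{rapa8}, \eqref{rapa9}, and \eqref{rapa7} at those points whose $\partial_{\mathfrak{t}}$-trajectory back to $t'$ already satisfies the hypothesis of (APA7).

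It remains to handle the points in \eqref{rapa7} at which the crude estimate coming from \eqref{rapa8} only yields $10W\overline{Q}$. There I would apply the semi-local maximum principle and the interior decay theorem established above on the cut-free, unscathed parabolic neighbourhood $P_{\mathcal O}(x,10A\rho_1(x))$: each application contributes a factor $\tfrac1{100}$, so finitely many iterations recover the missing factor $W$, provided $A\ge\underline{A}(E,\lambda,W)$ is large enough to accommodate them, with Proposition \ref{BK-prop 12.22}(b) and the bound at the initial time serving as base cases; near the cuts the stronger estimate $e^{H(T-t_J)}\rho_1^3|\widehat h|\le\eta_{\operatorname{cut}}\overline{Q}^*$ at $t_J$ leaves slack of order $\eta_{\operatorname{cut}}^{-1}$, which the short-time growth cannot exhaust. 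The step I expect to be the main obstacle is exactly this last localization — bookkeeping how the cut disks in $\operatorname{Cut}\cup\operatorname{Cut}^J$ and the spatial boundary $\partial\mathcal N^{J+1}$ interact with the parabolic neighbourhoods used in the semi-local maximum principle, so that the global bound $10W\overline{Q}$ is sharpened to $10\overline{Q}$ precisely on the set where \eqref{rapa7} is imposed and not on a larger one. This is where the hypotheses $W\ge\underline{W}(E,\lambda,D_{\operatorname{cut}})$, $A\ge\underline{A}(E,\lambda,W)$ and the smallness of $\delta_{\operatorname{b}},\epsilon_{\operatorname{can}},r_{\operatorname{comp}}$ enter, and the argument follows \cite[proof of Lemma 12.47]{BamlerKleiner17} essentially verbatim, the sole simplification being that all domains are $O(n+1)$-invariant and all comparison maps $O(n+1)$-equivariant.
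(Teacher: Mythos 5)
First, note that the paper itself supplies no proof of this lemma: Section 9 explicitly only restates the relevant results of \cite{BamlerKleiner17} in the equivariant setting, so the ``paper's proof'' of Lemma \ref{conitunation} is simply a deferral to the proof of \cite[Lemma 12.47]{BamlerKleiner17}. Measured against that argument, the first half of your sketch (strict bounds at $t_J$ from the inductive hypothesis, Proposition \ref{BK-prop 12.3}(d) and $W\geq\underline{W}(E,\lambda,D_{\operatorname{cut}})$ on the cuts, plus continuity to get $t^{**}>t_J$) and the general short-time growth mechanism (thickness of $\mathcal N^{J+1}$ gives curvature bounds, $|h^{J+1}|\leq 10\etalin$ allows interior parabolic estimates, Gronwall over a time $\tau$ with growth factor at most $10$) are the right ingredients and match the intended proof.

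The genuine gap is your treatment of (\ref{rapa7}). The whole point of stating the relaxed assumption with the \emph{enlarged} radius $10A\rho_1(x)$ in its hypothesis is that, for any $x$ with $\mathfrak{t}(x)\in(t',t'+\tau]$ such that $P_{\mathcal O}(x,10A\rho_1(x))$ avoids all cuts, every point $y$ in the (much smaller) backward parabolic neighborhood actually used in the growth estimate, with $\mathfrak{t}(y)\leq t'$, satisfies $P_{\mathcal O}(y,A\rho_1(y))\subset P_{\mathcal O}(x,10A\rho_1(x))$ (using the scale comparability of Lemma \ref{B-K Lemma 8.8}), hence avoids the cuts as well; so the \emph{non-relaxed} (APA7) applies at $y$ and gives $Q(y)\leq\overbar{Q}$ directly, and the same crude $\tau$-growth estimate that yields (\ref{rapa8}) yields $Q(x)\leq 10\overbar{Q}$. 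No upgrading of the bound $W\overbar{Q}$ is needed anywhere in this lemma. Your alternative --- iterating the semi-local maximum principle/Interior Decay Theorem to ``recover the factor $W$'' --- is the mechanism of the \emph{later} verification lemmas (the analogues of \cite[Lemmas 12.57--12.64]{BamlerKleiner17}), and importing it here is not only unnecessary but incompatible with the statement being proved: the number of iterations, and hence the required $A$ and the resulting constants, would depend on $W$, whereas the lemma asserts $\tau=\tau(T,E,H,\etalin,\lambda,A,\rcomp)$ with no dependence on $W$ or $D_{\operatorname{cut}}$, and its hypotheses only demand $A\geq\underline{A}$ (your step needs $A\geq\underline{A}(E,\lambda,W)$). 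Moreover each interior-decay application requires control of $h$ on a parabolic neighborhood reaching back a time $\sim(A\lambda\rcomp)^2\gg\tau$, where one only has the relaxed bounds up to $t^{**}$ and must confront the cut/boundary bookkeeping you yourself flag as unresolved. Replacing that step by the containment argument above closes the gap and restores the stated parameter dependences; the same containment (with $B_{\mathcal O}(x,10A\rho_1(x))\subset\mathcal N_{\mathfrak{t}(x)-}$ forcing $B_{\mathcal O}(y,A\rho_1(y))\subset\mathcal N_{\mathfrak{t}(y)-}$) is also what should be said for (\ref{rapa9}), which your maximum-principle formulation glosses over.
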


The next several results verify that by arranging the parameters properly, then (APA6)---(APA9) hold on $\mathcal{N}_{[0,t^{**}]}$. It then follows from Lemma \ref{conitunation} that $t^{**}=t^*$. Furthermore, (APA6) means $|h|\leq \eta_{\operatorname{lin}}$ on $\mathcal{N}^{J+1}_{t^*}$, we then have, by Proposition \ref{BK-prop 12.22}(c), $t^*=t_J$, and this finishes the proof of Proposition \ref{BK-prop 12.1}.

\begin{lem}[Verification of a priori assumptions]
\
\begin{enumerate}
    \item (Lemma 12.57 in \cite{BamlerKleiner17}) If 
    \begin{gather*}
        \delta_{\operatorname{n}}\leq\overline\delta_{\operatorname{n}},\quad\lambda\leq\overline\lambda,\quad\Lambda\geq\underline\Lambda(\lambda,A),\quad\delta_{\operatorname{b}}\leq\overline\delta_{\operatorname{b}}(\lambda,D_{\operatorname{cut}},A,\Lambda),\quad\epsilon_{\operatorname{can}}\leq\overline\epsilon_{\operatorname{can}}(\lambda,D_{\operatorname{cut}},A,\Lambda),\quad r_{\operatorname{comp}}\leq \overline r_{\operatorname{comp}},
    \end{gather*}
    then (APA6) holds on $\mathcal{N}_{[0,t^{**}]}$.
    \item (Lemma 12.59 in \cite{BamlerKleiner17}) If
    \begin{gather*}
        E\geq\underline{E},\quad F\geq\underline{F}(E),\quad H\geq \underline{H}(E),\quad \etalin\leq \overline \eta_{\operatorname{lin}}(E),\quad \nu\leq\overline\nu(E),\quad\delta_{\operatorname{n}}\leq\overline\delta_{\operatorname{n}},\quad\lambda\leq\overline\lambda,\quad A\geq\underline{A}(E,W),
        \\
        \Lambda\geq\underline{\Lambda},\quad \delta_{\operatorname{b}}\leq\overline\delta_{\operatorname{b}}(E,\lambda,D_{\operatorname{cut}},A,\Lambda),\quad \epsilon_{\operatorname{can}}\leq\overline\epsilon_{\operatorname{can}}(E,\lambda,D_{\operatorname{cut}},W,A,\Lambda),\quad r_{\operatorname{comp}}\leq\overline r_{\operatorname{comp}},
    \end{gather*}
    then (APA7) holds on $\mathcal{N}_{[0,t^{**}]}$.
    \item (Lemma 12.62 in \cite{BamlerKleiner17}) If
    \begin{gather*}
        E\geq \underline{E},\quad H\geq\underline{H}(E),\quad \etalin\leq\overline \eta_{\operatorname{lin}}(E),\quad \nu\leq\overline\nu(E),\quad \delta_{\operatorname{n}}\leq\overline\delta_{\operatorname{n}},\quad \lambda\leq \overline\lambda,\quad W\geq\underline{W}(E,\lambda,D_{\operatorname{cut}})
        \\
        A\geq \underline{A}(E),\quad \Lambda\geq\underline{\Lambda},\quad \delta_{\operatorname{b}}\leq\overline\delta_{\operatorname{b}}(\lambda,D_{\operatorname{cut}},A,\Lambda),\quad\epsilon_{\operatorname{can}}\leq\overline\epsilon_{\operatorname{can}}(E,\lambda,D_{\operatorname{cut}},A,\Lambda),\quad r_{\operatorname{comp}}\leq\overline r_{\operatorname{comp}},
    \end{gather*}
    then (APA8) holds on $\mathcal{N}_{[0,t^{**}]}$.
    \item (Lemma 12.64 in \cite{BamlerKleiner17}) If
    \begin{gather*}
        E\geq\underline{E},\quad H\geq\underline{H},\quad\etalin\leq\overline\eta_{\operatorname{lin}},\quad \nu\leq\overline\nu, \quad \delta_{\operatorname{n}}\leq\overline\delta_{\operatorname{n}},\quad\lambda\leq\overline\lambda,\quad \eta_{\operatorname{cut}}\leq\overline\eta_{\operatorname{cut}},\quad D_{\operatorname{cut}}\geq\underline{D}_{\operatorname{cut}}(\lambda),
        \\
        A\geq\underline{A}(E,\lambda),\quad \Lambda\geq \underline{\Lambda},\quad \delta_{\operatorname{b}}\leq \overline\delta_{\operatorname{b}}(\lambda,D_{\operatorname{cut}},A,\Lambda),\quad \epsilon_{\operatorname{can}}\leq\overline\epsilon_{\operatorname{can}}(E,\lambda,D_{\operatorname{cut}},A,\Lambda),\quad r_{\operatorname{comp}}\leq \overline r_{\operatorname{comp}}(\lambda),
    \end{gather*}
    then (APA9) holds on $\mathcal{N}_{[0,t^{**}]}$.
\end{enumerate}
\end{lem}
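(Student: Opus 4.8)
The plan is to follow the proofs of Lemmas 12.57, 12.59, 12.62 and 12.64 of \cite{BamlerKleiner17} essentially line by line, substituting the $O(n+1)$-invariant versions of the two analytic engines that drive them: the semi-local maximum principle and the Interior Decay Theorem, both established above for rotationally invariant Ricci flow spacetimes. Since the comparison maps $\phi,\{\phi^j\}$ and the cap extension $\widehat\phi$ are $O(n+1)$-equivariant, the Ricci--DeTurck perturbation $(h,\{h^j\})$ is automatically $O(n+1)$-invariant; consequently every region and weight occurring in the Bamler--Kleiner argument --- the cuts (now $O(n+1)$-invariant $(n+1)$-disks with a single singular orbit), the extension caps, the orbit-thickened neighbourhoods $P_{\mathcal{O}}$ and $B_{\mathcal{O}}$, the truncated scale $\rho_1$, and the quantities $Q,Q^*,\overline{Q},\overline{Q}^*$ --- may be taken $O(n+1)$-invariant throughout, so the transcription is mechanical. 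The entire argument takes place on the fixed interval $[t_J,t^{**}]$, on which the relaxed bounds \eqref{rapa7}--\eqref{rapa9} are available by the definition of $t^{**}$ and by Lemma \ref{conitunation} (which also supplies $t^{**}>t_J$).

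I would prove the four assertions in the order stated, each allowed to use the preceding ones. For (APA6): at points of $\mathcal{N}^{J+1}$ whose $O(n+1)$-thickened $A\rho_1$-ball lies in the comparison domain, the relaxed bound \eqref{rapa9} on $Q^*$ together with the $\lambda r_{\operatorname{comp}}$-thickness of the comparison domain gives $|h|=e^{-H(T-\mathfrak{t})}\rho_1^{-3}Q^*\le \nu\,\eta_{\operatorname{lin}}\le\eta_{\operatorname{lin}}$, while near $\partial\mathcal{N}^{J+1}_t$ the near-boundary estimate of Proposition \ref{BK-prop 12.22}(b) yields the same conclusion; on the earlier slabs $\mathcal{N}^j$, $j\le J$, it is the inductive hypothesis, and on the cuts it is Proposition \ref{BK-prop 12.3}(d). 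For (APA7) and (APA9) the mechanism is the Interior Decay Theorem: at a point $x$ whose parabolic (resp. spatial) $A\rho_1(x)$-neighbourhood misses all cuts (resp. lies in $\mathcal{N}_{\mathfrak{t}(x)-}$), applying that theorem --- legitimate because (APA6) supplies the hypothesis $|h|\le\eta_{\operatorname{lin}}$ --- gives $Q(x)\le\alpha\sup_PQ+C\sup_{P\cap\mathcal{M}_0}Q$ (resp. the analogous inequality for $Q^*$, obtained from the same proposition with $E$ replaced by $3$); feeding in the relaxed bound on $\sup_PQ$ and the initial-data bound from assumption (vi) of Proposition \ref{BK-prop 12.1}, then choosing $\alpha$ small (this is what forces $A\ge\underline{A}(E,W)$ in part (2) and $A\ge\underline{A}(E,\lambda)$ in part (4)) and $\nu$ small, we obtain $Q(x)\le\overline{Q}$, resp. $Q^*(x)\le\overline{Q}^*$. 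For (APA8) one instead uses the semi-local maximum principle on a neighbourhood of radius $L\rho_1(x)$ to produce a factor $\tfrac1{100}$, bounds the contribution of each cut $\mathcal{D}$ met by that neighbourhood via $e^{H(T-t_J)}\rho_1^3|\widehat h|\le\eta_{\operatorname{cut}}\overline{Q}^*$ (Proposition \ref{BK-prop 12.3}(d)) together with the controlled geometry of $\mathcal{D}$, and sums these contributions as a geometric series, choosing $W\ge\underline{W}(E,\lambda,D_{\operatorname{cut}})$ large and $\eta_{\operatorname{cut}},\nu$ small so that $Q(x)\le W\overline{Q}$. Once all four hold on $\mathcal{N}_{[0,t^{**}]}$, Lemma \ref{conitunation} forces $t^{**}=t^*$, so $|h|\le\eta_{\operatorname{lin}}$ on $\mathcal{N}^{J+1}_{t^*}$, whence Proposition \ref{BK-prop 12.22}(c) gives $t^*=t_{J+1}$, completing the inductive step.

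The main obstacle is not conceptual but is the bookkeeping needed to confirm that the covering and geometric-series estimates controlling how the errors created on the cut disks propagate outward --- the technical core of Section 12 of \cite{BamlerKleiner17} --- survive the passage to the rotationally invariant category. Concretely, one must verify that the $O(n+1)$-thickened parabolic neighbourhoods $P_{\mathcal{O}}(x,A\rho_1(x))$ interact with the $O(n+1)$-invariant cut disks exactly as ordinary parabolic neighbourhoods interact with the $(n+1)$-disk cuts of \cite{BamlerKleiner17}. This is routine: thickening by $O(n+1)$-orbits distorts distances by at most a factor depending only on $n$ (orbit diameters being comparable to $\rho_1$ throughout the canonical-neighbourhood region), and the geometry near each extension cap is pinned down, up to scaling, by a piece of the Bryant soliton via Lemma \ref{nonneckCN=Bryant_Application} and Proposition \ref{BryantExtension}, so all the constants change only by controlled amounts. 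I would therefore indicate precisely these substitutions and defer the remaining verification to \cite{BamlerKleiner17}.
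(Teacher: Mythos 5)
Your proposal matches the paper's treatment: the paper offers no independent proof of this lemma, stating it as a direct transcription of Lemmas 12.57, 12.59, 12.62 and 12.64 of \cite{BamlerKleiner17} to the equivariant setting, justified exactly as you describe by the $O(n+1)$-invariance of the Ricci--DeTurck perturbation and by the equivariant semi-local maximum principle and Interior Decay Theorem established earlier. Your sketch of the individual mechanisms (relaxed bounds plus thickness and Proposition \ref{BK-prop 12.22}(b) for (APA6), interior decay for (APA7) and (APA9), the semi-local maximum principle with a geometric series over cuts and $W$ large for (APA8)) is consistent with the Bamler--Kleiner arguments being imported, so no further comment is needed.
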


\section{Uniqueness of singular Ricci Flows}

Summarizing the results of the previous two sections, we may conclude the following result, namely, if two Ricci flow spacetimes have sufficiently close initial data, then there exists a comparison domain and a comparison between them.

\begin{thm}[Theorem 13.1 in \cite{BamlerKleiner17}, existence of comparison domain and comparison] \label{biguniqueness}
If
\begin{gather*}
    T>0,\quad E\geq\underline{E},\quad H\geq \underline{H}(E),\quad \etalin\leq\baretalin(E),\quad \nu\leq\overline \nu (T,H,\etalin),\quad \dn\leq\bardn(T,H,\etalin),\quad \lambda\leq\overline\lambda(\dn),
    \\ 
    \Dcap\geq\uDcap(\lambda),\quad \eta_{\operatorname{cut}}\leq\overline\eta_{\operatorname{cut}},\quad D_{\operatorname{cut}}\geq \underline{D}_{\operatorname{cut}}(\Dcap,\eta_{\operatorname{cut}}),\quad W\geq\underline{W}(D_{\operatorname{cut}}),\quad A\geq \underline{A}(W),\quad \Lambda\geq \underline{\Lambda}(A),
    \\\db\leq\bardb(\Lambda),\quad \ecan\leq\barecan(\db),\quad \rcomp\leq\barrcomp(\Lambda),
\end{gather*}
then the following holds. 

Consider two $(n+1)$-dimensional $O(n+1)$-invariant and  $(\ecan\rcomp, T)$-complete Ricci flow spacetimes $\mathcal{M}$ and $\mathcal{M}'$ that each satisfy the $O(n+1)$-equivariant $\ecan$-canonical neighborhood assumption at scales $(\ecan\rcomp,1)$. Let $\zeta:\{x\in\mathcal{M}_0\, |\, \rho(x)>\lambda\rcomp\}\to\mathcal{M}'_0$ be an $O(n+1)$-equivariant diffiemorphism onto its image that satisfies the following bounds:
\begin{eqnarray*}
\left|\zeta^* g_0'-g_0\right|&\leq&\etalin,
\\
e^{HT}\rho_1^E\left|\zeta^* g_0'-g_0\right|&\leq&\nu\overline{Q}=\nu\cdot 10^{-E-1}\etalin\rcomp^E,
\\
e^{HT}\rho_1^3\left|\zeta^* g_0'-g_0\right|&\leq&\nu\overline{Q}^*=\nu\cdot 10^{-1}\etalin(\lambda\rcomp)^3.
\end{eqnarray*}
Assume moreover that the $O(n+1)$-equivariant $\ecan$-neighborhood assumption holds at scales $(0,1)$ on the image of $\zeta$.

Then for any $J\geq 1$ with $J\rcomp^2\leq T$ there is a comparison domain $(\mathcal{N},\{\mathcal{N}^j\}_{j=1}^J,\{t_j\}_{j=0}^J)$ and a rotationally equivariant comparison $(\operatorname{Cut},\phi,\{\phi^j\}_{j=1}^J)$ from $\mathcal{M}$ to $\mathcal{M}'$ defined on this domain such that equivariant a priori assumptions (APA1)---(APA6) hold for the tuple of parameters $(\etalin,\dn,\lambda,\Dcap,\Lambda,\db,\ecan,\rcomp)$ and equivariant a priori assumptions (APA7)---(APA13) hold for the tuple of parameters $(T,E,H,\etalin,\nu,\lambda,\eta_{\operatorname{cut}},D_{hi\operatorname{cut}},W,A,\rcomp)$. Moreover, $\phi_{0+}=\phi_0^1=\zeta\vert_{\mathcal{N}_0}$.
\end{thm}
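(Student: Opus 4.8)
The plan is to prove this by induction on $J$, alternately applying Proposition~\ref{comparisondomainextension} to push the comparison domain forward by one time step of length $\rcomp^2$ and Proposition~\ref{BK-prop 12.1} to push the comparison forward by the same step; since $t_j=j\rcomp^2$, any prescribed $J$ with $J\rcomp^2\leq T$ is reached after finitely many iterations. Before beginning the induction I would fix the parameters $E,H,\etalin,\nu,\dn,\lambda,\Dcap,\eta_{\operatorname{cut}},D_{\operatorname{cut}},W,A,\Lambda,\db,\ecan,\rcomp$ in the order displayed in the statement, each subject to the indicated threshold, and verify that these choices, made in this order, satisfy all of the nested constraints imposed in the hypotheses of Propositions~\ref{comparisondomainextension}, \ref{BK-prop 12.1}, \ref{BK-prop 12.3}, \ref{BK-prop 12.22} and Lemma~\ref{conitunation}. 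I expect this to be the main obstacle: not because it is conceptually deep, but because one must match every threshold function invoked downstream against the value already fixed and confirm that no circular dependency arises. What makes it work is that in each of those statements the threshold for a given parameter depends only on parameters appearing earlier in our ordering; this is the parameter bookkeeping of \cite[\S13]{BamlerKleiner17}, and in the rotationally invariant setting the hierarchy is the same one (several constituent lemmas, such as the comparison-domain induction step via Lemma~\ref{nonneckCN=Bryant_Application}, merely admit simpler proofs).

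For the base case $J=0$ the comparison domain is empty, the comparison is trivial, and the equivariant a priori assumptions hold vacuously; I would then check that the map $\zeta$ of the hypothesis supplies exactly the data required in assumption (vi) of Proposition~\ref{BK-prop 12.1} when that proposition is first applied (with $J=0$). Once $\lambda\leq\overline{\lambda}(\dn)$, the neck geometry of $\partial\mathcal{N}_0$---whose components are central spheres of $\dn$-necks at scale $\rcomp$---forces every point within distance $\dn^{-1}\rcomp$ of $\mathcal{N}_0$ to have scale at least $c(\dn)\rcomp>\lambda\rcomp$, so the domain $\{x\in\mathcal{M}_0;\rho(x)>\lambda\rcomp\}$ of $\zeta$ contains the required $\dn^{-1}\rcomp$-tubular neighborhood of $\mathcal{N}_0$; the three displayed bounds on $\zeta^*g_0'-g_0$ are precisely those of (vi) with $X=\{\rho>\lambda\rcomp\}$ and $\widehat\phi=\zeta$; and, since $\ecan\leq\barecan(\lambda)$, every point of $\zeta(X)$ has scale at least $\ecan\rcomp$, so the canonical neighborhood assumption on $\mathcal{M}'$ at scales $(\ecan\rcomp,1)$ yields the equivariant $\ecan$-canonical neighborhood assumption at scales $(0,1)$ on $\zeta(X)$.

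For the inductive step, suppose that for some $J\geq 0$ with $(J+1)\rcomp^2\leq T$ a comparison domain on $[0,t_J]$ and an $O(n+1)$-equivariant comparison on $[0,t_J]$ satisfying the equivariant a priori assumptions (APA1)--(APA13) have been constructed (trivially if $J=0$). Setting $t_{J+1}:=t_J+\rcomp^2\leq T$, assumptions (i)--(v) of Proposition~\ref{comparisondomainextension} are met, so that proposition produces an $O(n+1)$-invariant set $\mathcal{N}^{J+1}\subset\mathcal{M}_{[t_J,t_{J+1}]}$ for which the extended domain, together with the old comparison, satisfies (APA1)--(APA6). Then assumptions (i)--(v) of Proposition~\ref{BK-prop 12.1} hold as well---assumption (iv) combining the inductive control of the comparison along $[0,t_J]$ with the output of Proposition~\ref{comparisondomainextension}---and for $J=0$ assumption (vi) is the one furnished above by $\zeta$. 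Proposition~\ref{BK-prop 12.1} then yields $\operatorname{Cut}^J$, a time-preserving $O(n+1)$-equivariant diffeomorphism $\phi^{J+1}$ on $\mathcal{N}^{J+1}$, and a map $\overline{\phi}$ such that $(\operatorname{Cut}\cup\operatorname{Cut}^J,\overline{\phi},\{\phi^j\}_{j=1}^{J+1})$ is an $O(n+1)$-equivariant comparison on $[0,t_{J+1}]$ still satisfying (APA1)--(APA13), and with $\phi_0^1=\zeta|_{\mathcal{N}_0}$ in the case $J=0$. Iterating until the final time reaches $t_J=J\rcomp^2$ then produces, for every $J$ with $J\rcomp^2\leq T$, the comparison domain and comparison asserted in the theorem, the normalization $\phi_{0+}=\phi_0^1=\zeta|_{\mathcal{N}_0}$ being recorded in the case $J=0$ of Proposition~\ref{BK-prop 12.1}.
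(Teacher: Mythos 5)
Your proposal is correct and follows essentially the same route the paper intends: the paper itself only remarks that the theorem follows by ``summarizing the results of the previous two sections'' together with the parameter hierarchy of \cite{BamlerKleiner17}, and your induction---alternately invoking Proposition \ref{comparisondomainextension} and Proposition \ref{BK-prop 12.1}, with the trivial comparison as base case and $\zeta$ furnishing assumption (vi) at the first application---is exactly that argument. Your added checks (the $\dn^{-1}\rcomp$-tubular neighborhood lying in $\{\rho>\lambda\rcomp\}$ and the compatibility of the parameter thresholds with the stated ordering) are the right points to verify and are handled as in \cite[\S 13]{BamlerKleiner17}.
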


Indeed, if one observes the parameter diagram on page 44 of \cite{BamlerKleiner17}, then it is clear that the basic constants are but a few, and they are either universal, or determined by the initial data and the time interval of the flow. All other parameters are dependent on them. Hence we may reduce the amount of parameters to the basic ones, and this gives rise to the weak and strong stability of the Ricci flow spacetime.

\begin{thm}[Theorem 1.5 in \cite{BamlerKleiner17}, weak stability of Ricci flow spacetimes]
For every $\delta>0$ and $T<\infty$, there is an $\epsilon=\epsilon(\delta,T)>0$ such that the following holds. 

Consider two  $(n+1)$-dimensional $O(n+1)$-invariant and $(\epsilon,T)$-complete Ricci flow spacetimes $\mathcal{M}$ and $\mathcal{M}'$ that each satisfies the $O(n+1)$-equivariant $\epsilon$-canonical neighborhood assumption at scales $(\epsilon,1)$. Let $\phi:U\to U'$ be an $O(n+1)$-equivariant diffeomorphism between two open subsets $U\in\mathcal{M}_0$ and $U'\in\mathcal{M}'_0$. Assume that $|Rm|\geq \epsilon^{-2}$ on $\mathcal{M}_0\setminus U$ and $$\left|\phi^* g_0'-g_0\right|\leq\epsilon.$$ Assume moreover that the $O(n+1)$-equivariant $\epsilon$-canonical neighborhood assumption holds on $U'$ at scales $(0,1)$.

Then there is a time-preserving $O(n+1)$-equivariant diffeomorphism $\widehat\phi:\widehat U\to\widehat U'$ between two open subsets $\widehat U\subset \mathcal{M}_{[0,T]}$ and $\widehat U'\subset \mathcal{M}'_{[0,T]}$ that evolves by the harmonic map heat flow and that satisfies $\widehat\phi=\phi$ on $U\cap\widehat U$ and $$\left|\widehat\phi^* g'-g\right|\leq\delta.$$ Moreover, $|Rm|\geq\delta^{-2}$ on $\mathcal{M}_{[0,T]}\setminus \widehat U$.
\end{thm}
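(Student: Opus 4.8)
The plan is to deduce this weak stability statement from Theorem \ref{biguniqueness} by fixing the full chain of parameters in terms of $\delta$ and $T$ alone, exactly as in the reduction on page 44 of \cite{BamlerKleiner17}. First I would run the parameter diagram backwards: choose $E\geq\underline E$ and $H\geq\underline H(E)$ to be universal constants; then $\eta_{\operatorname{lin}}\leq\overline\eta_{\operatorname{lin}}(E)$; then $\nu\leq\overline\nu(T,H,\eta_{\operatorname{lin}})$ small enough that the final output perturbation bound comes out below $\delta$ (here one uses that $e^{HT}$ is a fixed constant once $T$ is fixed, so $\overline Q$ and $\overline Q^{*}$ are comparable to $\eta_{\operatorname{lin}}$ up to constants depending on $T$); then $\delta_{\operatorname{n}},\lambda,D_{\operatorname{cap}},\eta_{\operatorname{cut}},D_{\operatorname{cut}},W,A,\Lambda,\delta_{\operatorname{b}},\epsilon_{\operatorname{can}},r_{\operatorname{comp}}$ in the order dictated by Theorem \ref{biguniqueness}. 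Finally set $\epsilon=\epsilon(\delta,T)$ to be smaller than $\epsilon_{\operatorname{can}}$, smaller than $r_{\operatorname{comp}}$, smaller than the threshold making $(\epsilon,T)$-completeness imply $(\epsilon_{\operatorname{can}}r_{\operatorname{comp}},T)$-completeness, and small enough that the hypothesis $|\phi^{*}g_0'-g_0|\leq\epsilon$ together with $|Rm|\geq\epsilon^{-2}$ on $\mathcal M_0\setminus U$ implies the three weighted initial bounds required for $\zeta$ in Theorem \ref{biguniqueness} when we take $\zeta:=\phi$ restricted to $\{x\in\mathcal M_0\,|\,\rho(x)>\lambda r_{\operatorname{comp}}\}$.

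The second step is to verify that $\zeta$ so defined satisfies the hypotheses of Theorem \ref{biguniqueness}. The pointwise bound $|\zeta^{*}g_0'-g_0|\leq\eta_{\operatorname{lin}}$ is immediate from $\epsilon\leq\eta_{\operatorname{lin}}$. For the two weighted bounds $e^{HT}\rho_1^{E}|\zeta^{*}g_0'-g_0|\leq\nu\overline Q$ and $e^{HT}\rho_1^{3}|\zeta^{*}g_0'-g_0|\leq\nu\overline Q^{*}$, note that on the domain of $\zeta$ we have $\rho_1=\min\{\rho,1\}\leq 1$ while $\rho>\lambda r_{\operatorname{comp}}$ implies there is no upper obstruction; the point is that on $\{\rho>\lambda r_{\operatorname{comp}}\}$ we only need $|\zeta^{*}g_0'-g_0|$ small compared to $\nu\cdot 10^{-E-1}\eta_{\operatorname{lin}}r_{\operatorname{comp}}^{E}$ and $\nu\cdot10^{-1}\eta_{\operatorname{lin}}(\lambda r_{\operatorname{comp}})^{3}$, and these are fixed positive constants (depending on already-chosen parameters and on $T$), so choosing $\epsilon$ below all of them suffices. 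The $O(n+1)$-equivariant $\epsilon_{\operatorname{can}}$-canonical neighborhood assumption on the image of $\zeta$ at scales $(0,1)$ is exactly the hypothesis assumed on $U'$ (after possibly shrinking, since $\{\rho>\lambda r_{\operatorname{comp}}\}\cap U\subseteq U$ and its image lies in $U'$), provided $\epsilon\leq\epsilon_{\operatorname{can}}$. One must also check that $\{x\in\mathcal M_0\,|\,\rho(x)>\lambda r_{\operatorname{comp}}\}\subseteq U$, which follows because $|Rm|\geq\epsilon^{-2}>(\lambda r_{\operatorname{comp}})^{-2}$ on $\mathcal M_0\setminus U$ forces $\rho\leq\lambda r_{\operatorname{comp}}$ there.

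The third step is to apply Theorem \ref{biguniqueness} with $J:=\lceil T/r_{\operatorname{comp}}^{2}\rceil$ (shrinking $r_{\operatorname{comp}}$ if needed so $Jr_{\operatorname{comp}}^{2}\leq T$ up to harmless rescaling of $T$, or simply taking $J r_{\operatorname{comp}}^2$ just below $T$ and noting the statement only claims control on $\mathcal M_{[0,T]}$ after possibly one more step), obtaining a comparison domain $(\mathcal N,\{\mathcal N^{j}\},\{t_j\})$ and an equivariant comparison $(\operatorname{Cut},\phi^{\mathcal M\to\mathcal M'},\{\phi^{j}\})$ satisfying (APA1)--(APA13). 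Then I would set $\widehat U:=\mathcal N\setminus\bigcup_{\mathcal D\in\operatorname{Cut}}\mathcal D$ (or rather its image under the comparison map restricted appropriately), $\widehat U':=$ the image, and $\widehat\phi:=$ the smooth part of the comparison, which by construction (the $\phi^{j}$ evolve by harmonic map heat flow, per Proposition \ref{BK-prop 12.22}) is a time-preserving $O(n+1)$-equivariant diffeomorphism evolving by harmonic map heat flow. The bound $|\widehat\phi^{*}g'-g|\leq\delta$ follows from (APA6), which asserts $|h|\leq\eta_{\operatorname{lin}}$ on the comparison domain, combined with our choice $\eta_{\operatorname{lin}}\leq\delta$; and $|Rm|\geq\delta^{-2}$ on $\mathcal M_{[0,T]}\setminus\widehat U$ follows because every point discarded from the comparison domain is either in a thin region or in a cut whose scale is controlled by $r_{\operatorname{comp}}$, $\lambda$, and the cutoff thresholds, all chosen below $\delta$. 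Finally $\widehat\phi=\phi$ on $U\cap\widehat U$ because $\phi_{0+}=\phi_0^{1}=\zeta|_{\mathcal N_0}=\phi|_{\mathcal N_0}$.

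The main obstacle I expect is bookkeeping rather than mathematical depth: one must track the several dozen parameters through Theorem \ref{biguniqueness} and confirm that the genuinely free inputs reduce to $E,H$ (universal), then a cascade terminating in $\epsilon$, with only $T$ entering through $e^{HT}$ and $J\sim T/r_{\operatorname{comp}}^{2}$; and one must be careful that the relation between the abstract "$(\epsilon,T)$-complete and $\epsilon$-canonical at scales $(\epsilon,1)$" hypotheses of this theorem and the "$(\epsilon_{\operatorname{can}}r_{\operatorname{comp}},T)$-complete, $\epsilon_{\operatorname{can}}$-canonical at scales $(\epsilon_{\operatorname{can}}r_{\operatorname{comp}},1)$" hypotheses of Theorem \ref{biguniqueness} is correctly a matter of monotonicity in the scale parameter. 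This is precisely the content of the paragraph following Theorem \ref{biguniqueness} in the excerpt, so the proof is genuinely a short deduction once that reduction is invoked.
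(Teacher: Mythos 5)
Your proposal is correct and follows essentially the same route the paper takes: the paper itself treats this theorem as a direct consequence of Theorem \ref{biguniqueness}, obtained by collapsing the parameter cascade (cf.\ the diagram in \cite{BamlerKleiner17}) so that everything is determined by $\delta$, $T$ and universal constants, taking $\zeta=\phi$ on $\{\rho>\lambda r_{\operatorname{comp}}\}$, and reading off $\widehat\phi$, the bound $|\widehat\phi^{*}g'-g|\leq\eta_{\operatorname{lin}}\leq\delta$ from (APA6), and the curvature bound on the complement from the thinness of the discarded regions and cuts after shrinking $r_{\operatorname{comp}}$. Your verification that $\{\rho>\lambda r_{\operatorname{comp}}\}\subseteq U$ and that the weighted initial bounds follow once $\epsilon$ is chosen last matches the intended reduction, so no substantive gap remains.
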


\begin{thm}[Theorem 1.7 in \cite{BamlerKleiner17}, strong stability of Ricci flow spacetimes]\label{thestrongstability}
There is a constant $\underline{E}<\infty$ such that for every $\delta>0$, $T<\infty$, and $\underline{E}\leq E<\infty$ there are constants $\ecan=\ecan(E)$ and $\epsilon=\epsilon(\delta, T, E)>0$, such that for all $0<r<1$ the following holds.

Consider two $(n+1)$-dimensional $O(n+1)$-invariant and $(\epsilon r,T)$-complete Ricci flow spacetimes $\mathcal{M}$ and $\mathcal{M}'$ that each satisfies the $O(n+1)$-equivariant $\ecan$-canonical neighborhood assumption at scales $(\epsilon r,1)$. Let $\phi: U\to U'$ be an $O(n+1)$-equivariant diffeomorphism between two open subsets $U\subset \mathcal{M}_0$ and $U'\subset\mathcal{M}_0'$. Assume that $|\Rm|\geq(\epsilon r)^{-2}$ on $\mathcal{M}_0\setminus U$ and $$|\phi^*g_0'-g_0|\leq\epsilon \cdot r^{2E}(|\Rm|+1)^E\quad \text{on}\quad U.$$ Assume moreover that the $O(n+1)$-equivariant $\ecan$-canonical neighborhood assumption holds on $U'$ at scales $(0,1)$. Then there is a time-preserving $O(n+1)$-equivariant diffeomorphism $\widehat \phi:\widehat U\to\widehat U'$ between two open subsets $\widehat U\subset \mathcal{M}_{[0,T]}$ and $\widehat U'\subset\mathcal{M}'_{[0,T]}$ that evolves by the harmonic map heat flow, satisfies $\widehat\phi=\phi$ on $U\cap\widehat U$ and that satisfies $$|\widehat\phi^*g'-g|\leq\delta\cdot r^{2E}(|\Rm|+1)^E\quad\text{ on }\quad \widehat U.$$ Moreover, we have $|\Rm|\geq r^{-2}$ on $\mathcal{M}_{[0,T]}\setminus\widehat U$. If additionally $|\Rm|\geq(\epsilon r)^{-2}$ on $\mathcal{M}_0'\setminus U'$, then we also have $|\Rm|\geq r^{-2}$ on $\mathcal{M}'_{[0,T]}\setminus\widehat U'$.
\end{thm}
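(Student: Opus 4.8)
The plan is to deduce the theorem directly from Theorem~\ref{biguniqueness} (existence of an $O(n+1)$-equivariant comparison domain and comparison) by choosing the full parameter tuple in the order prescribed there, and then reading off $\widehat\phi$ and the stated bounds from the resulting comparison and its a priori assumptions. First we fix, as functions of $E$ alone, the constants $\underline E$, $H$, $\etalin$, $\dn$, $\lambda$, $\Dcap$, $\eta_{\operatorname{cut}}$, $D_{\operatorname{cut}}$, $W$, $A$, $\Lambda$, $\db$, $\ecan=\ecan(E)$ compatibly with the parameter hypotheses of Theorem~\ref{biguniqueness}, together with $\nu=\nu(T,E)$; then we pick $\rcomp$ to be a small multiple of $r$, adjusted slightly so that $T/\rcomp^2\in\mathbb N$ and $\rcomp\le\barrcomp(\Lambda)$, $\ecan\rcomp\le\epsilon r$, the threshold $\epsilon=\epsilon(\delta,T,E)$ being pinned down only at the end. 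Since $|\Rm|\ge(\epsilon r)^{-2}$ on $\mathcal M_0\setminus U$ forces the curvature scale $\rho$ to be comparable to $\epsilon r<\lambda\rcomp$ there, we get $\{\rho>\lambda\rcomp\}\subseteq U$, so $\zeta:=\phi|_{\{\rho>\lambda\rcomp\}}$ is defined; on this set $\rho_1$ is comparable to $(|\Rm|+1)^{-1/2}$ and bounded below by a multiple of $\rcomp$, so the hypothesis $|\phi^*g_0'-g_0|\le\epsilon\,r^{2E}(|\Rm|+1)^E$ on $U$ translates, after inserting the appropriate powers of $e^{HT}$ and $\rho_1$, into the three weighted bounds on $\zeta$ required by Theorem~\ref{biguniqueness}, provided $\epsilon$ is small in terms of $(T,E)$. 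The remaining hypotheses of Theorem~\ref{biguniqueness} --- $O(n+1)$-invariance, $(\ecan\rcomp,T)$-completeness, and the equivariant $\ecan$-canonical neighborhood assumptions on $\mathcal M$, $\mathcal M'$ and on $\zeta(\{\rho>\lambda\rcomp\})$ --- are immediate from those of the present theorem.

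Next we apply Theorem~\ref{biguniqueness} with $J:=T/\rcomp^2$, obtaining an $O(n+1)$-invariant comparison domain $(\mathcal N,\{\mathcal N^j\},\{t_j\})$ over $[0,T]$ and an $O(n+1)$-equivariant comparison $(\operatorname{Cut},\Psi,\{\Psi^j\})$ satisfying the equivariant a priori assumptions (APA1)--(APA13), with $\Psi_{0+}=\zeta|_{\mathcal N_0}$. Let $\widehat U\subset\mathcal M_{[0,T]}$ be the open set obtained from $\mathcal N$ by deleting a neighborhood of the cuts (each an $O(n+1)$-invariant $(n+1)$-disk around a high-curvature cap), put $\widehat\phi:=\Psi|_{\widehat U}$, and set $\widehat U':=\widehat\phi(\widehat U)$. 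Then $\widehat\phi$ is a time-preserving $O(n+1)$-equivariant diffeomorphism onto its image whose inverse evolves by harmonic map heat flow (because each $\Psi^j$ has this property), and $\widehat\phi=\phi$ at time $0$ on $\mathcal N_0\subseteq U$ by the clause $\Psi_{0+}=\zeta|_{\mathcal N_0}$. The curvature bound $|\Rm|\ge r^{-2}$ on $\mathcal M_{[0,T]}\setminus\widehat U$ follows from the construction of the comparison domain: through the separating collection $\mathcal S'$ and the classification of type (I)--(IV) components (Proposition~\ref{comparisondomainextension} and the lemmas around it), the domain contains every point whose curvature scale exceeds a fixed multiple of $\rcomp$. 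The perturbation estimate comes from (APA7), $Q_+=e^{H(T-\mathfrak t)}\rho_1^E|h|\le\overline Q$: feeding this into the Interior Decay Theorem over the bounded number of parabolic scales that separate a point of $\widehat U$ from the time-$0$ slice improves it to $Q_+(x)\le C(T,E)\sup_{\mathcal M_0}Q_+$, which by the weighted bound on $\zeta$ is at most $C(T,E)\,\epsilon\cdot(\text{const}(T,E))$ times $\rho_1^E r^{2E}(|\Rm|+1)^E$; unwinding the definition of $Q_+$ then yields $|\widehat\phi^*g'-g|\le\delta\,r^{2E}(|\Rm|+1)^E$ on $\widehat U$ once $\epsilon\le\epsilon(\delta,T,E)$. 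If in addition $|\Rm|\ge(\epsilon r)^{-2}$ on $\mathcal M_0'\setminus U'$, then running the same construction with the roles of $\mathcal M$ and $\mathcal M'$ interchanged --- legitimate because the weighted-closeness hypothesis is symmetric up to the bi-Lipschitz scale distortion lemma --- gives $|\Rm|\ge r^{-2}$ on $\mathcal M_{[0,T]}'\setminus\widehat U'$ as well.

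The whole argument is carried out as at the end of \cite[Section 13]{BamlerKleiner17}; the only new ingredient is that, by the modified definitions of comparison domain, comparison and a priori assumptions above, every object in sight (the comparison domain, the cuts, the extension caps, the comparison map, and the Ricci--DeTurck perturbation) is automatically $O(n+1)$-invariant or $O(n+1)$-equivariant, so the resulting $\widehat\phi$ is equivariant with no extra work. The main obstacle is not any single estimate but the parameter bookkeeping needed to close the hierarchy of Theorem~\ref{biguniqueness} so that $\ecan$ depends only on $E$ and $\epsilon$ only on $(\delta,T,E)$, together with the verification that the comparison domain is ``almost onto'' the thick part of $\mathcal M'$ --- i.e.\ that $\widehat U'$ really exhausts $\{x\in\mathcal M_{[0,T]}':|\Rm|(x)<r^{-2}\}$ --- which is exactly where the extra hypothesis on $\mathcal M_0'\setminus U'$ and the equivariant canonical neighborhood theory enter.
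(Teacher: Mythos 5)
Your overall route is the right one and is the same as the paper's: the paper does not reprove this statement but simply deduces it from Theorem \ref{biguniqueness} as in Section 13 of \cite{BamlerKleiner17}, the equivariance of all objects coming for free from the modified definitions, and that is also your plan. However, two concrete steps in your sketch do not work as written. First, you cannot fix $\dn,\lambda,\Dcap,D_{\operatorname{cut}},W,A,\Lambda,\db,\ecan$ as functions of $E$ alone: Theorem \ref{biguniqueness} requires $\dn\le\bardn(T,H,\etalin)$, and this $T$-dependence propagates through $\lambda\le\overline{\lambda}(\dn)$ down the whole chain to $\ecan\le\barecan(\db)$. A single application of Theorem \ref{biguniqueness} with the obvious parameter choices therefore only yields $\ecan=\ecan(E,T)$; obtaining $\ecan$ independent of $T$ (and of $\delta$) is exactly the delicate part of the deduction in \cite{BamlerKleiner17}, which requires an additional argument (e.g.\ iterating the statement over shorter time intervals with readjusted curvature thresholds), and your sketch concedes this as ``bookkeeping'' without carrying it out.

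Second, your mechanism for extracting the factor $\delta$ is not valid. (APA7) gives $e^{H(T-\mathfrak{t})}\rho_1^E|h|\le\overline{Q}=10^{-E-1}\etalin\rcomp^E$, a bound fixed by the parameter package, and it cannot be upgraded to $Q_+\le C(T,E)\sup_{\mathcal{M}_0}Q_+$ via the Interior Decay Theorem: the Ricci--DeTurck perturbation of the comparison is not a global solution with small initial data, since it jumps across each cut by an amount of order $\eta_{\operatorname{cut}}\overline{Q}^*$ (Proposition \ref{BK-prop 12.3}(d)) and near the cuts is only controlled by $W\overline{Q}$ (APA8); neither quantity shrinks with the initial closeness $\epsilon$, and the interior-decay estimate in any case leaves an $\alpha\sup_P Q$ term that is only bounded by these fixed quantities. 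The $\delta$-smallness must instead be extracted from the weight itself: since $\rho_1\gtrsim(|\Rm|+1)^{-1/2}$ on the comparison domain, the (APA7)/(APA8) bound translates (after running the scheme with the exponent adjusted for this convention) into $|h|\le C(E)(\rcomp/r)^{2E}\,r^{2E}(|\Rm|+1)^E$, so one chooses $\rcomp\le c(\delta,E)\,r$ --- admissible because Theorem \ref{biguniqueness} only imposes an upper bound on $\rcomp$ and $\ecan$ does not depend on $\rcomp$ --- and absorbs the resulting constraints into $\epsilon=\epsilon(\delta,T,E)$; note that your shortcut would otherwise also force $\ecan$ to depend on $\delta$, contradicting the statement. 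The remaining ingredients of your sketch (the definition of $\zeta$ on $\{\rho>\lambda\rcomp\}$, the complement bound $|\Rm|\ge r^{-2}$ from the $\Lambda\rcomp$-thickness of the comparison domain, and the symmetric argument for $\mathcal{M}'$) are consistent with \cite{BamlerKleiner17}.
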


Finally, the uniqueness of the singular Ricci flow is a consequence of the stability theorems stated above. Indeed, if we consider two $O(n+1)$-invariant and $(0,T)$-complete Ricci flow spacetimes with the same initial data, then, by taking the parameters $\varepsilon$ and $\delta$ to $0$, we obtain a limit for the map $\widehat\phi$ given by Theorem \ref{thestrongstability}. One can then show that this limit map is an $O(n+1)$-equivariant isometry. The technical details can be found in the proof of \cite[Theorem 1.3]{BamlerKleiner17}.

\begin{thm}[Theorem 1.3 in \cite{BamlerKleiner17}, uniqueness of Ricci flow spacetime]
There is a dimension-dependent universal constant $\ecan>0$ such that the following holds. Let $(\mathcal{M},\mathfrak{t},\partial_{\mathfrak t},g)$ and $(\mathcal{M}',\mathfrak{t}',\partial_{\mathfrak t'},g')$ be two $(n+1)$-dimensional $O(n+1)$-invariant Ricci flow spacetimes that are both $(0,T)$-complete for some $T\in(0,\infty]$ and satisfy the $O(n+1)$-invariant $\ecan$-canonical neighborhood assumption at scales $(0,r)$ for some $r>0$. If the initial time-slices $(\mathcal{M}_0,g_0)$ and $(\mathcal{M}'_0,g_0')$ are isometric, then the flows $(\mathcal{M},\mathfrak{t},\partial_{\mathfrak t},g)$ and $(\mathcal{M}',\mathfrak{t}',\partial_{\mathfrak t'},g')$ are isometric as well. More precisely, assume that there is an $O(n+1)$-equivariant isometry $\phi:(\mathcal{M}_0,g_0)\to (\mathcal{M}'_0,g_0')$. Then there is a unique $O(n+1)$-equivariant smooth diffeomorphism $\widehat\phi:\mathcal{M}'_{[0,T]}\to\mathcal{M}_{[0,T]}$ such that
$$\widehat\phi^*g'=g,\quad \widehat\phi\vert_{\mathcal{M}_0}=\phi,\quad \widehat\phi_*\partial_{\mathfrak t}=\partial_{\mathfrak t'},\quad \mathfrak t'\circ\phi=\mathfrak t.$$
\end{thm}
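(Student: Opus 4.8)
The plan is to deduce this from the strong stability theorem (Theorem \ref{thestrongstability}) by a limiting argument, following \cite[Section 13]{BamlerKleiner17}. Fix the universal constant $\underline E$ and a value $E \geq \underline E$, and let $\ecan = \ecan(E)$ be as in Theorem \ref{thestrongstability}; after possibly shrinking $\ecan$ we may assume that $\mathcal{M}$ and $\mathcal{M}'$, being $(0,T)$-complete and satisfying the $O(n+1)$-equivariant $\ecan$-canonical neighborhood assumption at scales $(0,r)$, in particular satisfy that assumption at scales $(\epsilon' r', 1)$ for every $0 < r' < \min\{r,1\}$ and every $\epsilon' > 0$. Now choose sequences $\delta_k \searrow 0$ and $r_k \searrow 0$ with $r_k < 1$, set $\epsilon_k = \epsilon(\delta_k, T, E)$, let $\phi : (\mathcal{M}_0, g_0) \to (\mathcal{M}'_0, g_0')$ be the given $O(n+1)$-equivariant isometry, and put $U_k := \{x \in \mathcal{M}_0 ; |\Rm|(x) < (\epsilon_k r_k)^{-2}\}$ and $U_k' := \phi(U_k)$. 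Since $\phi$ is an isometry we have $\phi^* g_0' - g_0 \equiv 0$, so the closeness hypothesis $|\phi^* g_0' - g_0| \leq \epsilon_k r_k^{2E}(|\Rm|+1)^E$ holds trivially on $U_k$; moreover $|\Rm| \geq (\epsilon_k r_k)^{-2}$ on $\mathcal{M}_0 \setminus U_k$ by construction, and the $O(n+1)$-equivariant $\ecan$-canonical neighborhood assumption holds on $U_k' \subseteq \mathcal{M}'_0$ at scales $(0,1)$ because it holds on all of $\mathcal{M}'$. Theorem \ref{thestrongstability} then produces time-preserving $O(n+1)$-equivariant diffeomorphisms $\widehat\phi_k : \widehat U_k \to \widehat U_k'$, with $\widehat U_k \subseteq \mathcal{M}_{[0,T]}$ and $\widehat U_k' \subseteq \mathcal{M}'_{[0,T]}$ open, evolving by the harmonic map heat flow, such that $\widehat\phi_k = \phi$ on $U_k \cap \widehat U_k$, $|\widehat\phi_k^* g' - g| \leq \delta_k r_k^{2E}(|\Rm|+1)^E$ on $\widehat U_k$, and $|\Rm| \geq r_k^{-2}$ on both $\mathcal{M}_{[0,T]} \setminus \widehat U_k$ and $\mathcal{M}'_{[0,T]} \setminus \widehat U_k'$.

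Next I would pass to the limit. Because $|\Rm|$ is finite at every point of the (regular) spacetimes $\mathcal{M}$ and $\mathcal{M}'$, the condition $|\Rm| \geq r_k^{-2}$ off $\widehat U_k$, resp.\ off $\widehat U_k'$, forces $\{\widehat U_k\}$ to exhaust $\mathcal{M}_{[0,T]}$ and $\{\widehat U_k'\}$ to exhaust $\mathcal{M}'_{[0,T]}$. On any fixed compact subset $K \subseteq \mathcal{M}_{[0,T]}$ the curvature is uniformly bounded, and the perturbation bound $|\widehat\phi_k^* g' - g| \to 0$ uniformly on $K$ combines with local parabolic regularity for the harmonic map heat flow coupled to the background Ricci flows, exactly as in \cite[Section 13]{BamlerKleiner17}, to yield uniform $C^m_{loc}$ bounds on $\widehat\phi_k$ for every $m$; hence a subsequence converges in $C^\infty_{loc}$ to a smooth time-preserving map $\widehat\phi : \mathcal{M}_{[0,T]} \to \mathcal{M}'_{[0,T]}$ satisfying $\widehat\phi^* g' = g$, so that $\widehat\phi$ restricts to a Riemannian isometry on each time slice, and $\widehat\phi = \phi$ on $\mathcal{M}_0$. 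Since every $\widehat\phi_k$ is $O(n+1)$-equivariant and equivariance is preserved under $C^\infty_{loc}$ convergence, $\widehat\phi$ is $O(n+1)$-equivariant. Running the same argument for $\widehat\phi_k^{-1}$ gives a smooth inverse, so $\widehat\phi$ is a diffeomorphism; the relation $\widehat\phi_* \partial_{\mathfrak t} = \partial_{\mathfrak t'}$ holds because the harmonic map heat flow construction is $\partial_{\mathfrak t}$-compatible and this identity passes to the limit, and $\mathfrak t' \circ \widehat\phi = \mathfrak t$ since each $\widehat\phi_k$ is time-preserving.

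For uniqueness, suppose $\widehat\phi_1, \widehat\phi_2 : \mathcal{M}'_{[0,T]} \to \mathcal{M}_{[0,T]}$ are two diffeomorphisms with the stated properties. Then $F := \widehat\phi_2^{-1} \circ \widehat\phi_1$ is a time-preserving, $\partial_{\mathfrak t'}$-preserving isometry of $\mathcal{M}'_{[0,T]}$ that restricts to the identity on $\mathcal{M}'_0$; pulling back the Ricci-DeTurck / harmonic map heat flow equation through $F$ and invoking the uniqueness of its solutions (Kotschwar's energy argument \cite{Kotschwar14}, which underlies the stability theorem itself) forces $F = \operatorname{id}$, whence $\widehat\phi_1 = \widehat\phi_2$.

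The main obstacle is the limiting step: one must verify that the families $\{\widehat U_k\}$ and $\{\widehat U_k'\}$ are genuine exhaustions and extract a $C^\infty_{loc}$-convergent subsequence of the $\widehat\phi_k$ whose limit is defined on all of $\mathcal{M}_{[0,T]}$. This needs the interior estimates for the harmonic map heat flow valid on regions of bounded (but possibly large) curvature, together with the fact that a Ricci flow spacetime contains no singular points and hence has locally bounded geometry away from the excised singular set. Everything else — equivariance, compatibility with the time vector field, and uniqueness — is inherited from the corresponding steps in \cite[Section 13]{BamlerKleiner17} with only cosmetic changes to accommodate the $O(n+1)$-symmetry, which is respected automatically by every map produced in the construction.
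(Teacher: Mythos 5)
Your proposal takes essentially the same route as the paper: the paper likewise deduces this theorem from the strong stability theorem (Theorem \ref{thestrongstability}) by letting the closeness parameters tend to zero, extracting a limit of the maps $\widehat\phi$ on an exhaustion of the spacetime, and deferring the remaining technical details (smooth local convergence, and the uniqueness of the limit map) to the proof of Theorem 1.3 in \cite{BamlerKleiner17}. One minor correction: the reduction from the canonical neighborhood assumption at scales $(0,r)$ to the scales $(\epsilon r',1)$ required by Theorem \ref{thestrongstability} is achieved by parabolically rescaling both spacetimes, not by shrinking $\ecan$.
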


\section{Number of bumps in a Ricci flow spacetime}

Let $(M^{n+1},g)$ be a connected rotationally invariant Riemannian
manifold, which is not necessarily complete. Fix any $O(n+1)$-orbit
$\mathcal{O}\subseteq M$, and fix a parametrization of the nonsingular
orbits, so that we can write
\[
g=\varphi^{2}(r)dr^{2}+\psi^{2}(r)g_{\mathbb{S}^{n}}
\]
for some smooth warping function $\psi\in C^{\infty}((a,b))$. Define $\text{Bump}(M)$ to be the number of
critical points $(\psi')^{-1}(0)$ of $\psi$ if $(\psi')^{-1}(0)$
is finite, and $\text{Bump}(M):=\infty$ otherwise. Thus, $\text{Bump}(M)$ records the number of bumps of the warping function of $(M,g)$. Note that this function does not depend on the choice of parametrization, since a change of parametrization corresponds to composing $\psi$ with a diffeomorphism between open intervals in $\mathbb{R}$. If $M$ is not connected, we let $\text{Bump}(M)$ be the sum of $\text{Bump}(M')$ over all connected components $M'$ of $M$. 
\begin{lem} Let $\mathcal{M}$ be a rotationally invariant Ricci flow spacetime as obtained in \ref{spacetimeexists}. Then $(0,\infty)\to\mathbb{N}\cup\{\infty\},t\mapsto\operatorname{Bump}(\mathcal{M}_{t})$ is nonincreasing. 
\end{lem}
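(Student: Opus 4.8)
The plan is to combine two facts: first, at a smooth (non-singular) time the number of bumps is nonincreasing along a smooth rotationally invariant Ricci flow; second, at a singular time of the spacetime $\mathcal{M}$, the surgery/limiting procedure cannot create new critical points of the warping function. For the first fact, I would work on $M_{\mathrm{reg}}$ with the warping function $\psi(s,t)$ in arclength parametrization (Proposition \ref{spacetimewarp}), which satisfies
\[
\partial_t \psi = \partial_s(\partial_s\psi) - (n-1)\frac{1-(\partial_s\psi)^2}{\psi}.
\]
Differentiating in $s$ and writing $v:=\partial_s\psi$, one gets a linear (in $v$, with smooth coefficients depending on $\psi$) parabolic equation of the form $\partial_t v = \partial_s^2 v + b(s,t)\,\partial_s v + c(s,t)\,v$, where the commutator term $[\partial_{\mathfrak t},\partial_s] = -n\frac{\partial_s(\partial_s\psi)}{\psi}\partial_s$ from Proposition \ref{spacetimewarp} produces the first-order term. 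The critical points of $\psi$ are exactly the zeros of $v$, and the Sturmian theorem (Angenent) says the number of zeros of a solution to a one-dimensional linear parabolic equation is nonincreasing in time and drops whenever a multiple zero occurs. One subtlety is the presence of singular orbits (the endpoints where $\psi\to 0$, $\partial_s\psi\to\pm 1$): there $v=\pm 1\neq 0$, so $v$ never vanishes near the tips, and the relevant interval of $s$ on which we count zeros has, effectively, no zeros entering or leaving through the spatial boundary. This gives monotonicity of $\mathrm{Bump}$ on any time interval $[t_1,t_2]$ contained in the regular part of $\mathcal{M}$.

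Next I would handle the passage through a singular time $t^\ast$. Because $\mathcal{M}$ is the Kleiner-Lott limit of Ricci flows with surgery, and by Theorem \ref{extinction}/the extinction argument, on the approach $t\nearrow t^\ast$ the geometry near any point of $\mathcal{M}_{t^\ast}$ is (after rescaling) modeled on a $\kappa$-solution, and the singular set $\mathcal{M}_{t^\ast}\setminus \Omega^{t^\ast}$ (the points where $R\to\infty$) consists of $\epsilon$-horns. The key point is: (i) passing to the limiting incomplete metric $g_{t^\ast}$ on the regular part $\Omega^{t^\ast}$ does not increase $\mathrm{Bump}$ — indeed $\mathrm{Bump}(\Omega^{t^\ast}_{t^\ast}) \le \liminf_{t\nearrow t^\ast}\mathrm{Bump}(\mathcal{M}_t)$ by lower semicontinuity of the number of critical points under $C^1_{loc}$ convergence of $\psi(\cdot,t)\to\psi(\cdot,t^\ast)$ on compact subsets of the regular region — and (ii) discarding the components that run off to the singular set only removes bumps, and (iii) in the limiting spacetime there is in fact no surgery (the cap-gluing disappears in the $\delta\to 0$ limit), so $\mathcal{M}_{t^\ast}^+$, the time-$t^\ast$ slice of $\mathcal{M}$, is obtained from $\mathcal{M}_t$ (for $t<t^\ast$ close) by throwing away singular components and taking a limit, both of which are bump-nonincreasing operations. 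So $\mathrm{Bump}(\mathcal{M}_{t^\ast})\le \lim_{t\nearrow t^\ast}\mathrm{Bump}(\mathcal{M}_t)$, and combined with the smooth monotonicity on $(t^\ast, \text{next singular time})$ we conclude global monotonicity on $(0,\infty)$.

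The main obstacle, and where I would spend the most care, is part (i)–(iii) of the singular-time step: making rigorous the claim that $\mathrm{Bump}$ cannot jump up across $t^\ast$. The delicate issue is that the $C^1_{loc}$ convergence $\psi(\cdot,t)\to\psi(\cdot,t^\ast)$ holds only on the regular part, and a bump could in principle "escape to the singular set" (harmless, decreases the count) or, more worryingly, several critical points near a developing neck could conspire. Here I would use the canonical neighborhood theorem (the rotationally invariant version proved in the excerpt) together with the classification of rotationally invariant $\kappa$-solutions (Theorems \ref{kappa-solution-classification-noncompact}, \ref{kappa-solution-classification-compact}): near a high-curvature point the flow looks like a shrinking cylinder, Bryant soliton, round sphere, or Perelman's sausage, and the warping profile of each of these is monotone or single-bumped on the relevant scale, so no new critical point is born in the degenerating region. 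For the very delicate "multiple zero at the boundary of a horn" scenario, the Sturmian count again rescues us: a critical point disappearing into the singular set corresponds to a zero of $v$ leaving the domain, which only decreases the zero count. Assembling these observations into a clean statement that $\mathrm{Bump}(\mathcal{M}_{t^\ast}) \le \liminf_{t\nearrow t^\ast}\mathrm{Bump}(\mathcal{M}_t)$, and then invoking smooth monotonicity on the complementary open intervals, completes the proof.
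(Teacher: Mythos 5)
Your treatment of the regular part is essentially sound and close in spirit to the paper: the paper also runs a Sturmian argument (via spacetime analyticity of $\psi$ and Galaktionov's theorem, tracing each critical point backwards in time along a continuous curve), whereas you propose Angenent-style zero counting for $v=\partial_s\psi$; either works on a surgery-free time slab, modulo care at the incomplete ends of the slices.

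The singular-time step, however, has a genuine gap. Your inequality $\operatorname{Bump}(\mathcal{M}_{t^\ast})\le\liminf_{t\nearrow t^\ast}\operatorname{Bump}(\mathcal{M}_t)$ is justified by ``lower semicontinuity of the number of critical points under $C^1_{loc}$ convergence,'' but no such semicontinuity holds: e.g.\ $\psi_t(s)=1+\tfrac{1}{3}s^3+(t^\ast-t)s$ has no critical points for $t<t^\ast$ while the limit has one, so a (degenerate) critical point can be born exactly in the limit. Ruling this out requires the parabolic (Sturmian) structure across $t^\ast$, not mere $C^1_{loc}$ convergence, and this is precisely the content one must prove. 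Relatedly, your claim that no bump can emerge from the high-curvature region afterwards is only argued heuristically (``the model solutions are monotone or single-bumped''); the paper's proof makes this precise by a different mechanism: it continues the backward curves $\gamma_j$ of critical points \emph{inside the spacetime}, assumes for contradiction that some $\gamma_j$ escapes to the singular set (so $R(\gamma_j(t))\to\infty$), blows up to a rotationally invariant $\kappa_0$-solution with a critical point of the warping function (forcing it to be compact, which is excluded by a diameter argument, or a cylinder), and then derives a contradiction from the evolution identity $\frac{d}{dt}\psi(\gamma_j(t))=-2\,\mathrm{Rc}_{\mathrm{tang}}\,\psi(\gamma_j(t))$ together with a Dini-derivative bound and Hamilton's ODE comparison, since $\psi(\gamma_j(t))\to 0$ while its forward derivative is negative. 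Finally, your scheme of ``monotone on regular intervals plus semicontinuity at singular times'' implicitly assumes the singular times are isolated, which is not known for these spacetimes (and is not needed in the paper's argument, which works with the maximal backward interval of each curve rather than with a decomposition of $(0,\infty)$ into regular intervals). To complete your proof you would need to replace the semicontinuity claim by a backward-continuation statement for critical points across arbitrary (not necessarily isolated) singular behavior, which is exactly the paper's blow-up plus ODE-comparison step.
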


\begin{proof}
First observe that each $\mathcal{M}_{t}$ has the structure of a real
analytic Riemannian manifold (by taking an atlas of geodesic coordinates)
by Shi's local derivative estimates. Thus, for any non-surgery time
$t>0$, we have $\text{Bump}(\mathcal{M}_{t})<\infty$. 

Consider the case where $0<t_{1}<t_{2}$ and $\mathcal{M}=M\times[t_{1},t_{2}]$
has no surgery times in the time interval $[t_{1},t_{2}]$. It suffices
to consider the case where $M$ is connected. Fix normal exponential coordinates with respect to fixed principal orbit at some fixed time $t_0\in [t_1,t_2]$. By the proof of Theorem 2.1 in \cite{deTurckKazdan}, the corresponding warping function $\psi:(a,b)\to (0,\infty)$ is real analytic. By possibly considering a subinterval of $[t_1 ,t_2]$ containing $t_0$, and using a finite cover of $[t_1,t_2]$ by such intervals, we can apply Theorem 4 of \cite{Kotschwar15} to obtain that the warping function is real analytic in spacetime:
$\psi\in C^{\omega}((a,b)\times[t_{1},t_{2}])$. Let $x\in(a,b)$
satisfy $\partial_x\psi(x,t_{2})=0$. If $\partial_{x}^{2}\psi(x,t_{2})=0$,
then Theorem 1.2 of \cite{Galaktionov04} gives $\epsilon>0$ and a continuous
curve $\gamma:(t_{2}-\epsilon,t_{2}]\to M$ (there may be multiple
such curves in the zero set due to branching behavior backwards in
time, but we just choose one) such that $\gamma(t_{2})=x$, and $\partial_{x}\psi(\gamma(t),t)=0$,
$\partial_{x}^{2}\psi(\gamma(t),t)\neq0$ for all $t\in(t_{2}-\epsilon,t_{2})$.
If instead $\partial_{x}^{2}\psi(x,t_{2})\neq0$, then the implicit
function theorem gives $\epsilon>0$ and a smooth function $\gamma(t)$
with $\gamma(t_{2})=x$ and $\partial_{x}\psi(\gamma(t),t)=0$ for
all $t\in(t_{2}-\epsilon,t_{2}]$. In particular, there exist $\epsilon>0$
and at least $N:=\text{Bump}(\mathcal{M}_{t_{2}})$ distinct smooth curves
$\gamma_{j}:(t_{2}-\epsilon,t_{2}]\to M$ such that $\partial_{x}^{2}\psi(\gamma_{j}(t),t)\neq0$
and $\partial_{x}\psi(\gamma_{j}(t),t)=0$ for all $t\in(t_{2}-\epsilon,t_{2}]$,
which we can assume satisfy $\gamma_{1}<\cdots<\gamma_{N}$. By the
implicit function theorem, the maximal time interval $(-\epsilon,t_{2}]\subseteq I\subseteq[t_{1},t_{2}]$
such that all $\gamma_{j}(t)$ are defined, continuous and satisfy $\partial_x \psi (\gamma_j(t),t)=0$,  $a<\gamma_{1}(t)<\cdots<\gamma_{N}(t)<b$
for all $t\in I$ is open in $[t_{1},t_{2}]$, so we
can write $I=(t^{\ast},t_{2}]$ if $t_{1}<t^{\ast}$. 
\\

\noindent \textbf{Claim 1: }$\liminf_{t\searrow t^{\ast}}\gamma_{1}(t)>a$
and $\limsup_{t\searrow t^{\ast}}\gamma_{N}(t)<b$.
\begin{proof}[Proof of the claim]
Because $g(t^{\ast})$ extends to a smooth metric over the singular
orbit corresponding to $x=a$, we know that $L:=\lim_{x\searrow a}\partial_{x}\psi(x,t^{\ast})>0$,
so there exists $\delta>0$ such that $\partial_{x}\psi(x,t^{\ast})>\frac{1}{2}L$
for all $x\in(a,a+\delta)$. Moreover, $|\partial_{t}\psi|$ is bounded
in terms of the curvatures of $g(t)$ and their covariant derivatives,
which are uniformly bounded on $M\times[t^{\ast},t_{2}]$, so by shrinking
$\delta>0$, we can assume $\partial_{x}\psi(x,t)>\frac{1}{4}L$ for
all $(x,t)\in(a,a+\delta)\times[t^{\ast},t^{\ast}+\delta^{2})$. If
$\liminf_{t\searrow t^{\ast}}\gamma_{1}(t)=a$, then there exist $t_{k}\searrow t^{\ast}$
such that $\partial_{x}\psi(\gamma_{1}(t_{k}),t_{k})=0$. However,
$(\gamma_{1}(t_{k}),t_{k})\in(a,a+\delta)\times[t^{\ast},t^{\ast}+\delta^{2})$
for $k\in\mathbb{N}$ sufficiently large, a contradiction. The remaining
claim is completely analogous. \end{proof}

\noindent \textbf{Claim 2: }$\lim_{t\searrow t^{\ast}}\gamma_{j}(t)\in(a,b)$
exist for $j=1,...,N$. 
\begin{proof}[Proof of the claim]
Let $J\subseteq[a,b]$ be the set of limit points of $\gamma_{j}(t)$
as $t\searrow t^{\ast}$. By the previous claim, $J\subseteq(a,b)$.
Because $\psi$ is analytic and $\partial_{x}\psi(\gamma_{j}(t),t)=0$
for $t\in(t^{\ast},t_{2}]$, we have $\partial_{x}\psi(y,t^{\ast})=0$ for any $y\in J$.
Moreover, for any $y_{1},y_{3}\in J$ and $y_{2}\in(y_{1},y_{3})$,
and any $\epsilon>0$, there is a sequence $s_{k}\searrow t^{\ast}$
with $|\gamma_{j}(s_{2k})-y_{1}|<\epsilon$ and $|\gamma_{j}(s_{2k+1})-y_{3}|<\epsilon$
for all $j\in\mathbb{N}$. By the intermediate value theorem, there
are $s_{k}^{\ast}\in(s_{2k},s_{2k+1})$ with $|\gamma_{j}(s_{k}^{\ast})-y_{2}|<2\epsilon$,
hence $y_{2}\in J$. That is, $J$ is connected, so it is a singleton
or an interval. However, $J$ cannot be an interval because then $\partial_{x}\psi(\cdot,t^{\ast})$
vanishes on an interval, a contradiction. \end{proof}

The previous claim shows that $\gamma_{j}$ extend to continuous curves
$\gamma_{j}:[t^{\ast},t_{2}]\to(a,b)$ which satisfy $\gamma_{j}(t^{\ast})\leq\gamma_{j+1}(t^{\ast})$
for $j=1,...,N-1$ and $\psi(\gamma_{j}(t^{\ast}),t^{\ast})=0$. We
also check that $\gamma_{j}(t^{\ast})<\gamma_{j+1}(t^{\ast})$ for
$j=1,...,N-1$. If this were not the case, then $\partial_{x}\psi$
would vanish on the parabolic boundary of the set bounded by $\gamma_{j}([t^{\ast},t_{2}])$,
$\gamma_{j+1}([t^{\ast},t_{2}])$, and $(a,b)\times\{t_{2}\}$, so
by the maximum principle, $\partial_{x}\psi=0$ there. This implies
that $\partial_{x}\psi(\cdot,t_{2})$ vanishes on the interval $(x_{j},x_{j+1})$,
a contradiction. Repeating previous reasoning, we see that $\gamma_{j}$
can be extended to curves $\gamma_{j}:[t^{\ast}-\epsilon,t_{2}]\to(a,b)$
with $\partial_{x}\psi(\gamma_{j}(t),t)=0$ for $j=1,...,N$ and $\gamma_{j}(t)<\gamma_{j+1}(t)$
for all $t\in[t^{\ast}-\epsilon,t_{2}]$, $j=1,...,N-1$. This contradicts
the minimality of $t^{\ast}$, so we must actually have $t^{\ast}=t_{1}$.
Thus $\gamma_{1}(t_{1})<\cdots<\gamma_{N}(t_{1})$, and $\partial_{x}\psi(\gamma_{j}(t_{1}),t_{1})=0$
for $j=1,...,N$, so $\text{Bump}(\mathcal M_{t_{1}})\geq N$.

Observe that the curves $\gamma_{j}$ are smooth in a neighborhood
of any $t\in[t_{1},t_{2}]$ where $\partial_{x}^{2}\psi(\gamma_{j}(t),t)\neq0$. 
\\

\noindent \textbf{Claim 3: }There are only finitely many $t\in[t_{1},t_{2}]$
where $\partial_{x}^{2}\psi(\gamma_{j}(t),t)=0$.
\begin{proof}[Proof of the claim]
In fact, $X_1 :=\{(x,t)\in (a,b)\times (t_1-\epsilon,t_2+\epsilon) ; \partial_x \psi (x,t)=0\}$ and $X_2 :=\{(x,t)\in (a,b)\times (t_1-\epsilon,t_2+\epsilon) ; \partial_x^2 \psi (x,t)=0\}$ are proper real analytic subsets of $(a+\epsilon,b-\epsilon)\times (t_1-\epsilon,t_2+\epsilon)$ for some $\epsilon>0$. Thus, either $X_1 \cap X_2 \cap \left((a,b) \times [t_1,t_2]\right)$ is finite, or this set contains an entire real-analytic curve $\eta: (-\delta,\delta)\to (a,b)\times (t_1 ,t_2)$. However, the proof of Theorem 1.2 in \cite{Galaktionov04} indicates that any curve in $X_1$ oriented backwards in time emanating from some point $(x_0,t_0)\in X_2$ only paramatrizes simple zeros for some interval $t\in (t_0-\delta,t_0)$, contradicting $\partial_x^2 \psi(\eta(t),t)\equiv 0$. \end{proof}

For any $t_{0}\in[t_{1},t_{2}]$ where $\partial_{x}^{2}\psi(\gamma_{j}(t_{0}),t_{0})=0$,
we have (again by \cite{Galaktionov04}) $\gamma_{j}(t)=\gamma_{j}(t_{0})+O(|t-t_{0}|)$ as $t\searrow t_{0}$,
while $\gamma_{j}(t)=\gamma_{j}(t_{0})+a_{j}\sqrt{t_{0}-t}+O(|t-t_{0}|)$
as $t\nearrow t_{0}$, for some $a_{j}\in\mathbb{R}$. In either case,
$\gamma_{j}$ is absolutely continuous. Because $\psi$ is smooth,
we therefore know that $t\mapsto\psi(\gamma_{j}(t),t)$ is absolutely
continuous.
\\

\noindent \textbf{Claim 4:} The right upper Dini-derivative of $t\mapsto \psi(\gamma_j(t),t)$ is bounded.
\begin{proof}[Proof of the claim]
We only need to consider the case where $\gamma_j(t)$ hits a degenerate critical point, for otherwise $\gamma_j(t)$ is smooth. Let $\gamma_j(t_0)$ be a degenerate critical point of $\psi(\cdot,t_0)$. Then according to Theorem 1.2 in \cite{Galaktionov04}, either $\gamma_j(t)$ vanishes once $t>t_0$, which contradicts our setting, or, $\gamma_j(t)$ is a simple critical point for $t>t_0$ close enough to $t_0$, satisfying
\begin{eqnarray*}
|\gamma_j(t)-\gamma_j(t_0)|\leq O(t-t_0).
\end{eqnarray*}
It follows immediately that
\begin{eqnarray*}
\frac{D^+}{dt}\gamma_j(t_0):=\limsup_{\tau\searrow t_0}\frac{\gamma_j(t_0+\tau)-\gamma_j(t_0)}{\tau}<\infty , \\
\frac{D_+}{dt}\gamma_j(t_0):=\liminf_{\tau \searrow t_0}\frac{\gamma_j(t_0+\tau)-\gamma_j(t_0)}{\tau}>-\infty 
\end{eqnarray*}

Knowing that the right Dini derivative of $\gamma_j(t)$ is always finite, then one may compute the right Dini derivative of $\psi(\gamma_j(t),t)$ as follows
\begin{eqnarray*}
\frac{D^+}{dt}\psi(\gamma_j(t),t)&=&\limsup_{\tau \searrow 0}\frac{\psi(\gamma_j(t+\tau),t+\tau)-\psi(\gamma_j(t),t)}{\tau}
\\ 
&=&\limsup_{\tau \searrow 0} \psi_s(\gamma_j(t),t)\left( \frac{\gamma_j(t+\tau)-\gamma_j(t)}{\tau} \right)+\psi_t(\gamma_j(t),t)
\\ 
&=& \psi_t(\gamma_j(t),t)
\\
&=&2Rc_{\text{tang}}(\gamma_{j}(t))\psi(\gamma_{j}(t)).
\end{eqnarray*}
Here we have used the fact that $\gamma_j(t)$ is a critical point of $\psi(\cdot,t)$. \end{proof}

We now consider the case where singularities occur in the time interval
$[t_{1},t_{2}]$, and suppose $p_{1},...,p_{N}\in\mathcal{M}_{t_{2}}$
satisfy $\partial_{s}\psi(p_{i})=0$ for $i=1,...,N$, where we
now let $\psi\in C^{\infty}(\mathcal{M})$ and $\partial_s \in \mathfrak{X}(\mathcal{M})$ be as in \ref{spacetimewarp}. Moreover, it suffices to consider the case where $\mathcal{M}_{t_2}$ is connected; then $\mathcal{M}_t$ is connected for all $t\in [t_1,t_2]$. Arguing as in the nonsingular case, we obtain curves $\gamma_{1},...,\gamma_{N}:(t^{\ast},t_{2}]\to\mathcal{M}$,
where $\gamma_{j}(t)\in\mathcal{M}_t$, $\partial_{s}\psi(\gamma_{j}(t))=0$,
and $\mathcal{O}(\gamma_{1}(t)),...,\mathcal{O}(\gamma_N(t))$ do not intersect. Suppose by way
of contradiction that $\gamma_{j}(t)$ has no limit points in $\mathcal{M}$
as $t\searrow t^{\ast}$. By retracing the argument in the nonsingular case, the $0$-completeness of $\mathcal{M}$ implies $R(\gamma_{j}(t))\to\infty$. We know that, for any $\epsilon>0$, $\mathcal{M}$ satisfies the equivariant $\epsilon$-canonical neighborhood assumption at scales $(0,r_0(\epsilon))$, so that after rescaling by $R(\gamma_j(t))$, arbitrarily large backwards parabolic cylinders of $\gamma_j(t)$ are unscathed, with bounded curvature; given any sequence $t_k \searrow t^{\ast}$, we may therefore pass to a further subsequence to get smooth pointed convergence of rescalings of $(\mathcal{M}_{t\leq t_k},\gamma_j(t_k))$ (in the sense of spacetimes) to a $\kappa_0$-solution $(M_{\infty},(g_t^{\infty})_{t\in (-\infty,0]},p_{\infty})$; moreover, if $\psi_{\infty}$ denotes the warping function of $M_{\infty}$, then $\partial_s \psi_{\infty}(p_{\infty})=0$. Thus, $M_{\infty}$ is compact or a shrinking cylinder. If $M_{\infty}$ is compact, then $\lim_{k\to \infty}\operatorname{diam}_{g_{t_k}}(\mathcal{M}_{t_k})=0$, which is impossible.

On the other hand,
we know that whenever $\gamma_{j}$ is differentiable, 
\begin{align*}
\frac{d}{dt}\psi(\gamma_{j}(t))&=\partial_{\mathfrak{t}}\psi(\gamma_{j}(t))+\langle \nabla \psi(\gamma_j(t)),\dot{\gamma}_j(t)\rangle_{g_t}
\\&= \partial_s (\partial_s \psi)(\gamma_j(t))-(n-1)\frac{1}{\psi(\gamma_j(t))}\\ 
&= -2Rc_{\text{tang}}(\gamma_{j}(t))\psi(\gamma_{j}(t)),
\end{align*}
where $Rc_{\text{tang}}(\gamma_{j}(t))$ is the unique eigenvalue
of $Rc$ restricted to $T\left( \mathcal{O}(\gamma_j(t))\right)$.

Because $Rc_{\text{tang}}(\gamma_j(t))\psi^2 (\gamma_j(t))$ converges to a positive constant, there exists $\epsilon>0$
such that 
\[
\frac{D^+}{dt}\psi(\gamma_{j}(t))<0
\]
for $t\in(t^{\ast},t^{\ast}+\epsilon)$. However, $\lim_{t\searrow t^{\ast}}\psi(\gamma_{j}(t))=0$, so by Claim 4, we may apply Hamilton's comparison theorem for ODE \cite{Hamilton86} to obtain that $\psi(\gamma_j(t))\leq 0$ for all $t\in(t^{\ast},t^{\ast}+\epsilon)$; this is a contradiction.

Suppose by way of contradiction that $\text{Bump}(\mathcal{M}_{t})=\infty$
for some $t>0$. Then, for any $N\in\mathbb{N}$, we can find points
$x_{1},...,x_{N} \in \mathcal{M}_t$ with $\partial_{s}\psi(x_j)=0$, and the
above reasoning shows that we can find $N$ curves
$\gamma_{1},...,\gamma_{N}:(t-\epsilon,t)\to \mathcal{M}$ such that $\gamma_j(s)\in \mathcal{M}_s$, $\mathcal{O}(\gamma_j(s))$ are pairwise disjoint, and  $\partial_{s}\psi(\gamma_{j}(s))=0$
for all $s\in(t-\epsilon,t)$. In particular, fixing a regular time
$t_{0}<t$, and letting $\psi_{t_0}:(a,b)\to (0,\infty)$ be the warping function at this time, we see that $\partial_{s}\psi_{t_0}$ has at least $N$ zeros for any $N\in\mathbb{N}$; that is, $\mathcal{M}_{t_0}$ is a regular time with infinitely many bumps, a contradiction.
\end{proof}

\section{Bounding the blowup rate}

In this section, we show that the curvature of a closed rotationally invariant Ricci flow cannot blow up arbitrarily quickly.  Roughly speaking, the first step is to show that if the solution is not Type-I, then the curvature is largest at the singular orbits. We will then show that, at the bump closest to the singular orbit, the warping function is larger than the shrinking cylinder with the same singularity time. 

\begin{lem}
Suppose $(\mathbb{S}^{n+1},(g_{t})_{t\in[0,T)})$ is a rotationally
invariant Ricci flow developing a Type-II singularity at time $T$,
and let $o_{S},o_{N}\in\mathbb{S}^{n+1}$ denote the two singular
orbits. Suppose 
\[
\limsup_{t\nearrow T}\max\left(|Rm|(o_{S},t),|Rm|(o_{N},t)\right)(T-t)^{2}<\infty.
\]
Then 
\[
\limsup_{t\nearrow T}\max_{M}|Rm(\cdot,t)|(T-t)^{2}<\infty.
\]
\end{lem}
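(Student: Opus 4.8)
The plan is to work with the warping function description $g_t = ds\otimes ds + \psi^2(s,t)\,g_{\mathbb{S}^n}$ on $\mathbb{S}^{n+1}$, where $s$ parametrizes the orbit space by signed arclength and the two singular orbits $o_S, o_N$ correspond to the endpoints. The two sectional curvatures are $K_{\mathrm{rad}} = -\psi_{ss}/\psi$ and $K_{\mathrm{orb}} = (1-\psi_s^2)/\psi^2$, so controlling $|Rm|$ amounts to controlling these two quantities. The first step is to use the hypothesis that the solution is Type-II together with the Anderson–Chow-type pinching estimate (the lemmas in Section 9, which give $Rm(h,h)/|h|^2 \le |Rc|^2/R$ and, more to the point, control the curvature operator in terms of $R$) to reduce everything to a scalar-curvature bound: it suffices to show $\limsup_{t\nearrow T}\max_M R(\cdot,t)(T-t)^2 < \infty$. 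Since the flow satisfies the Hamilton–Ivey pinching estimate (Theorem \ref{rotpinch}), the smallest eigenvalue $\nu$ of the curvature operator is controlled by $R$ up to logarithmic factors, so a bound on $R$ gives a bound on $|Rm|$ with at most a logarithmic loss, which is harmless for the Type-II (non-Type-I) conclusion after absorbing constants.

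Next I would localize the scalar curvature maximum. The idea — flagged in the section's opening paragraph — is that for a non–Type-I solution the curvature is largest near the singular orbits. Concretely, I would argue by contradiction: suppose there are times $t_k \nearrow T$ and points $x_k \in \mathbb{S}^{n+1}$ with $R(x_k,t_k)(T-t_k)^2 \to \infty$ but $x_k$ staying (after passing to a subsequence) at definite $d_{g_{t_k}}$-distance, measured in the scale $R(x_k,t_k)^{-1/2}$, from both singular orbits. Then the canonical neighborhood theorem (the higher-dimensional rotationally invariant version proved in Section 3.3) applies: after rescaling by $Q_k := R(x_k,t_k)$, a large parabolic neighborhood of $(x_k,t_k)$ is $\epsilon$-close to a rotationally invariant $\kappa$-solution, hence (by Theorem \ref{kappa-solution-classification-noncompact} and the equivariant closeness of Corollary \ref{closenessimproved}) to a neck, a Bryant-type cap, or a round sphere. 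In the neck/cap cases the point $x_k$ lies far from the tip, and one extracts in the blowup limit a cylinder or a Bryant soliton; one then compares the warping function $\psi$ of the flow to that of the model, using $\partial_t \psi = \partial_s(\partial_s\psi) - (n-1)(1-(\partial_s\psi)^2)/\psi$ and the maximum principle, to show the high-curvature region must in fact propagate to one of the singular orbits, contradicting the assumed bound there. This is where the hypothesis on $o_S, o_N$ enters decisively.

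The cleanest way to package the comparison is via the maximum principle on $\psi^2$: as in the proof of Theorem \ref{extinction}, $\partial_t(\psi^2) = \partial_s^2(\psi^2) - 2\psi_s^2 - 2n(1-\psi_s^2) \le \partial_s^2(\psi^2) - 2n + 2n\psi_s^2$, and since at the bump nearest a singular orbit $\psi_s = 0$, one gets a differential inequality forcing $\psi$ at that bump to dominate the warping function $\sqrt{2n(T-t)}$ of the shrinking cylinder with the same singular time. A lower bound on $\psi$ at the nearest bump, combined with the gradient bound $|\psi_s| \le C$ (valid in the region of bounded rescaled curvature, by Shi estimates) and the fact that between the bump and the singular orbit $\psi$ ranges over $[0,\psi(\text{bump})]$, pins the geometry near $o_S$ (resp. $o_N$) to a definite scale comparable to $\sqrt{T-t}$, so $R \le C(T-t)^{-1}$ there — far better than $(T-t)^{-2}$. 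Feeding this back into the localization argument closes the contradiction. The main obstacle I anticipate is the localization step: making rigorous the claim that a point with $R(T-t)^2 \to \infty$ which is "interior" (bounded away from singular orbits on the rescaled scale) cannot exist. One must rule out the possibility of the high-curvature region being a closed component (round sphere) shrinking on its own — here one uses that on $\mathbb{S}^{n+1}$ with two singular orbits the manifold is connected, so any neck or cap separating off such a region would itself be a region where $\psi$ is small, and tracking $\psi$ across it via the maximum principle forces the singularity scale to be seen at $o_S$ or $o_N$. Getting the quantifiers right in this dichotomy — interior point versus singular-orbit point, and the uniform-in-$k$ control of the comparison — is the delicate part; the rest is routine ODE/PDE comparison.
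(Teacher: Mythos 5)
Your proposal takes a much longer route than needed, and two of its load-bearing steps do not hold up. The paper's proof is a short Hamilton-style point-picking argument: assuming a sequence with $|Rm|(y_j,s_j)(T-s_j)^2\to\infty$, one maximizes $|Rm|(x,t)(T_j-t)^2$ over $M\times[0,T_j]$ with $T_j=\frac{T+s_j}{2}$, rescales by $Q_j=|Rm|(x_j,t_j)$, and uses the \emph{equivariant} compactness theorem to extract an eternal, nonflat, rotationally invariant $\kappa$-solution, i.e.\ the Bryant soliton. The decisive point is that the equivariant diffeomorphisms must carry the Bryant tip (a singular orbit) to $o_N$ or $o_S$, so the hypothesis $|Rm|(o_{N/S},t)(T-t)^2\leq C$ together with $Q_j(T-t_j)^2\to\infty$ forces the rescaled curvature at the tip to vanish in the limit, contradicting nonflatness. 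Your outline never isolates this mechanism; instead it relies on a ``propagation to the poles'' claim that is not justified and, as written, cannot be.

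Concretely: (a) Your assertion that the bound $\psi(\mathrm{bump})\geq\sqrt{2(n-1)(T-t)}$ at the bump nearest a pole, together with a gradient bound, ``pins the geometry near $o_S$ to scale $\sqrt{T-t}$, so $R\leq C(T-t)^{-1}$ there'' is false. The bump estimate controls the scale \emph{at the bump} only; between the bump and the pole the profile can develop degenerate-neckpinch behavior with tip curvature far exceeding $(T-t)^{-1}$ (if your claim were true unconditionally, the hypothesis on $o_S,o_N$ would be superfluous and the entire barrier analysis of Section 11, which only achieves the rate $(T-t)^{-2}$ at the poles, would be unnecessary). (b) The interior case is the actual crux and your maximum-principle comparison points the wrong way: at a neck minimum one has $\psi_{ss}\geq0$ and $\psi_s=0$, so $\frac{d}{dt}\psi_{\min}^2\geq-2(n-1)$, which yields a \emph{lower} bound on the neck curvature relative to the shrinking cylinder, not the needed upper bound $R\leq C(T-t)^{-2}$ at interior points. (Your sub-case of a point at bounded rescaled distance from a pole can indeed be closed using the cap structure plus the hypothesis at the pole, but that does not address interior necks.) The reduction to scalar curvature via Hamilton--Ivey and the use of the canonical neighborhood theorem are fine but unnecessary; the blow-up argument with equivariant convergence handles everything at once.
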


\begin{proof}
Suppose by way of contradiction that there exist $(y_{j},s_{j})\in M\times[0,T)$
satisfying $s_{j}\nearrow\infty$ and 
\[
\lim_{j\to\infty}|Rm|(y_{j},s_{j})(T-s_{j})^{2}=\infty.
\]
Set $T_{j}:=\frac{T+s_{j}}{2}\nearrow T$, and choose Choose $(x_{j},t_{j})\in M\times[0,T_{j}]$
maximizing 
\[
M\times[0,T_{j}]\to[0,\infty),\quad(x,t)\mapsto|Rm|(x,t)(T_{j}-t)^{2},
\]
so that $Q_{j}:=|Rm|(x_{j},t_{j})$ satisfy
\[
Q_{j}(T_{j}-t_{j})^{2}\geq|Rm|(y_{j},s_{j})(T_{j}-s_{j})^{2}=\frac{1}{4}|Rm|(y_{j},s_{j})(T-s_{j})^{2}\to\infty
\]
as $j\to\infty$. Set $g_{t}^{j}:=Q_{j}g_{t_{j}+Q_{j}^{-1}t}$ for
$t\in[-Q_{j}t_{j},Q_{j}(T-t_{j})]$, so that for any $(x,t)\in M\times[-Q_{j}t_{j},\frac{1}{2}Q_{j}(T_{j}-t_{j})]$,
we have 
\begin{align*}
  |Rm|_{g^{j}}(x,t)&=Q_j^{-1}|Rm|_g\left(x,t_j+Q_j^{-1}t\right)\le Q_j^{-1}\frac{Q_j(T_j-t_j)^2}{\left(T_j-t_j-Q_j^{-1}t\right)^2}
  \\
  &\leq\frac{(T_{j}-t_{j})^{2}}{\frac{1}{4}(T_j-t_{j})^{2}}\leq4.
\end{align*}
Because $Q_{j}(T_{j}-t_{j})^{2}\to\infty$, we can pass to a subsequence
so that $(M,(g_{t}^{j})_{t\in[-Q_{j}t_{j},\frac{1}{2}Q_{j}(T-t_{j})]},x_{j})$
converge in the equivariant, pointed Cheeger-Gromov-Hamilton sense to a nonflat,
eternal, rotationally invariant $\kappa$-solution $(M_{\infty},(g_{t}^{\infty})_{t\in\mathbb{R}},x_{\infty})$,
which must therefore be the Bryant soliton. Let $x_{\text{Bry}}\in M_{\infty}$
be the tip of the Bryant soliton. There is an $O(n+1)$-invariant,
precompact, open exhaustion $(U_{j})$ of $M_{\infty}$, along with
$O(n+1)$-equivariant diffeomorphisms $\phi_{j}:U_{j}\to M_{j}$ such
that $\phi_{j}^{\ast}g^{j}\to g^{\infty}$ in $C_{loc}^{\infty}(M_{\infty}\times\mathbb{R})$.
However, $\phi_{j}$ must map singular orbits of the $O(n+1)$ action
to singular orbits, so $\phi_{j}(x_{\text{Bry}})\in\{o_{N},o_{S}\}$
for each $j\in\mathbb{N}$; passing to a further subsequence, we can
assume $\phi_{j}(x_{\text{Bry}})=o_{N}$. Then 
\begin{align*}
|Rm|_{g^{\infty}}(x_{\text{Bry}},0)= & \lim_{j\to\infty}|Rm|_{g^{j}}(o_{N},0)=\lim_{j\to\infty}\frac{|Rm|_{g}(o_{N},t_j)}{Q_{j}}\\
\leq & \limsup_{j\to\infty}\frac{C}{Q_{j}(T-t_{j})^{2}}=0,
\end{align*}
a contradiction.
\end{proof}
Now fix a rotationally invariant Ricci flow $(\mathbb{S}^{n+1},(g_{t})_{t\in[0,T)})$
developing a Type-II singularity, and identify
\[
(\mathbb{S}^{n+1}\setminus\{o_{N},o_{S}\},g_{t})=((-1,1)\times\mathbb{S}^{n},\varphi^{2}(x,t)dx^{2}+\psi^{2}(x,t)g_{\mathbb{S}^{n}}),
\]
where $o_{N}$ corresponds to the singular orbit at $x=-1$. By possibly
truncating the time interval $[0,T)$, we may assume that there exists
a function $x_{\ast}:[0,T)\to(-1,1)$ so that $x_{\ast}(t)$ is the
location of the left-most bump of $\psi(\cdot,t),$ and that the number
of bumps and necks does not change for any time in $[0,T)$. 
\begin{lem}\label{upperbound of u}
$\psi(x_{\ast}(t),t)>\sqrt{2(n-1)(T-t)}$ for all $t\in[0,T)$. 
\end{lem}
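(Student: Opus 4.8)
The plan is to track, along the left-most bump, the quantity
\[
f(t):=\psi^{2}(x_{\ast}(t),t)+2(n-1)t,
\]
to show that $f$ is \emph{strictly} decreasing on $[0,T)$, and then to compare $f(t)$ with its limit as $t\nearrow T$. Strict monotonicity together with $\psi^{2}\geq 0$ forces, for every fixed $t<T$, $f(t)>\lim_{t'\nearrow T}f(t')=\lim_{t'\nearrow T}\psi^{2}(x_{\ast}(t'),t')+2(n-1)T\geq 2(n-1)T$ (the limit exists because $f$ is monotone), and rearranging gives $\psi^{2}(x_{\ast}(t),t)>2(n-1)(T-t)$, which is the claim. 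Note this uses only that the left-most bump persists for all $t\in[0,T)$, which is exactly what the truncation preceding the lemma arranges; the Type-II hypothesis is not actually needed for this particular estimate.

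To see that $f$ is nonincreasing, I would first recall from the analysis of the previous section that $x_{\ast}$ is one of the bump curves $\gamma_{j}$: it is absolutely continuous, smooth away from the finitely many times at which $x_{\ast}(t)$ is a degenerate critical point of $\psi(\cdot,t)$, and at those exceptional times the relevant one-sided Dini derivatives are finite. At a time where $x_{\ast}$ is differentiable, since $\partial_{s}\psi(x_{\ast}(t),t)=0$, the evolution equation $\partial_{\mathfrak{t}}\psi=\partial_{s}(\partial_{s}\psi)-(n-1)\frac{1-(\partial_{s}\psi)^{2}}{\psi}$ of Proposition~\ref{spacetimewarp} together with the chain rule gives
\[
\frac{d}{dt}\big[\psi^{2}(x_{\ast}(t),t)\big]=2\psi\,\partial_{\mathfrak{t}}\psi=2\psi\,\partial_{s}^{2}\psi(x_{\ast}(t),t)-2(n-1).
\]
Because the warping function increases as one moves into $M_{\operatorname{reg}}$ from the singular orbit $o_{N}$ (the smoothness conditions force $\partial_{s}\psi\to 1$ there), the left-most critical point is a local maximum of $\psi(\cdot,t)$, so $\partial_{s}^{2}\psi(x_{\ast}(t),t)\leq 0$ and hence $f'(t)=2\psi\,\partial_{s}^{2}\psi(x_{\ast}(t),t)\leq 0$; at the finitely many non-smooth times the same computation (carried out as in the proof of the bump-monotonicity lemma) bounds the upper right Dini derivative of $t\mapsto\psi^{2}(x_{\ast}(t),t)$ by $-2(n-1)$, so $f$ has nonpositive upper right Dini derivative everywhere and, being absolutely continuous, is nonincreasing.

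To promote this to strict monotonicity, I would use that $\partial_{s}^{2}\psi(x_{\ast}(t),t)<0$ for all but finitely many $t$ in any compact subinterval of $[0,T)$. This holds because $x_{\ast}(t)$ is a zero of $v:=\partial_{s}\psi$, which satisfies a quasilinear parabolic equation of the form $\partial_{\mathfrak{t}}v=\partial_{s}^{2}v+b\,\partial_{s}v+cv$ on the regular part, so by Angenent--Sturm zero-counting — equivalently by Theorem~1.2 of \cite{Galaktionov04}, exactly as invoked in the previous section — this zero is degenerate only at isolated times; away from those times $x_{\ast}(t)$ is a nondegenerate local maximum and $f'(t)<0$. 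An absolutely continuous function with $f'<0$ almost everywhere is strictly decreasing, which completes the argument.

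The step I expect to require the most care is the strictness: verifying that $\partial_{s}^{2}\psi$ vanishes along the left-most bump only at isolated times, and that the Dini-derivative bookkeeping at those times does not introduce an upward jump in $f$. Both points are of exactly the type already handled in the proof of the bump-monotonicity lemma (real-analyticity of $\psi$ in space and time, together with the parabolic zero-set theory), so the work here is mostly a matter of citing and transplanting that analysis rather than proving anything genuinely new.
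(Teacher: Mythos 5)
Your argument is correct and is essentially the paper's proof: the core step is the identical computation $\frac{d}{dt}\psi^{2}(x_{\ast}(t),t)=2\psi\,\partial_{s}^{2}\psi(x_{\ast}(t),t)-2(n-1)\leq -2(n-1)$ at the bump (using $\partial_x\psi(x_\ast(t),t)=0$ and the evolution equation), integrated up to the singular time and combined with $\psi^{2}\geq 0$. The additional bookkeeping you do with Dini derivatives and Sturmian zero-counting to secure strictness and the regularity of $x_{\ast}$ is sound but not a different route — the paper simply asserts $\partial_{s}^{2}\psi(x_{\ast}(t),t)<0$ (which the truncation keeping the number of bumps and necks constant already guarantees, since a degenerate critical point would force that count to drop) and integrates the strict differential inequality directly.
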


\begin{proof}
Because $\partial_{s}^{2}\psi(x_*(t),t)<0$, we may compute
\[
\frac{d}{dt}\psi^{2}(x_*(t),t)<2\psi(x_*(t),t)\partial_{x}\psi(x_*(t),t)x_*'(t)-2(n-1)=-2(n-1),
\]
so integrating from $t$ to $T$ gives 
\[
-\psi^{2}(x_*(t),t)<-2(n-1)(T-t),
\]
and the claim follows. 
\end{proof}
Next, we would like to compare the solution $g(t)$ to the Ricci flow ``subsolutions'' constructed in \cite{WuTypeII}. To do so, we must work in the so called ``horizontal coordinates'' defined as follows. Because $\partial_{x}\psi(\cdot,t)>0$ on $(0,x_{\ast}(t))$ for each $t\in[0,T)$, we know
that the inverse $x_{t}(\psi):=\left(\psi(\cdot,t)\right)^{-1}(\psi)$
is well-defined. If we define $z(\psi,t):=(\partial_{s}\psi)^{2}(x_{t}(\psi),t)$,
then it is well-known \cite{WuTypeII} that $z$ satisfies the parabolic equation
\[
\partial_{t}z(\psi,t)=z\partial_{\psi}^{2}z(\psi,t)+\left(n-1-z(\psi,t)\right)\frac{\partial_{\psi}z(\psi,t)}{\psi}+2(n-1)\frac{z(\psi,t)\left(1-z(\psi,t)\right)}{\psi^{2}}-\frac{1}{2}(\partial_{\psi}z)^{2}(\psi,t).
\]
Making the Type-I change of variables $u(\psi,t):=\frac{\psi}{\sqrt{2(n-1)(T-t)}}$
and $\tau(t):=-\log(T-t)$, the equation become $\partial_{\tau} z(u,\tau)=\mathcal{D}[z](u,\tau)$, where
\begin{align*}
\mathcal{D}[z](u,\tau):=&\frac{1}{2(n-1)}\left(z\partial_{u}^{2}z(u,\tau)+\left(n-1-z(u,\tau)\right)\frac{\partial_{u}z(u,\tau)}{u}+2(n-1)\frac{z(u,\tau)\left(1-z(u,\tau)\right)}{u^{2}}-\frac{1}{2}(\partial_{u}z)^{2}(u,\tau)\right)\\&-\frac{1}{2}u
\partial_{u}z(u,\tau).
\end{align*}

A comparison principle for this PDE was proved in \cite{WuTypeII}, which
we recall here for the reader's convenience. 

\begin{lem}
Let $\tau_0>0$ and $\overline{\tau}\in(\tau_{0},\infty)$ be arbitrary, and suppose that
$z^{+}$ is a supersolution and $z^{-}$ is a subsolution of 
\[
\partial_{\tau}z=\mathcal{D}_{u}[z].
\]
Suppose that there exists $K<\infty$ such that either $|\frac{1}{u}\partial_{u}z^{-}|,|\partial_{u}^{2}z^{-}|\leq Ke^{\lambda\tau}$
or $|\frac{1}{u}\partial_{u}z^{+}|,|\partial_{u}^{2}z^{+}|\leq Ke^{\lambda\tau}$
for all $\tau\in[\tau_{0},\overline{\tau}]$. Furthermore, assume
\begin{enumerate}[(C1)]
\item $z^{-}(u,\tau_{0})<z^{+}(u,\tau_{0})$ for all $u\in(0,1)$,
\item $z^{-}(0,\tau)\leq z^{+}(0,\tau)$ and $z^{-}(1,\tau)\leq z^{+}(1,\tau)$
for all $\tau\in[\tau_0,\overline \tau]$.
\end{enumerate}
Then $z^{-}(u,\tau)\leq z^{+}(u,\tau)$ for all $(u,\tau)\in[0,1]\times[\tau_{0},\overline{\tau}]$. 
\end{lem}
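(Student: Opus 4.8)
The statement is a standard parabolic comparison principle for the quasilinear degenerate operator $\mathcal D_u$, and the natural approach is the classical one: assume for contradiction that $z^-$ exceeds $z^+$ somewhere, perturb $z^+$ upward by a small exponentially-growing term so that strict inequality holds at $\tau_0$ and on the lateral boundary, look at the first time and interior point where the perturbed functions touch, and derive a contradiction from the sign of $\partial_\tau(z^--z^+_\varepsilon)$ versus $\mathcal D_u[z^-]-\mathcal D_u[z^+_\varepsilon]$ there. The degeneracy of the operator at $u=0$ (the coefficient of $\partial_u^2 z$ is $z$, which vanishes where the solution vanishes, and there are $1/u$ and $1/u^2$ terms) is the reason the hypotheses include the boundary control (C2) at $u=0$ and $u=1$ and the one-sided second-derivative/gradient bound $|\tfrac1u\partial_u z^\pm|,|\partial_u^2 z^\pm|\le Ke^{\lambda\tau}$; these are exactly what is needed to run the argument on a compact interval away from the degeneracy and then pass to the boundary.

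\textbf{Key steps, in order.} First, I would fix $\overline\tau$ and work on $[0,1]\times[\tau_0,\overline\tau]$, setting $w:=z^--z^+$ and, for $\varepsilon>0$, $w_\varepsilon:=z^--z^+-\varepsilon e^{A\tau}(1+u^2)$ for a large constant $A=A(K,\lambda,\overline\tau,n)$ to be chosen; by (C1) and (C2) we get $w_\varepsilon<0$ on the parabolic boundary $\big(\{\tau_0\}\times[0,1]\big)\cup\big([\tau_0,\overline\tau]\times\{0,1\}\big)$. Second, suppose $\sup w_\varepsilon\ge 0$; then there is a first time $\tau_1\in(\tau_0,\overline\tau]$ and a point $u_1\in(0,1)$ with $w_\varepsilon(u_1,\tau_1)=0$, $\partial_\tau w_\varepsilon(u_1,\tau_1)\ge 0$, $\partial_u w_\varepsilon(u_1,\tau_1)=0$, $\partial_u^2 w_\varepsilon(u_1,\tau_1)\le 0$. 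Third, I would subtract the (super/sub)solution inequalities and expand $\mathcal D_u[z^-]-\mathcal D_u[z^+]$ at $(u_1,\tau_1)$: since $z^-$ and $z^+_\varepsilon:=z^++\varepsilon e^{A\tau}(1+u^2)$ agree there, the difference $\mathcal D_u[z^-]-\mathcal D_u[z^+_\varepsilon]$ can be written, via the mean value theorem in the arguments $(z,\partial_u z,\partial_u^2 z)$, as a linear combination of $w_\varepsilon$, $\partial_u w_\varepsilon$, $\partial_u^2 w_\varepsilon$ with coefficients that are controlled — the coefficient of $\partial_u^2 w_\varepsilon$ is $\tfrac{1}{2(n-1)}z\ge 0$ (here one uses that $z^\pm$ stay nonnegative, which holds for the geometric quantities of interest since $z=(\partial_s\psi)^2\ge 0$; if one wants the statement for abstract sub/supersolutions one simply assumes $z^\pm\ge 0$, or absorbs the sign into $A$), and the coefficients of $w_\varepsilon$ and $\partial_u w_\varepsilon$ are bounded in terms of $K e^{\lambda\tau}$ on $[\tau_0,\overline\tau]$ by the hypothesis. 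At $(u_1,\tau_1)$ we have $w_\varepsilon=0$, $\partial_u w_\varepsilon=0$, $\partial_u^2 w_\varepsilon\le 0$, so $\mathcal D_u[z^-]-\mathcal D_u[z^+_\varepsilon]\le 0$ there. On the other hand, $\partial_\tau(z^--z^+_\varepsilon)=\partial_\tau w_\varepsilon + \varepsilon A e^{A\tau}(1+u^2)$, and $z^-$ sub, $z^+$ super gives $\partial_\tau z^-\le \mathcal D_u[z^-]$, $\partial_\tau z^+\ge\mathcal D_u[z^+]$; combining and using that $\mathcal D_u[z^+_\varepsilon]-\mathcal D_u[z^+]$ is $O(\varepsilon e^{A\tau})$ with the bound again from the one-sided estimates, one obtains, for $A$ chosen large enough, $\partial_\tau w_\varepsilon(u_1,\tau_1)<0$, contradicting $\partial_\tau w_\varepsilon(u_1,\tau_1)\ge 0$. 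Fourth, letting $\varepsilon\downarrow 0$ gives $z^-\le z^+$ on $[0,1]\times[\tau_0,\overline\tau]$, and since $\overline\tau$ was arbitrary the claim follows.

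\textbf{Main obstacle.} The only genuinely delicate point is the degeneracy/singularity of $\mathcal D_u$ at the endpoints $u=0,1$: one must make sure the first-touching point $u_1$ is genuinely interior (guaranteed by (C2) plus the strict perturbation), and that the coefficient of $\partial_u^2 w_\varepsilon$ produced by the mean-value expansion is nonnegative — this uses $z\ge 0$, which for the application comes from $z=(\partial_s\psi)^2$ but should be stated as a hypothesis (or arranged by the sign of the barrier) in the abstract lemma. The $1/u$ and $1/u^2$ terms are harmless because the $\partial_u w_\varepsilon$ and $w_\varepsilon$ factors they multiply vanish at the touching point, and away from it they are bounded by the assumed $Ke^{\lambda\tau}$-control; thus no blow-up of coefficients actually enters the contradiction. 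I expect the rest — choosing $A$, expanding $\mathcal D_u[z^-]-\mathcal D_u[z^+_\varepsilon]$ via the fundamental theorem of calculus in the three scalar arguments, and collecting the $\varepsilon$-error terms — to be routine, so I would carry it out but not dwell on it, referring to \cite{WuTypeII} for the computation of the error estimates.
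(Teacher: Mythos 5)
The paper does not actually prove this lemma --- it is quoted verbatim from Wu \cite{WuTypeII} ("A comparison principle for this PDE was proved in \cite{WuTypeII}, which we recall here for the reader's convenience"), so your argument has to stand on its own, and as written it has a genuine gap. Your treatment of the piece $\mathcal{D}_u[z^-]-\mathcal{D}_u[z^+_\varepsilon]$ at the first interior touching point is fine: there the singular coefficients multiply $w_\varepsilon=\partial_u w_\varepsilon=0$, and only the nonnegativity of the coefficient of $\partial_u^2 w_\varepsilon$ is needed (which, as you note, requires $z\ge 0$, an assumption not in the statement). The problem is the other piece, $\mathcal{D}_u[z^+_\varepsilon]-\mathcal{D}_u[z^+]$, which you assert is $O(\varepsilon e^{A\tau})$ "with the bound again from the one-sided estimates". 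Writing the exact divided difference of the term $2(n-1)z(1-z)/u^2$, this piece contains
\begin{equation*}
\frac{\bigl(1-z^+-z^+_\varepsilon\bigr)\,(1+u^2)}{u^2}\,\varepsilon e^{A\tau},
\end{equation*}
and the hypotheses only control $\tfrac1u\partial_u z^\pm$ and $\partial_u^2 z^\pm$ (for one of the two functions); they say nothing about this $u^{-2}$ zeroth-order coefficient. Your remark that the $1/u$ and $1/u^2$ coefficients are "harmless because the $\partial_u w_\varepsilon$ and $w_\varepsilon$ factors they multiply vanish at the touching point" is true only for the first piece; in the second piece they multiply the barrier $\varphi=\varepsilon e^{A\tau}(1+u^2)$ and $\partial_u\varphi=2\varepsilon e^{A\tau}u$, and only the $1/u$ term is rescued by the factor $u$. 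The barrier cannot be made to vanish at $u=0$, since you need strict inequality there to push the touching point into the interior. Hence if the touching point $u_1$ approaches $0$ and $1-z^--z^+>0$ there, the error term is of size $\varepsilon e^{A\tau}u_1^{-2}$, which cannot be beaten by $\varepsilon A e^{A\tau}$ with $A$ chosen in advance of $\varepsilon$ and $u_1$; your contradiction does not close.

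To repair this you need input on the behaviour of $z^\pm$ near the degenerate end: in the geometric application $z\to 1$ as $u\to 0$ (smoothness at the singular orbit), so $1-z^--z^+\le 0$ near $u=0$ and the dangerous coefficient has the favorable sign; alternatively one can run the comparison on $[\eta,1]$ with boundary control and a weight adapted to the degeneracy and let $\eta\to 0$, or simply reproduce Wu's argument from \cite{WuTypeII}, which is what the paper implicitly does. Two further points to make explicit if you carry this out: the "either $z^-$ or $z^+$" form of the derivative bounds forces you to arrange the divided differences so that $\partial_u^2$ and $\tfrac1u\partial_u$ fall on the controlled function in the zeroth-order coefficient, and the nonnegativity of the second-order coefficient ($z$ between $z^+_\varepsilon$ and $z^-$) must be added as a hypothesis or deduced from the setting, as you anticipated.
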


We now apply this maximum principle to show that, if $z$ corresponds to the region of an arbitrary rotationally invariant Ricci flow on a closed manifold from a singular orbit to the nearest bump, then an appropriately chosen, rescaled subsolution constructed in \cite{WuTypeII} will be a lower barrier for $z$.

Note that the boundedness conditions on $\partial_{u}z^{+}$ and $\partial^2_{u}z^{+}$
are satisfied if $z^{+}$ is a solution of $\partial_{\tau}z=\mathcal{D}[z]$
corresponding to a smooth solution of Ricci flow. We now recall Wu's construction of barrier functions for rotationally invariant
Ricci flow on $\mathbb{R}^{n+1}$. The following diagram summarizes the order in which we will choose the parameters $A_1,A_2,A_3,\beta,D,\tau_0$ for these solutions.
\[\begin{tikzcd}
	&&& {z_0} \\
	{A_2} & {A_3} & \beta & D & {A_1} & {\tau_0}
	\arrow[from=2-1, to=2-2]
	\arrow[from=2-2, to=2-3]
	\arrow[from=2-3, to=2-4]
	\arrow[from=1-4, to=2-4]
	\arrow[from=2-4, to=2-5]
	\arrow[from=2-5, to=2-6]
\end{tikzcd}\]
Here, $z_0$ is a function determined by our given solution of Ricci flow.

In \cite{WuTypeII}, Wu defines a lower
barrier function by patching together the functions
\[
z_{\text{int}}(u,\tau):=\mathcal{B}(A_{1}e^{\frac{1}{2}\tau}u),
\]
\[
z_{\text{ext}}(u,\tau):=A_{2}e^{-\tau}Z_{1}(u)-e^{-2\tau}A_{3}\zeta(u),
\]
where $\mathcal{B}$ is the $z$-function (in horizontal coordinates)
for the Bryant soliton, $Z_{1}(u):=\frac{1}{u^{2}}(1-u^{2})^{2}$,
and 
\[
\zeta(u):=\frac{1}{u^{4}}(1-u^{2})^{2}\left(1-2u^{2}+2u^{2}(1-u^{2})\left(\log(1-u^{2})-2\log u\right)\right).
\]
An easy computation shows that $z_{\text{int}}(u,\tau)$ is a
subsolution in $\{u>0,\tau\geq0\}$. A less trivial computation shows
that $z_{\text{ext}}(u,\tau)$ is a subsolution in $\left\{ be^{-\frac{1}{2}\tau}\leq u<1,\tau\in[\tau_{0},\infty)\right\} $
for any $A_{2}$, if $A_{3}\geq\underline{A}_{3}(A_{2})$ and $\tau_{0}\geq\underline{\tau}_{0}(A_{2},A_{3})$,
where $b=b(A_{2},A_{3})>0$. 

Wu also notes the following asymptotics:

\[
\mathcal{B}(r)=1-b_{2}r^{2}+O(r^{4}),\qquad\mathcal{B}'(r)=-2b_{2}r+O(r^{3})\quad\text{ as }\quad r\to0,
\]
\[
\mathcal{B}(r)=\frac{1}{r^{2}}+\frac{c_{2}}{r^{4}}+O\left(\frac{1}{r^{6}}\right)\quad\text{ as }\quad r\to\infty,
\]
\[
\zeta(u)=-(1-u^{2})^{2}+O\left((1-u^{2})^{3}\log(1-u^{2})\right)\quad\text{ as }\quad u\to1,
\]
\[
\zeta(u)=\frac{1}{u^{4}}+O\left(\frac{1}{u^{2}}\log u\right)\quad\text{ as }\quad u\to0,
\]
where both $b_2$ and $c_2$ are some numerical constants depending only on $n$. Now fix $\beta\in (1,\infty)$ to be determined, and suppose $r\in[1,\beta]$.
Then 
\[
z_{\text{int}}(e^{-\frac{1}{2}\tau}r,\tau)=\mathcal{B}(A_{1}r),
\]
\[
z_{\text{ext}}(e^{-\frac{1}{2}\tau}r,\tau)=\frac{(1-e^{-\tau}r^{2})}{r^{2}}\left(A_{2}-\frac{A_{3}}{r^{2}}\left(1-2e^{-\tau}r^{2}+2e^{-\tau}r^{2}(1-e^{-\tau}r^{2})\left(\log(1-e^{-\tau}r^{2})-\log(e^{-\tau}r^{2})\right)\right)\right).
\]
Note that when $A_{1}\geq\frac{1}{2}$, we have
\[
\left|z_{\text{int}}(e^{-\frac{1}{2}\tau}r,\tau)-\left(\frac{1}{A_{1}^{2}r^{2}}+\frac{c_{2}}{A_{1}^{4}r^{4}}\right)\right|\leq\frac{C_0}{2r^{6}}
\]
for all $\tau\geq0$ and $r\in[1,\beta]$. Moreover, if $r\in[1,\beta]$
and $\tau\geq\underline{\tau}(\beta,A_{2},A_{3})$, we have
\begin{align*}
\left|z_{\text{ext}}(e^{-\frac{1}{2}\tau}r,\tau)-\left(\frac{A_{2}}{r^{2}}-\frac{A_{3}}{r^{4}}\right)\right|\leq & \left|\frac{(1-e^{-\tau}r^{2})}{r^{2}}\left(-\frac{A_{3}}{r^{2}}\left(-2e^{-\tau}r^{2}+2e^{-\tau}r^{2}(1-e^{-\tau}r^{2})\left(\log(1-e^{-\tau}r^{2})-\log(e^{-\tau}r^{2})\right)\right)\right)\right|\\
 & +\left|e^{-\tau}r^{2}\right|\cdot\left|\frac{A_{2}}{r^{2}}-\frac{A_{3}}{r^{4}}\right|\\
\leq & 4A_{3}\beta^{2}\left(1+\log(\beta^{2})\right)e^{-\tau}+(A_{2}+A_{3})\beta^{2}e^{-\tau}\leq\frac{1}{2r^{6}}.
\end{align*}
In particular, for any $D\in[A_{2}^{\frac{1}{2}}A_{3}^{-\frac{1}{2}},A_{2}^{\frac{1}{2}}A_{3}^{-\frac{1}{2}}\beta]$,
if we set $R_{D}:=D\sqrt{\frac{A_{3}}{A_{2}}}$, then whenever $\tau\geq\underline{\tau}(\beta,A_{2},A_{3},D)$,
we have
\begin{align*}
R_{D}^{2}\left(z_{\text{int}}(e^{-\frac{1}{2}\tau}R_{D},\tau)-z_{\text{ext}}(e^{-\frac{1}{2}\tau}R_{D},\tau)\right)\geq & A_{1}^{-2}-\left(A_{2}-A_{3}\cdot\tfrac{A_{2}}{A_{3}}D^{-2}\right)-C(A_{2},A_{3}) D^{-4}\\
\geq & A_{1}^{-2}-A_{2}\big(1-D^{-2}+C(A_2,A_3)D^{-4}\big),
\end{align*}
\begin{align*}
4R_{D}^{2}\left(z_{\text{int}}(2e^{-\frac{1}{2}\tau}R_{D},\tau)-z_{\text{ext}}(2e^{-\frac{1}{2}\tau}R_{D},\tau)\right)\leq & A_{1}^{-2}+c_2A_1^{-4}\cdot\tfrac{A_2}{4A_3}D^{-2}-\left(A_{2}-A_{3}\cdot\tfrac{A_{2}}{4A_{3}}D^{-2}\right)+C(A_{2},A_{3}) D^{-4}\\
\le & A_{1}^{-2}-A_{2}\left(1-\tfrac{1}{4}\left(1+\tfrac{c_2}{A_1^4A_3}\right)D^{-2}-C(A_2,A_3)D^{-4}\right).
\end{align*}
Choosing $\beta\geq \underline{\beta}(A_2,A_3)$, $D\geq \underline{D}(A_2,A_3,\beta)$, $A_3 \geq \underline{A_3}(A_2)$, and then $A_{1}^{-2}:=A_{2}\left(1-\frac{1}{2}D^{-2}\right)$ thus implies
that $z_{\text{int}}(e^{-\frac{1}{2}\tau}R_{D},\tau)>z_{\text{ext}}(e^{-\frac{1}{2}\tau}R_{D},\tau)$
and $z_{\text{int}}(e^{-\frac{1}{2}\tau}R_{D},\tau)<z_{\text{ext}}(e^{-\frac{1}{2}\tau}R_{D},\tau)$. As in \cite{WuTypeII}, define
\begin{align}\label{zstar}
z_{\ast}(u,\tau):=\begin{cases}
z_{\text{int}}(u,\tau), & u\in[0,e^{-\frac{1}{2}\tau}R_{D}]\\
\max\{z_{\text{int}}(u,\tau),z_{\text{ext}}(u,\tau)\}, & u\in[e^{-\frac{1}{2}\tau}R_{D},2e^{-\frac{1}{2}\tau}R_{D}]\\
z_{\text{ext}}(u,\tau). & u\in[2e^{-\frac{1}{2}\tau}R_{D},1]
\end{cases},
\end{align}
which is a subsolution in the sense of distributions (in particular,
we can apply the aforementioned maximum principle).

Moreover, we can estimate, for all $u\in(e^{-\frac{1}{2}\tau}R_{D},\frac{1}{2}]$,
\begin{align}\label{upperboundofext}
z_{\text{ext}}(u,\tau)= & A_{2}e^{-\tau}\frac{1}{u^{2}}(1-u^{2})^{2}-e^{-2\tau}A_{3}\frac{1}{u^{4}}(1-u^{2})^{2}\left(1-2u^{2}+2u^{2}(1-u^{2})\left(\log(1-u^{2})-2\log u\right)\right)\\\nonumber
\leq & A_{2}R_{D}^{-2}=A_{2}^{2}A_{3}^{-1}D^{-2}\leq D^{-2}.
\end{align}

\noindent The following proposition guarantees that an appropriately chosen subsolution fits underneath $z$.\\ 

\begin{prop}
For any smooth function $z_{0}:[0,1]\to(0,1)$ satisfying $z_{0}(0)=1$
and $z_{0}'(0)=0$, we can choose $D<\infty$ sufficiently large so
that there exists $\underline{\tau}>0$ with $z_{\ast}(\cdot,\tau)\leq z_{0}$
for all $\tau\geq\underline{\tau}$, where $z_*$ is the subsolution defined in (\ref{zstar}).
\end{prop}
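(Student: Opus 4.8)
The idea is to verify conditions (C1) and (C2) of the comparison principle with $z^- = z_\ast$ and $z^+ = z_0$, at an initial time $\tau_0$ chosen large and for a window $[\tau_0,\overline\tau]$, and then let $\overline\tau \to \infty$. The barrier $z_0$ being a fixed smooth function with $z_0(0)=1$, $z_0'(0)=0$ and $0<z_0<1$ on $(0,1]$, the key point is that the subsolution $z_\ast(\cdot,\tau)$ becomes \emph{small} on the whole interval $(0,1]$ except near $u=0$, where its value approaches $1$ with vanishing derivative as well. So the proof splits the interval $[0,1]$ into an inner region near $u=0$, where both functions are close to $1$ and one compares via the quadratic expansions, and an outer region $[\sigma,1]$ for a fixed small $\sigma$, where $z_\ast$ is uniformly small (of order $D^{-2}$ plus exponentially decaying terms) while $z_0$ is bounded below by a positive constant $c(\sigma)$.

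\textbf{Key steps.} First I would record the asymptotics already collected in the excerpt: $z_{\text{int}}(u,\tau) = \mathcal{B}(A_1 e^{\tau/2} u)$ with $\mathcal{B}(r) = 1 - b_2 r^2 + O(r^4)$ near $r=0$ and $\mathcal{B}(r) = r^{-2} + c_2 r^{-4} + O(r^{-6})$ near $r = \infty$; and the estimate \eqref{upperboundofext}, namely $z_{\text{ext}}(u,\tau) \le D^{-2}$ for $u \in (e^{-\tau/2}R_D, \tfrac12]$, where $R_D = D\sqrt{A_3/A_2}$. Second, for the outer region: fix $\sigma>0$ with $z_0 \ge c_0 > 0$ on $[\sigma,1]$ (possible since $z_0>0$ on the compact set $[\sigma,1]$, as $z_0(0)=1$ and $z_0$ continuous forces $z_0>0$ near $0$ too, but here we only need $[\sigma,1]$). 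On $[\sigma,1]$ one has $z_\ast = \max\{z_{\text{int}},z_{\text{ext}}\}$ or $z_{\text{ext}}$; since $z_{\text{int}}(u,\tau) = \mathcal{B}(A_1 e^{\tau/2}u) \le C(A_1 e^{\tau/2}\sigma)^{-2} = C A_1^{-2}\sigma^{-2} e^{-\tau}$ for $\tau$ large, and $z_{\text{ext}} \le D^{-2}$ on $[\sigma,\tfrac12]$ while on $[\tfrac12,1]$ the expansion $z_{\text{ext}}(u,\tau) = A_2 e^{-\tau}Z_1(u) - e^{-2\tau}A_3\zeta(u)$ is $O(e^{-\tau})$ uniformly on $[\tfrac12,1]$, one gets $z_\ast \le D^{-2} + C e^{-\tau}$ on $[\sigma,1]$. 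Choosing $D$ large so $D^{-2} < \tfrac12 c_0$ and then $\tau_0$ large so $Ce^{-\tau_0} < \tfrac12 c_0$ gives $z_\ast < z_0$ strictly on $[\sigma,1]$ for all $\tau \ge \tau_0$, handling (C1) there and the endpoint $u=1$ of (C2) (where $z_\ast(1,\tau) = z_{\text{ext}}(1,\tau) = 0 < z_0(1)$ since $Z_1(1) = \zeta(1) = 0$).

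\textbf{Inner region and the main obstacle.} Near $u=0$ we have $z_\ast = z_{\text{int}}(u,\tau) = \mathcal{B}(A_1 e^{\tau/2}u)$. I split further: on the tiny region where $A_1 e^{\tau/2} u \le \rho$ for a fixed small $\rho$, use $\mathcal{B}(r) \le 1 - \tfrac12 b_2 r^2 = 1 - \tfrac12 b_2 A_1^2 e^\tau u^2$, and compare with $z_0(u) = 1 - c_1 u^2 + o(u^2)$ where $c_1 = -\tfrac12 z_0''(0) \ge 0$; since $e^\tau \to \infty$, for $\tau$ large the coefficient $\tfrac12 b_2 A_1^2 e^\tau$ dominates $c_1$ on the region in question, giving $z_\ast \le z_0$, and at $u=0$ both equal $1$ so $z_\ast(0,\tau) = z_0(0)$, which is the $\le$ needed in (C2). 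On the intermediate region $\rho \le A_1 e^{\tau/2} u$ and $u \le \sigma$, use $\mathcal{B}(r) \le C(A_1 e^{\tau/2}u)^{-2} \le C\rho^{-2} \cdot (\text{something} \to 0)$? — here one must be careful: $\mathcal{B}(r) \le \mathcal B(\rho)<1$ is a fixed constant $<1$, but $z_0$ on $[u : A_1 e^{\tau/2}u \ge \rho]$ need not be bounded below by that constant unless the region shrinks; since the region is $u \ge \rho A_1^{-1} e^{-\tau/2} \to 0$, and $z_0$ is continuous with $z_0(0)=1$, one in fact has $z_0 \ge \mathcal B(\rho)$-ish only very near $0$. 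The cleanest fix: choose $\rho$ small enough that $\mathcal B(\rho) < z_0$ on a fixed neighborhood $[0,\sigma']$ of $0$ (possible because $\mathcal B(\rho)\uparrow 1$ as $\rho\downarrow 0$ and $z_0$ is continuous with $z_0(0)=1$ — wait, this goes the wrong way). The genuinely delicate point — and the main obstacle — is matching the inner quadratic comparison with the outer smallness across the transition scale $u \sim e^{-\tau/2}$: one needs $z_0$ to stay above $\mathcal B(A_1 e^{\tau/2}u)$ uniformly, which works because on the transition region $u = \Theta(e^{-\tau/2})$, so $u \to 0$, $z_0(u) \to 1$, while $\mathcal B(A_1 e^{\tau/2}u) = \mathcal B(\Theta(1))$ is bounded away from $1$; choosing $D$ (hence $R_D$, hence the transition scale) large makes $A_1 e^{\tau/2} u$ large there, pushing $\mathcal B$ down to $\Theta(D^{-2})$, well below $z_0 \approx 1$. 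So the quantitative chain is: pick $\sigma$ (outer bound), then $\rho$ small (inner quadratic), then $D$ large (controls $\mathcal B$ at transition scale and $z_{\text{ext}}$ on the outer region), then $\tau_0$ large (kills all $e^{-\tau}$ terms and ensures $e^\tau$ dominates in the inner quadratic comparison), consistent with the parameter-ordering diagram $z_0 \to A_2 \to A_3 \to \beta \to D \to A_1 \to \tau_0$ already displayed. Assembling these three regional comparisons and invoking the comparison principle on $[\tau_0,\overline\tau]$ for every $\overline\tau$, then taking $\overline\tau\to\infty$, completes the proof.
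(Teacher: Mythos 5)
Your regional decomposition is close to the paper's actual argument (the paper likewise splits $[0,1]$ into an innermost zone, a middle zone reaching to $2e^{-\tau/2}R_{D}$, the zone $[e^{-\tau/2}R_{D},\tfrac12]$ where $z_{\text{ext}}\leq D^{-2}$, and $[\tfrac12,1]$ where $z_{\text{ext}}=O(e^{-\tau})$), and your innermost quadratic (Taylor) comparison is a sound substitute for the paper's second-derivative comparison. But two points, as written, do not go through. First, the framing via the parabolic comparison principle is both unnecessary and inadmissible here: the Proposition is a static statement, to be verified separately on each time slice $\tau\geq\underline{\tau}$, and the comparison lemma requires $z^{+}$ to be a supersolution of $\partial_{\tau}z=\mathcal{D}[z]$ --- the arbitrary smooth function $z_{0}$ is not one (only in the Corollary is $z^{+}$ taken to be the actual solution $z$). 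Since your regional estimates are uniform in $\tau\geq\tau_{0}$, they already yield the conclusion, so the closing step ``invoke the comparison principle on $[\tau_{0},\overline{\tau}]$ and let $\overline{\tau}\to\infty$'' should simply be deleted rather than repaired.

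Second, your parameter choices leave the strip $2e^{-\tau/2}R_{D}\leq u\leq\sigma$ uncovered. There $z_{\ast}=z_{\text{ext}}\leq D^{-2}$, so you need $D^{-2}\leq z_{0}$ on all of $(0,\sigma]$; but you calibrated $D$ only against $c_{0}=\inf_{[\sigma,1]}z_{0}$, and $z_{0}$ may dip below $\tfrac12 c_{0}$ somewhere in $(0,\sigma)$. The paper's choice $D^{-2}\leq\inf_{(0,1]}z_{0}$ fixes this at no cost. Relatedly, the ``main obstacle'' you flag in the intermediate zone is not resolved by taking $D$ large: near the left end of that zone $r=A_{1}e^{\tau/2}u\approx\rho$, so $\mathcal{B}(r)\approx\mathcal{B}(\rho)$ is close to $1$ regardless of $D$. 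What does resolve it is the observation you half-make: the region where $z_{\text{int}}$ is active, $[0,2e^{-\tau/2}R_{D}]$, shrinks to $\{0\}$ as $\tau\to\infty$, so for fixed $\rho$ and $D$ one has $z_{0}>\mathcal{B}(\rho)\geq z_{\text{int}}$ there once $\tau$ is large, by continuity of $z_{0}$ at $0$ (the order is: fix $\rho$, then take $\tau$ large --- not ``choose $\rho$ so that $\mathcal{B}(\rho)<z_{0}$ on a fixed neighborhood,'' which, as you noticed, goes the wrong way). Alternatively, the paper sidesteps the issue by comparing derivatives on the middle zone, $\partial_{u}z_{\text{int}}\leq-\delta_{1}A_{1}e^{\tau/2}<z_{0}'$, and integrating outward from the innermost zone. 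With these two repairs your proof is correct and essentially parallel to the paper's.
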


\begin{proof}
In fact, we choose $D<\infty$ sufficiently large so that $D^{-2}\leq\text{inf}_{u\in(0,1]}z_{0}$.
We recall that $\mathcal{B}''(0)<0$, hence there exists $\delta>0$
universal such that $\mathcal{B}''(r)\leq-\frac{1}{2}\delta$ for
all $r\in[0,\delta]$. In particular,
\[
(\partial_{u}^{2}z_{\text{int}})(u,\tau)-z_{0}''(u)\leq A_{1}^{2}e^{\tau}\mathcal{B}''(A_{1}e^{\frac{1}{2}\tau}u)+\sup_{r\in[0,\frac{1}{2}]}|z_{0}''|<-\frac{1}{4}A_{1}^{2}e^{\tau}\delta
\]
whenever $\tau\geq\underline{\tau}(A_{1},\delta,z_{0})$ and $u\in[0,A_{1}^{-1}e^{-\frac{1}{2}\tau}\delta]$.
Upon integration, this implies (since $\mathcal{B}'(0)=0=z_{0}'(0)$
and $\mathcal{B}(0)=1=z_{0}(0)$) $\partial_{u}z_{\text{int}}(u,\tau)\leq z_{0}'(u)$
and $z_{\text{int}}(u,\tau)\leq z_{0}(u)$ for all $u\in[0,A_{1}^{-1}e^{-\frac{1}{2}\tau}\delta]$.
On the other hand, because $\mathcal{B}$ is strictly decreasing on
$(0,\infty)$, we can find $\delta_{1}>0$ such that $\inf_{r\in[A_{1}^{-1}\delta,2R_{D}]}\mathcal{B}'(r)<-\delta_{1}$.
Then 
\[
\partial_{u}z_{\text{int}}(u,\tau)-z_{0}'(u)<-\delta_{1}A_{1}e^{\frac{1}{2}\tau}-z_{0}'(u)<0
\]
for all $u\in[A_{1}^{-1}e^{-\frac{1}{2}\tau}\delta,2e^{-\frac{1}{2}\tau}R_{D}]$
whenever $\tau\geq\underline{\tau}(A_{1},\delta,\delta_{1},z_{0})$
is sufficiently large. Upon integration, we get $z_{\text{int}}(\cdot,\tau)\leq z_{0}$
on $[0,2e^{-\frac{1}{2}\tau}R_{D}]$. Furthermore, we have $z_{\text{ext}}(\cdot,t)\leq z_{0}$
on $[e^{-\frac{1}{2}\tau}R_{D},\frac{1}{2}]$ by (\ref{upperboundofext}) and our choice of $D$. In other words, $z_{\ast}(\cdot,\tau)\leq z_{0}$
on $[0,\frac{1}{2}]$ for $\tau$ large. Because $\inf_{u\in[\frac{1}{2},1]}z_{0}>0$,
and $\lim_{\tau\to\infty}\max_{u\in[\frac{1}{2},1]}|z_{\text{ext}}(u,\tau)|=0$,
the proposition follows.
\end{proof}
\begin{cor}
For any closed, rotationally invariant Ricci flow $(M^{n},(g_{t})_{t\in[0,T)})$
developing a singularity at time $T<\infty$, we have
\[
\limsup_{t\nearrow T}\max_{M}|Rm(\cdot,t)|(T-t)^{2}<\infty.
\]
\end{cor}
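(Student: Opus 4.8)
The plan is as follows. First I would note that finiteness of $\limsup_{t\nearrow T}\max_M|Rm|(T-t)^2$ is unchanged under parabolic rescaling, so I may normalize the initial data so that $\nu(\cdot,0)\geq-1$; by Theorem \ref{rotpinch} the flow then satisfies the Hamilton--Ivey estimate, and $\min_M R(\cdot,0)$ is finite with $\min_M R(\cdot,t)$ nondecreasing in $t$. If the singularity is of Type I, i.e.\ $\sup_{M\times[0,T)}|Rm|(T-t)<\infty$, then $|Rm|(T-t)^2\leq T\cdot|Rm|(T-t)$ is bounded and there is nothing to prove, so I assume from now on that the singularity is not of Type I. Then a standard point-picking argument (exactly as in the first lemma of this section) produces, after rescaling by the curvature, an eternal, non-flat, rotationally invariant $\kappa$-solution as a pointed Cheeger--Gromov limit; Lemma \ref{flat} forces the orbits at the chosen basepoints to have uniformly bounded diameter, so Proposition \ref{flowequivcompact} upgrades the convergence to be $O(n+1)$-equivariant, and by the classification of rotationally invariant $\kappa$-solutions an eternal non-flat one is the Bryant soliton. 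Since its tip is an $O(n+1)$-fixed point and equivariant diffeomorphisms send fixed points to fixed points, $M$ must contain an $O(n+1)$-fixed point; hence $M\cong\mathbb{S}^{n+1}$ and the singularity is of Type II. At this point I invoke the first lemma of this section, which reduces the statement to proving $\limsup_{t\nearrow T}\max\bigl(|Rm|(o_S,t),|Rm|(o_N,t)\bigr)(T-t)^2<\infty$ for the two singular orbits $o_S,o_N$.

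Next I would control the curvature at $o_N$ (the argument at $o_S$ being identical) by fitting Wu's subsolution under the flow on the cap between $o_N$ and the nearest bump. Write $\mathbb{S}^{n+1}\setminus\{o_N,o_S\}\cong(-1,1)\times\mathbb{S}^n$ with $o_N$ at $x=-1$, and after truncating $[0,T)$ assume the number of bumps and necks is constant and the leftmost bump $x_*(t)$ of $\psi(\cdot,t)$ varies continuously (there is at least one bump since $\psi$ vanishes at both poles). On $\{-1<x<x_*(t)\}$ one has $\partial_s\psi>0$, so the horizontal coordinate $z(\psi,t)=(\partial_s\psi)^2$ is defined, with $z(0,t)=1$ and $\partial_\psi z(0,t)=0$; after the change of variables $u=\psi/\sqrt{2(n-1)(T-t)}$, $\tau=-\log(T-t)$, it solves $\partial_\tau z=\mathcal{D}[z]$, and by Lemma \ref{upperbound of u} the bump sits at $u_{\mathrm{bump}}(t)>1$ for every $t$, so the interval $[0,1]$ always lies strictly inside the cap. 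Fixing a large $\tau_1$ and setting $z_0:=z(\cdot,\tau_1)$ on $[0,1]$ (which is positive there since $u_{\mathrm{bump}}>1$), I apply the Proposition above to choose $D$ large so that the subsolution $z_*$ from \eqref{zstar} satisfies $z_*(\cdot,\tau)\leq z_0$ for $\tau\geq\underline\tau\geq\tau_1$, and then compare $z_*$ (subsolution) with $z$ (solution) via Wu's maximum principle on $[0,1]\times[\underline\tau,\infty)$: the boundary conditions hold because $z_*(0,\tau)=\mathcal{B}(0)=1=z(0,\tau)$ and $z_*(1,\tau)=0\leq z(1,\tau)$. This yields $z\geq z_*$, hence $z(u,\tau)\geq z_{\mathrm{int}}(u,\tau)=\mathcal{B}(A_1e^{\tau/2}u)$ on $[0,e^{-\tau/2}R_D]$, i.e.
\[
z(\psi,t)\;\geq\;\mathcal{B}\!\left(\frac{A_1\psi}{\sqrt{2(n-1)}\,(T-t)}\right)\qquad\text{for }\;0\leq\psi\leq\sqrt{2(n-1)}\,R_D\,(T-t).
\]

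Finally I would extract the curvature bound. Using $\mathcal{B}(r)=1-b_2r^2+O(r^4)$ and $K_{orb}(\psi,t)=\psi^{-2}(1-z(\psi,t))$, the displayed inequality gives $K_{orb}(\psi,t)\leq\psi^{-2}\bigl(1-\mathcal{B}(A_1\psi/(\sqrt{2(n-1)}(T-t)))\bigr)$; letting $\psi\to0$ (and using that $g_t$ is smooth at $o_N$, so $K_{orb}(o_N,t)=\lim_{\psi\to0}K_{orb}(\psi,t)$) yields $K_{orb}(o_N,t)\leq b_2A_1^2\bigl(2(n-1)(T-t)^2\bigr)^{-1}$. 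At a pole the full $O(n+1)$-symmetry forces the curvature tensor to be that of a constant-curvature metric, so $K_{rad}(o_N,t)=K_{orb}(o_N,t)$ and $R(o_N,t)=n(n+1)K_{orb}(o_N,t)\leq C(n)(T-t)^{-2}$; on the other hand $R(o_N,t)\geq\min_M R(\cdot,t)\geq\min_M R(\cdot,0)>-\infty$. Hence $|R(o_N,t)|\leq C(n)(T-t)^{-2}$ for $t$ near $T$, so $|Rm|(o_N,t)=c(n)|K_{orb}(o_N,t)|\leq C(n)(T-t)^{-2}$, and likewise at $o_S$. Then $\limsup_{t\nearrow T}\max\bigl(|Rm|(o_S,t),|Rm|(o_N,t)\bigr)(T-t)^2<\infty$, and the first lemma of this section finishes the proof.

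The main obstacle is verifying that Wu's comparison principle genuinely applies here: one must check the growth bounds $|\tfrac1u\partial_u z|,|\partial_u^2 z|\leq Ke^{\lambda\tau}$ for $z$ coming from an \emph{arbitrary} closed rotationally invariant flow on the cap, and arrange the strict ordering at the initial time $\underline\tau$ (both of which are handled as in \cite{WuTypeII}), as well as making the cutoff $D$ a legitimate choice. This last point is exactly where Lemma \ref{upperbound of u} is indispensable: it forces $u_{\mathrm{bump}}>1$ \emph{uniformly in $t$}, so that $[0,1]$ remains inside the cap (where $z>0$) for all times and the comparison can be run right up to $\tau=\infty$.
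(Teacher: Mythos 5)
Your route is essentially the paper's: dismiss the Type-I case, use a Type-II blow-up (an eternal, nonflat, rotationally invariant $\kappa$-solution, hence the Bryant soliton by Theorem \ref{kappa-solution-classification-noncompact}, with Lemma \ref{flat} and Proposition \ref{flowequivcompact} giving equivariant convergence) to force a fixed point of the $O(n+1)$-action, so $M\cong\mathbb{S}^{n+1}$, and then invoke the first lemma of the section to reduce to a $(T-t)^{-2}$ bound at the two poles; after that, pass to horizontal coordinates on the cap between a pole and the left-most bump (which Lemma \ref{upperbound of u} keeps outside $\{u\le 1\}$) and compare with Wu's subsolution $z_*$ from (\ref{zstar}). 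Your explicit treatment of the $\mathbb{S}^1\times\mathbb{S}^n$ alternative and of the lower curvature bound at the pole (via monotonicity of $\min_M R$) fills in details the paper leaves implicit, and the extraction of $K(o_S,t)\le C(T-t)^{-2}$ from $z\ge z_{\mathrm{int}}$ matches the paper.

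There is, however, a genuine gap in how you apply the comparison principle. You set $z_0:=z(\cdot,\tau_1)$ for a fixed large $\tau_1$, obtain from the fitting proposition a time $\underline\tau\ge\tau_1$ with $z_*(\cdot,\tau)\le z_0$ for all $\tau\ge\underline\tau$, and then run the maximum principle on $[0,1]\times[\underline\tau,\infty)$. But hypothesis (C1) requires the ordering at the \emph{initial time of the comparison region}, i.e. $z_*(u,\underline\tau)<z(u,\underline\tau)$ for $u\in(0,1)$, whereas what you have is only $z_*(\cdot,\underline\tau)\le z(\cdot,\tau_1)$; since $\underline\tau$ may be much larger than $\tau_1$ and $z(\cdot,\tau)$ is not monotone in $\tau$, this does not give the needed initial ordering, and you cannot choose $\tau_1$ after $\underline\tau$ because $\underline\tau$ depends on $z_0$ and hence on $\tau_1$. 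The paper resolves exactly this point by a time shift: it takes $z_0:=z(\cdot,0)$, the profile at the (truncated) initial time, and compares $z$ with $z_{\mathrm{comp}}(u,\tau):=z_*(u,\tau_0+\tau)$, which is still a subsolution because $\partial_\tau z=\mathcal{D}[z]$ is autonomous; then the ordering holds at the common initial time $\tau=0$ and the maximum principle applies on $[0,1]\times[0,\infty)$. With this one-line repair (plus attention to the strictness of (C1) on $(0,1)$, which both you and the paper treat lightly, and to the growth hypotheses on $\partial_u z,\partial^2_u z$, satisfied by the smooth solution on compact $\tau$-intervals), your argument goes through and yields the same $C(T-t)^{-2}$ bound at the poles, after which the first lemma finishes the proof exactly as you say.
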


\begin{proof}
By symmetry, it suffices to show that 
\[
\limsup_{t\nearrow T}\max_{M}|Rm(o_{S},t)|(T-t)^{2}<\infty,
\]
where $o_{S}\in\mathbb{S}^{n+1}$ is the south pole (corresponding
to $x=-1$). We may (by considering the flow on a subinterval $[T',T)$
if necessary) assume that $x_{\ast}(t)$ is a smooth function representing
the position of the left-most bump of the solution, so that $\partial_{x}\psi(x,t)>0$
for all $x\in(-1,x_*(t))$. Then the function $z(u,\tau)$ is well-defined,
smooth, and positive on $\{0\leq u\leq1,\tau\geq0\}$; one may see from Lemma \ref{upperbound of u} that the domain of $z$ indeed contains $\{0\leq u\leq1,\tau\geq0\}$. Moreover, $u$
satisfies the equation 
\[
\partial_{\tau}z= \mathcal{D}_u[z]
\]
in this region. Choose $\tau_{0}<\infty$ according to the previous
lemma, so that $z_{\ast}(u,\tau)$ satisfies 
\[
\partial_{\tau}z_{\ast}< \mathcal{D}_u[z_{\ast}]
\]
on the region $\{0\leq u\leq1,\tau\geq\tau_{0}\}$, as well as $z_{\ast}(u,\tau_{0})<z(u,0)$
for all $u\in(0,1)$, and $z_{\ast}(0,\tau)=1$, $z_{\ast}(1,\tau)=0$
for all $\tau\geq0$. We observe that $z_{\text{comp}}(u,\tau):=z_{\ast}(u,\tau_{0}+\tau)$
then satisfies $z_{\text{comp}}(u,\tau)<z(u,0)$ for all $u\in(0,1)$,
and $z_{\text{comp}}(0,\tau)=1$, $z_{\text{comp}}(1,\tau)=0$ for
all $\tau\geq0$, as well as
\[
\partial_{\tau}z_{\text{comp}}< \mathcal{D}_u[z_{\text{comp}}]
\]
on the region $\{0\leq u\leq1,\tau\geq0\}$. We may thus apply the
maximum principle to obtain $z(u,\tau)\geq z_{\text{comp}}(u,\tau)$
for all $u\in[0,1]$, $\tau\in[0,\infty)$. In particular, the radial
curvature is given by 
\[
\frac{1-z(u,\tau)}{2(n-1)e^{-\tau}u^{2}}\le\frac{1-z_{\text{comp}}(u,\tau)}{2(n-1)e^{-\tau}u^{2}},
\]
so taking $u\searrow0$ tells us that the sectional curvature $K(o_{S},T-e^{-\tau})$
of the Ricci flow solution $(M,(g_{t})_{t\in[0,T)})$ at time $t=T-e^{-\tau}$
at the pole $o_{S}$ is bounded by 
\[
e^{\tau}\lim_{u\searrow0}\frac{1-z_{\text{comp}}(u,\tau)}{2(n-1)u^{2}}=e^{\tau}\lim_{u\searrow0}\frac{1-\mathcal{B}(A_{1}e^{\frac{1}{2}\tau}u)}{2(n-1)u^{2}}=\frac{b_{2}A_{1}^{2}}{2(n-1)}e^{2\tau}.
\]
In other words, 
\[
K(o_{S},t)\leq\frac{b_{2}A_{1}^{2}}{2(n-1)(T-t)^{2}}.
\]
\end{proof}

\begin{cor} If $\mathcal{M}$ is a rotationally invariant Ricci flow spacetime obtained as in \ref{spacetimeexists}, and if $t^{\ast}$ is a singular time such that there exists $\epsilon>0$ with no singular times in $[t^{\ast}-\epsilon,t^{\ast})$, then 
$$\limsup_{t\nearrow t^{\ast}}\max_{\mathcal{M}_t}|Rm|(\cdot)(t^{\ast}-t)^2 <\infty.$$ 
\end{cor}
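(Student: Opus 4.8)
The plan is to reduce to the situation treated in the preceding corollary, namely a (now possibly non‑compact) rotationally invariant Ricci flow on a single component carrying a singular orbit, and to dispose of the components without singular orbits by a rigidity argument. First I would use the hypothesis that $t^{\ast}$ is isolated: since there are no singular times in $[t^{\ast}-\epsilon,t^{\ast})$, the restriction of $\mathcal{M}$ to this interval is a smooth Ricci flow, and $\mathcal{M}|_{[t^{\ast}-\epsilon/2,t^{\ast})}\cong N\times[t^{\ast}-\epsilon/2,t^{\ast})$ for $N:=\mathcal{M}_{t^{\ast}-\epsilon/2}$, a rotationally invariant (possibly incomplete, possibly disconnected) Riemannian manifold. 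It suffices to bound $|Rm|(\cdot,t)$ by $C(t^{\ast}-t)^{-2}$ on each connected component $N_0$ of $N$ with a constant depending only on $N_0$; there are only finitely many components to consider, since by $\kappa_0$-noncollapsing and the properness of $R$ on $\mathcal{M}_{\le t^{\ast}-\epsilon/2}$ (assertion (a) of Theorem \ref{spacetimeexists}) only finitely many components can develop high curvature, and the rest are irrelevant. By the classification of connected rotationally invariant manifolds into $\mathbb{S}^{n+1},\mathbb{R}^{n+1},\mathbb{R}\times\mathbb{S}^{n}$ and $\mathbb{S}^{1}\times\mathbb{S}^{n}$ (and its relative version, Proposition \ref{rotdifgen}), $N_0$ is equivariantly diffeomorphic to a regular subdomain of $\mathbb{S}^{n+1}$ or $\mathbb{S}^{1}\times\mathbb{S}^{n}$; the latter forces $t^{\ast}-\epsilon/2=0$ (post‑first‑surgery slices are never $\mathbb{S}^{1}\times\mathbb{S}^{n}$), in which case $N_0$ is closed and the preceding corollary applies directly. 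So I may assume $N_0$ carries $0$, $1$, or $2$ singular orbits.

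If $N_0$ has \emph{no} singular orbit, I would show the singularity at $t^{\ast}$ on $N_0$ is of Type I. Suppose not; then the point‑picking argument of the first lemma of this section applies: using $0$-completeness, the equivariant $\epsilon$-canonical neighborhood assumption, $\kappa_0$-noncollapsing (Theorem \ref{universal kappa}) and Hamilton–Ivey pinching (Theorem \ref{rotpinch}), one produces an equivariant pointed Cheeger–Gromov–Hamilton limit which is a non‑flat \emph{eternal} rotationally invariant $\kappa$-solution. By Theorems \ref{kappa-solution-classification-noncompact} and \ref{kappa-solution-classification-compact} the only eternal one is the Bryant soliton, whose tip is an $O(n+1)$-fixed point; but the approximating equivariant diffeomorphisms $\phi_j$ must send singular orbits to singular orbits, and $N_0$ has none—contradiction. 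Hence $\sup_{N_0}|Rm|(\cdot,t)\le C(t^{\ast}-t)^{-1}$ near $t^{\ast}$, which is stronger than required.

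If $N_0$ has a singular orbit $o$ (so near $o$ it looks like $\mathbb{R}^{n+1}$), I would re‑run the proof of the preceding corollary on $N_0$. First, exactly as in the first lemma of this section, it suffices to bound $|Rm|$ at each of the (finitely many) singular orbits of $N_0$ at the rate $(t^{\ast}-t)^{-2}$: a Type‑II blow‑up would again converge to a Bryant soliton with $\phi_j(x_{\mathrm{Bry}})$ lying among these singular orbits, forcing the Bryant tip to have vanishing curvature—impossible. So fix a singular orbit $o$. After replacing $[t^{\ast}-\epsilon/2,t^{\ast})$ by a smaller interval, I may assume the leftmost critical point $x_{\ast}(t)$ of the warping function on the $o$-side is a smooth function of $t$ and that the bump/neck count is constant; such a bump exists for $t$ near $t^{\ast}$ because, at a singular orbit where curvature blows up, the canonical neighborhood description forces a (bumped) spherical‑cap profile rather than a bumpless Bryant‑cap profile. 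Lemma \ref{upperbound of u}—whose proof uses only $\partial_s^2\psi(x_{\ast}(t),t)<0$ and the evolution of $\psi^2$, hence is insensitive to the global topology—then gives $\psi(x_{\ast}(t),t)>\sqrt{2(n-1)(t^{\ast}-t)}$, so the horizontal Type‑I coordinate function $z(u,\tau)$ is well defined on $\{0\le u\le 1,\ \tau\ge 0\}$ and solves $\partial_\tau z=\mathcal{D}_u[z]$. The Proposition preceding this corollary then supplies, for $z_0:=z(\cdot,\tau_0)$, the distributional subsolution $z_{\ast}$ of \eqref{zstar} with $z_{\ast}(\cdot,\tau_0)<z(\cdot,0)$ and matching boundary values, and the comparison principle gives $z\ge z_{\mathrm{comp}}$, whence $K(o,t)\le \tfrac{b_2 A_1^2}{2(n-1)}(t^{\ast}-t)^{-2}$ just as in the closed case. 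Combined with the first‑lemma reduction, this yields $|Rm|\le C(t^{\ast}-t)^{-2}$ on $N_0$, completing the proof.

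The only genuinely new content beyond the preceding corollary is checking that these reductions survive the loss of compactness. The cleanest step is ruling out Type‑II behavior on components without singular orbits via Bryant‑tip rigidity. The main obstacle is the complementary step on a component with a singular orbit: verifying that when curvature blows up at that orbit the warping function has a bump nearby (so that $x_{\ast}(t)$ is defined on an interval up to $t^{\ast}$ and Wu's comparison applies verbatim), which is where the canonical neighborhood theorem and the classification of rotationally invariant $\kappa$-solutions do the work; the bookkeeping of finitely many components with uniform constants is routine.
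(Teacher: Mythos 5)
Your first reduction is the same as the paper's, but you then abandon its key simplification. Since there are no singular times in $[t^{\ast}-\epsilon,t^{\ast})$, every slice $\mathcal{M}_t$ with $t\in[t^{\ast}-\epsilon,t^{\ast})$ has bounded curvature; $0$-completeness then makes each such slice complete, and properness of $R$ on $\mathcal{M}_{\leq T}$ (assertion (a) of Theorem \ref{spacetimeexists}) makes it compact. Hence pulling back along $\partial_{\mathfrak{t}}$ identifies $\mathcal{M}_{[t^{\ast}-\epsilon,t^{\ast})}$ with a smooth \emph{closed} rotationally invariant Ricci flow, and the preceding corollary applies verbatim (component by component). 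That one observation is the paper's entire proof. By instead allowing $N=\mathcal{M}_{t^{\ast}-\epsilon/2}$ to be incomplete, you create casework that never occurs and that you then have to handle with new arguments.

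This is where the genuine gap lies. For a (hypothetically incomplete) component carrying a singular orbit, your argument needs the leftmost bump $x_{\ast}(t)$ to exist up to $t^{\ast}$, and you justify this by asserting that at a singular orbit with blowing-up curvature the canonical neighborhood assumption "forces a bumped spherical-cap profile rather than a bumpless Bryant-cap profile." That is unjustified: a Bryant-type cap, whose warping function is monotone with no critical point, is exactly one of the admissible canonical neighborhoods at a high-curvature cap, and it is precisely the profile of the potential Type-II behavior the corollary is meant to control, so this step begs the question. In the paper's closed-manifold argument the bump exists for purely topological reasons ($\psi$ vanishes at both poles), not because of the CNA; on an incomplete ball-like component no such reason is available, and without the bump Lemma \ref{upperbound of u} and Wu's comparison on $u\in[0,1]$ do not get off the ground. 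Likewise, your Type-II point-picking on a noncompact or incomplete component without singular orbits is only sketched (the maximum of $|Rm|(x,t)(T_j-t)^2$ need not be attained there). Both problematic steps sit in cases that are vacuous once compactness of the slices is noted, so the correct repair is not to fix the incomplete-component arguments but to prove (or simply invoke) that absence of singular times on $[t^{\ast}-\epsilon,t^{\ast})$ yields compact slices and then quote the preceding corollary, as the paper does.
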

\begin{proof} By pulling back $\mathcal{M}$ by a (possibly time-dependent) diffeomorphism, we can view $\mathcal{M}_{[t^{\ast}-\epsilon,t^{\ast})}$ as a smooth, closed Ricci flow.
\end{proof}

\section{Appendix: Preparatory results in Section 8 of the Bamler-Kleiner paper}

In this subsection we recall several results from \cite[Section 8]{BamlerKleiner17} which will be used to construct the comparison domain; however, we assume rotational invariance throughout, which allows us to strengthen the conclusions, and ensure that the modified a priori assumptions from section 5.1 hold. Since their proofs are not essentially different from those in \cite{BamlerKleiner17}, we shall not include the detailed proofs. 

\begin{lem}[Lemma 8.10 in \cite{BamlerKleiner17}, bounded curvature at bounded distance]\label{B-K Lemma 8.8}
For every $A<\infty$ there is a constant $C=C(n,A)<\infty$, such that if $$\epsilon_{\operatorname{can}}\leq\overline\epsilon_{\operatorname{can}}(A),$$ then the following holds. 

Let $0<r\leq 1$ and consider an $(n+1)$-dimensional rotationally invariant $(\epsilon_{\operatorname{can}}r,t_0)$-complete Ricci flow spacetime $\mathcal{M}$ that satisfies the equivariant $\epsilon_{\operatorname{can}}$-canonical neighborhood assumption at scales $(\epsilon_{\operatorname{can}}r,1)$. If $x\in\mathcal{M}_{[0,t_0]}$ and $\rho_1(x)\geq r$, then $P_{\mathcal{O}}(x,A\rho_1(x))$ is unscathed, and we have
\begin{eqnarray}
C^{-1}\rho_1(x)\leq\rho_1\leq\rho_1(x)\quad\text{ on }\quad P_{\mathcal{O}}(x,A\rho_1(x)).
\end{eqnarray}
\end{lem}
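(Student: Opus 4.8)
The statement is a verbatim adaptation of \cite[Lemma 8.10]{BamlerKleiner17} to the rotationally invariant setting, so the plan is to follow the Bamler--Kleiner argument essentially unchanged, verifying that each ingredient it uses is available in our setting. The proof is by contradiction and a point-picking argument. Suppose the statement fails; then there is some $A<\infty$ and sequences $r_i \in (0,1]$, $\epsilon_{\operatorname{can},i}\to 0$, rotationally invariant $(\epsilon_{\operatorname{can},i} r_i,t_{0,i})$-complete Ricci flow spacetimes $\mathcal{M}_i$ satisfying the equivariant $\epsilon_{\operatorname{can},i}$-canonical neighborhood assumption at scales $(\epsilon_{\operatorname{can},i} r_i, 1)$, and points $x_i \in (\mathcal{M}_i)_{[0,t_{0,i}]}$ with $\rho_1(x_i)\geq r_i$, such that either $P_{\mathcal{O}}(x_i, A\rho_1(x_i))$ is scathed or the two-sided curvature bound $C^{-1}\rho_1(x_i)\leq \rho_1 \leq \rho_1(x_i)$ fails on this parabolic neighborhood, for every choice of $C=C_i\to\infty$. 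After parabolically rescaling so that $\rho_1(x_i)=1$, one runs the standard point-picking scheme: find a point $y_i$ in a controlled parabolic neighborhood of $x_i$ where $\rho_1(y_i)$ is small (say $\leq C_i^{-1}$ after rescaling) but where a definite-size backward parabolic neighborhood still has curvature controlled by that of $y_i$ up to a fixed factor. Because $\rho_1(y_i) \gg \epsilon_{\operatorname{can},i} r_i$ along the scheme, the canonical neighborhood assumption applies at $y_i$ and throughout the picked neighborhood.

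The next step is to extract a limit. Rescaling by $R(y_i)$ and using that the canonical neighborhood assumption gives, after rescaling, uniformly bounded curvature on arbitrarily large backward parabolic balls around $y_i$ (the key point being that the $(\epsilon_{\operatorname{can},i}r_i,t_{0,i})$-completeness guarantees the rescaled flows remain unscathed on these balls for $i$ large), one applies the equivariant Cheeger--Gromov--Hamilton compactness theorem, Proposition \ref{flowequivcompact} together with Corollary \ref{closenessimproved}, to pass to a subsequential limit that is a rotationally invariant $\kappa_0$-solution; here $\kappa_0 = \kappa_0(n)$ is the universal constant from Theorem \ref{universal kappa}, which makes the noncollapsing assumption uniform. (Lemma \ref{flat} is what rules out the orbit diameters degenerating, so the equivariant compactness theorem is genuinely applicable.) The limit $\kappa$-solution, being a rotationally invariant $\kappa_0$-solution, has uniformly bounded nonnegative curvature operator; this is precisely the contradiction, because the point-picking forced the ratio of curvatures in the limiting neighborhood to blow up, yet a $\kappa$-solution has bounded curvature at bounded distance. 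The scathing alternative is handled the same way: if $P_{\mathcal{O}}(x_i, A\rho_1(x_i))$ were scathed for all $i$, the limiting flow would still be a smooth $\kappa$-solution on a parabolic neighborhood of controlled size, again contradicting the assumed scathing together with the fact that $\kappa$-solutions are ancient and smooth.

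The monotonicity $\rho_1 \leq \rho_1(x)$ on $P_{\mathcal{O}}(x,A\rho_1(x))$ — as opposed to just a two-sided bound with constants — uses the extra input that in a $\kappa$-solution the scalar curvature is spatially monotone away from a cap or attains a max on a compact component, which in our rotationally invariant setting follows from Proposition \ref{kapsols} and the explicit Bryant/cylinder/sphere classification (Theorems \ref{kappa-solution-classification-noncompact} and \ref{kappa-solution-classification-compact}); combined with the canonical neighborhood control near $x$ this propagates back to $\mathcal{M}_i$ for $i$ large. The main obstacle I expect is the bookkeeping in the point-picking step to ensure the picked neighborhoods stay unscathed with uniformly bounded rescaled curvature for all large $i$ — i.e. checking that the $(\epsilon_{\operatorname{can},i}r_i, t_{0,i})$-completeness hypothesis is strong enough that the backward parabolic balls we need do not run into the initial time slice or into a scathed region prematurely; this is exactly the delicate part of \cite[Lemma 8.10]{BamlerKleiner17}, and the rotational symmetry does not simplify it, though it does not complicate it either. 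Everything else is a routine transcription once the equivariant compactness and the universal $\kappa_0$ are in hand, which is why we omit the details and refer to \cite{BamlerKleiner17}.
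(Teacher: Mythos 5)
The paper itself does not prove this lemma: it is stated in the appendix with the explicit remark that the proofs "are not essentially different" from \cite{BamlerKleiner17}, the only rotationally invariant input being that the auxiliary estimate used there (universal bounds of the form $|\partial_t R|\leq \eta R^2$, $|\nabla R|\leq \eta R^{3/2}$ at points where the canonical neighborhood assumption applies) follows from the equivariant compactness of $\kappa$-solutions, as noted in Section 6. The intended argument is therefore the \emph{direct} one of Bamler--Kleiner Lemma 8.10: integrate these derivative bounds along spatial paths and backward in time to show that $\rho$ cannot drop below $C^{-1}(A)\rho_1(x)$ inside $P_{\mathcal{O}}(x,A\rho_1(x))$ as long as $\epsilon_{\operatorname{can}}\leq\overline\epsilon_{\operatorname{can}}(A)$, and then invoke $(\epsilon_{\operatorname{can}}r,t_0)$-completeness (since $C^{-1}\rho_1(x)>\epsilon_{\operatorname{can}}r$) to conclude the neighborhood is unscathed. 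Your plan of "transplanting BK with the equivariant compactness and universal $\kappa_0$" is consistent with this in spirit, but the internal structure you sketch is a different argument, and as written it has a genuine gap: after rescaling by $R(y_i)$ at the picked high-curvature points, the reference point $x_i$ sits at rescaled distance $\gtrsim A\,C_i^{1/2}\to\infty$, so the limiting rotationally invariant $\kappa$-solution exhibits no forbidden curvature ratio at bounded distance and no contradiction is reached. To close a blow-up argument one would instead have to take an (a priori incomplete) limit based at $x_i$ itself, which is exactly the delicate step of Perelman's \S 12.1 that the canonical neighborhood hypothesis is designed to make unnecessary; your sketch does not supply that, nor does it explain how unscathedness actually follows (in BK it is a consequence of the established lower bound on $\rho$ together with completeness, not of a separate limiting argument).

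A second, smaller issue: the upper bound in the displayed conclusion should be read as $\rho_1\leq C\rho_1(x)$ (as in BK Lemma 8.10); the constant-free bound $\rho_1\leq\rho_1(x)$ is false in general (take $x$ at or near a Bryant-type tip, or simply go backward in time on an almost-cylindrical region, where $\rho$ increases), so your proposed justification via "spatial monotonicity of the scalar curvature away from a cap" on $\kappa$-solutions cannot work and is not needed: once the constant is restored, the two-sided bound comes out of the same integration of the derivative estimates, with no monotonicity input and no appeal to the Bryant/cylinder/sphere classification.
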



\begin{lem}[Lemma 8.22 in \cite{BamlerKleiner17}, scale distortion of bilipschitz maps]\label{B-K Lemma 8.22}
There is a constant $10^3<C_{\operatorname{SD}}<\infty$ such that the following holds if $$\eta_{\operatorname{lin}}\leq\overline \eta_{\operatorname{lin}},\quad \delta_{\operatorname{n}}\leq\overline \delta_{\operatorname{n}},\quad\epsilon_{\operatorname{can}}\leq\overline\epsilon_{\operatorname{can}},\quad r_{\operatorname{comp}}\leq \overline r_{\operatorname{comp}}.$$ Let $\mathcal{M}$ and $\mathcal{M}'$ be $(n+1)$-dimensional rotationally invariant $(\epsilon_{\operatorname{can}}r_{\operatorname{comp}},t_0)$-complete Ricci flow spacetimes. Consider a closed product domain $X\subset \mathcal{M}_{[0,t_0]}$ with rotationally invariant time-slices on a time-interval of the form $[t-r^2_{\operatorname{comp}},t]$, $t\geq r^2_{\operatorname{comp}}$, such that the following hold:
\begin{enumerate}[(i)]
    \item $\partial X_t$ consists of embedded $n$-spheres that are centers of $\delta_{\operatorname{n}}$-necks at scale $r_{\operatorname{comp}}$.
    \item Each connected component of $X_t$ contains a $2r_{\operatorname{comp}}$-thick point.
\end{enumerate}
Let $\bar t\in [t-r^2_{\operatorname{comp}},t]$ and $t'>0$, and consider a rotationally equivariant diffeomorphism onto its image $\phi:X_{\bar t}\rightarrow \mathcal{M}'_{t'}$ such that $|\phi^* g'_{t'}-g_{\bar t}|\leq \eta_{\operatorname{lin}}$. We assume that $\mathcal{M}$ satisfies the equivariant $\epsilon_{\operatorname{can}}$-canonical neighborhood assumption at scales $(0,1)$ on $X_{\bar t}$, and that $\mathcal{M}'$ satisfies the equivariant $\epsilon_{\operatorname{can}}$-canonical neighborhood assumption at scales $(0,1)$ on $\phi(X_{\bar t})$. Then for any $x\in X_{\bar t}$, we have
\begin{eqnarray}
C_{\operatorname{SD}}^{-1}\rho_1(x)\leq \rho_1(\phi(x))\leq C_{\operatorname{SD}}\rho_1(x).
\end{eqnarray}
\end{lem}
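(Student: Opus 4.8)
The plan is to follow the proof of \cite[Lemma 8.22]{BamlerKleiner17}, exploiting rotational symmetry and the $O(n+1)$-equivariance of $\phi$ to streamline the distance estimates. Throughout, let $\Psi$ denote a positive quantity depending also on $n$ which tends to $0$ as all of its arguments tend to $0$; the smallness hypotheses on $\etalin,\dn,\ecan,\rcomp$ are invoked precisely to make the relevant $\Psi$'s small. Write $\rho_1^{\mathcal M}$ and $\rho_1^{\mathcal M'}$ for the scale functions of the two spacetimes, and fix $x\in X_{\bar t}$.

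First I would establish a bilipschitz estimate for $\phi$. The pointwise bound $|\phi^*g'_{t'}-g_{\bar t}|\leq\etalin$ forces $\phi$ to distort lengths of curves in $X_{\bar t}$ by a factor in $[1-\Psi(\etalin),1+\Psi(\etalin)]$; the only subtlety is that a $g'_{t'}$-minimizing segment joining two points of $\phi(X_{\bar t})$ might temporarily exit $\phi(X_{\bar t})$. This is where hypotheses (i)--(ii) enter: by the $C^0$-closeness and the canonical neighborhood assumptions on $X_{\bar t}$ and on $\phi(X_{\bar t})$, each sphere of $\phi(\partial X_{\bar t})$ is the central sphere of a $\Psi(\dn,\etalin)$-neck at a scale comparable to $\rcomp$, and each component of $\phi(X_{\bar t})$ still contains a definitely thick point; a minimizing $g'_{t'}$-segment of length $\lesssim\rcomp$ with endpoints in $\phi(X_{\bar t})$ cannot cross such a neck region and return, hence stays inside $\phi(X_{\bar t})$. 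Consequently $\phi$ maps $g_{\bar t}$-balls around $x$ of radius $\lesssim\rcomp$ into $g'_{t'}$-balls around $\phi(x)$ of $(1\pm\Psi(\etalin))$-comparable radius, and vice versa.

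Next I would use that, under the canonical neighborhood assumption, $\rho_1$ is essentially a local, single-time-slice quantity. By Lemma \ref{B-K Lemma 8.8} applied to $\mathcal M$ and to $\mathcal M'$, for each $A<\infty$ there is $C(n,A)$ such that $C^{-1}\rho_1^{\mathcal M}(x)\leq\rho_1^{\mathcal M}\leq\rho_1^{\mathcal M}(x)$ on the unscathed neighborhood $P_{\mathcal O}(x,A\rho_1^{\mathcal M}(x))$, and similarly on the $\mathcal M'$-side; moreover, whenever $\rho_1^{\mathcal M}(x)<1$ the parabolically rescaled pointed flow at $x$ is $\Psi(\ecan)$-close in $C^{\lfloor\ecan^{-1}\rfloor}$ to a pointed $\kappa_0$-solution normalized to have scale $1$ at the basepoint (Theorem \ref{universal kappa} together with the $\kappa$-compactness theorem \cite[Theorem 1.2]{LiZhang18}), while if $\rho_1^{\mathcal M}(x)=1$ then completeness, Hamilton--Ivey pinching and $\kappa_0$-noncollapsing already give uniform bounds on $|\Rm|$, its derivatives, and the injectivity radius at $x$ at scale $\rho_1^{\mathcal M}(x)$; the analogous facts hold at $\phi(x)$. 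Combining this with the bilipschitz estimate, $(\mathcal M'_{t'},\rho_1^{\mathcal M}(x)^{-2}g'_{t'},\phi(x))$ is $\Psi(\etalin)$-$C^0$-close, on a large ball in the rescaled metric, to the uniformly bounded-geometry model $(\mathcal M_{\bar t},\rho_1^{\mathcal M}(x)^{-2}g_{\bar t},x)$. If $\rho_1^{\mathcal M'}(\phi(x))$ were smaller than $\rho_1^{\mathcal M}(x)$ by more than a fixed factor, then (canonical neighborhood assumption on $\mathcal M'$ plus the $\kappa$-solution description) this rescaled slice would have curvature $\gg 1$ at $\phi(x)$, contradicting closeness to a bounded-geometry model; the contradiction is made precise through the $\kappa$-compactness theorem. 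The symmetric inequality follows identically, and one obtains $\rho_1^{\mathcal M'}(\phi(x))\in[C_{\operatorname{SD}}^{-1}\rho_1^{\mathcal M}(x),C_{\operatorname{SD}}\rho_1^{\mathcal M}(x)]$ with $C_{\operatorname{SD}}$ depending only on $n$; the requirement $C_{\operatorname{SD}}>10^3$ is then met by further shrinking $\etalin,\dn,\ecan,\rcomp$.

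The step I expect to be the main obstacle is the geodesic-containment claim in the bilipschitz estimate, namely ruling out that minimizing segments in $\mathcal M'_{t'}$ between points of $\phi(X_{\bar t})$ escape the image. In the rotationally symmetric setting this is substantially easier than in \cite{BamlerKleiner17}, since $\phi$ is $O(n+1)$-equivariant and every relevant neck, cap and product region is a union of orbits, so the ``exit and return'' scenario can be analyzed directly in terms of the warping function on the one-dimensional orbit space; nevertheless, tracking the neck-parameter degradation $\dn\mapsto\Psi(\dn,\etalin)$ and keeping the scale-comparison constants uniform across both spacetimes is the delicate point, and is exactly where the assumptions $\dn\leq\bardn$, $\etalin\leq\baretalin$, $\ecan\leq\barecan$, $\rcomp\leq\barrcomp$ are used.
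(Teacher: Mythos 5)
A preliminary remark: the paper does not actually supply a proof of this lemma --- the appendix preamble states that these statements are carried over from Section 8 of \cite{BamlerKleiner17} with rotational invariance imposed, ``since their proofs are not essentially different'', so the benchmark for your proposal is the original argument for Lemma 8.22 there. Your outline does follow that argument's general architecture: bilipschitz control coming from $|\phi^*g'_{t'}-g_{\bar t}|\leq\eta_{\operatorname{lin}}$, the canonical neighborhood assumptions on both sides, hypotheses (i)--(ii) to handle the boundary, and a rescaling/compactness comparison with $\kappa$-solutions.

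However, the central step is stated in a way that does not work. When $\rho_1(\phi(x))\ll\rho_1(x)$ you assert that the rescaled slice ``would have curvature $\gg1$ at $\phi(x)$, contradicting closeness to a bounded-geometry model''. The only closeness available is $C^0$ (bilipschitz): a metric can be $(1+\Psi(\eta_{\operatorname{lin}}))$-bilipschitz to a bounded-geometry, even flat, metric and still have arbitrarily large curvature, and this remains true after passing to limits, since the comparison map survives only as a bilipschitz map; so large curvature at $\phi(x)$ is not by itself in tension with anything. The contradiction must be extracted from quantities that are stable under $C^0$-perturbation. Concretely, after rescaling by $\rho_1(\phi(x))$ the canonical neighborhood assumption on $\mathcal M'$ and the equivariant compactness theorem produce a $\kappa$-solution time-slice with basepoint scale $1$, which is almost-bilipschitz to a region around $x$ in $\mathcal M_{\bar t}$ that, by Lemma \ref{B-K Lemma 8.8} and noncollapsing, is almost Euclidean at arbitrarily large rescaled radii; one then needs a $C^0$-robust invariant to rule this out, e.g.\ volume ratios: every normalized rotationally invariant $\kappa$-solution (cylinder, Bryant, round sphere, sausage, by the classification) drops below any fixed volume-ratio threshold at a universal radius, whereas the bilipschitz image of an almost-Euclidean ball cannot. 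Your sketch never identifies such an invariant, so the decisive step is missing. The same issue affects your preliminary claim that the spheres of $\phi(\partial X_{\bar t})$ are central spheres of $\Psi(\delta_{\operatorname{n}},\eta_{\operatorname{lin}})$-necks at scale comparable to $r_{\operatorname{comp}}$: $C^0$-closeness to a cylinder does not yield a neck in the $C^{\lfloor\delta^{-1}\rfloor}$ sense, and upgrading it through the canonical neighborhood assumption at $\phi(y)$ already requires knowing $\rho_1(\phi(y))\sim r_{\operatorname{comp}}$, i.e.\ a special case of the very scale-distortion estimate being proved; as written this step is circular and must be run through the same $C^0$-robust comparison. (A minor point: $10^3<C_{\operatorname{SD}}$ is merely a lower bound on the admissible constant, not something to be ``met by further shrinking'' the parameters.)
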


\begin{lem}[Lemma 8.32 in \cite{BamlerKleiner17}, time-slice necks imply spacetime necks]\label{B-K Lemma 8.32}
If $$\delta_\#>0,\quad 0<\delta\leq\overline \delta(\delta_\#),\quad0<\epsilon_{\operatorname{can}}\leq\overline \epsilon_{\operatorname{can}}(\delta_\#)\quad 0<r\leq\overline r,$$ then the following holds. 

Assume that $\mathcal{M}$ is an $(n+1)$-dimensional rotationally invariant and $(\epsilon_{\operatorname{can}} r,t_0)$-complete Ricci flow spacetime satisfying the equivariant $\epsilon_{\operatorname{can}}$-canonical neighborhood assumption at scales $(\epsilon_{\operatorname{can}}r,1)$. Let $a\in[-1,\frac{1}{4}]$ and consider a time $t\geq 0$ such that $t+ar^2\in[0,t_0]$. Assume that $U\subset \mathcal{M}_{t+ar^2}$ is a equivariant $\delta$-neck at scale $\sqrt{1-3a}r$. In other words, there is a rotationally equivariant diffeomorphism$$\psi_1:\mathbb{S}^n\times(-\delta^{-1},\delta^{-1})\rightarrow U$$ such that
\begin{eqnarray}
\left\|r^{-2}\psi_1^* g_{t+ar^2}-g_a^{\mathbb{S}^n\times\mathbb{R}}\right\|_{C^{[\delta^{-1}]}}<\delta.
\end{eqnarray}
Here $g_t^{cyl}:=ds^2+(\frac{2}{3}-2t)\overline{g}$, $t\in(-\infty,\frac{1}{3})$, is the shrinking round cylinder with $\rho(\cdot,0)=1$, where $\rho=(\frac{2}{3n(n-1)}R)^{-\frac{1}{2}}$ in this case, and the $C^{[\delta^{-1}]}$-norm is taken over the domain of $\psi_1$.

Then there is a product domain $U^*\subset\mathcal{M}_{[t-r^2,t+\frac{1}{4}r^2]\cap[0,t_0]}$ and an $r^2$-time-equivariant, $\partial_{\mathfrak{t}}$-preserving, and rotationally equivariant diffeomorphism $$\psi_2:\mathbb{S}^n\times(-\delta^{-1}_\#,\delta^{-1}_\#)\times[t^*,t^{**}]\rightarrow U^*$$ with $t+t^*r^2=\max\{t-r^2,0\}$ and $t+t^{**}r^2=\min\{t+\frac{1}{4}r^2,t_0\}$, such that $$\psi_2\big\vert_{\mathbb{S}^n\times(-\delta^{-1}_\#,\delta^{-1}_\#)\times\{a\}}=\psi_1\big\vert_{\mathbb{S}^n\times(-\delta^{-1}_\#,\delta^{-1}_\#)}$$ and $$\left\|r^{-2}\psi_2^* g-g^{\mathbb{S}^n\times\mathbb{R}}\right\|_{C^{[\delta^{-1}_\#]}}<\delta_\#.$$ Here the $C^{[\delta^{-1}_\#]}$-norm is taken over the domain of $\psi_2$.
\end{lem}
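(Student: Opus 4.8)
The statement to prove is Lemma 8.32 (``time-slice necks imply spacetime necks''), which asserts that an equivariant $\delta$-neck at a single time slice can be extended to a parabolic neighborhood that is $\delta_\#$-close to a piece of the shrinking round cylinder. The plan is to argue by contradiction, mirroring the proof of \cite[Lemma 8.32]{BamlerKleiner17} but exploiting rotational invariance to simplify the compactness step.

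\textbf{Setup of the contradiction.} Fix $\delta_\# > 0$ and suppose the conclusion fails: there is a sequence of $(n+1)$-dimensional rotationally invariant Ricci flow spacetimes $\mathcal{M}_i$, each $(\epsilon_{\operatorname{can},i} r_i, t_{0,i})$-complete and satisfying the equivariant $\epsilon_{\operatorname{can},i}$-canonical neighborhood assumption at scales $(\epsilon_{\operatorname{can},i} r_i, 1)$, with $\delta_i \to 0$, $\epsilon_{\operatorname{can},i}\to 0$, $r_i \to 0$, together with slices $U_i \subseteq (\mathcal{M}_i)_{t_i + a_i r_i^2}$ and equivariant parametrizations $\psi_{1,i}:\mathbb{S}^n\times(-\delta_i^{-1},\delta_i^{-1})\to U_i$ satisfying the hypothesized $C^{[\delta_i^{-1}]}$-closeness to the standard cylinder slice $g^{\mathbb{S}^n\times\mathbb{R}}_{a_i}$ at scale $r_i$, but for which no product domain $U^*_i$ with the asserted properties exists. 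After rescaling by $r_i^{-2}$ we may assume $r_i = 1$; we pass to a subsequence so that $a_i \to a_\infty \in [-1,\tfrac14]$, and recenter the parametrization $\psi_{1,i}$ at the central sphere so that the base point $x_i := \psi_{1,i}(\cdot, 0)$ has $\rho(x_i) \to 1$.

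\textbf{Compactness and identification of the limit.} Since $\rho(x_i)\to 1$ and the equivariant canonical neighborhood assumption holds at the relevant scales, the bounded-curvature-at-bounded-distance lemma (Lemma \ref{B-K Lemma 8.8}) gives, for each fixed $A<\infty$, that $P_{\mathcal{O}}(x_i, A)$ is unscathed with uniformly bounded curvature for $i$ large. The intrinsic diameters of the orbits $\mathcal{O}(x_i)$ are uniformly bounded (they are close to the orbits of the standard cylinder slice). Hence the equivariant Cheeger-Gromov-Hamilton compactness theorem for Ricci flows (Proposition \ref{flowequivcompact}) applies: after passing to a subsequence, $(\mathcal{M}_i, (g_t^i), x_i)$ converges equivariantly and smoothly on compact subsets to a pointed, rotationally invariant, ancient (or at least locally defined on a long backward interval) Ricci flow $(M_\infty, (g_t^\infty), x_\infty)$ with bounded nonnegative curvature, $\kappa_0$-noncollapsed, whose time-$a_\infty$ slice, by the hypothesized $\delta_i \to 0$ closeness, agrees with the standard round cylinder slice on every compact subset. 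By uniqueness of Ricci flow with bounded curvature (and since the shrinking round cylinder is the unique rotationally invariant solution with a cylindrical time-slice), the limit flow is exactly the standard shrinking round cylinder $(\mathbb{S}^n\times\mathbb{R}, g^{\mathbb{S}^n\times\mathbb{R}}_t)$, parametrized so that the central sphere at the limit time corresponds to $x_\infty$ and $\rho(x_\infty, a_\infty) = 1$.

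\textbf{Constructing the parametrization and deriving the contradiction.} Pull back the limit via the equivariant convergence diffeomorphisms to obtain, for $i$ large, time-preserving, $\partial_{\mathfrak t}$-preserving, $O(n+1)$-equivariant maps $\psi_{2,i}: \mathbb{S}^n\times(-\delta_\#^{-1}, \delta_\#^{-1})\times[t^*,t^{**}] \to U_i^* \subseteq \mathcal{M}_i$ with $\|\psi_{2,i}^* g - g^{\mathbb{S}^n\times\mathbb{R}}\|_{C^{[\delta_\#^{-1}]}} < \delta_\#$; the only subtlety is to arrange the boundary-matching condition $\psi_{2,i}|_{\mathbb{S}^n\times(-\delta_\#^{-1},\delta_\#^{-1})\times\{a_i\}} = \psi_{1,i}|_{\mathbb{S}^n\times(-\delta_\#^{-1},\delta_\#^{-1})}$. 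This is handled as in \cite{BamlerKleiner17}: both $\psi_{1,i}$ (restricted) and the $a_i$-slice of $\psi_{2,i}$ are equivariant parametrizations of nearly the same cylindrical slice; by Claim 1 in the proof of Theorem \ref{spacetimeexists} (classification of equivariant diffeomorphisms of cylinders as $(r,\zeta)\mapsto(\phi(r),\pm\zeta)$), the composition $\psi_{2,i}^{-1}\circ\psi_{1,i}$ on the overlap is a reparametrization in the $\mathbb{R}$-factor by a map $C^{[\delta_\#^{-1}]}$-close to the identity (up to the $\pm$ on $\mathbb{S}^n$, which we absorb), and we precompose $\psi_{2,i}$ with the time-independent extension of this reparametrization (cut off away from the center) to achieve exact agreement on the $a_i$-slice while only mildly enlarging the $C^{[\delta_\#^{-1}]}$ error. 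Taking $i$ large enough (so $\delta_i$ small enough that the reparametrization error is negligible, and $\epsilon_{\operatorname{can},i}, r_i$ small) then produces a valid $U_i^*$ and $\psi_{2,i}$, contradicting the assumption that no such object exists. The main obstacle is this boundary-matching/reparametrization step: one must verify that the equivariant diffeomorphism comparing the two parametrizations of the $a_i$-slice is close to the identity in the relevant norm and can be extended in spacetime without destroying closeness to the cylinder — this is where the rigidity of equivariant maps of $\mathbb{S}^n\times\mathbb{R}$ (Claim 1 of Theorem \ref{spacetimeexists}) does the essential work, and is the reason the rotationally invariant case is cleaner than the general one.
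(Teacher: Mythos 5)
The paper does not actually supply a proof of this lemma: the appendix states explicitly that the proofs of these Section~8 statements ``are not essentially different from those in \cite{BamlerKleiner17}'' and omits them, so your proposal must be measured against the Bamler--Kleiner argument that the paper adopts by reference. Your overall strategy (contradiction, rescale to $r=1$, equivariant compactness, identify the limit as the shrinking round cylinder, transfer the parametrization back) is indeed that route, but as written there is a genuine gap in the forward-time direction, which is the real content of the lemma. Lemma~\ref{B-K Lemma 8.8} controls only \emph{backward} parabolic neighborhoods $P_{\mathcal{O}}(x,A\rho_1(x))$ (recall $P(x,t,r)=B_{g_t}(x,r)\times[t-r^2,t]$), whereas the conclusion requires the product domain to extend forward from the given time $t+ar^2$ up to relative time $\tfrac14$ (e.g.\ forward by $\tfrac54 r^2$ when $a=-1$). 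To invoke Proposition~\ref{flowequivcompact} on the whole interval $[t^*,t^{**}]$ you must already know that the relevant worldlines are unscathed with uniformly bounded curvature forward in time; your argument derives this from the limit, which is circular. The standard repair — and what the Bamler--Kleiner proof does in substance — is a continuity/maximal-time argument: on the maximal forward interval on which the central portion of the neck stays, say, $2\delta_\#$-close to the shrinking cylinder, the scale stays $\gtrsim \tfrac12 r\gg \epsilon_{\operatorname{can}}r$, so the $(\epsilon_{\operatorname{can}}r,t_0)$-completeness and the derivative bounds in the canonical neighborhood assumption keep the points alive and the curvature controlled slightly beyond; the compactness argument applied at the terminal time then improves the closeness back to $\delta_\#$, so the maximal interval is all of $[t+ar^2,\min\{t+\tfrac14 r^2,t_0\}]$ (the cylinder itself stays smooth with $\rho\geq\tfrac12 r$ up to relative time $\tfrac14$, which is exactly why the forward endpoint is $\tfrac14$ and not $\tfrac13$). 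Without some version of this step your limit simply does not exist on the forward portion of the asserted domain.

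Two smaller points. First, identifying the limit \emph{backward} in time as the shrinking cylinder is not covered by ``uniqueness of Ricci flow with bounded curvature'': forward from time $a_\infty$ this is Chen--Zhu uniqueness, but backward it requires backward uniqueness (Kotschwar) for the complete bounded-curvature limit, or an ad hoc argument via the rotationally invariant warping-function equation; your parenthetical assertion is not a justification. Second, the boundary-matching fix as described (cutting the reparametrization off ``away from the center'') would destroy the required \emph{exact} identity $\psi_2\vert_{\,\cdot\,\times\{a\}}=\psi_1$ on the whole slice $\mathbb{S}^n\times(-\delta_\#^{-1},\delta_\#^{-1})$; the clean solution is to define $\psi_2$ from the outset by flowing $\psi_1$ (restricted to the smaller neck) along $\partial_{\mathfrak{t}}$, which makes $\psi_2$ automatically $r^2$-time-equivariant, $\partial_{\mathfrak{t}}$-preserving, equivariant, and equal to $\psi_1$ at time $a$, so that the limit argument is only needed to verify metric closeness. (Also, the contradiction setup should take $\delta_i,\epsilon_{\operatorname{can},i}\to 0$ with $r_i\leq\overline{r}$ fixed and universal; there is no reason to assume $r_i\to 0$, though after rescaling this slip is harmless.)
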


\begin{lem}[Lemma 8.40 in \cite{BamlerKleiner17}, nearly increasing scale implies Bryant-like geometry]\label{B-K Lemma 8.40}
If $$\alpha,\delta>0,\quad 1\leq J<\infty,\quad\beta\leq\overline\beta(\alpha,\delta,J),\quad\epsilon_{\operatorname{can}}\leq\overline \epsilon_{\operatorname{can}}(\alpha,\delta,J),\quad r\leq\overline r(\alpha),$$ then the following holds.

Let $0<r\leq 1$. Assume that $\mathcal{M}$ is an $(n+1)$-dimensional rotationally invariant and $(\epsilon_{\operatorname{can}}r,t_0)$-complete Ricci flow spacetime that satisfies the equivariant $\epsilon_{\operatorname{can}}$-canonical neighborhood assumption at scales $(\epsilon_{\operatorname{can}}r,1)$. Let $t\in[Jr^2,t_0]$ and $x\in\mathcal{M}_t$. Assume that $x$ survives until time $t-r^2$ and that
\begin{gather*}
    \alpha r\leq\rho(x)\leq\alpha^{-1}r,
    \\
    \rho(x(t-r^2))\leq\rho(x)+\beta r.
\end{gather*}
Let $a\in[\rho(x(t-r^2)),\rho(x)+\beta r]$.

Then $(\mathcal{M}_{t'},x(t'))$ is $\delta$-close to $(M_{\operatorname{Bry}},g_{\operatorname{Bry}},x_{\operatorname{Bry}})$ at scale $a$ for all $t'\in[t-r^2,r]$. Furthermore, there is an $a^2$-time-equivariant, $\partial_{\mathfrak{t}}$-preserving, and rotationally equivariant diffeomorphism onto its image $$\psi:\overline{M_{\operatorname{Bry}}(\delta^{-1})}\times[-J\cdot(ar^{-1})^{-2},0]\rightarrow\mathcal{M}$$ such that $\psi(x_{\operatorname{Bry}},0)=x$ and$$\left\|a^{-2}\psi^*g-g_{\operatorname{Bry}}\right\|_{C^{[\delta^{-1}]}}<\delta,$$ where the norm is taken over the domain of $\psi$.
\end{lem}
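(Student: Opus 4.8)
The plan is to argue by contradiction using a blow-up and the equivariant compactness theorem, mirroring the proof of \cite[Lemma 8.40]{BamlerKleiner17}; the only new ingredient is that Proposition \ref{flowequivcompact} lets us choose the blow-up limit and the comparison diffeomorphisms to be $O(n+1)$-equivariant. Concretely, I would fix $\alpha,\delta>0$ and $1\le J<\infty$ and suppose the statement fails for every admissible choice of $\overline\beta,\overline\epsilon_{\operatorname{can}},\overline r$. This produces a sequence of $(n+1)$-dimensional rotationally invariant Ricci flow spacetimes $\mathcal M_i$, scales $r_i\le\overline r_i\to 0$, parameters $\epsilon_{\operatorname{can},i}\to 0$ and $\beta_i\to 0$, and points $x_i\in(\mathcal M_i)_{t_i}$ with $t_i\ge Jr_i^2$, $\alpha r_i\le\rho(x_i)\le\alpha^{-1}r_i$, such that $x_i$ survives to $t_i-r_i^2$ with $\rho(x_i(t_i-r_i^2))\le\rho(x_i)+\beta_ir_i$, while the conclusion fails for some $a_i\in[\rho(x_i(t_i-r_i^2)),\rho(x_i)+\beta_ir_i]$. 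After rescaling $\mathcal M_i$ by $\rho(x_i)^{-2}$ and translating time so that $\mathfrak{t}(x_i)=0$, the scalar curvature $R(x_i)$ becomes a fixed positive constant, $r_i^2\rho(x_i)^{-2}\in[\alpha^2,\alpha^{-2}]$, and (after a subsequence) $r_i^2\rho(x_i)^{-2}\to r_\infty^2\in[\alpha^2,\alpha^{-2}]$ and $a_i\rho(x_i)^{-1}\to a_\infty\in[c(\alpha),C(\alpha)]$.

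The next step is to extract a limiting $\kappa$-solution. Since $\rho(x_i)\le\alpha^{-1}r_i\to 0$ in the original scale, $x_i$ eventually lies in the band of curvatures where the equivariant $\epsilon_{\operatorname{can},i}$-canonical neighborhood assumption applies; combining this with Lemma \ref{B-K Lemma 8.8} (bounded curvature at bounded distance), the $(\epsilon_{\operatorname{can},i}r_i,t_i)$-completeness, and iterating backward in time, I would obtain that arbitrarily large backward parabolic neighborhoods of $x_i$ are unscathed with uniformly bounded curvature. By Corollary \ref{closenessimproved} these are equivariantly close to rotationally invariant $\kappa$-solutions, which are $\kappa_0(n)$-noncollapsed (Theorem \ref{universal kappa}); hence, after a further subsequence, $(\mathcal M_i,x_i)$ converges in the pointed equivariant Cheeger--Gromov--Hamilton sense to a rotationally invariant $\kappa$-solution $(\mathcal M_\infty,(g^\infty_t)_{t\in(-\infty,0]},x_\infty)$ with $R(x_\infty,0)>0$. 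Lemma \ref{flat} is what makes Proposition \ref{flowequivcompact} applicable: if the intrinsic diameters of the orbits $\mathcal O(x_i)$ were not uniformly bounded, the limit would be flat $\mathbb R^{n+1}$, contradicting $R(x_\infty,0)>0$; so the orbit-diameter hypothesis of Proposition \ref{flowequivcompact} holds and it provides the $O(n+1)$-equivariant embeddings realizing the convergence.

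I would then identify the limit as the tip of the Bryant soliton. Passing to the limit in $\rho(x_i(t_i-r_i^2))\le\rho(x_i)+\beta_ir_i$ (using $\beta_i\to 0$ and $r_i\rho(x_i)^{-1}\le\alpha^{-1}$) gives $\rho(x_\infty(-r_\infty^2))\le\rho(x_\infty(0))$, while Hamilton's trace Harnack inequality for ancient solutions with bounded nonnegative curvature operator gives $\partial_{\mathfrak{t}}R\ge 0$ along every worldline, so $t\mapsto\rho(x_\infty(t))$ is non-increasing and $\rho(x_\infty(-r_\infty^2))\ge\rho(x_\infty(0))$. Hence $\rho(x_\infty(\cdot))$ is constant on $[-r_\infty^2,0]$, i.e. $R$ is constant along the worldline of $x_\infty$ over an interval of length $r_\infty^2\ge\alpha^2>0$. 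By the classification of rotationally invariant $\kappa$-solutions (Theorems \ref{kappa-solution-classification-noncompact} and \ref{kappa-solution-classification-compact}), $\mathcal M_\infty$ is the Bryant soliton, a quotient of the shrinking cylinder, the round shrinking sphere, or Perelman's sausage; on the last three, $\partial_{\mathfrak{t}}R>0$ strictly at every spacetime point (immediate for the cylinder and the sphere, and for the sausage by the equality case of the Harnack inequality, since a compact steady gradient Ricci soliton is flat), so $\mathcal M_\infty$ must be the Bryant soliton. Since on the Bryant soliton $\partial_{\mathfrak{t}}R$ vanishes only along the worldline of the tip, $x_\infty=x_{\operatorname{Bry}}$.

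Finally I would read off the conclusion from the equivariant smooth convergence: the $O(n+1)$-equivariant embeddings from Proposition \ref{flowequivcompact} carry singular orbits to singular orbits, so the tip's worldline in $\mathcal M_\infty$ is matched with that of $x_i$, and the set $\overline{M_{\operatorname{Bry}}(\delta^{-1})}\times[-J(a_ir_i^{-1})^{-2},0]$ lies in a fixed compact subset of $\mathcal M_\infty$ (because $a_ir_i^{-1}$ is bounded above and below in terms of $\alpha$). Thus for $i$ large the restriction of the embedding there is an $a_i^2$-time-equivariant, $\partial_{\mathfrak{t}}$-preserving, $O(n+1)$-equivariant diffeomorphism $\psi_i$ onto its image with $\psi_i(x_{\operatorname{Bry}},0)=x_i$ and $\|a_i^{-2}\psi_i^\ast g-g_{\operatorname{Bry}}\|_{C^{[\delta^{-1}]}}<\delta$, and likewise $(\mathcal M_{i,t'},x_i(t'))$ is $\delta$-close to $(M_{\operatorname{Bry}},g_{\operatorname{Bry}},x_{\operatorname{Bry}})$ at scale $a_i$ for all $t'\in[t_i-r_i^2,t_i]$, contradicting the choice of $\mathcal M_i$. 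The two places where care is needed are: (i) the rigidity step showing that constancy of $R$ on a finite worldline segment forces the Bryant geometry with the worldline through the tip, which rests on the Harnack inequality and the classification of rotationally invariant $\kappa$-solutions; and (ii) the backward bootstrap guaranteeing that $x_i$ survives unscathed long enough for the convergence to hold on $[-J(a_ir_i^{-1})^{-2},0]$, which uses the equivariant canonical neighborhood assumption together with Lemma \ref{B-K Lemma 8.8}.
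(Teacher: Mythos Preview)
Your proposal is correct and is exactly the approach the paper has in mind: the paper does not give a proof of this lemma at all, but explicitly says the argument is ``not essentially different'' from \cite[Lemma~8.40]{BamlerKleiner17}, the only change being that the equivariant compactness (Proposition~\ref{flowequivcompact}, via Lemma~\ref{flat}) upgrades the blow-up limit and the comparison diffeomorphisms to $O(n+1)$-equivariant ones. Your identification of the limit via the Harnack inequality together with the classification (Theorems~\ref{kappa-solution-classification-noncompact} and~\ref{kappa-solution-classification-compact}) in place of Brendle's three-dimensional steady-soliton uniqueness is the natural adaptation to the rotationally invariant higher-dimensional setting and is consistent with how the paper handles related rigidity steps elsewhere.
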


\begin{lem}[Lemma 8.41 in \cite{BamlerKleiner17}, Bryant slice lemma] \label{B-K Lemma 8.41}
If 
\begin{gather*}
    \delta_{\operatorname{n}}\leq\overline{\delta}_{\operatorname{n}},\quad 0<\lambda<1,\quad\Lambda\geq\underline{\Lambda},\quad \delta\leq\overline{\delta}(\lambda,\Lambda),
\end{gather*}
then the following holds for some $D_0=D_0(\lambda)<\infty$.

Consider an $(n+1)$-dimensional $O(n+1)$-invariant Ricci flow spacetime $\mathcal M$ and let $r>0$ and $t\geq 0$. Consider a subset $X\subset \mathcal M_t$ such that the following holds
\begin{enumerate}[(i)]
    \item $X$ is a closed and $O(n+1)$-invariant subset (consisting of $O(n+1)$-orbits) with smooth boundaries. 
    \item The boundary components of $X$ are central $O(n+1)$-orbits of $O(n+1)$-equivariant $\delta_{\operatorname{n}}$-necks at scale $r$.
    \item $X$ contains all $\Lambda r$-thick points of $\mathcal M_t$.
    \item Every component of $X$ contains a $\Lambda r$-thick point. 
\end{enumerate}
Consider the image $W$ of an $O(n+1)$-equivariant diffeomorphism
$$\psi:W^*:=\overline{M_{\operatorname{Bry}}(d)}\rightarrow W\subset\mathcal M_t,$$
such that $d\geq\delta^{-1}$ and
$$\left\|(10\lambda r)^{-2}\psi^*g_t-g_{\operatorname{Bry}}\right\|_{C^{[\delta^{-1}]}(W^*)}<\delta.$$
Then $\psi(x_{\operatorname{Bry}})$ is $11\lambda r$-thin. Moreover, if $\mathcal C:=W\setminus\operatorname{Int}X\not=\emptyset$, then
\begin{enumerate}[(a)]
    \item $\mathcal C$ is a $O(n+1)$-invariant $(n+1)$-disk containing $\psi(x_{\operatorname{Bry}})$.
    \item $\mathcal C$ is a component of $\mathcal M_t\setminus \operatorname{Int}X$ and $\partial\mathcal C\subset\partial X$.
    \item $\mathcal C$ is $9\lambda r$-thick and $1.1r$-thin.
    \item $\mathcal C\subset \psi(M_{\operatorname{Bry}}(D_0(\lambda)))\subset\operatorname{Int}W$.
\end{enumerate}
\end{lem}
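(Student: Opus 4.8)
The plan is to follow the proof of \cite[Lemma 8.41]{BamlerKleiner17}, using rotational invariance to replace the separating-sphere and point-picking arguments by an elementary analysis in a single radial variable. First I would use Proposition \ref{rotdifgen} to write $g_{\operatorname{Bry}}=dr^2+w^2(r)\overline g$ on $M_{\operatorname{Bry}}(d)$ and pull back by $\psi$, so that the hypothesis says $(10\lambda r)^{-2}\psi^*g_t$ is $\delta$-close to $g_{\operatorname{Bry}}$ in $C^{[\delta^{-1}]}$. Since $R_{g_{\operatorname{Bry}}}$ is a smooth strictly decreasing function of the radial coordinate with $R_{g_{\operatorname{Bry}}}(0)=1$, nonvanishing derivative, and $R_{g_{\operatorname{Bry}}}(r)\to0$ as $r\to\infty$ (Section 7.1 and the discussion preceding Proposition \ref{BryantExtension}), the scale $\rho_{g_{\operatorname{Bry}}}$ is a smooth strictly increasing function $\iota(r)$ of $r$ with $\iota(0)=1$, $\iota'>0$, and $\iota(r)\to\infty$. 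The closeness assumption then gives $\rho_{g_t}(\psi(y))=10\lambda r\,\iota(r(y))(1+o_\delta(1))$ for $y\in M_{\operatorname{Bry}}(d)$, with error $\to0$ as $\delta\to0$. In particular $\rho_{g_t}(\psi(x_{\operatorname{Bry}}))=10\lambda r(1+o_\delta(1))<11\lambda r$ once $\delta\le\overline\delta$, which is the first assertion.

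For the second part, since $d\ge\delta^{-1}$ and $\delta\le\overline\delta(\lambda,\Lambda)$, I may assume $d$ exceeds the unique radius $s_\Lambda=s_\Lambda(\lambda,\Lambda)$ with $\iota(s_\Lambda)=\Lambda/(10\lambda)$, so that the $\Lambda r$-thick points of $\mathcal M_t$ lying in $W$ form exactly $\psi(\{r(y)\ge s_\Lambda\})$, which by (iii) is contained in $\operatorname{Int}X$. Assuming $\mathcal C=W\setminus\operatorname{Int}X\neq\emptyset$, I introduce the $\lambda$-dependent radius $d_\lambda$ with $\iota(d_\lambda)=(10\lambda)^{-1}$ and set $D_0(\lambda):=d_\lambda+1$. (One checks separately that $\mathcal C\ne\emptyset$ forces $\lambda<\tfrac1{10}$: for $\lambda\ge\tfrac1{10}$ one has $\rho_{g_t}\circ\psi\ge r(1-o_\delta(1))$ throughout $W$, which excludes any $\delta_{\operatorname{n}}$-neck at scale $r$ inside $W$, while $W$ connected and meeting both $\operatorname{Int}X$ and its complement would need such a neck.) By (i)--(ii), every boundary orbit of $X$ meeting $W$ is the central orbit of a $\delta_{\operatorname{n}}$-neck at scale $r$, hence has $\rho_{g_t}=r(1+o_{\delta_{\operatorname{n}}}(1))$, and by monotonicity of $\rho_{g_t}\circ\psi$ (with $\iota'$ bounded away from $0$ on the relevant range) it must lie in $\psi(\{|r(y)-d_\lambda|<\epsilon\})$ for some $\epsilon=\epsilon(\lambda,\delta,\delta_{\operatorname{n}})\to0$. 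Consequently the radial trace $A:=\{r(y):\psi(y)\in X\cap W\}\subseteq[0,d]$ is a closed set containing $[s_\Lambda,d]$ whose points that are limits of $[0,d]\setminus A$ all lie near $d_\lambda$.

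The crux is to conclude that $A=[b,d]$ for a single $b\in(d_\lambda-\epsilon,d_\lambda+\epsilon)$. Here I would invoke hypothesis (iv): any other shape of $[0,d]\setminus A$ produces a component of $X$ of the form $\psi(\{a_1\le r(y)\le a_2\})$ with $a_1,a_2<s_\Lambda$ --- either a thin annular slab between two boundary necks, or a cap around the tip bounded by a boundary neck --- which is entirely $s_\Lambda r$-thin and hence contains no $\Lambda r$-thick point, contradicting (iv). Once $A=[b,d]$, we get $\mathcal C=\psi(\{r(y)\le b\})$, the $\psi$-image of a closed metric ball about the tip; this is an $O(n+1)$-invariant $(n+1)$-disk containing $\psi(x_{\operatorname{Bry}})$, giving (a), and $\partial\mathcal C=\psi(\{r(y)=b\})\subseteq\partial X$, giving part of (b). That $\mathcal C$ is a full component of $\mathcal M_t\setminus\operatorname{Int}X$ follows since $\partial W$ is $\Lambda r$-thick (hence in $\operatorname{Int}X$), so no path in $\mathcal M_t\setminus\operatorname{Int}X$ can leave $W$, while near $\partial\mathcal C\subseteq\partial X$ the complement of $\operatorname{Int}X$ lies on the $\mathcal C$-side, so $\mathcal C$ is relatively clopen. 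For (c) one reads off that on $\psi(\{r(y)\le b\})$ the quantity $\rho_{g_t}\circ\psi=10\lambda r\,\iota(r(y))(1+o(1))$ ranges over $[10\lambda r(1-o(1)),\,r(1+o(1))]$ (the upper end because $\{r(y)=b\}$ is a neck at scale $r$), so $\mathcal C$ is $9\lambda r$-thick and $1.1r$-thin for $\delta,\delta_{\operatorname{n}}$ small; and (d) follows from $b<D_0(\lambda)\le\delta^{-1}\le d$.

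The main obstacle is exactly this reduction of $A$ to a single outer interval. This is where (iv) and the fact that $\partial X$ consists of necks at the \emph{single} scale $r$ do the essential work, and it is also what makes $D_0$ depend only on $\lambda$ --- through the $\lambda$-determined radius $d_\lambda$ at which the Bryant geometry is cylindrical at scale $r$ --- rather than on $\Lambda$.
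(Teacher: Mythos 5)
Your proposal is essentially correct, and it takes a genuinely different route from the paper: the paper gives no proof at all here, deferring to Lemma 8.41 of \cite{BamlerKleiner17} with the remark that the argument carries over under rotational invariance, whereas you exploit the symmetry to collapse everything onto the single radial variable. Your key observations are sound: $\rho\circ\psi\approx 10\lambda r\,\iota(\cdot)$ with $\iota$ strictly increasing from $1$, so $\psi(x_{\operatorname{Bry}})$ is $11\lambda r$-thin; boundary orbits of $X$ inside $W$, being central orbits of $\delta_{\operatorname{n}}$-necks at scale $r$, must sit near the radius $d_\lambda$ where $\iota\approx(10\lambda)^{-1}$; and hypothesis (iv) kills every configuration of the radial trace other than $A=[b,d]$, since a gap would produce a component of $X$ that is either a thin annulus between two boundary necks or a thin cap around the tip (both clopen in $X$, hence genuine components, and both free of $\Lambda r$-thick points). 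This one-dimensional bookkeeping replaces the separating-sphere arguments of \cite{BamlerKleiner17} and makes the dependence $D_0=D_0(\lambda)$ transparent through $d_\lambda$; conclusions (a)--(d) then follow exactly as you describe.

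Two points need repair, though neither threatens the structure. First, your parenthetical argument that $\mathcal{C}\neq\emptyset$ forces $\lambda<\tfrac1{10}$ is wrong as stated: knowing $\rho\geq r(1-o_\delta(1))$ throughout $W$ does \emph{not} exclude a $\delta_{\operatorname{n}}$-neck at scale $r$ inside $W$, since such a neck has $\rho\approx r$, which is perfectly consistent with that lower bound. The correct mechanism is that an equivariant $\delta_{\operatorname{n}}$-neck consists of principal orbits and must extend a $g_t$-length $\approx\delta_{\operatorname{n}}^{-1}r$ on \emph{both} sides of its central orbit; a central orbit at Bryant radius $r_0$ lies at distance $\approx 10\lambda r\, r_0$ from the tip, so the neck cannot fit if the center is too close to the tip. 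This is what rules out boundary necks in $W$ when $\lambda$ is at or slightly above $\tfrac1{10}$ (where $d_\lambda$ fails to exist), and hence shows $\mathcal{C}=\emptyset$ in that regime; you need this fitting argument rather than the $\rho$-comparison alone, because $\overline{\delta}_{\operatorname{n}}$ is universal and cannot be shrunk with $\lambda$. Second, $\iota'$ is \emph{not} bounded away from zero near the tip ($\iota'(0)=0$ since $R'_{g_{\operatorname{Bry}}}$ vanishes at $x_{\operatorname{Bry}}$); but you do not need this --- strict monotonicity and continuity of $\iota^{-1}$ near $(10\lambda)^{-1}$, with the admissible error $\epsilon=\epsilon(\lambda,\delta,\delta_{\operatorname{n}})$, already localize the boundary orbits near $d_\lambda$. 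With these two adjustments your sketch is a complete and cleaner proof in the rotationally invariant setting.
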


\begin{lem}[Lemma 8.42 in \cite{BamlerKleiner17}, Bryant slab lemma] \label{B-K Lemma 8.42}
If 
\begin{gather*}
    \delta_{\operatorname{n}}\leq\overline{\delta}_{\operatorname{n}},\quad 0<\lambda<1,\quad\Lambda\geq\underline{\Lambda},\quad \delta\leq\overline{\delta}(\lambda,\Lambda),
\end{gather*}
then the following holds.

Consider an $(n+1)$-dimensional $O(n+1)$-invariant Ricci flow spacetime $\mathcal M$ and let $r>0$ and $t_0\ge 0$. Set $t_1:=t_0+r^2$. For $i=0,1$, let $X_i\subset\mathcal M_{t_i}$ be a closed subset that is a domain with boundary, satisfying conditions (i)--(iv) from Lemma \ref{B-K Lemma 8.41}, and in addition:
\begin{enumerate}[(i)]
    \setcounter{enumi}{4}
    \item $X_1(t)$ is defined for all $t\in[t_0,t_1]$, and $\partial X_1(t_0)\subset\operatorname{Int}X_0$.
\end{enumerate}

Consider a ``$\delta$-good Bryant slab'' in $\mathcal M_{[t_0,t_1]}$, i.e. the image $W$ of a map
$$\psi:W^*=\overline{M_{\operatorname{Bry}}(d)}\times[-(10\lambda)^{-2},0]\rightarrow\mathcal M_{[t_0,t_1]}$$
where $d\ge \delta^{-1}$ and $\psi$ is a $(10\lambda r)^2$-time equivariant and $\partial_{\mathfrak{t}}$-preserving diffeomorphism onto its image, whose time-slices are all $O(n+1)$-equivariant, and
$$\left\|(10\lambda r)^{-2}\psi^*g-g_{\operatorname{Bry}}\right\|_{C^{[\delta^{-1}]}(W^*)}<\delta.$$

Set $\mathcal C_i:=W_{t_i}\setminus\operatorname{Int}X_i\subset\mathcal M_{t_i}$ for $i=0,1$. Then
\begin{enumerate}[(1)]
    \item $\mathcal C_i(t)$ is well-defined and $9\lambda r$-thick for all $t\in[t_0,t_1]$ and
    \item if $\mathcal C_1\neq\emptyset$, then $\mathcal C_0\subset\mathcal C_1(t_0)$ and $\mathcal C_0=\mathcal C_1(t_0)\setminus\operatorname{Int} X_0$.
\end{enumerate}
\end{lem}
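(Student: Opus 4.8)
The plan is to reduce this ``slab'' statement to the ``slice'' statement of the Bryant slice lemma (Lemma~\ref{B-K Lemma 8.41}), applied separately in the two time slices $\mathcal M_{t_0}$ and $\mathcal M_{t_1}$, and then to transport its conclusions across $[t_0,t_1]$ using the fact that the $\delta$-good Bryant slab $\psi$ is $\partial_{\mathfrak{t}}$-preserving. Write $s_0:=-(10\lambda)^{-2}$, $s_1:=0$, $s(t):=(t-t_1)/(10\lambda r)^2\in[s_0,0]$. For $i=0,1$ the time slice $\psi(\,\cdot\,,s_i)\colon\overline{M_{\operatorname{Bry}}(d)}\to W_{t_i}$ is exactly an $O(n+1)$-equivariant scale-$10\lambda r$ almost-Bryant chart of the type Lemma~\ref{B-K Lemma 8.41} accepts (with $X=X_i$, which satisfies (i)--(iv) of that lemma), so I would first apply Lemma~\ref{B-K Lemma 8.41} at $t_0$ and at $t_1$. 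Whenever $\mathcal C_i\neq\emptyset$ this gives that $\mathcal C_i$ is an $O(n+1)$-invariant $(n+1)$-disk containing the tip $\psi(x_{\operatorname{Bry}},s_i)$, is a connected component of $\mathcal M_{t_i}\setminus\operatorname{Int}X_i$ with $\partial\mathcal C_i\subset\partial X_i$, is $9\lambda r$-thick and $1.1 r$-thin, and lies in $\psi(M_{\operatorname{Bry}}(D_0(\lambda))\times\{s_i\})\subset\operatorname{Int}W_{t_i}$.

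Here rotational invariance gives a genuine simplification of \cite[Section 8]{BamlerKleiner17}: since $\psi$ is $O(n+1)$-equivariant, $\psi^{-1}(\mathcal C_i)$ is a connected $O(n+1)$-invariant open subset of $M_{\operatorname{Bry}}(D_0(\lambda))$ containing $x_{\operatorname{Bry}}$, hence equals \emph{exactly} the closed metric ball $\overline{M_{\operatorname{Bry}}(r_i)}\times\{s_i\}$ for a single radius $r_i\in(0,D_0(\lambda))$ --- I never have to argue that $\delta_{\operatorname{n}}$-neck central spheres are squeezed between nearby level sets of $r$.

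For assertion (1): since $\psi$ is $\partial_{\mathfrak{t}}$-preserving the worldline of $\psi(y,s_i)\in\mathcal C_i$ is $s\mapsto\psi(y,s)$, which stays inside $\psi(M_{\operatorname{Bry}}(D_0(\lambda))\times[s_0,0])\subset W$, so every point of $\mathcal C_i$ survives on $[t_0,t_1]$, $\mathcal C_i(t)$ is well-defined, and $\mathcal C_i(t)=\psi(\overline{M_{\operatorname{Bry}}(r_i)}\times\{s(t)\})$; the bound $\rho\ge 9\lambda r$ on $\mathcal C_i(t)$ follows by running at every intermediate slice the same estimate that gives it at the endpoints (on $\psi(M_{\operatorname{Bry}}(D_0(\lambda))\times[s_0,0])$ the rescaled metric $(10\lambda r)^{-2}\psi^*g$ is $\delta$-close to $g_{\operatorname{Bry}}$, whose scale is bounded below on $M_{\operatorname{Bry}}(D_0(\lambda))$, so $\rho\ge(1-O(\delta))\,10\lambda r\,\inf_{M_{\operatorname{Bry}}(D_0(\lambda))}\rho_{g_{\operatorname{Bry}}}\ge 9\lambda r$ once $\delta\le\overline{\delta}(\lambda,\Lambda)$). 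For assertion (2), assuming $\mathcal C_1\neq\emptyset$, I would transport $\mathcal C_1$ back to $t_0$ through $\psi$, getting $\mathcal C_1(t_0)=\psi(\overline{M_{\operatorname{Bry}}(r_1)}\times\{s_0\})$, while $\mathcal C_0=\psi(\overline{M_{\operatorname{Bry}}(r_0)}\times\{s_0\})$ (if $\mathcal C_0=\emptyset$ both assertions are immediate). Pulling back the part of $\partial X_1(t_0)$ inside $W_{t_0}$ by $\psi^{-1}$, the hypothesis $\partial X_1(t_0)\subset\operatorname{Int}X_0$ places the orbit $\{r=r_1\}$ in the region $\psi^{-1}(\operatorname{Int}X_0\cap W_{t_0})=\{r_0<r\le d\}$, forcing $r_0<r_1$; then $\{r\le r_0\}\subset\{r\le r_1\}$ gives $\mathcal C_0\subset\mathcal C_1(t_0)$ and $\{r\le r_1\}\setminus\{r>r_0\}=\{r\le r_0\}$ gives $\mathcal C_1(t_0)\setminus\operatorname{Int}X_0=\mathcal C_0$.

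I expect the radius comparison in assertion (2) to be the only delicate point: one must handle the open/closed bookkeeping in ``$\psi^{-1}(\mathcal C_i)=\overline{M_{\operatorname{Bry}}(r_i)}$'' and check that $\psi^{-1}(X_i\cap W)$ meets $\partial W$ only in its harmless outer part, so that $\{r=r_1\}$ is a genuine boundary orbit of $X_1(t_0)$ and $\partial X_1(t_0)\subset\operatorname{Int}X_0$ really does order $r_0$ and $r_1$. This is precisely what the running hypotheses secure --- $\delta_{\operatorname{n}}\le\overline{\delta}_{\operatorname{n}}$ and $\delta\le\overline{\delta}(\lambda,\Lambda)$ keep the neck boundary structure rigid, and $\Lambda\ge\underline{\Lambda}$ pushes the $\Lambda r$-thick points, hence $\partial X_i$ and the $r_i$, well inside $M_{\operatorname{Bry}}(D_0(\lambda))$ and away from $\partial W$ --- so beyond this bookkeeping the argument is as in \cite{BamlerKleiner17}.
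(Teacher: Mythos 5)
Your argument is correct, and you should know that the paper itself offers no proof of this lemma: it sits in the appendix, where the authors state that the proofs ``are not essentially different from those in \cite{BamlerKleiner17}'' and omit them, so the intended proof is Bamler--Kleiner's. Your route follows the same overall strategy as theirs --- apply the slice lemma (Lemma \ref{B-K Lemma 8.41}) at $t_0$ and $t_1$, use that $\psi$ is time-equivariant and $\partial_{\mathfrak{t}}$-preserving so that worldlines of points of $W_{t_i}$ are the curves $s\mapsto\psi(y,s)$ (giving survival and, via the slab closeness, the $9\lambda r$-thickness at intermediate times), and then use hypothesis (v) to order $\mathcal C_0$ against $\mathcal C_1(t_0)$ --- but your treatment of the containment step is a genuine equivariant simplification: identifying $\psi^{-1}(\mathcal C_i)$ with an exact ball $\overline{M_{\operatorname{Bry}}(r_i)}$ (legitimate, since $\mathcal C_i$ is an $O(n+1)$-invariant connected disk containing the tip with boundary a single orbit, by Lemma \ref{B-K Lemma 8.41}(a),(b)) replaces the connectedness/separation bookkeeping that Bamler--Kleiner need in the non-symmetric setting, and reduces assertion (2) to the one-dimensional comparison $r_0<r_1$ forced by $\partial\mathcal C_1(t_0)\subset\partial X_1(t_0)\subset\operatorname{Int}X_0$ together with $W_{t_0}\cap\operatorname{Int}X_0=\psi(\{r_0<r\le d\}\times\{s_0\})$. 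The points you flag as delicate are in fact already secured by the slice lemma: conclusion (d) of Lemma \ref{B-K Lemma 8.41} keeps $\mathcal C_i$ inside $\psi(M_{\operatorname{Bry}}(D_0(\lambda))\times\{s_i\})\subset\operatorname{Int}W_{t_i}$, so the topological boundary of $\mathcal C_i$ never touches the outer sphere $\{r=d\}$ and really is the single orbit $\{r=r_i\}$; beyond recording that (and the normalization $\rho_{g_{\operatorname{Bry}}}\ge\rho_{g_{\operatorname{Bry}}}(x_{\operatorname{Bry}})=1$ behind the $9\lambda r$ bound, plus your correct disposal of the cases $\mathcal C_0=\emptyset$ or $\mathcal C_1=\emptyset$), nothing is missing.
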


\Addresses

\begin{thebibliography}{99}
\bibitem{Angenent88}
S. Angenent, \textit{The zero set of a solution of a parabolic equation}, J. Reine Angew. Math. \textbf{390} (1988), 79--96. 

\bibitem{AngenentCaputoKnopf}
S. Angenent, M. Caputo, and D. Knopf, \textit{Minimally invasive surgery for Ricci flow singularities}, J. Reine Angew. Math. \textbf{672} (2012), 39--87

\bibitem{AngenentIsenbergKnopf}
S. Angenent, J. Isenberg, and D. Knopf, \textit{Degenerate neckpinches in Ricci flow}, J. Reine Angew. Math. \textbf{709} (2015) 81-117

\bibitem{AngenentKnopf04}
S. Angenent and D. Knopf, \textit{An example of neckpinching for Ricci flow on $\mathbb{S}^{n+1}$}, Math. Res. Lett. \textbf{11} (2004), no. 4, 493--518. 

\bibitem{AngenentKnopf05}
S. Angenent and D. Knopf, \textit{Precise asymptotics of the Ricci flow neckpinch}, arXiv:math/0511247. 

\bibitem{AngenentKnopf19}
S. Angenent and D. Knopf, \textit{Ricci solitons, canonical singularities, and nonuniqueness}, arXiv:1909.08087. 


\bibitem{Appleton19} A. Appleton, \textit{Eguchi-Hanson singularities in $U(2)$-invariant Ricci flow,} arXiv:1903.09936 (2019).


\bibitem{BamlerKleiner17}
R. Bamler and B. Kleiner, \textit{Uniqueness and stability of Ricci flow through singularities}, arXiv:1709.04122. 

\bibitem{BamlerNotes}
R. Bamler,
\textit{Ricci Flow with Surgery} Diploma Thesis.

\bibitem{Brendle13}
S. Brendle, \textit{Rotational symmetry of self-similar solutions to the Ricci flow}, Invent. Math. \textbf{194} (2013), no. 3, 731--764. 

\bibitem{Brendle14}
S. Brendle, \textit{Rotational symmetry of Ricci solitons in higher dimensions}, J. Differential Geom. \textbf{97} (2014), no. 2, 191--214. 

\bibitem{Brendle18}
S. Brendle, \textit{Ricci flow with surgery in higher dimensions}, Ann. Math. (2018), 263--299.

\bibitem{Brendle19} 
S. Brendle, \textit{Ricci flow with surgery on manifolds with positive isotropic curvature}, Ann. Math. \textbf{190} (2019), 465--559.

\bibitem{Brendle20}
S. Brendle, \textit{Ancient solutions to the Ricci flow in dimension 3}, Acta Math. \textbf{225} (2020), 1--102

\bibitem{Brendle-etal21} S. Brendle, P. Daskalopoulos, K. Naff and N. \v{S}e\v{s}um,\textit{Uniqueness of compact ancient solutions to the higher dimensional Ricci flow.} arXiv preprint arXiv:2102.07180 (2021).

\bibitem{Bryant05}
R. Bryant, \textit{Ricci flow solitons in dimension three with $SO(3)$-symmetries}, https://services.math.duke.edu/~bryant/3DRotSymRicciSolitons.pdf (2005). 

\bibitem{Carson16}
T. Carson, \textit{Ricci flow emerging from rotationally invariant degenerate neckpinches}, Int. Math. Res. Not. \textbf{2016} (2016), no. 12, 3678--3716. 

\bibitem{Carson18}
T. Carson, \textit{Ricci flow from some spaces with asymptotically cylindrical singularities}, arXiv:1805.09401.

\bibitem{ChenWu16} 
J. Chen and J. Wu, \textit{Pinching estimates for solutions of the linearized Ricci flow system in higher dimensions.} Differential Geometry and its Applications 46 (2016): 108-118.


\bibitem{ChowI}
B. Chow, S. Chu, D. Glickenstein, C. Guenther, J. Isenberg, T. Ivey, D. Knopf, P. Lu, F. Luo and L. Ni, The Ricci
flow: techniques and applications. Part I: geometric aspects, volume 163, Mathematical Surveys and Monographs,
American Mathematical Society. Providence, RI (2010).

\bibitem{ChowII}
B. Chow, S. Chu, D. Glickenstein, C. Guenther, J. Isenberg, T. Ivey, D. Knopf, P. Lu, F. Luo and L. Ni, The Ricci
flow: techniques and applications. Part II: analytic aspects, volume 163, Mathematical Surveys and Monographs,
American Mathematical Society. Providence, RI (2010).

\bibitem{ChowNi}
B. Chow, P. Lu, and L. Ni, {Hamilton's Ricci flow}, volume 77 of Graduate Studies in Mathematics, American Mathematical Society, Providence RI (2006)

\bibitem{deTurckKazdan}
D. DeTurck and J. Kazdan, \textit{Some regularity theorems in Riemannian geometry}, Ann. Sci. \'{E}cole Norm. Sup. (4), \textbf{14} (1981), no. 3, 249--260.

\bibitem{Galaktionov04}
V. Galaktionov, \textit{Geometric Sturmian theory of nonlinear parabolic equations and applications}, CRC Press (2004). 

\bibitem{GuZhu}
H. Gu and X. Zhu, \textit{The existence of type II singularities for the Ricci flow on $\mathbb{S}^{n+1}$}, arXiv:0707.0033. 


\bibitem{GuoSong16}
B. Guo and J. Song, \textit{On Feldman-Ilmanen-Knopf's conjecture for the blow-up behavior of the Kahler-Ricci flow}, Math. Res. Lett. \textbf{23} (2016), no. 6, 1681--1719.



\bibitem{Hamilton86}
R. Hamilton, \textit{Four-manifolds with positive curvature operator}, J. Differential Geom. \textbf{24} (1986), no. 2, 153--179. 

\bibitem{Hamilton95} R. Hamilton, \textit{A compactness property for solutions of the Ricci flow}, Amer. J. Math \textbf{117} (1995) 545--572.

\bibitem{Haslhofer}
 R. Haslhofer, \textit{Uniqueness and stability of singular Ricci flows in higher dimensions}, arXiv:2110.03412

\bibitem{IKS16}
J. Isenberg, D. Knopf and N. \v{S}e\v{s}um, \textit{Ricci flow neckpinches without rotational
symmetry}, Comm. Partial Differential Equations \textbf{41} (2016), no. 12, 1860--1894. 

\bibitem{KleinerLott08}
B. Kleiner and J. Lott, \textit{Notes on Perelman's papers}, Geom. Topol. \textbf{12} (2008), no. 5, 2587--2855. 

\bibitem{KleinerLott17}
B. Kleiner and J. Lott, \textit{Singular Ricci flows I}, Acta Math. \textbf{219} (2017), no. 1, 65--134. 

\bibitem{KleinerLott18}
B. Kleiner and J. Lott, \textit{Singular Ricci flows II}, arXiv:1804.03265. 


\bibitem{Kotschwar14} B. Kotschwar, \textit{An energy approach to the problem of uniqueness for the Ricci flow.} Communications in Analysis and Geometry, \textbf{22} (2014), no. 1, 149-176.

\bibitem{Kotschwar15} 
B. Kotschwar, \textit{Time-analyticity of solutions to the Ricci flow}, Amer. J. Math. \textbf{137} (2015), no. 2, 535--576.

\bibitem{Kotschwar20}
B. Kotschwar, \textit{On the maximal rate of convergence under the Ricci flow}, arXiv:2001.01272.

\bibitem{Lakzian15}
S. Lakzian, \textit{Intrinsic flat continuity of Ricci flow through neckpinch singularities}, Geom. Dedicata \textbf{179} (2015), no. 1, 69--89. 

\bibitem{LiZhang18}
X. Li and Y. Zhang, \textit{Ancient solutions to the Ricci flow in higher dimensions}, arXiv:1812.04156, to appear in Comm. Anal. Geom.

\bibitem{Maximo14} D. M\'aximo, \textit{On the blow-up of four-dimensional Ricci flow singularities.} Journal f\"ur die reine und angewandte Mathematik (Crelles Journal) 2014.692 (2014): 153-171.

\bibitem{MorganTian}
J. Morgan and G. Tian, \textit{Ricci flow and the Poincar\'e conjecture}, American Mathematical Soc., \textbf{3} (2007). 

\bibitem{Perelman02}
G. Perelman, \textit{The entropy formula for the Ricci flow and its geometric applications}, arXiv:math/0211159. 

\bibitem{Perelman03}
G. Perelman, \textit{Ricci flow with surgery on three-manifolds}, arXiv:math/0303109.

\bibitem{Petersen}
P. Petersen, \textit{Riemannian Geometry}, Springer (2007)

\bibitem{WuTypeII} H. Wu, \textit{On Type-II Singularities in Ricci Flow on $\mathbb{R}^N$}, Comm. Partial Differential Equations \textbf{39} (2014), no. 11, 2064--2090.

\bibitem{Zhang15}
Z. Zhang, \textit{Generalization of the Hamilton-Ivey estimate to the higher dimensional Ricci flow with a vanishing Weyl tensor}, J. Math. Anal. Appl. \textbf{426} (2015), no. 2, 774--782. 



\end{thebibliography}
\end{document}